\documentclass[draft]{article}

\usepackage{amsmath,amsfonts,amsthm,amssymb,amscd,cancel,color,mathabx}
\usepackage{enumitem}
\usepackage{verbatim}
\usepackage[dvipdfmx]{graphicx}
\usepackage{ulem,color}

\usepackage{cleveref} 
\renewcommand{\Re}{\mathop{\rm Re}\nolimits}
\renewcommand{\Im}{\mathop{\rm Im}\nolimits}
\def\S{\mathhexbox278}

\theoremstyle{plain}
\newtheorem{theorem}{Theorem}[section]
\newtheorem{lemma}[theorem]{Lemma}
\newtheorem{proposition}[theorem]{Proposition}

\theoremstyle{definition}

\theoremstyle{remark}
\newtheorem{remark}[theorem]{Remark}
\newtheorem{notation}[theorem]{Notation}

\newcommand{\R}{{\mathbb R}}

\newcommand{\Q}{{\mathbb Q}}

\newcommand{\N}{{\mathbb N}}

\def\im{{\rm i}}

\newcommand{\C}{\mathbb{C}}

\def\({\left(}
\def\){\right)}
\def\<{\left\langle}
\def\>{\right\rangle}
\newcommand{\sech}{{\mathrm{sech}}}

\newcommand{\supp}{{\mathrm{supp}\ }}
    \newcommand{\diag}{{\mathrm{diag}}}
\newcommand{\Span}{{\mathrm{Span}}}
\newcommand{\sign}{{\mathrm{sign}}}

\makeatletter

\numberwithin{equation}{section}

\begin{document}

\title{ 
The asymptotic stability of the stable solitons of the generalized good Boussinesq equation
}

\author{Junsik Bae, Scipio Cuccagna, Daniele Del Santo  and Masaya Maeda  }
\maketitle

\begin{abstract}   This paper continues the series where  we apply a synthesis of   the virial inequalities  in the tradition of Merle, Martel  and their collaborators 
and of Kato smoothing,   to close the virial inequalities. We prove absence of nonzero imaginary eigenvalues for the linearization in unweighted space and asymptotic stability for every velocity $\frac{\sqrt{p-1}}{2}<c<1$ and every $1<p<5$.
\end{abstract}


\section{Introduction}

Let $p>1$. We consider real-valued solutions on $\R$ of the
generalized good Boussinesq equation (gB)
\begin{align}\label{eq:gB1}&
   \partial _t ^2   u    -\partial _x ^2   u  +  \partial_x  ^2\(  \partial_x ^2  u +  f(u) \) =0
\end{align}
where $f(u)=|u|^{p-1}u$   or $f(u)=|u|^{p } $. We will write it as a system  in the form
\begin{align} \label{eq:gB2}&
   \left\{
     \begin{array}{ll}
       \partial _t u_1= \partial _x u_2 , &   \\
        \partial _t u_2= \partial _x     \(  - \partial_x ^2  u _1+u_1 - f(u_1)  \)  . &
     \end{array}
   \right.
\end{align}
Specifically, in integral form we can rewrite the equation  for $\mathbf{u}=(u_1, u_2) ^\intercal $  as
\begin{align}
  \label{eq:gB3}&  \mathbf{u}(t) = U(t)\mathbf{u}_0 -\int _0 ^t  U(t-t')  \partial _x  \(   0 ,  f(u_1 (t') )   \) ^\intercal dt ' \text{ where } \mathbf{u}_0 \in H^1(\R ) \times L^2 (\R ) \text{ and }\\& U(t): = \cos \( \im  t\partial _x  \sqrt{1- \partial _x^2} \)I_2   - \im \left(
                                                                \begin{array}{cc}
                                                                  0 & (1- \partial _x^2) ^{-\frac{1}{2}} \\
                                                                  (1- \partial _x^2) ^{ \frac{1}{2}} & 0 \\
                                                                \end{array}
                                                              \right)  \sin \( \im  t\partial _x  \sqrt{1- \partial _x^2} \) .  \label{eq:group}
\end{align}
We consider $\mathcal{H}^1= H^1(\R ) \times   L^2(\R )$  with norm
\begin{equation}
  \|   (u_1, u_2) ^\intercal \| _{\mathcal{H}^1 } ^2 =    \|    u_1  \| _{ H^1(\R )} ^2  + \|     u_2   \| _{ L^2(\R )} ^2. \label{eq:ennorm}
\end{equation}
For local well-posedness in the energy space, see Linares
\cite[Theorem~4.2]{Lineares93JDE}; earlier results for more regular
data were obtained by Bona and Sachs \cite{MR954673}.
For completeness, we give in Appendix~\ref{app:them:locwp}
a self-contained proof of the following formulation, valid for
both nonlinearities in \eqref{eq:gB1} and every $p>1$.
\begin{theorem}\label{them:locwp} \begin{description}
                                    \item[1] For any initial datum   $\mathbf{u} _0  \in \mathcal{H}^1$  there exists  $T=T\( \|  \mathbf{u} _0 \| _{\mathcal{H}^1} \) >0$ and a solution $\mathbf{u}\in C ^{0}([0, T], \mathcal{H}^1 )$  of  \eqref{eq:gB3}.
                                    \item[2] There is a unique maximal solution for any initial value problem.
                                    \item[3] Given a solution   $\mathbf{u}\in C ^{0}([0, T], \mathcal{H}^1 )$
 for any $T'\in (0,T)$ there exists a neighborhood $\mathcal{V}$   of $\mathbf{u}_0$ in $\mathcal{H}^1$ such that the map 
\begin{align}\label{eq:contflow}
  \mathcal{V} \ni  \mathbf{v}_0 \rightarrow   \mathbf{v}\in  C ^{0}([0, T'], \mathcal{H}^1 )
\end{align}
which associates to any initial datum its solution, is well defined and continuous.
 \item[4] For    $\mathbf{u}=(u_1, u_2) ^\intercal $  the following functionals are invariants of motion.
\begin{align}
   \label{eq:energy}E(\mathbf{u}) &=  2^{-1} \|   \mathbf{u}  \| _{\mathcal{H}^1 } ^2 -    \int _{\R }  F ( u_1 ) dx  \, , \text{ for $F'=f$ and $F(0)=0$} \\ \label{eq:mass}M(\mathbf{u}) &=  2^{-1} \<\sigma _1   \mathbf{u}   , \mathbf{u} \>    \text{ where }   \sigma_1=\begin{pmatrix} 0 &
   1 \\
   1 & 0
    \end{pmatrix} .
\end{align}

                                  \end{description}

\end{theorem}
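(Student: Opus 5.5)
The strategy is a contraction argument for the Duhamel formulation \eqref{eq:gB3} in $X_T=C^0([0,T],\mathcal{H}^1)$, with no dispersive estimates.

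The key observation is that $U(t)$ is an isometry of $\mathcal{H}^1$. Set $L=(1-\partial_x^2)^{1/2}$ and $\omega=\partial_x L$; then $\im\omega$ is real, so $\cos(\im t\omega)$ and $-\im\sin(\im t\omega)$ are real bounded Fourier multipliers. Conjugating by $\diag(L,1)$ turns $U(t)$ into the rotation $\cos(\im t\omega)I_2-\im\sigma_1\sin(\im t\omega)$ acting on $L^2\times L^2$, which is unitary. Hence $\|U(t)\mathbf{u}\|_{\mathcal{H}^1}=\|\mathbf{u}\|_{\mathcal{H}^1}$ and $U$ is strongly continuous.

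The nonlinear term loses a derivative, $\partial_x(0,f(u_1))^\intercal$, but the group recovers it. The operator $U(t)\partial_x$ applied to $(0,g)^\intercal$ has first component $-\im L^{-1}\sin(\im t\omega)\partial_x g$ and second component $\cos(\im t\omega)\partial_x g$. Since $\sin(\im t\omega)=\sin(\im t\partial_x L)$ is a bounded multiplier, in $\mathcal{H}^1$ norm the first component is controlled by $\|\partial_x g\|_{L^2}$. The second component, however, needs $\partial_x g\in L^2$, which also costs a derivative.

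This is the main obstacle. I handle it as follows.
\begin{itemize}
\item For $g=f(u_1)$ with $u_1\in H^1\subset L^\infty$, we have $\partial_x f(u_1)=f'(u_1)\partial_x u_1\in L^2$. So $\|f(u_1)\|_{H^1}\le C(\|u_1\|_{H^1})\|u_1\|_{H^1}$ with $C$ locally bounded, and the second component is in fact fine.
\item For the Lipschitz estimate, the bound $\|f(u)-f(v)\|_{H^1}\lesssim\|u-v\|_{H^1}$ requires $f'$ to be locally Lipschitz, which fails when $1<p<2$.
\item In that case I contract instead in the weaker metric of $C^0([0,T],L^2\times H^{-1})$. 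There $U$ is again an isometry, the nonlinear difference only needs $\|f(u)-f(v)\|_{L^2}\lesssim\|u-v\|_{L^2}$ (true since $u,v\in L^\infty$), and closed balls of $\mathcal{H}^1$ are complete in this metric.
\item This gives existence on $T\sim\|\mathbf{u}_0\|^{-(p-1)}$ up to constants, and uniqueness in $C^0\mathcal{H}^1$.
\item Continuity in time in $\mathcal{H}^1$, rather than mere weak continuity, then follows from the energy identity below combined with weak continuity; the same argument gives the continuity of \eqref{eq:contflow}.
\end{itemize}

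Item 2 follows by the standard gluing of local solutions, using uniqueness and the fact that $T$ depends only on the norm.

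For item 3, given $T'<T$, I cover $[0,T']$ by finitely many intervals of length $\tau(2\sup\|\mathbf{u}\|)$. On each interval the Lipschitz dependence gives continuity in the weak metric, which yields convergence $\mathbf{v}_n\to\mathbf{u}$ in $C^0(L^2\times H^{-1})$ together with uniform $\mathcal{H}^1$ bounds. Conservation of $E$ and $M$, plus $\mathcal{H}^1$ convergence at time 0, upgrades this to norm convergence at each time: $\|\mathbf{v}_n(t)\|_{\mathcal{H}^1}\to\|\mathbf{u}(t)\|_{\mathcal{H}^1}$ because $\int F(v_{n,1})\to\int F(u_1)$ by strong $L^2$ convergence with $L^\infty$ bounds. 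Uniformity in $t$ then follows by the usual compactness/equicontinuity argument, which I would check carefully.

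For item 4, I first regularize. For data in $H^3\times H^2$, and with $f$ replaced by a smooth mollified $f_\epsilon$ if needed, the solution is regular enough to differentiate $E$ and $M$. Formally,
\[
\tfrac{d}{dt}M=\int(\partial_t u_1u_2+u_1\partial_t u_2)=\int\big(\partial_x u_2\,u_2+u_1\partial_x(-\partial_x^2u_1+u_1-f(u_1))\big)=0,
\]
by integration by parts, since $\int u_1\partial_x g=-\int\partial_x u_1\, g$ and $\partial_x u_1(-\partial_x^2u_1+u_1-f(u_1))$ is an exact derivative. Similarly,
\[
\tfrac{d}{dt}E=\int\big(-\partial_x^2u_1+u_1-f(u_1)\big)\partial_x u_2+u_2\,\partial_x\big(-\partial_x^2u_1+u_1-f(u_1)\big)=0.
\]
I then pass to the limit using the continuous dependence from item 3, which is stable under $f_\epsilon\to f$ because all the estimates are uniform in $\epsilon$.
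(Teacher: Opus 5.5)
Your contraction argument for existence and uniqueness is sound and is essentially the paper's. The paper also contracts in a weak metric on the set $\{\sup_t\|v(t)\|_{H^1}\le A\}$, though for the first component alone in $C([0,T],L^2)$, using that $\pi_1U(t)\partial_x(0,g)^\intercal$ loses no derivative; it then recovers $u_2$ from its Duhamel formula.

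The gap is in the strong $\mathcal H^1$ statements, where your argument is circular as written:
\begin{itemize}
\item In item 3 you upgrade convergence in $C^0(L^2\times H^{-1})$ to norm convergence by using $E(\mathbf u(t))=E(\mathbf u_0)$ for the limit solution.
\item In item 4 you prove that identity by passing to the limit from regularized solutions ``using the continuous dependence from item 3''.
\item Your claim $\mathbf u\in C^0([0,T],\mathcal H^1)$ in item 1 is itself routed through item 4.
\end{itemize}
With only weak-metric convergence of the regularized solutions, weak lower semicontinuity gives just $E(\mathbf u(t))\le E(\mathbf u_0)$. The identity for $M$ does pass to the limit, being strong in $u_1$ and weak in $u_2$. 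Nothing in your chain produces the reverse inequality for $E$.

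Either of two ingredients breaks the circle.

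(a) Use time reversibility. The map $(u_1,u_2)(t)\mapsto(u_1,-u_2)(-t)$ sends solutions to solutions and preserves $E$. Run the same lower semicontinuity argument backward from $\mathbf u(s)$ on short intervals, whose length depends on $\sup\|\mathbf u\|_{\mathcal H^1}$ over a compact time interval, and use uniqueness. This gives the reverse inequality locally, hence $E(\mathbf u(t))=E(\mathbf u_0)$, and your item 3 argument then goes through.

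(b) Do what the paper does and avoid conservation in items 1 and 3 altogether. Two estimates are needed:
\begin{itemize}
\item $\|\Phi(g)\|_{C([0,T],H^s)}\le\int_0^T\|g(t)\|_{H^s}\,dt$ for $s=0,1$, where $\Phi$ is the first component of the Duhamel term;
\item $\|f(v)-f(w)\|_{H^1}\le C(A)(\|v-w\|_{H^1}+\|v-w\|_{L^2}^{\alpha/2})$ with $\alpha=\min\{p-1,1\}$, which follows from H\"older continuity of $f'$ and $\|v-w\|_{L^\infty}\le\|v-w\|_{L^2}^{1/2}\|v'-w'\|_{L^2}^{1/2}$.
\end{itemize}
Combined with the $L^2$-Lipschitz bound you already have and Gronwall, these give strong $H^1$ continuous dependence with a H\"older modulus, even for $1<p<2$. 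You are right that a Lipschitz $H^1$ bound fails there, but a H\"older one suffices.

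Continuity in time in $\mathcal H^1$ never needed the energy. The forcing $\partial_x(0,f(u_1(s)))^\intercal$ lies in $L^\infty([0,T],\mathcal H^1)$ and is strongly measurable, because $u_1$ is weakly continuous into $H^1$ and $f$ is continuous $H^1\to H^1$. Its Duhamel integral against the isometric group is therefore continuous into $\mathcal H^1$.

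Finally, the paper proves conservation with the Fourier truncation $P_n$ applied to the data and the nonlinearity. This keeps the Hamiltonian form $\partial_t\mathbf u=P_n\partial_x\sigma_1\nabla E(\mathbf u)$ exact. It needs neither a mollified $f$ nor the persistence-of-regularity result in $H^3\times H^2$ that your route leaves implicit.
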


Using Notation \ref{not:notation}  we can rewrite system  \eqref{eq:gB2} as
\begin{align} \label{eq:gB4syst}&
       \partial _t \mathbf{u} =   \partial _x  \( \sigma _1 \diag \( -\partial _x^2 +1, 1     \)  \mathbf{u}   - (0, f  ( u_1 ) )^\intercal  \)  .
\end{align}
\begin{remark}
  \label{rem:poitsyst} Since  in $H ^{-1}(\R ) \times H ^{-2}(\R )$   for the domain we have $D\( \partial _x  \( \sigma _1 \diag \( -\partial _x^2 +1, 1     \) \) \)= \mathcal{H}^{1} $ and since  by $\mathbf{u}\in C ^{0}([0, T], \mathcal{H}^1 )$ we have $ \partial _x  (0, f  ( u_1 ) )^\intercal  \in C ^{0}([0, T],\mathcal{H}^{1}  )$,  then   we can apply Proposition 4.1.6 in Cazenave and Haraux \cite{CazHarbook} and conclude that
  system \eqref{eq:gB2}  or the equivalent system  \eqref{eq:gB4syst} holds in a pointwise way. 
\end{remark}

System  \eqref{eq:gB4syst} has solitary waves
\begin{align}\label{eq:QQ}
  \mathbf{Q}_c(x-ct-x_0) =( Q_c, -cQ_c)^\intercal (x-ct-x_0) \text{ for } x_0\in \R \text{ and  }  c \in (-1,1)
\end{align}
where
\begin{align}\label{eq:QQ1}
 Q_c( x) = \gamma ^{\frac{2}{p-1}} Q ( \gamma x)  \text{ where  }     \gamma = \sqrt{1-c^2} \text{ and  }     Q (   x) =  \(\frac {p+1}2 \)^{\frac 1{p-1}}
\sech   ^{\frac 2{p-1}}\(\frac{p-1}2 x\)  ,
\end{align}
as can be shown using equations
\begin{equation}\label{eq:soliton_eq}
	Q'' -Q +f(Q)=0, \quad \hbox{and} \quad Q_c''-(1-c^2)Q_c+f(Q_c)=0.
	\end{equation}
 From Bona and Sachs \cite{MR954673} and Maul\'{e}n and Mu\~{n}oz \cite[Theorem 2.4]{maulen2025asymptoticstabilitystablegood} 
 we have the following.

 \begin{theorem}[Orbital Stability]\label{thm:orbstab} Let $1<p< 5$. Then for any   $ \frac{p-1}{4}<c^2 <1$ and any $x_0\in \R$ there exist     $\delta_0(c),C_0(c)>0$ such that for all $0<\delta<\delta_0(c)$ for any  $\mathbf{u} _{0}\in D _{\mathcal{H}^1} \left ( \mathbf{Q}_c (\cdot -x_0), \delta    \right )    $   we have for the corresponding solution $\mathbf{u}\in C ^0 ([0, +\infty ),  \mathcal{H}^1 )$  and there is a function $x(t)$ such that
\begin{equation}\label{OS}
\sup_{t\geq 0} \| \mathbf{u} - \mathbf{Q}_c (\cdot -x(t)) \|_{\mathcal{H}^1} \leq C_0(c)\delta.
\end{equation} 
\end{theorem}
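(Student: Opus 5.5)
The plan is the Grillakis--Shatah--Strauss/Weinstein scheme, built on the conserved quantities $E$ and $M$ of Theorem~\ref{them:locwp}. The soliton $\mathbf{Q}_c$ is a critical point of the Lyapunov functional $S_c(\mathbf{u})=E(\mathbf{u})+c\,M(\mathbf{u})$. Indeed,
\[
S_c'(\mathbf{u})=\big(-\partial_x^2u_1+u_1-f(u_1)+cu_2,\; u_2+cu_1\big)^\intercal ,
\]
and at $\mathbf{Q}_c=(Q_c,-cQ_c)^\intercal$ the second component vanishes. The first component reduces to $-Q_c''+(1-c^2)Q_c-f(Q_c)=0$ by \eqref{eq:soliton_eq}.

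\textbf{Step 1: the Hessian.} I would compute $H_c=S_c''(\mathbf{Q}_c)$, which is the block operator
\[
H_c=\begin{pmatrix} -\partial_x^2+1-f'(Q_c) & c\\ c & 1\end{pmatrix}.
\]
Completing the square in $u_2$, the quadratic form becomes
\[
\langle H_c\mathbf{v},\mathbf{v}\rangle=\|v_2+cv_1\|_{L^2}^2+\langle L_c v_1,v_1\rangle,\qquad L_c=-\partial_x^2+\gamma^2-f'(Q_c).
\]
By scaling, $L_c$ is unitarily equivalent to $\gamma^2 L$ with $L=-\partial_x^2+1-f'(Q)$. The operator $L$ has exactly one negative eigenvalue, kernel spanned by $Q'$, and is coercive on the orthogonal complement of these two directions. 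Hence $H_c$ has exactly one negative direction and kernel spanned by $\partial_x\mathbf{Q}_c$.

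\textbf{Step 2: the Vakhitov--Kolokolov condition.} Set $d(c)=S_c(\mathbf{Q}_c)$, so that $d'(c)=M(\mathbf{Q}_c)=-c\|Q_c\|_{L^2}^2$. The scaling in \eqref{eq:QQ1} gives $\|Q_c\|_2^2=\gamma^{\frac4{p-1}-1}\|Q\|_2^2$. Differentiating $-c\gamma^{\frac{5-p}{p-1}}$ in $c$ yields
\[
d''(c)\propto -\gamma^{\frac{5-p}{p-1}-2}\Big(\gamma^2-c^2\,\tfrac{5-p}{p-1}\Big).
\]
This is negative (up to sign convention, nondegenerate with the correct sign) exactly when $c^2>\frac{p-1}{4}$. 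The condition $d''\neq0$ with the right sign guarantees that $H_c$ is positive on the tangent space $\{\mathbf{v}:\langle \sigma_1\mathbf{Q}_c,\mathbf{v}\rangle=0,\ \langle \mathbf{v},\sigma_1\partial_x\mathbf{Q}_c\rangle=0\}$. The argument is the standard one: the negative direction pairs nontrivially with $\partial_c\mathbf{Q}_c$, and $\langle H_c\partial_c\mathbf{Q}_c,\partial_c\mathbf{Q}_c\rangle=-d''(c)$. Positivity on this finite-codimension subspace upgrades to coercivity, $\langle H_c\mathbf{v},\mathbf{v}\rangle\ge\kappa\|\mathbf{v}\|_{\mathcal{H}^1}^2$.

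\textbf{Step 3: modulation and conclusion.} As long as $\mathbf{u}$ stays close to the orbit, the implicit function theorem gives $x(t)$ with $\langle \mathbf{u}-\mathbf{Q}_c(\cdot-x),\sigma_1\partial_x\mathbf{Q}_c(\cdot-x)\rangle=0$. I would also allow a nearby speed $\tilde c$ fixed by the condition $M(\mathbf{Q}_{\tilde c})=M(\mathbf{u}_0)$. This is possible because $d''\neq0$, and it gives $|\tilde c-c|\lesssim\delta$. The perturbation $\mathbf{v}$ then satisfies the mass constraint only up to a quadratic error. Expanding conservation of $S_{\tilde c}$ gives
\[
S_{\tilde c}(\mathbf{u}_0)-S_{\tilde c}(\mathbf{Q}_{\tilde c})=\tfrac12\langle H_{\tilde c}\mathbf{v},\mathbf{v}\rangle+O(\|\mathbf{v}\|^3)\ge \tfrac\kappa4\|\mathbf{v}\|^2-C\|\mathbf{v}\|^3.
\]
For $p<2$ the cubic remainder is replaced by $O(\|\mathbf{v}\|^{\min(p+1,3)})$, which is still superquadratic. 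The left side is $O(\delta^2)$. A continuity/bootstrap argument therefore gives $\|\mathbf{v}(t)\|\lesssim\delta$ for all times, and in particular global existence via Theorem~\ref{them:locwp}. Finally, replacing $\tilde c$ by $c$ costs $O(\delta)$, which yields \eqref{OS}.

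\textbf{Main obstacle.} The delicate step is Step 2: turning the sign of $d''$ into coercivity of $H_c$ on the constrained space. This needs care with the nonselfadjoint Hamiltonian structure $\partial_x\sigma_1$, and with the fact that the mass constraint involves $\sigma_1$. I would handle it with the reduction to $L_c$ from Step 1, together with a min--max argument in the variables $v_1$ and $w=v_2+cv_1$.
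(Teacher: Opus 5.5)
Your argument is the standard Grillakis--Shatah--Strauss/Weinstein Lyapunov proof with $S_c=E+cM$, and that is what the paper relies on. The paper does not reprove Theorem~\ref{thm:orbstab}; it cites Bona--Sachs and Maul\'en--Mu\~noz, and later uses exactly your Step~2 ingredient, namely the coercivity of $\mathbf L_c=H_c$ under $\langle\mathbf z,\sigma_1\mathbf Q_c\rangle=\langle\mathbf z,\xi_1[c]\rangle=0$ in \eqref{eq:linear-constrained-coercivity}.

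Fix one sign slip. Your own formula gives $d''(c)=q'(c)>0$, not negative, exactly when $c^2>\frac{p-1}{4}$ (cf.\ \eqref{eq:qprc}--\eqref{eq:q_prime}). It is this positivity that makes $\langle H_c\partial_c\mathbf Q_c,\partial_c\mathbf Q_c\rangle=-q'(c)<0$, which is what gives the constrained coercivity.
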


\begin{remark}
For $f(s)=|s|^{p-1}s$, the stability range in
Theorem \ref{thm:orbstab} is sharp.
 Indeed, for
$1<p<5$, the solitary wave $\mathbf Q_c$ is orbitally unstable in
$\mathcal H^1$ when 
    $c^2\leq \frac{p-1}{4}$:
  case
$c^2< (p-1)/4$ was proved by Liu \cite{Liu1993};  the degenerate
critical case
$c^2=(p-1)/4$  by Li et al.
\cite{LiOhtaWuXue2020}.
Consequently, for $1<p<5$, the solitary wave $\mathbf Q_c$ is
orbitally stable precisely when
    $\frac{p-1}{4}<c^2<1$.
    See also, \cite{SS12Nonlinearity}.
\end{remark} 

 Our aim in this paper is to prove the following result.
 
 \begin{theorem}
    \label{thm:main} For any $1<p<5$, any
$c_0\in ((p-1)^{1/2}/2,1)$ and any $a>0$ and $\epsilon>0$,
      there exists a $\delta>0$
      such that for any     $\mathbf{u}_0 \in \bigcup _{x_0\in \R }D _{\mathcal{H}^1} \left ( \mathbf{Q} _{c_0} (\cdot -x_0), \delta    \right ) $
   then  for the
  solution $\mathbf{u}\in C^0([0,+\infty),\mathcal H^1 )$ of  \eqref{eq:gB3}  
  there exist  functions
  $ c \in C^1 ([0,+\infty) , (c_0 - \epsilon ,c_0+  \epsilon) ),$ $D \in C^1 ([0,+\infty), \R ) $  
    and $c_+\in (0,1)$  such that
  \begin{align} \label{eq:asstab1}
    &\mathbf{ u}(t)=    \mathbf{Q} _{c (t)} (\cdot -D(t) ) + \mathbf{v}(t, \cdot -D(t) ) \text{   with}
 \\&  \label{eq:asstab2}   \int _0^{+\infty} \|  e^{- a|x|}   \mathbf{v} (t ) \| _{\mathcal{H}^1 }^2 dt <  \epsilon ,  \\&  \label{eq:asstab20}
      \lim _{t\to +\infty  }  \|  e^{- a\< x\>}   v _1 (t ) \| _{L^2(\R )}     =0\text{ and} \\&
  \lim _{t\to +\infty}c (t)= c _+ . \label{eq:asstab3}
\end{align}

\end{theorem}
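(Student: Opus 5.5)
We plan to follow the scheme of the earlier papers in this series: a modulation argument, two virial inequalities (one on a large scale, one on a smaller scale near the soliton), and Kato smoothing for the linearized flow to close them. Start from Theorem \ref{thm:orbstab}, which keeps $\mathbf{u}(t)$ within $C_0\delta$ of the soliton manifold for all $t\ge 0$. By the implicit function theorem, write $\mathbf{u}(t)=\mathbf{Q}_{c(t)}(\cdot-D(t))+\mathbf{v}(t,\cdot-D(t))$. The $C^1$ parameters $(c,D)$ are chosen so that $\mathbf{v}$ is symplectically orthogonal to the generalized kernel of the linearized operator
\[
\mathcal{L}_c=\partial_x\sigma_1\diag(-\partial_x^2+1-f'(Q_c),1)+c\partial_x ,
\]
that is, to $\partial_x\mathbf{Q}_c$ and $\partial_c\mathbf{Q}_c$. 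The modulation equations give $|\dot c|+|\dot D-c|\lesssim \|e^{-a|x|}\mathbf{v}\|_{\mathcal H^1}^2+\|e^{-a|x|}\mathbf{v}\|_{\mathcal H^1}\cdot(\text{small})$. Hence \eqref{eq:asstab3} follows from \eqref{eq:asstab2}, since $\int_0^\infty|\dot c|\,dt<\infty$. Nondegeneracy of the modulation uses $\frac{d}{dc}M(\mathbf{Q}_c)\neq0$, which is exactly the stability condition $c^2>(p-1)/4$.

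The core is \eqref{eq:asstab2}. First prove a large-scale virial estimate with weight $\chi_A$, $A\gg1$, built from the localized momentum $\int \psi_A u_1u_2$. It bounds $\int_0^T\|\sech(x/A)\,\partial_x \mathbf v\|^2$ (with the $u_2$ analogue) by boundary terms $O(\delta)$, plus $\int_0^T\|e^{-a|x|}\mathbf v\|^2_{\mathcal H^1}$ on a compact region. This works because in the frame moving at speed $c$ the dispersion relation $\pm k\sqrt{1+k^2}$ yields group velocities $\ge 1>c$, so radiation separates from the soliton. Second, bound the compact-region norm. Here we use a smaller-scale virial for a transformed variable, or alternatively Kato smoothing: decompose $\mathbf v=\mathbf v_{\rm disc}+\mathbf v_{\rm cont}$ using the spectral projection of $\mathcal{L}_c$. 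For the continuous part, apply Kato smoothing estimates of the form
\[
\int_{\R}\|e^{-a|x|}e^{t\mathcal L_c}P_c g\|^2dt\lesssim\|g\|^2 .
\]
These follow from uniform resolvent bounds $\sup_{\lambda\in\im\R}\|e^{-a|x|}(\mathcal L_c-\lambda)^{-1}e^{-a|x|}\|<\infty$. We then feed them into Duhamel's formula, treating nonlinear and modulation terms as sources controlled by the large-scale virial norm.

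The main obstacle is the spectral input: \emph{absence of nonzero purely imaginary eigenvalues and of threshold resonances} for $\mathcal{L}_c$ in unweighted $L^2$, together with uniform resolvent bounds near $\lambda=0$ and at infinity. We would prove absence of embedded and imaginary eigenvalues as follows. Suppose $\mathcal L_c\mathbf w=\im\mu\mathbf w$ with $\mu\neq0$ real. Then $\mathbf w$ decays, and the functional $\langle \sigma_1\mathbf w,\mathbf w\rangle$ combined with the energy Hessian gives a contradiction: the Hessian has exactly one negative direction, which is controlled by the constraint via $\frac{d}{dc}M(\mathbf Q_c)$, so by a Grillakis--Shatah--Strauss/Pego--Weinstein type count any imaginary eigenvalue would have to be real. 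The resolvent bound near $0$ follows from the finite dimensional generalized kernel being removed by $P_c$. High frequencies are handled by the dispersion $k\sqrt{1+k^2}$, perturbatively in $f'(Q_c)$.

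Finally we prove \eqref{eq:asstab20}. From \eqref{eq:asstab2}, $\|e^{-a\<x\>}v_1(t_n)\|\to0$ along a sequence of times. We upgrade this to full convergence by showing that $\frac{d}{dt}\|e^{-a\<x\>}v_1\|_{L^2}^2$ is bounded. Using $\partial_tv_1=\partial_xv_2+\dots$ and integrating by parts onto the weight, this derivative is $\lesssim\|e^{-a\<x\>}\mathbf v\|^2_{\mathcal H^1}$ up to harmless terms. Hence the derivative is integrable in time, which gives the limit. Taking $\delta$ small relative to $\epsilon,a$ closes all estimates by continuity (bootstrap on $[0,T]$ and letting $T\to\infty$).
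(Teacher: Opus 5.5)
Your overall scheme (modulation, a localized-momentum virial, Kato smoothing for the linearized flow) is the paper's, but two essential steps fail as proposed.

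\textbf{The spectral step.} This is the step you yourself call the main obstacle, and your argument does not resolve it. The Grillakis--Shatah--Strauss/Lin--Zeng count bounds by $n(\mathbf L_c)=1$ only the unstable eigenvalues and the imaginary eigenvalues of \emph{nonpositive} Krein signature. Here that single negative direction is already used by the Jordan block at zero, since $\langle\mathbf L_c\xi_2[c],\xi_2[c]\rangle=-q'(c)<0$; this is exactly how the paper obtains Claim \textbf{i} of Proposition \ref{prop:noeigh}. An eigenvector $\mathbf w$ for $\im\tau\neq0$ satisfies $\langle\mathbf w,\sigma_1\mathbf Q_c\rangle=0$, because $\mathcal L_{[c]}^*\eta_2[c]=0$. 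By \eqref{eq:linear-constrained-coercivity} it therefore has positive signature. So the count is consistent with embedded imaginary eigenvalues, and the mass/energy considerations give no contradiction.

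The paper needs a model-specific identity instead. With $\mathbf\Psi=\mathbf L_c\mathbf w$, $b=\psi_2$ and $h=(\im\tau)^{-1}(\psi_1+cb)$, one has $L_{+\gamma}b'=-\tau^2h-2\im c\tau b$. Testing against $\mu\overline b$ with the increasing weight $\mu=-Q_c'/Q_c$ gives $\frac{\tau^2}{2}\int\mu'|h|^2+\frac{3(p-1)}{2(p+1)}\int Q_c^{p+1}|(b/Q_c)'|^2=0$.

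Absence of embedded eigenvalues also does not by itself give the uniform weighted resolvent bound at intermediate frequencies. Your outline skips this range, discussing only $\lambda$ near $0$ and $\infty$. One must show that the outgoing Wronskian $\widehat W=\det(f_1,\widehat f_2,\widehat f_3,f_4)$ has no zeros on $\im\R\setminus\{0\}$, i.e.\ exclude solutions that are outgoing at both ends but carry oscillatory components. The paper does this with the $\xi$-independent flux $\mathcal J$ of Lemma \ref{lem:nozerodet}. It is negative on $\widetilde V_2^{(0)}$ and positive on $\widetilde V_3^{(0)}$, which kills the oscillatory parts and reduces to the eigenvalue case. Near zero, ``removing the generalized kernel'' likewise requires checking that the $\Lambda^{-2}$ and $\Lambda^{-1}$ terms cancel exactly when $\langle\widetilde{\mathbf F},E_2\rangle=\langle\widetilde{\mathbf F},E_1[c]\rangle=0$, with sources in $L^{1,1}$.

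\textbf{Closing the estimates.} First, $\eta_1[c]$ tends to nonzero constants at $\pm\infty$. Hence symplectic orthogonality to $\partial_c\mathbf Q_c$, and the projection $P_c\mathbf v$, are not defined for $\mathbf v\in\mathcal H^1$. Second, in Duhamel's formula for $\mathbf v$ several sources are not localized: $(\dot D-c)\mathbf v'$, $(c(t)-c_0)\mathbf v'$ from freezing the operator, and the pure-power part of $f(Q_c+v_1)-f(Q_c)-f'(Q_c)v_1$. Neither the dual smoothing estimate nor the virial norms control them.

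The paper localizes first. It imposes $\langle\mathbf v,\zeta_B\eta_1[c]\rangle=0$ and writes Duhamel for $\mathbf z=\mathcal Q_{[c_0]}\zeta_B\mathbf v$. Every source then carries $\zeta_B$ or a derivative of it, and is bounded through $\zeta_B/\zeta_A$ by the $\Sigma_{iAA_1}$ norms. The price is that the commutator and the transport terms put $3\partial_x(\zeta_B'v_1')$ and $\zeta_Bv_2'$ in the second component. These involve $v_1''$ and $v_2'$, which the energy does not control.

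The idea you are missing is that the virial is arranged so that its only uncontrolled term is $\|v_1\|_{\widehat\Sigma}$. The bad term $\langle f'(Q_c)v_1,(\varphi_{1AA_1}v_1)'\rangle$ is at most $\varepsilon_1\|\mathbf v\|^2_{\Sigma_{1AA_1}}+\varepsilon_1^{-1}\|v_1\|^2_{\widehat\Sigma}$ up to negligible terms. So smoothing is needed only for $\pi_1$ in a weighted $L^2$ norm. For that component, sources $(0,\partial_xg_2)$ with $g_2\in L^1$ are admissible (Proposition \ref{prop:smoothreg}); at high energy this is because $L_{13}\partial_\xi=L_{23}$ with no Dirac term. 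Your scheme leaves a local $\mathcal H^1$ error to be recovered and treats the sources as routine, so it never confronts this.

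\textbf{Convergence of $c(t)$.} Your bound on $\dot c$ contains a term linear in $\|e^{-a|x|}\mathbf v\|_{\mathcal H^1}$ times a small constant. From \eqref{eq:asstab2} this gives only $\dot c\in L^2_t$, not $\int_0^\infty|\dot c|\,dt<\infty$. The paper instead uses $M(\mathbf u_0)=M(\mathbf Q_{c(t)})+M(\mathbf v(t))$, which follows from $\langle\mathbf v,\sigma_1\mathbf Q_c\rangle=0$, together with strict monotonicity of $M(\mathbf Q_c)$ in $c$. It then shows $\frac{d}{dt}M(\mathbf v)\in L^1(\R_+)$: every linear term there carries a factor $\dot D-c$ or $\dot c$, and the nonlinear term is $O\bigl(\int Q_c^{p-1}|v_1v_1'|\bigr)$.
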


\begin{remark} \label{rem:udermain}    Our proof is basically the same for all $1<p<5$. 
     However we will prove Theorem \ref{thm:main} assuming $p\in (1,2]$: for $p> 2$  the proof is  slightly simpler.
\end{remark}

\begin{remark} \label{rem:meaningC}
The crucial property used in the resolvent estimates away from zero is the nonvanishing of the outgoing determinant $\widehat W$ on $\im\R\backslash\{0\}$, see below Lemma \ref{lem:nozerodet}. This follows from a conserved quantity and the absence of nonzero imaginary eigenvalues of $\mathcal{L}_{[c]}$ in $L^2(\R,\C^2)$, established in Proposition \ref{prop:noeigh}.
Together with the low and high energy estimates, this yields the resolvent estimates for every
$c\in ((p-1)^{1/2}/2,1)$.
\end{remark}

 Here we focus on $c>0$ but the same  conclusions hold for $c<0$.
  Maul\'en and Mu\~noz \cite{maulen2025asymptoticstabilitystablegood}  prove an analogous   result  for $2\le p<5$  for velocities
   $c_+(p) < |c| <1$ for a constant $c_+(p) \in [ (  p-1) ^{1/2}/2 , 1   ) $ for which     they find  computationally that  there exists a $p_0\le  p<5$ for a  $ p_0>3$ close to 3 such that $ c_+(p) =\frac{\sqrt{p-1}}{2}$ for $p\ge p_0$.   Theorem \ref{thm:main} applies to the whole interval
$((p-1)^{1/2}/2,1)$ for every $1<p<5$. Stable manifolds in the unstable regimes are discussed in Maul\'en \cite{maulen2023}.
See also \cite{MPP18CMP}.

The basic idea of our proof, which we tested  in \cite{BaCuMa, CM24D1, CM243, CM24D4, CM25D1}, is to split the virial inequality method in Martel and Merle \cite{MaMeARMA2001, MaMeNonlinearity2005} and  Kowalczyk et al. \cite{KMM2017,KMM2022,KMMV2021,KM22}, Martel \cite{Martelcubquint1,Martelcubquint2} and    Rialland   \cite{Riallandcuqui2,Riallandcuqui1}, by performing only a high energy virial inequality, which here   is easy.   We recall that  the virial inequality method  meets a serious obstruction with a localized  quadratic term in $\mathbf{v}$, in fact only in the first coordinate $v_1$ here,  with a bad sign. Instead of bounding this with the first or with successive virial estimates, we   observe that this localized  quadratic term looks exactly of the type treated  using the notion of smoothing in Kato  \cite{katoMA1966}. Since \eqref{eq:gB2} is a transport equation, and  $\mathbf{v}(t)$,    unlike for gKdV,   is  for $t>0$ not  more regular than 
$\mathbf{v}(0)$, when proving Kato smoothing we have a problem  with certain derivatives in the right hand side of \eqref{eq:prekato1}. Essentially, here as well as in \cite{BaCuMa}, the energy does not provide a bound for $v_2'$ and $v_1''$, which are in the   right hand side of \eqref{eq:prekato1}. Fortunately  in this paper, the fact that the  localized  quadratic term depends only on  $v_1$ and that we need a smoothing estimate  only for  $v_1$, provides the regularizing effect needed  to offset the extra derivative in  $v_2'$ and $v_1''$.  Ultimately our method allows a synthesis of the 
virial method with the dispersive estimates method in Soffer and Weinstein \cite{SW1,SW2} and Gustafson et al. \cite{GNT}, with closure of the estimates insured by the presence of a spatial cut--off which tames the obstructions, due to slow dispersion,  faced  in  \cite{SW1,SW2, GNT}.  The taming is analogous     to arguments in the second virial inequality and in fact it is here largely inspired by     Kowalczyk et al. \cite{KMM2022}.

Related approaches combining virial or monotonicity arguments with
localized linear decay estimates were developed by
Mizumachi \cite{MR2052617} and El Dika \cite{ElDIkaasstab}.
Further developments appear in
\cite{Mizumachi09CMP,MR3059165,MT14RIMS}.
These approaches use linear estimates in exponentially weighted
spaces, following Pego and Weinstein \cite{PegoWei2};
see also \cite{MizuKdV2001}.
For the present equation, a scalar one-sided exponential weight
cannot shift both branches of the far-field continuous spectrum
into the left half-plane when $|c|<1$.
Thus the spectral-shift argument  of  Pego and Weinstein, of Mizumachi and of  El Dika   does not apply directly here.

 This paper,   with \cite{BaCuMa, CM24D1, CM243, CM24D4, CM25D1}, contributes to a growing body of applications to rather diverse models that can nevertheless be treated by essentially the same strategy. Our approach is indeed quite general. More specifically, the arguments developed from \S \ref{sec:SLM} through \S \ref{sec:virial} closely parallel computations  carried out in our previous works, and in particular those in \cite{CM25D1}. The smoothing estimates developed here starting from \S \ref{sec:disp} rely on features specific to the good Boussinesq equation. At a structural level, however, also these proofs  follow essentially the same scheme as in \cite{CM25D1}.

The present work should therefore be viewed as part of a broader program aimed at developing the virial--Kato method into a robust and general framework. The underlying idea is simple: Kato smoothing can be used to close virial inequalities, thereby making the same basic mechanism applicable to a variety of models.

Our approach provides an alternative to the methods developed by Kowalczyk et al. \cite{KMM2017,KMM2022,KMMV2021,KM22}, Martel \cite{Martelcubquint1,Martelcubquint2}, Rialland \cite{Riallandcuqui1,Riallandcuqui2}, Maul\'{e}n \cite{maulen2023}, and Maul\'{e}n and Mu\~{n}oz \cite{maulen2025asymptoticstabilitystablegood}. In particular, while our method relies on the absence of nonzero eigenvalues embedded in the continuous spectrum of the linearized operator, it does not require the repulsivity condition used, for instance, in Kowalczyk et al. \cite{KMM2022,KM22}; see in particular \cite[Formula (1.6)]{KMMV2021}.

A difficulty in approaches based solely on virial inequalities is that the Ansatz \eqref{eq:asstab1} involves a \textit{modulation} which is largely adapted to the linearized operator, whereas the virial argument naturally produces a quadratic form associated with a different linear operator. Establishing directly the positivity of this quadratic form can be nontrivial. This issue already appears in the framework developed for the mass-critical NLS by Merle and Raphael in \cite{MerleRaphaelAnnals}, and with Fibich in \cite{FibichMerleRaphael}, as well as in the work of Martel and Merle on gKdV \cite{MaMeARMA2001}. Similarly, in the cubic--quintic NLS, Martel \cite{Martelcubquint2}, in order to exploit Simon's characterization of strictly positive Schr\"odinger operators \cite{Simon1976}, restricts the analysis to small-amplitude solitons of the cubic--quintic NLS.

Of course, proving the absence of nonzero embedded eigenvalues of the linearized operator may itself be difficult. In the present paper and in \cite{CM25D1}, however, this spectral property can be established rather directly. Moreover, embedded eigenvalues are typically unstable under perturbations. This suggests that a method whose main spectral requirement is the absence of embedded eigenvalues may have a   broad range of applicability.

\section{Notation }\label{sec:notation}

\begin{notation}\label{not:notation}
Throughout this paper, we will use the following list of   notations and definitions.

\begin{enumerate}

\item \label{itm:Cpm} We will write $\C _{\pm}=\{ z\in \C : \pm \Re z >0   \}$.

\item \label{itm:neigh1}We will denote by $\Upsilon $  appropriate   open neighborhoods of   $\im \R$ in $\overline{\C} _+$.

\item    \label{itm:proj}  For  $\mathbf{z}=(z_1, ...,  z_n) ^\intercal $ we will consider the  projections  $\pi _j\mathbf{z}:=z_j$.
In $\C^n$ we will consider the pairing $\< \mathbf{x}, \mathbf{y}\> _{\C ^n}=  \sum _{j=1}^{n}x_j {y}_j$. All vectors will be column vectors.

\item    \label{itm:spop}   Given two Banach spaces $X$ and $Y$ we denote by $\mathcal{L}(X,Y)$ the space of continuous linear operators from $X$ to $Y$. We write $\mathcal{L}(X ):=\mathcal{L}(X,X)$.

\item \label{itm:der} Occasionally we will use  $u'$ instead of $\partial _x u$ and  $\dot u $ instead of $\partial _t u$.

\item \label{itm:Comega} For $\Omega $ an open subset of $\R ^d$ we write $f\in C^\omega (\Omega) $ if $f$ is real analytic in $\Omega$.

\item \label{itm:japanese-bracket} We will consider the Japanese bracket $\<x \> = \sqrt{1+x^2}$.

\item \label{itm:chi} Following  the framework in Kowalczyk et al. \cite{KMM2022} we   fix an even function $\chi\in C_c^\infty(\R , [0,1])$ satisfying
\begin{align}  \label{eq:chi} \text{$1_{[-1,1]}\leq \chi \leq 1_{[-2,2]}$ and $x\chi'(x)\leq 0$}.
\end{align}

\item \label{itm:constants-relation} In analogy to     Kowalczyk et al.   \cite{KMM2022},      we   consider constants  $A, B,  A_1, \epsilon , \delta  >0$ satisfying
 \begin{align}\label{eq:relABg}
\log(\delta ^{-1})\gg\log(\epsilon ^{-1}) \gg     A    \gg   B^2\gg B    \gg 1  \text{ and $ A_1:= B ^{3/5}$.}
 \end{align}

\item \label{itm:small-o} The notation    $o_{y}(1)$  means a function in  $y$ such that
 $o_{y}(1) \xrightarrow {y  \to 0^+   }0.$

\item \label{itm:diag} We denote by   $\diag (a_1, \cdots , a_n)$   the diagonal matrix with   $a_1, \cdots , a_n$ on the diagonal.

\item \label{itm:ball-banach} Given   a   Banach space $X$, $u\in X$  and $r>0$  we set  $D _{X}(u,r):=\{v\in X \ |\ \|u-v\| _{X}<r \}$.

\item \label{itm:L2a} 
 We define    \begin{align}\label{eq:Lebwight}&
 L ^{p,s}=   L ^{p,s}(\R ) =\{ v: \ \< x \> ^{s}   v\in L^p(\R )   \}  \text{ with }\|  v  \| _{ L ^{p,s}}:=\| \< x \> ^{s} v  \| _{L^p(\R ) }.
\end{align}

\item \label{itm:vartheta-partition} For $ \vartheta _1=\chi $ and $ \vartheta _2=1-\chi $ we consider the partition of unity $1=\vartheta _{1A_1}+\vartheta _{2A_1}$ where $\vartheta _{iA_1}(x):=\vartheta _i\(\frac{x}{A_1}\)$.

\item \label{itm:zeta-varphi} Following  the framework in Kowalczyk et al. \cite{KMM2022}  and
using the function $\chi$  in \eqref{eq:chi}
we  consider for   $C>0$  the   function
\begin{align}\label{def:zetaC}
\zeta_C(x):=\exp\(-\frac{|x|}{C}(1-\chi(x))\),
\end{align}
and
\begin{align}\label{def:phiAi}   \varphi _{iAA_1}  (x):=\int_0^x \zeta_A^2(y) \vartheta _{iA_1} ^2 (y)\,dy.
\end{align}

\item \label{itm:norms} For   functions $  \mathbf{v}=(  {v}_1, v_2 ) ^\intercal $  and  $\sech (y) = \frac{1}{\cosh  (y)}   $ 
 we  will consider the seminorms
\begin{align}
&
\| \mathbf{v}  \|_{     { \Sigma}    _{jA A_1}} :=\left \|   \vartheta _{jA_1} \zeta_A  \mathbf{v}    \right \|_{\mathcal{H}^1}    \text{   for $j=1,2$ and} \label{eq:normA1}  
\\& \| f  \|_{       \widehat{\Sigma}     } :=\left \| \sech \(     \gamma \kappa  x\)   f \right \|_{L^2(\R )}  \text{ for a fixed sufficiently small  $\kappa  >0$. }  \label{eq:sigmatilde}
\end{align}
In all nonlinear estimates for the modulated solution with reference
velocity $c_0$, the parameter $\gamma$ in the weight
\eqref{eq:sigmatilde} is fixed at $\sqrt{1-c_0^2}$.

\item \label{itm:exterior-power-operator} Given  a linear operator $A \in \mathcal{L}(\C ^n)$ then considered for $1\le k \le n $   the space $  \wedge ^k (\C ^n)$ we denote by $A ^{(k)} \in \mathcal{L}(\wedge ^k (\C ^n))$,  see \cite[Proposition 3.2]{Bealsbook1988},
the operator
\begin{align*}
   A ^{(k)}( a_1 \wedge ...\wedge a_k) =A a_1 \wedge a_2 \wedge  ...\wedge a_k + a_1 \wedge A a_2 \wedge  ...\wedge a_k +...+a_1 \wedge   a_2 \wedge  ...\wedge Aa_k .
\end{align*}
 For an introduction to exterior algebras   see \cite[Definition 2.3.6]{AbrMarsdFound}.

\item \label{itm:hodge-star} The Hodge star operator $ *:\wedge ^k (\C ^n) \to   \wedge  ^{n-k} (\C ^n)$ is such that
$\<  \alpha , \beta \> _{\wedge ^k (\C ^n)}   =\det ( \alpha \wedge *\beta ) $ where $\alpha \wedge *\beta = \det ( \alpha \wedge *\beta ) e_1 \wedge ...\wedge e_n $, with  $( e_i ) _{i=1}^{n}$ the standard basis of $\C^n$  and where the inner product  $\<  \cdot  , \cdot  \> _{\wedge ^k (\C ^n)} $ is such that $\{  e_{i_1}\wedge ...\wedge e_{i_k} \} $ for all $i_1<...<i_k$ is an orthonormal basis of
$\wedge ^k (\C ^n)$. Notice that for $n$ even we have $ * ^2 =(-1)^k$.  Notice that for vectors in $\C ^4$
 \begin{equation*}
 \det (   X _1 \wedge X_2\wedge X _3\wedge X_4 )=\det (X_1, X_2, X _3, X_4)    ,
 \end{equation*}
with in   the right hand side   the   determinant of the matrix and     for all $ X_1, X_2, X _3, X_4 \in \C ^4$
 \begin{equation}\label{eq:hodgestar}
   *:\wedge  ^3(\C ^4) \to \C ^4 \text { we have  }  \det (X_1, X_2, X _3, X_4)=-\<  X_1, * (  X_2\wedge X _3\wedge X_4)\> _{\C^4}  .
 \end{equation}
\item \label{itm:real-imag-parts} For $z\in \C$ we will write $z=z_R +\im z_I$ with $z_R =\Re z $ and $z_I =\Im z $.

\item \label{itm:Lambda-p} By elementary computation,  we have
\begin{align}
      \partial _\gamma  \(  \gamma ^{\frac{2}{p-1}}    g (  \gamma  x)\)& = \gamma ^{\frac{2}{p-1}-1}   (\Lambda _pg)( \gamma  x )
        \text{ where }
         \Lambda _pg(x) =\frac{2}{p-1}  g(x) +  x g'(x). \label{eq:partcphic}
\end{align}

\item \label{itm:resolvent} Give an operator $T$ for us the resolvent is  $R_{T }(z)=(T -z)^{-1}$.

\item \label{itm:coninfry} When we write $\xi \to \infty$, we mean $|\xi | \to \infty$.

\item  \label{itm:chvar} We will later  make change of variables (the scaling is like in \cite{millerweinstein96})
\begin{align}\label{itm:chvar1} &
  \text{  $\lambda = \gamma ^3 \Lambda $,  $\xi = \gamma x $  and $f (x) = \widetilde{f} (  \xi)$,}  \\& \label{eq:relkern1}  F(x)  =(F_1(x), F_2(x))^\intercal =  (\widetilde{F}_1(\xi ), \widetilde{F}_2( \xi))^\intercal \text{ and}
   \\  & \widetilde{\mathbf{F}}(\xi )   = \( 0,0,  \gamma^{-3} (-\widetilde{F}_2(\xi )+c\widetilde{F}_1(\xi ))  ,     \gamma^{-1} \widetilde{F}_1(\xi )  \)  ^\intercal . \label{eq:relkern2}
\end{align}

\end{enumerate}

\end{notation}

\section{  Linearization and  modulation  }\label{sec:SLM}

As in \cite{maulen2025asymptoticstabilitystablegood}  we consider \begin{align} &
  L _{+\gamma}:=- \partial _x^2   +\gamma ^2-f' (Q_c )    \text{   ,  }   \label{eq:lin0}\mathbf{L} _{ c}:=  \begin{pmatrix} L _{+\gamma} +c^2 &
   c \\
   c & 1
    \end{pmatrix}    \text{   and }
   \\&
  \mathcal{L}_{[c]}
  =  \partial _x   \sigma _1   \mathbf{L} _{ c}.  \label{eq:Lin}
\end{align}
There is an immense literature on    $L_{+\gamma}$:  we refer to   the classical  \cite{weinMod1985} and to the more recent \cite{CGN2007}. 
Let  \begin{align*}
  q(c) :=  2^{-1}\< \mathbf{Q}  _c , \sigma _1 \mathbf{Q}  _c  \>  .
\end{align*}
Then by an elementary computation
\begin{align}\nonumber
  q'(c) &=  \< \mathbf{Q}  _c , \sigma _1\partial _c \mathbf{Q}  _c  \>   =-\< Q_c, \partial _c (cQ_c)   \> -\< c Q_c, \partial _c Q_c    \>  =-\| Q _c \| _{L^2(\R )}^2 -2 c\<  Q_c, \partial _c Q_c    \>
\\& \nonumber  = -  \gamma ^{\frac{4}{p-1}}\|  Q ( \gamma \cdot ) \| _{L^2(\R )}^2   +  \frac{c^2}{\gamma}    \partial _\gamma       \(  \gamma ^{\frac{4}{p-1}}\|  Q ( \gamma \cdot ) \| _{L^2(\R )}^2 \)  \\& =   \gamma ^{\frac{4}{p-1}-3}  \|  Q   \| _{L^2(\R )}^2   \(   c^2 \(  \frac{4}{p-1} - 1 \) -   \gamma ^2      \) . \label{eq:qprc}
\end{align}
  Notice that $ q'(c)=0$  for $c ^2 = \frac{p-1}{4} $ and 
\begin{equation}\label{eq:q_prime}
    q'(c)>0 \quad \text{for }1> c ^2 > \frac{p-1}{4}.
\end{equation}  
For $c\in ((p-1)^{1/2}/2,1)$, set
\begin{align*}
  d(c):=( d_1(c)   ,  d_2(c) ) ^\intercal
  :=\int_{\R}\partial_c\mathbf{Q}_c(y)\,dy,
  \qquad s_c:=d_2(c)-d_1(c).
\end{align*}
We set 
\begin{align}
& \xi_1[c](x):=\mathbf{Q}_c'(x),\quad \xi_2[c](x):=\partial_c\mathbf{Q}_c(x)
  \label{eq:ker1}\\
& \eta_1[c](x):=
  \theta_1\sigma_1\int_{-\infty}^{x}
  \partial_c\mathbf{Q}_c(y)\,dy
  +\frac{s_c}{2q'(c)}\binom{1}{-1}
  +\theta_2\sigma_1\mathbf{Q}_c(x),
  \label{eq:ker2}\\
& \eta_2[c](x):=\theta_3\sigma_1\mathbf{Q}_c(x),
  \nonumber
\end{align}
where
\begin{align}
  -\theta_1=\theta_3=\frac{1}{q'(c)} \text{ and } \theta_2=\frac{d_1(c)^2+d_2(c)^2}{2(q'(c))^2}. \label{eq:deftheta}
\end{align}
 The functions in \eqref{eq:ker1}--\eqref{eq:ker2} satisfy, like in \cite{PegoWei1}, the following.

 \begin{lemma}
  \label{lem:kernel} We have the following:
\begin{align}
& \mathcal{L}_{[c]}\xi_1[c]=0,\quad
  \mathcal{L}_{[c]}\xi_2[c]=-\xi_1[c],
  \label{eq:M2.3}\\
& \mathcal{L}_{[c]}^*\eta_2[c]=0,\quad
  \mathcal{L}_{[c]}^*\eta_1[c]=-\eta_2[c],
  \label{eq:M2.4}\\
& \<\xi_i[c],\eta_j[c]\>=\delta_{ij}.
  \label{eq:M2.5}
\end{align}

\end{lemma}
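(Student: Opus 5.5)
The plan is to verify each identity directly from the profile equation, after writing $\mathbf{L}_c$ as a Hessian. In the frame moving with speed $c$, the stationary equation for \eqref{eq:gB4syst} integrates once, using decay at infinity, to $u_2=-cu_1$ and $-cu_2=-u_1''+u_1-f(u_1)$. Equivalently,
\begin{equation*}
\nabla E(\mathbf{u})+c\,\sigma_1\mathbf{u}=\bigl(-u_1''+u_1-f(u_1)+cu_2,\ u_2+cu_1\bigr)^\intercal=0 .
\end{equation*}
This equation is solved by $\mathbf{Q}_c$, as follows from \eqref{eq:soliton_eq}. Its linearization at $\mathbf{Q}_c$ is the symmetric operator with entries $-\partial_x^2+1-f'(Q_c)=L_{+\gamma}+c^2$, $c$, $c$ and $1$, that is, exactly $\mathbf{L}_c$ in \eqref{eq:lin0}. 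Since $\mathbf{L}_c$ is symmetric, the adjoint is $\mathcal{L}_{[c]}^*=-\mathbf{L}_c\sigma_1\partial_x$.

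For \eqref{eq:M2.3}, I will first use translation invariance: differentiating the profile equation in $x$ gives $\mathbf{L}_c\mathbf{Q}_c'=0$. One can also check this by hand, since $L_{+\gamma}Q_c'=0$ and the $\pm c^2Q_c'$ terms cancel. Hence $\mathcal{L}_{[c]}\xi_1=0$. Next, differentiating the profile equation in $c$ gives $\mathbf{L}_c\partial_c\mathbf{Q}_c+\sigma_1\mathbf{Q}_c=0$. Applying $\partial_x\sigma_1$ yields $\mathcal{L}_{[c]}\xi_2=-\mathbf{Q}_c'=-\xi_1$.

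For \eqref{eq:M2.4}, I compute
\begin{equation*}
\mathcal{L}_{[c]}^*\eta_2=-\theta_3\mathbf{L}_c\sigma_1\sigma_1\mathbf{Q}_c'=-\theta_3\mathbf{L}_c\mathbf{Q}_c'=0 .
\end{equation*}
For $\eta_1$, the constant vector is annihilated by $\partial_x$, so $\sigma_1\partial_x\eta_1=\theta_1\partial_c\mathbf{Q}_c+\theta_2\mathbf{Q}_c'$. Therefore
\begin{equation*}
\mathcal{L}_{[c]}^*\eta_1=-\theta_1\mathbf{L}_c\partial_c\mathbf{Q}_c=\theta_1\sigma_1\mathbf{Q}_c=-\eta_2,
\end{equation*}
where the last step uses $\theta_1=-\theta_3$. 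The function $\eta_1$ is bounded but not in $L^2$, because it tends to constants at $\pm\infty$. This causes no difficulty: all pairings below are against the exponentially decaying $\xi_j$, and the boundary terms in the integrations by parts are controlled by the decay of $\mathbf{Q}_c$ and $\partial_c\mathbf{Q}_c$.

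For \eqref{eq:M2.5}, I will proceed pairing by pairing.
\begin{itemize}
\item $\langle\xi_1,\eta_2\rangle=\theta_3\langle\mathbf{Q}_c',\sigma_1\mathbf{Q}_c\rangle=0$, because the integrand is the derivative of $\frac12\langle\mathbf{Q}_c,\sigma_1\mathbf{Q}_c\rangle_{\C^2}$.
\item $\langle\xi_2,\eta_2\rangle=\theta_3\langle\partial_c\mathbf{Q}_c,\sigma_1\mathbf{Q}_c\rangle=\theta_3q'(c)=1$.
\item For $\langle\xi_1,\eta_1\rangle$: the constant term pairs with $\int\mathbf{Q}_c'=0$, and the $\theta_2$ term vanishes as in the first item. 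Integrating by parts, the $\theta_1$ term becomes $-\theta_1\langle\mathbf{Q}_c,\sigma_1\partial_c\mathbf{Q}_c\rangle=-\theta_1q'(c)=1$.
\item For $\langle\xi_2,\eta_1\rangle$, write $D(x)=\int_{-\infty}^x\partial_c\mathbf{Q}_c$. Then $\langle\partial_c\mathbf{Q}_c,\sigma_1D\rangle=\int(D_1'D_2+D_2'D_1)=d_1d_2$. So this pairing equals
\begin{equation*}
-\frac{d_1d_2}{q'}+\frac{s_c}{2q'}(d_1-d_2)+\frac{d_1^2+d_2^2}{2q'}=\frac{-2d_1d_2-(d_2-d_1)^2+d_1^2+d_2^2}{2q'}=0 .
\end{equation*}
\end{itemize}

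The only delicate point is this last cancellation. It is precisely what the choices of $\theta_2$ and of the constant $\frac{s_c}{2q'(c)}$ in \eqref{eq:ker2} are designed to achieve. The computation also uses $q'(c)\neq0$, which holds by \eqref{eq:q_prime} for $c^2>\frac{p-1}{4}$.
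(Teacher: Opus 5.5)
Your proposal is correct and follows essentially the same route as the paper. It verifies each identity directly, and your derivation of $\mathbf{L}_c\partial_c\mathbf{Q}_c=-\sigma_1\mathbf{Q}_c$ by differentiating the vector profile equation in $c$ is only a repackaging of the paper's computation via $L_{+\gamma}\partial_cQ_c=2cQ_c$ (you are also slightly more explicit about why the constant vector and the $\theta_2$ term drop out of $\mathcal{L}_{[c]}^*\eta_1[c]$ and $\langle\xi_1[c],\eta_1[c]\rangle$).
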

  \begin{proof}
  Applying $\partial _x $ to \eqref{eq:soliton_eq} we obtain $ L _{+\gamma}Q _c '=0 $ and
     \begin{align}\label{eq:LcQprc} &  \mathbf{L} _{c} \mathbf{Q}  _c '=  \begin{pmatrix} L _{+\gamma}Q _c '+c^2 Q_c' -c^2Q_c'
    \\
   c  Q_c' -c Q_c'
    \end{pmatrix}  =0.
         \end{align}
         Hence, $\mathcal{L}_{[c]}\xi_1[c]=0$. 
     Applying $\partial _c $ to \eqref{eq:soliton_eq} we obtain $ L _{+\gamma}\partial _c Q _c = 2c Q _c  $. Then \small
   \begin{align}\nonumber
     \mathcal{L}_{[c]}\xi _2 [c]&  =\partial _x  \sigma _1   \begin{pmatrix}
       L _{+\gamma} +c^2 &
   c\\  c & 1
    \end{pmatrix}   \begin{pmatrix}
  \partial _c Q_c \\ -c \partial _c Q_c -Q_c
    \end{pmatrix}   = \partial _x  \sigma _1  \begin{pmatrix}  L _{+\gamma}  \partial _c Q_c +c^2 \partial _c Q_c -  c^2 \partial _c Q_c  -cQ_c
  \\   - Q_c
    \end{pmatrix}  \\& = \partial _x  \sigma _1   \begin{pmatrix}
 cQ_c \\       - Q_c
    \end{pmatrix}  = -  \xi _1 [c].\label{eq:Lxi2-xi1}
   \end{align} \normalsize
It is immediate from \eqref{eq:LcQprc} that
\begin{align*}
    \mathcal{L}_{[c]}^*\eta _2 [c] =   -       \mathbf{L} _{ c}  \partial _x  \sigma _1^2\theta _3 \mathbf{Q}  _c  =   - \theta _3       \mathbf{L} _{ c} \partial _x \mathbf{Q}  _c     =0
\end{align*}
and from \eqref{eq:LcQprc}--\eqref{eq:Lxi2-xi1}
\begin{align*}
    \mathcal{L}_{[c]}^*\eta _1 [c] =  -   \theta _1     \mathbf{L} _{ c}  \partial _x   \sigma _1^2  \int _{-\infty} ^{x } \partial _c \mathbf{Q}  _c (y) dy= -   \theta _1     \mathbf{L} _{c} \partial _c \mathbf{Q}  _c   =  -   \theta _1   \begin{pmatrix}
 cQ_c \\       - Q_c
    \end{pmatrix} =     \theta _1 \sigma _1 \mathbf{Q}  _c=-\eta _2 [c].
\end{align*}
We have
\begin{align*}
  \<  \xi _1, \eta _2\> = \theta _3  \< \mathbf{Q}  _c', \sigma _1  \mathbf{Q}  _c\> =0,
\end{align*}
\begin{align*}
  \<  \xi _2, \eta _2\> = \theta _3  \< \partial _c\mathbf{Q}  _c , \sigma _1  \mathbf{Q}  _c\> = \theta _3 q'(c) =1,
\end{align*}
\begin{align*}
  \<  \xi _1, \eta _1\> &= \theta _1  \< \mathbf{Q}  _c' , \sigma _1 \int   _{-\infty} ^x \partial_c \mathbf{Q}  _c(y) dy\>  +\theta _2  \underbrace{\< \mathbf{Q}  _c' , \sigma _1\mathbf{Q}  _c\> }_{0} = -\theta _1  \< \mathbf{Q}  _c  , \sigma _1   \partial_c \mathbf{Q}  _c\>  =  -\theta _1  q'(c)=1.
\end{align*}
Finally, integration by parts gives
\begin{align*}
  \<\partial_c\mathbf{Q}_c,
    \sigma_1\int_{-\infty}^{x}
    \partial_c\mathbf{Q}_c(y)\,dy\>
  &=\frac12\<d(c),\sigma_1d(c)\>_{\R^2}
    =d_1(c)d_2(c),\\
  \<\xi_2,(  {1},{-1}) ^\intercal\>
  &=d_1(c)-d_2(c)=-s_c.
\end{align*}
Hence
\begin{align*}
  \<\xi_2,\eta_1\>
  &=\theta_1d_1(c)d_2(c)
    -\frac{s_c^2}{2q'(c)}+\theta_2q'(c)\\
  &=\frac{-2d_1(c)d_2(c)-(d_2(c)-d_1(c))^2
          +d_1(c)^2+d_2(c)^2}{2q'(c)}
    =0.
\end{align*}

\end{proof}

\begin{proposition}\label{prop:noeigh} Consider $\mathcal{L}_{[c]}$ with domain $ D(\mathcal{L}_{[c]})=H^3( \R )\times  H^1( \R )$ as an operator in  $L^2(\R, \C ^2)$
and for $c\in  ( (  p-1) ^{1/2}/2 , 1   )$.
The following hold.
 \begin{description}
  	  \item[i]  The   spectrum is    $\sigma  \left (    \mathcal{L}_{[c]}   \right ) =\im \R$.
     \item[ii]  0 is an eigenvalue with   $\ker   \mathcal{L}_{[c]} =\Span \{     \xi _1 [c]\}$.

  \item[iii]    There are no eigenvalues $\lambda \in \im \R \backslash \{ 0  \} $  in $L^2(\R, \C ^2)$.

   \end{description}

\end{proposition}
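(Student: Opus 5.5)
We treat the three items separately: (i) by a Weyl-type argument together with the Hamiltonian structure $\mathcal{L}_{[c]}=J\mathbf{L}_c$, $J:=\partial_x\sigma_1$ skew-adjoint; (ii) by ODE considerations on $\mathbf{L}_c$; (iii) by an ODE flux argument, which is the hard part.

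\emph{Item (i).} Let $\mathcal{L}_\infty:=\partial_x\sigma_1\begin{pmatrix}-\partial_x^2+1 & c\\ c&1\end{pmatrix}$ be the constant-coefficient part. Its Fourier symbol at frequency $k$ has eigenvalues $\lambda_\pm(k)=\im k\bigl(c\pm\sqrt{1+k^2}\bigr)$. Since $|c|<1$, each branch is a surjection $\R\to\im\R$, so $\sigma(\mathcal{L}_\infty)=\im\R$. The perturbation $\partial_x\sigma_1\diag(-f'(Q_c),0)$ is relatively compact with respect to $\mathcal{L}_\infty$ on $D(\mathcal{L}_{[c]})$, because $f'(Q_c)$ decays exponentially. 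Hence $\sigma_{ess}(\mathcal{L}_{[c]})=\im\R$, and off $\im\R$ only isolated eigenvalues of finite multiplicity can occur.

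To exclude such eigenvalues, suppose $\mathcal{L}_{[c]}u=\lambda u$ with $\Re\lambda\neq0$. By Lemma \ref{lem:kernel}, $\lambda\<u,\eta_2\>=\<u,\mathcal{L}^*_{[c]}\eta_2\>=0$, so $u\perp\sigma_1\mathbf{Q}_c$. Then $\lambda\<u,\eta_1\>=-\<u,\eta_2\>=0$, so also $u\perp\eta_1$. Conservation of $\<\mathbf{L}_cv,v\>$ along $e^{t\mathcal{L}_{[c]}}$, which holds because $J$ is skew, gives $2\Re\lambda\<\mathbf{L}_cu,u\>=0$, hence $\<\mathbf{L}_cu,u\>=0$.

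Next we show coercivity. The form $\<\mathbf{L}_c\cdot,\cdot\>=\<L_{+\gamma}u_1,u_1\>+\|cu_1+u_2\|^2$ has exactly one negative direction, inherited from $L_{+\gamma}$, and its kernel is $\Span\{\mathbf{Q}_c'\}$. We have $\mathbf{L}_c\partial_c\mathbf{Q}_c=-\sigma_1\mathbf{Q}_c$ (up to the sign computed in the proof of Lemma \ref{lem:kernel}), so $\<\mathbf{L}_c^{-1}\sigma_1\mathbf{Q}_c,\sigma_1\mathbf{Q}_c\>=-q'(c)<0$ by \eqref{eq:q_prime}. The standard Vakhitov--Kolokolov/Weinstein argument then gives $\<\mathbf{L}_cv,v\>\gtrsim\|v\|^2_{\mathcal{H}^1}$ on $\{\sigma_1\mathbf{Q}_c,\eta_2\}^\perp$ modulo $\mathbf{Q}_c'$. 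Applied to $u$, this forces $u\in\Span\{\mathbf{Q}_c'\}$, hence $\lambda u=0$, a contradiction.

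\emph{Item (ii).} If $\mathcal{L}_{[c]}u=0$ with $u\in L^2$, then $\sigma_1\mathbf{L}_cu$ is constant, and the constant is $0$ since it lies in $L^2$. The second row of $\mathbf{L}_cu=0$ gives $u_2=-cu_1$, and the first row then reduces to $L_{+\gamma}u_1=0$. Since $\ker L_{+\gamma}=\Span\{Q_c'\}$, we get $u\in\Span\{\xi_1[c]\}$.

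\emph{Item (iii), the main obstacle.} The Krein-signature argument above gives nothing on $\im\R$, so here we use the ODE. Write $\lambda=\im\omega$, $\omega\neq0$. The eigenvalue problem is equivalent to a fourth order system $\mathbf{Y}'=A(x,\lambda)\mathbf{Y}$ for $\mathbf{Y}=(u_1,u_1',\dots)$, which after the rescaling of Notation \ref{not:notation}, item \ref{itm:chvar}, is the system in the variables \eqref{eq:relkern2}. The coefficients converge exponentially to $A_\infty(\lambda)$. The roots $\mu$ of $\lambda=\mu\bigl(c\pm\sqrt{1-\mu^2}\bigr)$ are purely imaginary for at least one pair, and the others have nonzero real part.

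The system inherits from the $x$-independent structure of $\sigma_1$ a conserved sesquilinear form, which is the conserved quantity: a Wronskian/flux $\mathcal{W}(\mathbf{Y},\mathbf{Y})(x)$ constant in $x$. For an $L^2$ solution, the Jost decomposition at $\pm\infty$ gives $\mathcal{W}=\sum\kappa_j|a^\pm_j|^2$ over the oscillatory modes, where each $\kappa_j$ is nonzero with a sign fixed by the group velocity. Since $\mathcal{W}=0$ for $L^2$ solutions, and the group-velocity signs at $+\infty$ and $-\infty$ are compatible, the oscillatory coefficients vanish. Hence $u$ decays exponentially. Then, by analytic continuation in exponentially weighted spaces, an eigenvalue $\im\omega$ with exponentially decaying eigenfunction corresponds to a zero of the outgoing Evans function (Wronskian) $\widehat W(\lambda)$.

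What remains is to exclude such decaying eigenfunctions. The first thing we would try is the identity $\<\mathbf{L}_cu,u\>=\im\omega\<\sigma_1w,w'\>$ with $w=\partial_x^{-1}u\in L^2$, which is legitimate by exponential decay. This is combined with the coercivity from item (i), since the orthogonality to $\eta_1,\eta_2$ still holds for $\lambda\neq0$, and with a virial-type multiplier $x\partial_x$ applied to the ODE. If that fails, the fallback is a continuity argument in $c$: eigenvalues can only bifurcate from the edge $\lambda=0$ or from $c^2=\frac{p-1}{4}$. The pair at $0$ is controlled by Lemma \ref{lem:kernel} and $q'(c)\neq0$, so no zero of $\widehat W$ on $\im\R\setminus\{0\}$ can appear for $c\in((p-1)^{1/2}/2,1)$. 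Ruling out this decaying case is the step we expect to be hardest.
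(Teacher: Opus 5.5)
Your items (i) and (ii) are correct. For (i) you take a different route from the paper, which cites Lin--Zeng for $\Re\lambda>0$ and then reflects $x\mapsto -x$. Your energy argument runs as follows: $u\perp\sigma_1\mathbf Q_c$ from $\mathcal L_{[c]}^*\eta_2[c]=0$, then $\Re\lambda\,\langle\mathbf L_cu,u\rangle=0$, then the constrained coercivity of $\mathbf L_c$ (Lemma 2.1 of Maul\'en--Mu\~noz, which the paper itself uses in Lemma \ref{lem:linear-group-growth}). It uses the same two inputs, $n^-(L_{+\gamma})=1$ and $q'(c)>0$, is more elementary, and handles both half-planes at once. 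The extra claim $u\perp\eta_1[c]$ needs exponential decay of $u$, since $\eta_1[c]\notin L^2$, but you never use it.

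\textbf{The gap is item (iii).} You do not prove it, and neither route you sketch can close it.
\begin{itemize}
\item \emph{Energy route.} For $\lambda=\im\tau$ the identity $\Re\lambda\,\langle\mathbf L_cu,u\rangle=0$ is empty. Orthogonality plus coercivity only give $\langle\mathbf L_cu,u\rangle>0$, i.e.\ an embedded eigenvalue would have positive Krein signature. The count $n^-(\mathbf L_c)-1=0$ (the negative direction is absorbed by $q'(c)>0$) restricts only negative-signature eigenvalues, so it cannot exclude this.
\item \emph{Rewriting via $w$.} Writing $\langle\mathbf L_cu,u\rangle=\im\tau\langle\sigma_1w,w'\rangle$ adds no sign information, because $\langle\sigma_1w,w'\rangle\in\im\R$ for every decaying $w$.
\item \emph{Virial.} An unspecified ``$x\partial_x$ virial'' is not an argument.
\item \emph{Continuation in $c$.} This fails as well. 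All of $\im\R$ is essential spectrum, so nothing ties embedded eigenvalues to $\lambda=0$ or to $c^2=(p-1)/4$. Such an eigenvalue is created when a zero of the analytic continuation of $\widehat W$ across $\im\R$ reaches the axis at an interior value of $c$, which your argument does not control. You also have no value of $c$ at which to start the continuation.
\end{itemize}
Separately, the reduction to decaying eigenfunctions needs no flux. Since $\Re\mu_2=\Re\mu_3=0$ and $\mu_2\neq\mu_3$, an $L^2$ solution has no oscillatory component, so it is a multiple of $f_1$ and of $f_4$. The paper uses the flux $\mathcal J$ only in Lemma \ref{lem:nozerodet}.

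\textbf{The missing idea is the paper's multiplier identity.}
\begin{itemize}
\item Put $\Psi=\mathbf L_c\mathbf v=(\psi_1,\psi_2)^\intercal$, which solves $\mathbf L_c\partial_x\sigma_1\Psi=\im\tau\Psi$.
\item With $b=\psi_2$ and $h=(\im\tau)^{-1}(\psi_1+cb)$ this becomes $h'=b$ and $L_{+\gamma}b'=-\tau^2h-2\im c\tau b$.
\item Pair with $\mu\overline b$, where $\mu=-Q_c'/Q_c$ satisfies $\mu'=\frac{p-1}{p+1}Q_c^{p-1}>0$. The equation gives $\Re\int\mu\overline b\,L_{+\gamma}b'\,dx=\frac{\tau^2}{2}\int\mu'|h|^2dx\ge0$.
\item For every Schwartz $b$ one also has the explicit identity $\Re\int\mu\overline b\,L_{+\gamma}b'\,dx=-\frac32\int\mu'|b'+\mu b|^2dx=-\frac{3(p-1)}{2(p+1)}\int Q_c^{p+1}|(b/Q_c)'|^2dx\le0$.
\item Hence $h=0$, so $b=h'=0$ and $\Psi=0$. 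Thus $\mathbf v\in\ker\mathbf L_c=\Span\{\xi_1[c]\}$, and $\im\tau\mathbf v=\mathcal L_{[c]}\mathbf v=0$ with $\tau\neq0$ forces $\mathbf v=0$.
\end{itemize}
Passing from $\mathbf v$ to $\Psi=\mathbf L_c\mathbf v$, and choosing this bounded monotone multiplier whose two evaluations have opposite signs, is what your proposal lacks.
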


\begin{proof}
The eigenvalue problem for $\mathcal{L}_{[c]}$   is
   \begin{align}
  \label{eq:eilinvs} \lambda \mathbf{v} =\mathcal{L}_{[c]} \mathbf{v} = \mathcal{L}_{[c]}^{(0)} \mathbf{v}+ T \mathbf{v}
       \text{ with }\mathcal{L}_{[c]}^{(0)} :=\partial _x   \begin{pmatrix}
 c &   1 \\  -\partial ^2 _x +1 &  c
    \end{pmatrix} \text{ and }T :=-\partial _x  \circ \begin{pmatrix}
0 &   0 \\  f'( Q_c)  &  0
    \end{pmatrix}         .
\end{align}
The  operator $ \mathcal{L}_{[c]}$ 
is a relatively compact perturbation of $\mathcal{L}_{[c]}^{(0)}$, Kato \cite[p. 194] {MR203473}, and so by  \cite[p. 244] {MR203473} their  essential spectra coincide. A point $\lambda$ belongs to the spectrum $\sigma \left (  \mathcal{L}_{[c]}^{(0)}  \right )$ if and  only if
\begin{align*}
  0= \det \begin{pmatrix}
 \lambda -\im ck  &   -\im k  \\  - \im k ^3 -\im k &   \lambda - \im ck
    \end{pmatrix}  =(  \lambda - \im ck )^2 +k ^4 +k ^2
\end{align*}
that is  if and only if the following holds, 
\begin{align*}
  \lambda = \im  k (  c\pm \sqrt{1+k ^2} ) \text{  for }k \in \R  .
\end{align*}
By the Sturm-Liouville theory, the number of negative $L^2$ eigenvalues of $L _{+\gamma}$ is 1. Combining this and \eqref{eq:q_prime}, we can apply the theory developed in Lin and Zeng \cite{LZ22Memoiers} (or by Corollary 11.1 of \cite{LZ22Memoiers}) to conclude that there are no eigenvalues of $\mathcal{L}_{[c]}$ with  $\Re \lambda  >0$ for $  \frac{\sqrt{p-1}}{2}<c  <1 $  for  \eqref{eq:eilinvs}. 

Since for $\mathbf{v}(x):= \mathbf{u}(-x)$ we have
$ (\mathcal{L}_{[c]} \mathbf{v})(x)= - (\mathcal{L}_{[c]} \mathbf{u})(-x)$, there are no eigenvalues with $\Re \lambda  <0$ and  we have Claim $\mathbf{i}$.

We know already by  \eqref{eq:M2.3}    that   $\ker   \mathcal{L}_{[c]} \supseteq  \Span \{     \xi _1 [c]\}$. Given $ \mathbf{v}  \in \ker \mathcal{L}_{[c]}$,  by $\mathbf{v}\in D(\mathcal{L}_{[c]})$  we conclude $ \mathbf{L} _{c}\mathbf{v}\in L^2(\R, \C ^2)$.
On the other hand, $\mathcal{L}_{[c]}\mathbf{v}=0 $  implies that  $ \mathbf{L} _{c}\mathbf{v} $  is a constant function, hence $ \mathbf{L} _{c}\mathbf{v}=0 $. By  \cite[Lemma 2.1]{maulen2025asymptoticstabilitystablegood} we have $\mathbf{v}\in\Span \{     \xi _1 [c]\}$, completing the proof of Claim $\mathbf{ii}$.

We prove   Claim \textbf{iii} at the end of this section.
\end{proof}

For  $c  _0 \in ( (  p-1) ^{1/2}/2 , 1   ) $ and $r >0$ and for   $D _{\mathcal{H} ^1   }(\mathbf{u},r):=\{\mathbf v\in \mathcal{H} ^1  \ |\ \|\mathbf{u}-\mathbf{v}\| _{\mathcal{H} ^1   }<r \}$,  we set     $$\mathcal{U} (c _0 ,r  ) = \bigcup _{  x_0\in \R }  e^{-x_0 \partial _x}  D_{\mathcal{H} ^1  }(\mathbf{Q }_{c _0},r ). $$
The following can be proved with a standard application of the implicit function theorem.  

\begin{lemma}[Modulation]\label{lem:mod1}
There exist $\delta_0>0$ and $B_0>0$ with $\delta_0\ll B_0^{-3/2}$ such that for $\delta_0^{-1}\gg B\ge B_0$ and for the function $\zeta_B$ introduced in \eqref{def:zetaC} there are $(c,D)\in C^1\( \mathcal U(c_0,\delta_0), ((p-1)^{1/2}/2,1)\times\R \)$ such that for any $\mathbf u\in\mathcal U(c_0,\delta_0)$
\begin{align}\label{61}
& \mathbf v(\mathbf u):= e^{D(\mathbf u)\partial_x}\mathbf u-\mathbf Q_{c(\mathbf u)}
\text{ satisfies }\\
& \<\mathbf v(\mathbf u),\zeta_B\eta_1[c(\mathbf u)]\>
=\<\mathbf v(\mathbf u),\eta_2[c(\mathbf u)]\>=0.
\nonumber
\end{align}
For any $c_1\in((p-1)^{1/2}/2,1)$ satisfying $\|\mathbf Q_{c_1}-\mathbf Q_{c_0}\|_{\mathcal H^1}<\delta_0$, and any $D_0\in\R$, we have the identities
\begin{align}
 c(\mathbf Q_{c_1})=c_1,\quad D(\mathbf Q_{c_1})=0,\quad
 D(e^{-D_0\partial_x}\mathbf u)=D(\mathbf u)+D_0,\quad
c(e^{-D_0\partial_x}\mathbf u)=c(\mathbf u).
\label{eq:equivariance}
\end{align}
Furthermore, there exists a constant $K>0$, independent of $B$,
such that
\begin{align}\label{eq:mod2}
B^{-1/2}|D(\mathbf u)-D_0|+|c(\mathbf u)-c_0|
\le K\|\mathbf u-e^{-D_0\partial_x}\mathbf Q_{c_0}\|_{\mathcal H^1} \text{,   $\quad \forall  \,   \mathbf u\in
e^{-D_0\partial_x}D_{\mathcal H^1}(\mathbf Q_{c_0},\delta_0)$.}
\end{align} 
\end{lemma}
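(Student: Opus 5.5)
The plan is to apply the implicit function theorem to the map
\[
\mathcal F(\mathbf u,c,D):=\Big(\<e^{D\partial_x}\mathbf u-\mathbf Q_c,\zeta_B\eta_1[c]\>,\ \<e^{D\partial_x}\mathbf u-\mathbf Q_c,\eta_2[c]\>\Big),
\]
first near $(\mathbf u,c,D)=(\mathbf Q_{c_0},c_0,0)$, and then to extend it to the whole tube $\mathcal U(c_0,\delta_0)$ by translation equivariance. We have $\mathcal F(\mathbf Q_{c_0},c_0,0)=0$.

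The main issue is regularity in $D$. The map $D\mapsto e^{D\partial_x}\mathbf u$ is not $C^1$ into $\mathcal H^1$. We therefore move the translation onto the test functions: $\<e^{D\partial_x}\mathbf u,\psi\>=\<\mathbf u,e^{-D\partial_x}\psi\>$. The functions $\zeta_B\eta_1[c]$ and $\eta_2[c]$ are smooth, and they decay exponentially or are bounded together with their derivatives. Note that $\eta_1$ tends to constants at $\pm\infty$, but the factor $\zeta_B$ makes it exponentially decaying. Hence $(c,D)\mapsto e^{-D\partial_x}\zeta_B\eta_1[c]$ is $C^1$ into $L^2$ (with values in $(H^1)^*\times L^2$ as needed). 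Combined with smooth dependence of $\mathbf Q_c$ and $\eta_j[c]$ on $c$, this makes $\mathcal F$ a $C^1$ map on $\mathcal H^1\times((p-1)^{1/2}/2,1)\times\R$.

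Next we compute the Jacobian in $(c,D)$ at the base point. We have $\partial_c(-\mathbf Q_c)=-\xi_2$ and $\partial_D e^{D\partial_x}\mathbf Q_{c_0}|_{D=0}=\xi_1$. By Lemma~\ref{lem:kernel} the matrix is
\[
\begin{pmatrix}\<\xi_1,\zeta_B\eta_1\> & -\<\xi_2,\zeta_B\eta_1\>\\ \<\xi_1,\eta_2\> & -\<\xi_2,\eta_2\>\end{pmatrix}
=\begin{pmatrix}1+O(B^{-1}) & O(B^{-1/2})\\ 0&-1\end{pmatrix}.
\]
The terms involving $\partial_c\eta_j$ vanish because $\mathbf v=0$ there. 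The key estimate is how $\zeta_B-1$ pairs against $\xi_1,\xi_2$. Since $\xi_1,\xi_2$ decay exponentially and $\zeta_B-1=O(|x|/B)$ on their support, we get $\<\xi_i,(\zeta_B-1)\eta_1\>=O(B^{-1})$. The limits of $\eta_1$ at infinity are only constants, so no growth arises.

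The Jacobian is therefore invertible uniformly in $B\ge B_0$. The bounds on $\partial_{\mathbf u}\mathcal F$ are uniform too, except for the growth $\|\zeta_B\eta_1\|_{L^2}\lesssim B^{1/2}$ coming from the nonzero constants at infinity. This factor produces the weight $B^{-1/2}$ in front of $|D-D_0|$ in \eqref{eq:mod2}. Concretely, $\partial_{\mathbf u}D=O(B^{1/2})$ and $\partial_{\mathbf u}c=O(1)$, since the $\eta_2$ equation decouples up to triangular terms. The $B$-dependence also dictates that the neighborhood radius obey $\delta_0\ll B_0^{-3/2}$ (we may need to restrict to $B\ll\delta_0^{-1}$). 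This keeps the second-order remainders small: they involve $\partial_D^2$ of the pairings, of size $B^{1/2}$, multiplied by $|D|\lesssim B^{1/2}\delta$. The $B$-uniformity of the quantitative implicit function theorem is the step I expect to be most delicate. I would handle it with an explicit contraction-mapping version rather than the abstract theorem.

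Finally, we define $(c,D)$ on the translates $e^{-x_0\partial_x}D_{\mathcal H^1}(\mathbf Q_{c_0},\delta_0)$ by $c(\mathbf u)=c(e^{x_0\partial_x}\mathbf u)$ and $D(\mathbf u)=D(e^{x_0\partial_x}\mathbf u)+x_0$. Well-definedness on overlaps follows from local uniqueness of the IFT solution. Two translates of the $\delta_0$-ball overlap only when $|x_0-x_0'|$ is small, since $\|\mathbf Q_{c_0}-\mathbf Q_{c_0}(\cdot-h)\|\ge\delta$ for $|h|$ not small. In the overlap the uniqueness neighborhood in $D$, of size $\gg B^{1/2}\delta_0$, covers the shift. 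This gives $C^1$ regularity and \eqref{eq:equivariance}. The identities $c(\mathbf Q_{c_1})=c_1$ and $D(\mathbf Q_{c_1})=0$ hold because $(c_1,0)$ solves $\mathcal F=0$ trivially and lies in the uniqueness region. The estimate \eqref{eq:mod2} then follows from the derivative bounds by the mean value theorem along the segment from $e^{-D_0\partial_x}\mathbf Q_{c_0}$ to $\mathbf u$.
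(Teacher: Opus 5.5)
Your proposal is correct and is essentially the argument the paper has in mind; the paper only states that the lemma follows from a standard implicit function theorem argument, as in the authors' earlier work. The ingredients match: moving the translation onto the test functions, the near-triangular Jacobian from Lemma~\ref{lem:kernel}, the factor $\|\zeta_B\eta_1[c]\|_{L^2}\sim B^{1/2}$ producing the weight $B^{-1/2}$ in \eqref{eq:mod2}, and extension to the tube by equivariance. In the mean value step, note that away from the base point $\partial_D\mathcal F_2=-\langle\mathbf v,\eta_2'[c]\rangle=O(B^{1/2}\delta_0)$ no longer vanishes, and its product with $\|\partial_{\mathbf u}\mathcal F_1\|\sim B^{1/2}$ enters $\partial_{\mathbf u}c$; the restriction $B\delta_0\ll1$ is exactly what controls this and keeps $K$ independent of $B$.
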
  
\begin{proof}
    The proof is  an almost verbatim replica of the corresponding proof in     \cite{CM25D1}, so we skip it.
\end{proof}

Choose $\delta>0$ within the range of
Theorem \ref{thm:orbstab} and sufficiently small so that
$C_0(c_0)\delta<\delta_0$.
Let $\mathbf u\in C^0([0,+\infty),\mathcal H^1)$ be the
corresponding global solution.
By \eqref{eq:gB4syst},
$\mathbf u\in C^1([0,+\infty),H^{-3}(\R,\R^2))$.
The test functions $\zeta_B\eta_1[c]$ and $\eta_2[c]$,
together with all their derivatives in $x$ and $c$, belong to $H^m$ for every nonnegative integer $m$.
Thus,   the orthogonality conditions in \eqref{61}
extends  
$\mathbf u$ in $H^{-3}(\R,\R^2)$.
 The implicit function theorem   gives
\begin{align*}
(c(t),D(t)):=(c(\mathbf u(t)),D(\mathbf u(t)))
\in C^1\(
[0,+\infty),((p-1)^{1/2}/2,1)\times\R
\).
\end{align*}
Applying \eqref{eq:mod2} with $D_0=x(t)$ from \eqref{OS}, we obtain
\begin{align*}
B^{-1/2}|D(t)-x(t)|+|c(t)-c_0|
\lesssim
\|\mathbf u(t)-e^{-x(t)\partial_x}\mathbf Q_{c_0}\|_{\mathcal H^1}\lesssim\delta.
\end{align*}
Consequently,
\begin{align}\label{eq:h1errorbounds}
\|\mathbf v(t)\|_{\mathcal H^1}
\lesssim \delta+|D(t)-x(t)|
\lesssim B^{1/2}\delta,
\quad
|c(t)-c_0|\lesssim\delta.
\end{align}
Insertion in \eqref{eq:gB2} of $\mathbf{u}=\mathbf{Q} _c(\cdot -D) + \mathbf{v}(\cdot -D)$,   after cancellations due to  \eqref{eq:soliton_eq}, yields
\begin{align}
  \label{eq:v1} \dot v_1 &= cv_1' +v_2' +(\dot D -c)  (  Q_c'  +v_1')-\dot c \partial _c  Q_c   \qquad   \text{  and} \\   \label{eq:v2} \dot v_2 &=  \partial _x (-\partial _x ^2 +\gamma ^2 -f'(Q_c) ) v_1 +c(cv_1' +v_2') \\& \nonumber -\( f(Q_c +v_1 ) - f(Q_c   ) -f'(Q_c)  v_1  \) '
 + (\dot D -c) (-cQ_c'  +v_2' ) +\dot c \partial _c (cQ_c)   \nonumber
\end{align} or also, equivalently, 
\begin{align}\label{eq:sist1}
   \dot {\mathbf{v} } &=  \partial _x \sigma _1   \diag \( -\partial _x^2 +1, 1     \) \mathbf{v} +c \mathbf{v}'- \partial _x(0, f  ( v_1 +Q_c) -f  (  Q_c)   )^\intercal \\& +(\dot D-c)  (  \mathbf{Q}_c   +\mathbf{v} )' -\dot c \partial _c\mathbf{Q}_c . \nonumber
\end{align}
By \eqref{eq:gB4syst},
$\mathbf u\in C^1([0,+\infty),
H^{-1}(\R)\times H^{-2}(\R))$.
Since $c,D\in C^1([0,+\infty))$, differentiation of
translations in these spaces gives
$
\mathbf v\in C^1([0,+\infty),
H^{-1}(\R)\times H^{-2}(\R)).
$
Thus \eqref{eq:v1}--\eqref{eq:sist1} hold for every $t\ge0$
as identities in $H^{-1}(\R)\times H^{-2}(\R)$, and hence
in the sense of distributions with respect to $x$.

We will repeatedly use the following reformulation of the eigenvalue problem associated with the linearized system of \eqref{eq:v1}--\eqref{eq:v2}.
\begin{lemma}\label{lem:transys}For  $\lambda = \gamma ^3 \Lambda $,  $\xi = \gamma x $, $v_1(x) = \widetilde{v}_1(  \xi)$    and  $v_2(x) = \widetilde{v}_4(  \xi)$   as in \eqref{itm:chvar1}--\eqref{eq:relkern2}, the   system
 \begin{align}
  \label{eq:eilinv1}   & cv_1' +v_2'  -  \lambda  v_1 =F_1    \text{  and} \\  &   \label{eq:eilinv2} \partial _x L _{+\gamma} v_1 +c(cv_1' +v_2') - \lambda  v_2  = F_2   
\end{align}
  is transformed into the equivalent  first order system
\small \begin{align}  \label{A_Decompose}&
  \frac{d}{d\xi}\widetilde{ V}  =( A _\infty^{(0)}  + \Lambda A _\infty^{(1)} (\gamma)  +q(\xi))\widetilde{ V} +\widetilde{\mathbf{F}},  \text{ where } q(\xi )= -p\begin{pmatrix}
0 & 0& 0 & 0 \\
0 & 0& 0 & 0\\
 (p-1)Q ^{p-2} Q' &    Q ^{p-1} & 0 & 0 \\
0 & 0 & 0& 0
\end{pmatrix} , \\&   A _\infty^{(0)}  = \begin{pmatrix}
0 & 1 & 0 &0 \\
0 & 0 & 1 & 0 \\
0 & 1 & 0 & 0 \\
0 & -c & 0 & 0
\end{pmatrix} , \  A _\infty^{(1)} (\gamma) = \begin{pmatrix}
0 & 0 & 0 &0 \\
0 & 0 & 0 & 0 \\
c & 0  & 0 & -1 \\
\gamma ^2  & 0 & 0 & 0
\end{pmatrix}  , \  \widetilde{V}:=   \begin{pmatrix}
\widetilde{v}_1 \\
\widetilde{v}_2\\
\widetilde{v}_3 \\
\widetilde{v}_4
\end{pmatrix}   ,  \   \widetilde{v} _2:=\widetilde{v} _1 ' ,\ 
   \widetilde{v} _3:=\widetilde{v} _1 ''.      \nonumber
 \end{align}\normalsize

\end{lemma}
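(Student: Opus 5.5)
The plan is to prove the lemma by direct substitution. First eliminate $v_2'$ from \eqref{eq:eilinv2} using \eqref{eq:eilinv1}, which leaves a third order equation for $v_1$. Then rescale by $\xi=\gamma x$ and read off the rows of \eqref{A_Decompose} one at a time. For the converse I would run the same algebra backwards, so that the claimed equivalence follows at once.

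\textbf{Step 1 (elimination).} By \eqref{eq:eilinv1}, $c(cv_1'+v_2')=c(\lambda v_1+F_1)$. Inserting this into \eqref{eq:eilinv2} and expanding $\partial_x L_{+\gamma}v_1$ gives
\begin{equation*}
v_1'''=\gamma^2 v_1'-\(f'(Q_c)v_1\)'+\lambda\(cv_1-v_2\)+cF_1-F_2 .
\end{equation*}
Solving \eqref{eq:eilinv1} for $v_2'$ gives
\begin{equation*}
v_2'=\lambda v_1-cv_1'+F_1 .
\end{equation*}
Conversely, these two equations together give back \eqref{eq:eilinv1}--\eqref{eq:eilinv2}. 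One obtains \eqref{eq:eilinv1} trivially from the second, and then \eqref{eq:eilinv2} by adding $c(cv_1'+v_2')-c(\lambda v_1+F_1)=0$ to the first.

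\textbf{Step 2 (scaling).} Under \eqref{itm:chvar1} we have $\partial_x=\gamma\partial_\xi$ and $\lambda=\gamma^3\Lambda$. By \eqref{eq:QQ1}, $Q_c(x)=\gamma^{\frac2{p-1}}Q(\xi)$. For either nonlinearity this gives $f'(Q_c)=p\gamma^2Q^{p-1}(\xi)$, since $Q>0$. Hence $(f'(Q_c)v_1)'=\gamma^3 p\,\partial_\xi(Q^{p-1}\widetilde v_1)$.

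Now divide the third order equation by $\gamma^3$, using $\widetilde v_2=\widetilde v_1'$ and $\widetilde v_3=\widetilde v_1''$. This yields
\begin{equation*}
\widetilde v_3'=\widetilde v_2-p(p-1)Q^{p-2}Q'\widetilde v_1-pQ^{p-1}\widetilde v_2+\Lambda(c\widetilde v_1-\widetilde v_4)+\gamma^{-3}(c\widetilde F_1-\widetilde F_2),
\end{equation*}
which is precisely the third row of $A_\infty^{(0)}+\Lambda A_\infty^{(1)}(\gamma)+q$ together with the third entry of $\widetilde{\mathbf F}$ in \eqref{eq:relkern2}.

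Next divide the equation for $v_2'$ by $\gamma$. This gives
\begin{equation*}
\widetilde v_4'=\gamma^2\Lambda\widetilde v_1-c\widetilde v_2+\gamma^{-1}\widetilde F_1,
\end{equation*}
which is the fourth row. The first two rows are the definitions $\widetilde v_1'=\widetilde v_2$ and $\widetilde v_2'=\widetilde v_3$.

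\textbf{Main obstacle.} The only delicate points are bookkeeping. One must keep the powers of $\gamma$ consistent, with every term of the third order equation scaling like $\gamma^3$; this is why $\lambda=\gamma^3\Lambda$ is the right choice. One must also check the $f'$ scaling for $p\in(1,2]$. There $Q^{p-2}$ is singular only where $Q$ vanishes, which never happens since $Q>0$, so the coefficient $q(\xi)$ is smooth and exponentially decaying.
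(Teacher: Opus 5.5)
Your proposal is correct and follows essentially the paper's route. The paper likewise uses \eqref{eq:eilinv1} to replace $c(cv_1'+v_2')$ in \eqref{eq:eilinv2}, rescales to obtain the pair \eqref{eq:systres1}, and reads off the first-order system \eqref{eq:linsys00}. Doing the elimination before rather than after the rescaling, as you do, is immaterial, and your row-by-row checks match \eqref{A_Decompose} and \eqref{eq:relkern2}.
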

\begin{proof} The substitutions      and elementary computations show  that  \eqref{eq:eilinv1}--\eqref{eq:eilinv2} is equivalent to 
\begin{equation}\label{eq:systres1}
  \begin{aligned}
     &  c\widetilde{v} _1'  +\widetilde{v}_4'    -  \gamma ^2 \Lambda  \widetilde{v}_1   = \gamma^{-1} {\widetilde{F}}_1   \text{  and} \\     &   \partial _\xi L _{+ } \widetilde{v}_1 +c \Lambda \widetilde{v}_1  -     \Lambda \widetilde{v}_4 = \gamma^{-3} \(   -c \widetilde{F} _1 +    \widetilde{F} _2   \)   \text{  where }L_+ = L _{+1}=-\partial _\xi ^2 +1 -p Q ^{p-1}(\xi )  ,
  \end{aligned}
\end{equation}
with  the differentiations in the $\xi $ variable.  
Writing  \eqref{eq:systres1} as  a first order system yields
\begin{equation}\label{eq:linsys00}
  \left\{
    \begin{array}{ll}
      \widetilde{v} _1 '& = \widetilde{v} _2  \hbox{ ,} \\
      \widetilde{v} _2 ' &= \widetilde{v} _3    \hbox{, } \\
      \widetilde{v} _3 ' &= c \Lambda  \widetilde{v} _1  -     \Lambda \widetilde{v}_4+  \widetilde{v} _2 -p (p-1) Q ^{p-2} Q' \widetilde{v} _1 -p   Q ^{p-1}  \widetilde{v} _2  + \gamma^{-3} \( c\widetilde{F}_1-\widetilde{F}_2 \)
             \hbox{ , } \\   \widetilde{v} _4 ' &= \gamma ^2 \Lambda \widetilde{v} _1- c \widetilde{v} _2 +  \gamma^{-1} {\widetilde{F}}_1
         \hbox{ .}
    \end{array}
  \right.
\end{equation}
so that for $\widetilde{V}= (\widetilde{v}_1,\widetilde{v}_2,\widetilde{v}_3,\widetilde{v}_4)^\intercal$ and for the $\widetilde{\mathbf{F}}$ defined in \eqref{eq:relkern2} we obtain \eqref{A_Decompose}.
    
\end{proof}

\textit{ Proof of Claim \textbf{iii}  in Proposition  \ref{prop:noeigh}.} We   assume  by contradiction that $\mathcal{L}_{[c]}\mathbf v=\im\tau\mathbf v$ for some $\tau \in \R \setminus\{0\}$ and for 
$0\neq \mathbf v\in D(\mathcal{L}_{[c]})$. Then there is a corresponding solution $\widetilde{V}$  of system \eqref{eq:linsys00} with $\widetilde{F}_1=\widetilde{F}_2=0$. By the theory of Jost functions in Sect. \ref{sec:jost}, there exist constants 
$\alpha , \beta \in \C$ such that $\widetilde{V} =\alpha  f_1 ( \cdot ,\Lambda  , \gamma  ) =\beta f_4 ( \cdot ,\Lambda  , \gamma  ) $. This,  by Lemmas  \ref{lem:roots1} and   \ref{lem:estimate_f},   implies that  $\widetilde{V}$ is smooth and $\widetilde{V}\xrightarrow {\xi  \to \infty    }0$ exponentially fast, with all its derivatives. Then the same is true of $\mathbf v$. 

Set  $\mathbf\Psi:=\mathbf L_c\mathbf v
=( 
\psi_1, 
\psi_2 )^\intercal.$ 
 Since  
$\partial_x\sigma_1\mathbf\Psi=\im\tau\mathbf v$,
applying $\mathbf L_c$ gives
$\mathbf L_c\partial_x\sigma_1\mathbf\Psi=\im\tau\mathbf\Psi $, that is 
\begin{align*}
(L_{+\gamma}+c^2)\psi_2'+c\psi_1' =\im\tau\psi_1 \text{ and } 
c\psi_2'+\psi_1' =\im\tau\psi_2. 
\end{align*}
We obtain 
\begin{align}\label{eq:noimag3}
L_{+\gamma}\psi_2'&=\im\tau(\psi_1-c\psi_2),
 \\ \label{eq:noimag4}
(\psi_1+c\psi_2)'&=\im\tau\psi_2.
\end{align}
Set  $b:=\psi_2$  and $h:= (\im\tau) ^{-1}   (\psi_1+cb ).$  Then  $h'=b$
 and
$\psi_1-cb=\im\tau h-2cb$.  Therefore \eqref{eq:noimag3} becomes
\begin{equation}\label{eq:noimag6}
L_{+\gamma}b'=-\tau^2h-2\im c\tau b.
\end{equation}
We know that  $b, h$ and all their  derivatives  decay
exponentially as $x\to\infty$.  
We introduce
 $\mu:=-\frac{Q_c'}{Q_c}$. 
 Notice we have $\mu'>0$.
Next,   by the exact same elementary computation in  \cite[Proposition 10.3]{CM25D1}, we have
\begin{equation*}
\operatorname{Re}\int_{\R}\mu\overline {b} L_{+\gamma}b'   dx
=-\frac{3(p-1)}{2(p+1)}
\int_{\R}Q_c^{p+1}
\left|\left(\frac b{Q_c}\right)'\right|^2dx \text{ for all $b\in \mathcal{S}(\R, \C)$,}
\end{equation*}
   in particular, for  our $b$. 
Alternatively, using \eqref{eq:noimag6} gives \small
\begin{align*}
\operatorname{Re}\int_{\R}\mu\overline b L_{+\gamma}b' dx
&=-\tau^2\operatorname{Re}\int_{\R}\mu\overline b h dx  = -\tau^2\operatorname{Re}\int_{\R}\mu\overline h ' h dx  =- \frac{\tau^2}{2}\int_{\R}\mu(|h|^2)'dx   = \frac{\tau^2} 2\int_{\R}\mu'|h|^2dx.
 \end{align*} \normalsize
Therefore together they give the following, which shows   $h=0$   and $b=h'=0$,
\begin{align*}
\frac{\tau^2}{2}\int_{\R}\mu'|h|^2dx
+\frac{3(p-1)}{2(p+1)}\int_{\R}Q_c^{p+1}
\left|\left(\frac b{Q_c}\right)'\right|^2dx=0 . 
\end{align*}
Hence $\mathbf v\in \Span \{ \xi _1[c] \}$.    Then 
$\im \tau \mathbf v =\mathcal{L}_{[c]}\mathbf v=0$, which  by $\tau \neq 0$ implies $\mathbf v=0$, thus a contradiction.
\qed

\section{Main estimates} \label{sec:mainest}

The proof of Theorem \ref{thm:main} is mainly  based on the following continuation argument. The material in this section is similar to the part of \S 3 in \cite{CM25D1} below  Proposition 3.4.

\begin{proposition}\label{prop:continuation}
For any small $ \epsilon >0$ and for large constants $A$ and $B$ as in formula \eqref{eq:relABg}
there exists  a    $\delta _0= \delta _0(\epsilon )   $ s.t.   if  for an interval   $I=[0,T]$ for some $T>0$ the following inequality holds,
\begin{align}
  \label{eq:main2} \|\mathbf{v} \| _{L^2(I,  { \Sigma }_{1A A_1} )} +
\|\mathbf{v} \| _{L^2(I,  { \Sigma }_{2A A_1} )} + \| v_1 \|_{ L^2( I , \widehat{\Sigma})}  +   \| \dot D - c \| _{L^2(I  )} + \| \dot c \| _{L^2(I  )}\le \epsilon ,
\end{align}
   and if  the constant in \eqref{OS} satisfies  $\delta  \in (0, \delta _0)$
then     \eqref{eq:main2} holds   for   $\epsilon$ replaced by $   o _{ B ^{-\frac{1}{2}} }(1) \epsilon $.
\end{proposition}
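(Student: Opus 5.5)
The plan is to split the bound \eqref{eq:main2} into four separate estimates and then chain them, following \S 3 of \cite{CM25D1}. Each estimate is proved under the bootstrap hypothesis \eqref{eq:main2} on $I$, together with the smallness \eqref{eq:h1errorbounds}.

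\textbf{Step 1: modulation parameters.} Differentiate the orthogonality conditions \eqref{61} in time and use \eqref{eq:sist1} together with Lemma \ref{lem:kernel}. This should give
\begin{align*}
|\dot D-c|+|\dot c|\lesssim \|\sech(\kappa x)\, v_1\|_{L^2}+ \|\mathbf v\|_{\Sigma_{1AA_1}}\big(B^{-1/2}+\|\mathbf v\|_{\mathcal H^1}\big) .
\end{align*}
Since $\zeta_B\eta_1$ differs from $\eta_1$ only where $|x|\gtrsim B$, the leading term involves only $v_1$ tested against exponentially localized functions. Taking $L^2(I)$ norms, we get $\|\dot D-c\|_{L^2(I)}+\|\dot c\|_{L^2(I)}\lesssim \|v_1\|_{L^2(I,\widehat\Sigma)}+o_{B^{-1/2}}(1)\epsilon$.

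\textbf{Step 2: high-energy virial estimate.} Use the functional $\mathcal I=\langle v_1,\varphi_{iAA_1}v_2\rangle$, and variants with $\varphi_{2AA_1}$ and $\varphi_{1AA_1}$, where the weights are those of \eqref{def:phiAi}. The good Boussinesq structure makes $\dot{\mathcal I}$ produce the positive quadratic form
\begin{align*}
\tfrac12\int\varphi'\big(3|v_1'|^2+|v_1|^2+|v_2|^2\big),
\end{align*}
up to commutator terms of size $A^{-1}$ or $A_1^{-1}$. It also produces the bad localized term $\int \varphi' f'(Q_c)v_1^2$ and its nonlinear counterpart. Integrating over $I$ and using $|\mathcal I|\lesssim A\|\mathbf v\|_{\mathcal H^1}^2\lesssim AB\delta^2$, I expect
\begin{align*}
\|\mathbf v\|_{L^2(I,\Sigma_{1AA_1})}+\|\mathbf v\|_{L^2(I,\Sigma_{2AA_1})}\lesssim \|v_1\|_{L^2(I,\widehat\Sigma)}+o_{B^{-1/2}}(1)\epsilon .
\end{align*}
For $\Sigma_{2AA_1}$ the cutoff $\vartheta_{2A_1}$ kills the soliton potential, since $f'(Q_c)$ is exponentially small where $|x|\ge A_1$. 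For $\Sigma_{1AA_1}$ the bad term is absorbed by $\|v_1\|_{\widehat\Sigma}$, because $\kappa$ is small. The condition $1<p\le 2$ (Remark \ref{rem:udermain}) requires a little care with $f(Q_c+v_1)-f(Q_c)-f'(Q_c)v_1$, which is only $O(|v_1|^p)$.

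\textbf{Step 3: Kato smoothing for $v_1$, the main obstacle.} Write \eqref{eq:sist1} as $\dot{\mathbf v}=\mathcal L_{[c_0]}\mathbf v+c\mathbf v'+\mathbf R$ after a change of frame. Decompose $\mathbf v$ into its continuous spectral component plus the generalized kernel $\Span\{\xi_1,\xi_2\}$; the latter is controlled via \eqref{61} by $\|\mathbf v\|_{\Sigma_{1AA_1}}$-type terms. Then use Duhamel's formula and the estimate
\begin{align*}
\|\sech(\gamma\kappa x)\,\pi_1 R_{\mathcal L_{[c]}}(\lambda)\,\sech(\gamma\kappa x)\|\lesssim 1 \quad\text{uniformly for }\lambda\in\Upsilon\setminus\{0\},
\end{align*}
restricted to the range of the continuous spectral projection. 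The resolvent is built from the Jost functions of Lemma \ref{lem:transys}. Nonvanishing of the Wronskian $\widehat W$ on $\im\R\setminus\{0\}$ follows from Proposition \ref{prop:noeigh}(iii). The low-energy analysis near $\lambda=0$ uses the Jordan block of Lemma \ref{lem:kernel}, and the high-energy analysis uses the $4\times4$ asymptotics.

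The delicate point is that the forcing $\mathbf R$ contains $\partial_x$ of terms localized where $|x|\lesssim A_1$, together with $v_2'$ and $v_1''$, which energy does not control. I would first try exploiting the fact that only $\pi_1$ of the output is needed. In \eqref{eq:relkern2}, $F_1$ enters at order $\gamma^{-1}$ while $F_2$ enters through $\widetilde v_3'$, so the first component gains a derivative. This gain absorbs one $\partial_x$, and the resulting bound is $\|v_1\|_{L^2(I,\widehat\Sigma)}\lesssim \delta+o_{B^{-1/2}}(1)\epsilon$ plus a small multiple of the $\Sigma_{2AA_1}$ norms.

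\textbf{Step 4: closing.} Substitute Step 3 into Steps 2 and 1. The choice \eqref{eq:relABg}, with $\log\delta^{-1}\gg\log\epsilon^{-1}$, makes all $\delta$-terms $o_{B^{-1/2}}(1)\epsilon$. This yields \eqref{eq:main2} with $\epsilon$ replaced by $o_{B^{-1/2}}(1)\epsilon$, as claimed.
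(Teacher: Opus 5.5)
There is a genuine gap in your closing argument (Steps 2--4), and it sits exactly where the paper's proof of this proposition does its work. First, the smoothing estimate cannot be run on $\mathbf v$ itself. The datum $\mathbf v(0)$ is only in $\mathcal H^1$. The sources $(\dot D-c)\mathbf v'$, $(c-c_0)\mathbf v'$ and $f(Q_c+v_1)-f(Q_c)-f'(Q_c)v_1$ are not localized; for $1<p\le2$ the last behaves like $f(v_1)$ away from the soliton. So neither your $\sech$--$\sech$ resolvent bound nor the paper's $L^{1,1}\to\widehat\Sigma$ bounds apply to them. Even $\mathcal P\mathbf v$ is not controlled, since $\eta_1[c]$ does not decay; this is why \eqref{61} uses $\zeta_B\eta_1$.

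One must therefore work with $\zeta_B\mathbf v$, as in \eqref{eq:prekato1}, and this creates the commutator $[\zeta_B,\mathcal L_{[c_0]}]\mathbf v$. Its zeroth-order part $-\zeta_B'\begin{pmatrix}c_0&1\\1&c_0\end{pmatrix}\mathbf v$ decays only at rate $B^{-1}<\gamma\kappa$, so it is not admissible for a $\sech$-weighted input norm at all. Its $L^{1,1}$ norm costs $A_1^{3/2}B^{-1}\|\mathbf v\|_{\Sigma_{1AA_1}}+B^{1/2}\|\mathbf v\|_{\Sigma_{2AA_1}}$ (Lemma~\ref{lem:estcomm21}). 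Hence the smoothing inequality \eqref{eq:sec:smooth11} carries a \emph{large} factor $B^{1/2}$ in front of the exterior norm, not the small multiple you assert in Step 3. Your Step 2 bounds $\|\mathbf v\|_{L^2(I,\Sigma_{2AA_1})}$ by $\|v_1\|_{L^2(I,\widehat\Sigma)}$ with only an $O(1)$ constant. With that, the loop reads $\|v_1\|\lesssim B^{1/2}\|\mathbf v\|_{\Sigma_{2AA_1}}\lesssim B^{1/2}\|v_1\|$ and does not close.

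The missing idea is the two-scale bookkeeping with $A_1$. The exterior functional $\mathcal I_2$ sees the soliton potential only through $A_1^{-100}$-small terms. It couples to the interior only through $\vartheta_{2A_1}''$ and $(\vartheta_{2A_1}')^2=O(A_1^{-2})$ at the quadratic level. This gives \eqref{eq:sec:1virial12}, namely $\|\mathbf v\|_{L^2(I,\Sigma_{2AA_1})}\lesssim\delta^{1/3}+A_1^{-1}\|v_1\|_{L^2(I,\widehat\Sigma)}$. Combined with \eqref{eq:sec:1virial11} and \eqref{eq:sec:smooth11}, the two loop gains are $A_1^{3/2}B^{-1}$ and $B^{1/2}A_1^{-1}$. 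One needs $B^{1/2}\ll A_1\ll B^{2/3}$, and $A_1=B^{3/5}$ makes both gains $B^{-1/10}$. The exact power matters: a coupling of only $O(A_1^{-1})$ at the quadratic level, which your ``commutator terms of size $A_1^{-1}$'' would allow, leaves an empty window.

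Two further points. Your Step 1 bound has an $O(1)$ coefficient on $\|v_1\|_{\widehat\Sigma}$, and it would feed back into the Duhamel formula through the modulation source with an $O(1)$ constant. The paper instead proves $|\dot D-c|\lesssim B^{-1/2}(\|\mathbf v\|_{\Sigma_{1AA_1}}+\|\mathbf v\|_{\Sigma_{2AA_1}})$, using $\langle\mathbf v,\eta_2\rangle=0$. Because $\eta_1[c]$ does not decay, the exterior norm genuinely enters there. Also, nonvanishing of $\widehat W$ on $\im\R\setminus\{0\}$ does not follow from Proposition~\ref{prop:noeigh}(iii) alone. A zero of $\widehat W$ yields a solution that is outgoing at both ends but not necessarily in $L^2$. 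The paper removes its oscillatory components with the conserved flux \eqref{eq:def_flux} before invoking the absence of eigenvalues.
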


The proof splits  into three lemmas obtained assuming the hypotheses of Proposition  \ref{prop:continuation}.

\begin{lemma}\label{lem:lemdscrt} We have the estimates
 \begin{align} \label{eq:discrest1D}
   &     |\dot D -c |    \lesssim  B ^{-1/2}   (   \| \mathbf{v}  \|_{  { \Sigma }_{1A A_1}}   +  \| \mathbf{v}  \|_{  { \Sigma }_{2A A_1}}) \text{  and }\\& |\dot c   |     \lesssim B ^{-1} \delta ^{\frac{\min  \{ p ,2\}-1}{4}} (   \| \mathbf{v}  \|_{  { \Sigma }_{1A A_1}}   +  \| \mathbf{v}  \|_{  { \Sigma }_{2A A_1}})  . \label{eq:discrest1c}
\end{align}

\end{lemma}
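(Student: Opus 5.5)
Our plan is to differentiate in time the two orthogonality conditions of Lemma \ref{lem:mod1}, namely $\<\mathbf v,\zeta_B\eta_1[c]\>=0$ and $\<\mathbf v,\eta_2[c]\>=0$, and substitute \eqref{eq:sist1}. This produces a $2\times2$ linear system for $(\dot D-c,\dot c)$. Write \eqref{eq:sist1} as $\dot{\mathbf v}=\mathcal L_{[c]}\mathbf v+(\dot D-c)(\mathbf Q_c'+\mathbf v')-\dot c\,\partial_c\mathbf Q_c-\partial_x(0,N(v_1))^\intercal$, where $N(v_1)=f(Q_c+v_1)-f(Q_c)-f'(Q_c)v_1$. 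We then have
\begin{align*}
0=\<\dot{\mathbf v},\eta_2\>+\dot c\<\mathbf v,\partial_c\eta_2\>,\qquad 0=\<\dot{\mathbf v},\zeta_B\eta_1\>+\dot c\<\mathbf v,\zeta_B\partial_c\eta_1\>.
\end{align*}
All pairings are legitimate in $H^{-1}\times H^{-2}$, since the test functions are smooth and decaying, as noted after Lemma \ref{lem:mod1}.

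For the $\eta_2$ equation we use Lemma \ref{lem:kernel}. We have $\<\mathcal L_{[c]}\mathbf v,\eta_2\>=\<\mathbf v,\mathcal L^*_{[c]}\eta_2\>=0$, $\<\mathbf Q_c',\eta_2\>=0$ and $\<\partial_c\mathbf Q_c,\eta_2\>=1$. This gives
\begin{align*}
\dot c=(\dot D-c)\<\mathbf v',\eta_2\>+\<N(v_1),\partial_x\pi_2\eta_2\>+\dot c\<\mathbf v,\partial_c\eta_2\>.
\end{align*}
For the $\eta_1$ equation, $\<\mathcal L_{[c]}\mathbf v,\zeta_B\eta_1\>=\<\mathbf v,\mathcal L^*_{[c]}(\zeta_B\eta_1)\>=-\<\mathbf v,\zeta_B\eta_2\>+\<\mathbf v,[\mathcal L^*_{[c]},\zeta_B]\eta_1\>$. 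Since $\eta_2$ is exponentially localized, $\<\mathbf v,\zeta_B\eta_2\>=\<\mathbf v,(\zeta_B-1)\eta_2\>$, and this is $O(e^{-cB})$ times a localized norm of $\mathbf v$. The commutator involves $\zeta_B'$ and higher derivatives, which are $O(B^{-1})$ and supported where $|x|\ge1$. Because $\eta_1$ tends to a constant at infinity, this term is bounded by $B^{-1}\|\zeta_B\mathbf v\|_{\mathcal H^1}$ up to weight adjustments. Since $\zeta_B\ge\zeta_A$ is false, we will have to control $\zeta_B$-weighted quantities by the $\Sigma_{jAA_1}$ norms; this follows from $\zeta_B\lesssim\zeta_A$ for $B\ll A$.

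The remaining coefficients are $\<\mathbf Q_c',\zeta_B\eta_1\>=1+O(e^{-cB})$ and $\<\partial_c\mathbf Q_c,\zeta_B\eta_1\>=O(B^{-1})$ or smaller, the latter using $\<\xi_2,\eta_1\>=0$ and the decay of $\partial_c\mathbf Q_c$. The system therefore has the form $(I+O(B^{-1/2})+O(\|\mathbf v\|))(\dot D-c,\dot c)^\intercal=\text{RHS}$ and can be inverted. The smallness $\|\mathbf v\|_{\mathcal H^1}\lesssim B^{1/2}\delta$ from \eqref{eq:h1errorbounds} absorbs the $\dot c\<\mathbf v,\cdot\>$ and $(\dot D-c)\<\mathbf v',\cdot\>$ terms. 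In the $\eta_1$ equation, the quantity $\<\mathbf v,[\mathcal L^*,\zeta_B]\eta_1\>$ is where the factor $B^{-1/2}$ in \eqref{eq:discrest1D} comes from. We will estimate it by Cauchy--Schwarz: a $B^{-1}$ amplitude integrated over a region of width $\sim B$ against a function weighted by $\zeta_A$ gives $B^{-1}\cdot B^{1/2}=B^{-1/2}$.

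The main obstacle is the sharper bound \eqref{eq:discrest1c} for $\dot c$. It requires showing that the linear contribution to $\dot c$ is absent, which holds exactly because $\mathcal L^*\eta_2=0$. What remains is the nonlinear term $\<N(v_1),\partial_x\pi_2\eta_2\>$, where $\pi_2\eta_2=\theta_3Q_c$ is localized. For $p\le2$ we have $|N(v_1)|\lesssim|v_1|^p$, and for $p\ge2$ we have $|N(v_1)|\lesssim Q_c^{p-2}|v_1|^2+|v_1|^p$. This gives a bound $\|v_1\|_{L^\infty}^{\min\{p,2\}-1}$ times a localized norm of $v_1$. Using $\|v_1\|_{L^\infty}\lesssim\|\mathbf v\|_{\mathcal H^1}\lesssim B^{1/2}\delta$ together with $\delta\ll B^{-3/2}$, i.e. $\log\delta^{-1}\gg B$, we can trade powers of $\delta$ for the factor $B^{-1}$. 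This leaves $\delta^{(\min\{p,2\}-1)/4}$, and the cross terms with $\dot D-c$ inherit the $B^{-1/2}$ factor times $\|\mathbf v\|$, which is likewise absorbed. The localized norm of $v_1$ is then dominated by $\|\mathbf v\|_{\Sigma_{1AA_1}}+\|\mathbf v\|_{\Sigma_{2AA_1}}$, since $\vartheta_{1A_1}+\vartheta_{2A_1}=1$ and $\zeta_A\gtrsim Q_c^{1/2}$ on compact regions.
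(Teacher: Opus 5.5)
Your proposal follows essentially the paper's proof. You differentiate both orthogonality conditions, use $\mathcal L_{[c]}^*\eta_2[c]=0$ to remove the linear term from the $\dot c$ equation, and extract the factor $B^{-1/2}$ from the commutator $\langle\mathbf v,[\mathcal L_{[c]}^*,\zeta_B]\eta_1[c]\rangle$ via Cauchy--Schwarz with $\|\zeta_B/\zeta_A\|_{L^2}\lesssim B^{1/2}$. You then decouple the system using that the coefficient of $\dot D-c$ in the $\eta_2$ equation is only $\langle\mathbf v',\eta_2[c]\rangle=O(B^{1/2}\delta)$.

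Two slips should be fixed, though neither affects the conclusion.
\begin{itemize}
\item $\zeta_B$ is the exponential weight $e^{-|x|/B}$ away from the origin, not a cutoff. Hence $\langle\mathbf v,(\zeta_B-1)\eta_2[c]\rangle$ and $\langle\xi_1[c],\zeta_B\eta_1[c]\rangle-1$ are $O(B^{-1})$, not $O(e^{-cB})$. This still suffices, since only $B^{-1/2}$ is needed.
\item You never bound the nonlinear term $\langle(0,N(v_1))^\intercal,(\zeta_B\eta_1[c])'\rangle$ in the $\eta_1$ equation. Its piece $\zeta_B'\,\pi_2\eta_1[c]$ is not localized near the soliton. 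However, your own H\"older bound $|N(v_1)|\lesssim\|v_1\|_{L^\infty}^{\min\{p,2\}-1}|v_1|$, together with $|\zeta_B'|\lesssim B^{-1}\zeta_B$ and $\|\zeta_B/\zeta_A\|_{L^2}\lesssim B^{1/2}$, bounds this term by $(B^{1/2}\delta)^{\min\{p,2\}-1}\|\zeta_Av_1\|_{L^2}$. That is enough for the $\dot D-c$ estimate; the paper's $\Omega_{1,t,s}/\Omega_{2,t,s}$ splitting proves more than is needed at this point.
\end{itemize}
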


\begin{lemma}[Virial Inequality]\label{prop:1virial}
 We have for a fixed $C _{c_0}$
 \begin{align}\label{eq:sec:1virial11}
   \|  \mathbf{v}  \|_{ L^2( I ,  \Sigma  _{1A A_1} )} &\le  C _{c_0}   \(   \sqrt[3]{ {\delta }} + \| v_1 \|_{ L^2( I , \widehat{\Sigma})}   \)    \text{ and}\\ \|  \mathbf{v}  \|_{ L^2( I ,  \Sigma  _{2A A_1}) } &\le C _{c_0} \(  \sqrt[3]{ {\delta }} +A_1 ^{-1} \| v_1 \|_{ L^2( I , \widehat{\Sigma})}   \) . \label{eq:sec:1virial12}
 \end{align}
 \end{lemma}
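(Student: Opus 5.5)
The plan is to follow the virial scheme of Kowalczyk et al. \cite{KMM2022} as adapted in \cite{CM25D1}. We perform only a single, high-energy, virial estimate. The term that this estimate cannot control, a localized quadratic expression in $v_1$ with a bad sign, is left on the right-hand side and measured in the $\widehat{\Sigma}$ norm.

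\textbf{Step 1: the virial functional.} For $i=1,2$ we introduce functionals of the form
\begin{align*}
\mathcal I_i(t):=\int_{\R}\varphi_{iAA_1}\,v_1 v_2\,dx+\frac12\int_{\R}\varphi_{iAA_1}'\,v_1 v_2\,dx,
\end{align*}
or the analogous momentum-type density adapted to the matrix $\sigma_1$ structure of \eqref{eq:gB4syst}. We differentiate $\mathcal I_i$ in time using \eqref{eq:v1}--\eqref{eq:v2}. This is first done for regularized data, using Theorem \ref{them:locwp} part 3 and Remark \ref{rem:poitsyst}, and then passed to the limit.

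\textbf{Step 2: the main quadratic form.} The linear transport part produces a quadratic form of the type
\begin{align*}
\frac12\int\varphi_{iAA_1}'\bigl(3|v_1'|^2+|v_1|^2+|v_2|^2\bigr)dx+\int \varphi_{iAA_1}' c\,(\cdots),
\end{align*}
up to terms with $\varphi'''$. Since $\varphi_{iAA_1}'=\zeta_A^2\vartheta_{iA_1}^2$, this form bounds from below $\|\mathbf v\|_{\Sigma_{iAA_1}}^2$, modulo commutator errors of size $O(A^{-1}+A_1^{-1})\|\mathbf v\|^2_{\Sigma_{iAA_1}}$. Positivity of the leading form requires $|c|<1$; the cross terms with $c$ are handled by completing squares. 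This is the high-energy estimate, which the paper's introduction describes as easy here.

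\textbf{Step 3: the remaining terms.} The potential term $\int\varphi_{iAA_1}f'(Q_c)v_1v_1'$, after integration by parts, gives a localized term $\int (\varphi f'(Q_c))' v_1^2$. Since $Q_c$ decays like $e^{-\gamma|x|}$ and $\kappa$ is small, this term is bounded by $C\|v_1\|_{\widehat\Sigma}^2$. For $i=2$, $\varphi_{2AA_1}$ is supported where $|x|\gtrsim A_1$, so the weight $(\varphi_{2AA_1} f'(Q_c))'$ is of size $e^{-\gamma A_1/2}$ times the $\widehat\Sigma$ weight. This produces the gain $A_1^{-1}$, which is in fact much better.

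The nonlinear terms $f(Q_c+v_1)-f(Q_c)-f'(Q_c)v_1$ are treated with \eqref{eq:h1errorbounds}: $\|v_1\|_{L^\infty}\lesssim B^{1/2}\delta$. For $p\in(1,2]$ the Hölder nonlinearity gives a small factor times $\|\mathbf v\|^2_{\Sigma}$. The modulation terms $(\dot D-c)$ and $\dot c$ are absorbed using Lemma \ref{lem:lemdscrt} and \eqref{eq:main2}.

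\textbf{Step 4: integration in time.} We integrate over $I$. The boundary terms satisfy $|\mathcal I_i|\lesssim A\|\mathbf v\|_{\mathcal H^1}^2\lesssim AB\delta^2$, which is at most $\delta^{2/3}$ by \eqref{eq:relABg}. Taking square roots gives \eqref{eq:sec:1virial11}--\eqref{eq:sec:1virial12}.

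\textbf{Main obstacle.} I expect the delicate point to be positivity of the quadratic form in Step 2 uniformly in $c$ close to $1$, together with the cross terms $\varphi' v_1 v_2 c$. I would first try the modified density $v_1(v_2+cv_1)$, which diagonalizes the transport part, and then check that the resulting form is coercive with constant $\sim\gamma^2$. This coercivity constant is where $C_{c_0}$ comes from.
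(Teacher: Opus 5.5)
Your architecture matches the paper's: the functional $\int\varphi_{iAA_1}v_1v_2\,dx$, the coercive form $\frac32|v_1'|^2+\frac12(v_1^2+v_2^2)+cv_1v_2$, the potential term bounded by $\|v_1\|_{\widehat\Sigma}^2$, the modulation terms via Lemma \ref{lem:lemdscrt}, and boundary terms $AB\delta^2\le\delta^{2/3}$. But Step 3 has a genuine gap in the nonlinear term.

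\textbf{The gap.} The weight $\varphi_{iAA_1}$ is not localized: it tends to nonzero constants, of size $A_1$ resp.\ $A$, as $x\to\pm\infty$. So $\langle f(Q_c+v_1)-f(Q_c)-f'(Q_c)v_1,(\varphi_{iAA_1}v_1)'\rangle$ contains $\int\varphi_{iAA_1}N\,v_1'\,dx$, and $|N|\lesssim|v_1|^p$ is not localized either. H\"older only gives $\|\varphi_{iAA_1}\|_{L^\infty}\|v_1\|_{L^\infty}^{p-1}\|v_1\|_{L^2}\|v_1'\|_{L^2}$. This is not controlled by $\|\mathbf v\|^2_{\Sigma_{1AA_1}}+\|\mathbf v\|^2_{\Sigma_{2AA_1}}$, and its time integral grows like $|I|$. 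Hence the inequality does not close uniformly in $T$.

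\textbf{The missing step.} Split off the pure power. Since $f(v_1)v_1'=\partial_xF(v_1)$,
\[
\langle f(v_1),(\varphi_{iAA_1}v_1)'\rangle=\langle f(v_1)v_1-F(v_1),\varphi_{iAA_1}'\rangle ,
\]
which is localized by $\varphi_{iAA_1}'=\zeta_A^2\vartheta_{iA_1}^2$. The remainder satisfies, for $1<p\le2$,
\[
|f(Q_c+v_1)-f(Q_c)-f(v_1)-f'(Q_c)v_1|\lesssim Q_c^{(p-1)/2}|v_1|^{(p+1)/2}.
\]
You check this separately on $\{2|v_1|\le Q_c\}$ and on its complement, i.e.\ the partition \eqref{eq:partition} with $s=1$. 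The factor $Q_c^{(p-1)/2}$ supplies the localization and yields the bound $B^{\frac{p-1}4}\delta^{\frac{p-1}2}\bigl(\|\mathbf v\|^2_{\Sigma_{1AA_1}}+\|\mathbf v\|^2_{\Sigma_{2AA_1}}+\|v_1\|^2_{\widehat\Sigma}\bigr)$, where the $\widehat\Sigma$ term is unavoidable.

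\textbf{Coupling of the two inequalities.} The cutoff commutators do not give errors $O(A^{-1}+A_1^{-1})\|\mathbf v\|^2_{\Sigma_{iAA_1}}$ with the same index $i$. The terms $(\vartheta_{iA_1}')^2\zeta_A^2v_1^2$ live on $A_1\le|x|\le2A_1$, where $\vartheta_{iA_1}$ may vanish. They are therefore only bounded by $A_1^{-2}\bigl(\|\mathbf v\|^2_{\Sigma_{1AA_1}}+\|\mathbf v\|^2_{\Sigma_{2AA_1}}\bigr)$. In addition, the $(\dot D-c)$ term for $i=1$ leaves $B^{-1}\|\mathbf v\|^2_{\Sigma_{2AA_1}}$. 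You thus obtain a coupled system as in \eqref{eq:alvir1}--\eqref{eq:alvir2}. The factor $A_1^{-1}$ in \eqref{eq:sec:1virial12} comes from the term $A_1^{-2}\|\mathbf v\|^2_{\Sigma_{1AA_1}}$ in the second inequality, estimated through the first one. It does not come from the decay of the potential on the support of $\varphi_{2AA_1}$, which is far smaller anyway.

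\textbf{The anticipated obstacle is absent.} We have $\frac12(v_1^2+v_2^2)+cv_1v_2\ge\frac{1-c}2(v_1^2+v_2^2)$, and with $c(t)\le(1+c_0)/2$ this gives the constant $(1-c_0)/8$ directly. The modified density $\varphi v_1(v_2+cv_1)$ would instead create the non-localized, indefinite term $c\int\varphi_{iAA_1}v_1'v_2\,dx$, so do not use it.

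\textbf{Justifying the differentiation.} Theorem \ref{them:locwp} gives no propagation of higher regularity. Justify differentiating $\mathcal I_i$ by mollifying the equation, e.g.\ with $(1-\varepsilon\partial_x^2)^{-1}$, as the paper does.
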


\begin{lemma}[Smoothing Inequality]\label{prop:smooth11}
 We have
 \begin{align}\label{eq:sec:smooth11}
   \| v_1 \|_{ L^2( I , \widehat{\Sigma})}  \lesssim       A _1  ^{3/2} B ^{-1}   \| \mathbf{v} \| _{L^2(I,  { \Sigma }_{1A A_1}   )} +     B ^{ 1/2}   \| \mathbf{v} \| _{L^2(I,  { \Sigma }_{2A A_1}   )} +  o _{B^{-1}} (1)  \epsilon     .
 \end{align}
 \end{lemma}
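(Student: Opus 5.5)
The plan follows the scheme of \cite{CM25D1}. We introduce an auxiliary variable that removes the modulation and the non-dispersive directions, and apply Kato smoothing for the linearized group $e^{t\mathcal L_{[c_0]}}$ restricted to the continuous spectral subspace.

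\textbf{Step 1: the auxiliary variable.} Write \eqref{eq:sist1} as
\begin{equation*}
\dot{\mathbf v}=\mathcal L_{[c]}\mathbf v+\mathbf R,
\end{equation*}
where $\mathbf R$ collects three pieces: $-\partial_x(0,f(Q_c+v_1)-f(Q_c)-f'(Q_c)v_1)^\intercal$, the modulation terms $(\dot D-c)(\mathbf Q_c+\mathbf v)'$, and $-\dot c\,\partial_c\mathbf Q_c$. We then freeze $c$ at $c_0$ and put the difference $(\mathcal L_{[c]}-\mathcal L_{[c_0]})\mathbf v$ into the remainder; it is $O(\delta)$ times localized $v_1$. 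Next we define $\mathbf w:=\vartheta_{1A_1}\zeta_B\,P_c\mathbf v$. Here $P_c$ is the Riesz projection onto the continuous spectrum of $\mathcal L_{[c_0]}$ along the generalized kernel $\Span\{\xi_1,\xi_2\}$, built from Lemma \ref{lem:kernel}. This cutoff is the one that produces the $A_1^{3/2}B^{-1}\|\mathbf v\|_{L^2\Sigma_{1AA_1}}$ term. In the region $|x|\lesssim \kappa^{-1}$ that dominates the $\widehat\Sigma$ norm, $P_c\mathbf v$ differs from $\mathbf v$ only by discrete components. Those components are controlled, through the orthogonality conditions \eqref{61}, by $B^{-1/2}$ times $\Sigma_{jAA_1}$ norms. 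The outer part $\vartheta_{2A_1}$ is estimated directly by $\|\mathbf v\|_{\Sigma_{2AA_1}}$. Since $\zeta_A^{-1}\lesssim B^{1/2}$ on the region where $\sech(\gamma\kappa x)$ is not small compared to $e^{-|x|/A}$, this yields the $B^{1/2}$ factor.

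\textbf{Step 2: Duhamel and the smoothing estimates.} For $\mathbf w$ we write Duhamel's formula:
\begin{equation*}
\mathbf w(t)=e^{t\mathcal L_{[c_0]}}P_c\mathbf w(0)+\int_0^t e^{(t-s)\mathcal L_{[c_0]}}P_c\mathbf G(s)\,ds .
\end{equation*}
The forcing $\mathbf G$ contains three kinds of terms: the commutators $[\mathcal L_{[c_0]},\vartheta_{1A_1}\zeta_B]\mathbf v$, the nonlinearity, and the modulation terms. We need two estimates.
\begin{enumerate}
\item The homogeneous Kato smoothing bound $\|\sech(\gamma\kappa x)\,\pi_1 e^{t\mathcal L}P_c\mathbf u\|_{L^2_tL^2_x}\lesssim\|\mathbf u\|_{\mathcal H^1}$.
\item The inhomogeneous bound in which the forcing is measured in a weighted norm. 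This is applied to $\langle x\rangle^{N}$-localized forcing in $L^2_tH^{-1}$, or, for the $\partial_x$ in the second component, in $L^2_t L^2$ after one derivative.
\end{enumerate}
Both follow from uniform limiting-absorption bounds for $\sech(\gamma\kappa x)\pi_1R_{\mathcal L_{[c_0]}}(z)P_c\,\sech(\gamma\kappa x)\,\partial_x$ for $z\in\Upsilon$, combined with Kato's theorem via Plancherel in time. The resolvent bounds come from the Jost function construction of Lemma \ref{lem:transys}. Away from $0$ they use the nonvanishing of the Wronskian $\widehat W$ on $\im\R\setminus\{0\}$, which is the consequence of Proposition \ref{prop:noeigh}(iii). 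Near $\Lambda=0$ the Jordan block is removed by $P_c$. Large $|\Lambda|$ is handled by a high-energy analysis of the roots of $A_\infty^{(0)}+\Lambda A_\infty^{(1)}$.

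\textbf{Step 3: bounding the forcing terms.}
\begin{enumerate}
\item The initial-data contribution is $\lesssim B^{1/2}\delta$, which is $o_{B^{-1}}(1)\epsilon$ because $\log\delta^{-1}\gg\log\epsilon^{-1}$.
\item The commutator terms carry derivatives of cutoffs, $O(A_1^{-1})$ and $O(B^{-1})$, supported where $|x|\sim A_1$ or in the $\zeta_B$ tails. The second-component pieces lose a derivative, which is exactly why only the $v_1$ smoothing is sought. These terms give $A_1^{3/2}B^{-1}\|\mathbf v\|_{\Sigma_{1}}+B^{1/2}\|\mathbf v\|_{\Sigma_2}$.
\item The nonlinear terms are $\lesssim\|\mathbf v\|_{L^\infty}^{\min(p,2)-1}$ times localized $v_1$, hence $\delta^{(\min(p,2)-1)/2}\epsilon$.
\item The modulation terms are handled with Lemma \ref{lem:lemdscrt}.
\end{enumerate}

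\textbf{Main obstacle.} The hard part is the loss of regularity: terms like $v_2'$ and $v_1''$ appear in the forcing, but the energy space does not control them. The remedy is to project onto the first component and prove smoothing only for $\pi_1$. The first row of the resolvent of $\mathcal L_{[c_0]}$ gains the extra derivative, as reflected in the $\gamma^{-1}\widetilde F_1$ and $\gamma^{-3}$ entries of \eqref{eq:relkern2}. That gain is what permits $\partial_x$ on the forcing. I would first establish this gain for the free operator $\mathcal L_{[c]}^{(0)}$ by symbol computations, and then transfer it to $\mathcal L_{[c_0]}$ through the Jost-function representation.
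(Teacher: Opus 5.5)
There is a genuine gap, and it sits in your choice of auxiliary variable $\mathbf w=\vartheta_{1A_1}\zeta_B P_c\mathbf v$.

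\textbf{First issue: $P_c\mathbf v$ is not defined.} For $\mathbf v(t)\in\mathcal H^1$ the projection makes no sense. The dual vector $\eta_1[c_0]$ in \eqref{eq:ker2} is bounded but does not decay: it contains the primitive $\int_{-\infty}^x\partial_c\mathbf Q_c$ plus a constant, and its limits at $\pm\infty$ are never both zero. So $\langle\eta_1[c_0],\mathbf v\rangle$ needs $\mathbf v\in L^1$. This is precisely why the modulation uses the truncated condition $\langle\mathbf v,\zeta_B\eta_1[c]\rangle=0$ in \eqref{61}. That condition gives no information on $\langle(1-\zeta_B)\eta_1,\mathbf v\rangle$. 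Hence your claim fails that \eqref{61} controls the discrete part of $\mathbf v-P_c\mathbf v$ by the $\Sigma_{jAA_1}$ norms. The paper reverses the order: it cuts off first and projects afterwards, setting $\mathbf z=\mathcal Q_{[c_0]}\zeta_B\mathbf v$ with $\zeta_B\mathbf v\in L^{1,1}$. Then \eqref{61} gives $\langle\zeta_B\mathbf v,\eta_1[c_0]\rangle=\langle\zeta_B\mathbf v,\eta_1[c_0]-\eta_1[c]\rangle$ and $\langle\zeta_B\mathbf v,\eta_2[c_0]\rangle=\langle\mathbf v,(\zeta_B-1)\eta_2[c]\rangle+\langle\zeta_B\mathbf v,\eta_2[c_0]-\eta_2[c]\rangle$. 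This yields $\|\mathcal P_{[c_0]}\zeta_B\mathbf v\|_{L^2}\lesssim B^{-1}(\|\mathbf v\|_{\Sigma_{1AA_1}}+\|\mathbf v\|_{\Sigma_{2AA_1}})$, which is Lemma \ref{lem:disczbv}. For the same reason your homogeneous smoothing bound must take data in $\mathcal H^1\cap L^{1,1}$, as in \eqref{eq:pwdisp1}, not in $\mathcal H^1$ alone.

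\textbf{Second issue: the extra cutoff $\vartheta_{1A_1}$ spoils the constants, and your source norms are wrong.} The usable smoothing estimates take sources in $L^{1,1}$, or in $L^1$ for sources of the form $(0,\partial_xg)^\intercal$. The weight $\langle y\rangle$ is forced by the zero-energy threshold, where the resolvent kernel grows like $|\xi-y|$ (the term $B(\Lambda,\xi,y)$ in the proof of Lemma \ref{lem:smooth2small}). A $\sech$ weight on the source side, as in your limiting-absorption bound, is useless for commutator sources that are only localized at scale $B$.

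With your $\mathbf w$, the commutator contains $\vartheta_{1A_1}'\begin{pmatrix}c_0&1\\1&c_0\end{pmatrix}\zeta_B\mathbf v$. This term has size $A_1^{-1}$ on $A_1\le|x|\le2A_1$. Its $L^{1,1}(\R,L^2(I))$ norm can only be bounded by $A_1^{-1}\cdot A_1\cdot A_1^{1/2}\|\mathbf v\|_{L^2(I,L^2(A_1\le|x|\le2A_1))}$, and that quantity contains $\|\mathbf v\|_{L^2(I,\Sigma_{1AA_1})}$ with a factor $A_1^{1/2}$. Inserting this with \eqref{eq:sec:1virial11} into Proposition \ref{prop:continuation} puts $A_1^{1/2}C_{c_0}\|v_1\|_{L^2(I,\widehat\Sigma)}$ on the right, so the argument cannot close.

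In the paper there is no $\vartheta_{1A_1}$ in the auxiliary variable. Both explicit terms in \eqref{eq:sec:smooth11} come from the single term $\zeta_B'\begin{pmatrix}c_0&1\\1&c_0\end{pmatrix}\mathbf v$, split by the partition of unity (Lemma \ref{lem:estcomm21}):
\begin{itemize}
\item on the $\vartheta_{1A_1}$ part, $\|\zeta_B'/\zeta_A\|_{L^\infty}\lesssim B^{-1}$ and $\|1_{[-2A_1,2A_1]}\|_{L^{2,1}}\lesssim A_1^{3/2}$ give the factor $A_1^{3/2}B^{-1}$;
\item on the $\vartheta_{2A_1}$ part, $\|\zeta_B'/\zeta_A\|_{L^{2,1}}\lesssim B^{1/2}$ gives the factor $B^{1/2}$.
\end{itemize}
Your explanation via $\zeta_A^{-1}\lesssim B^{1/2}$ does not correspond to anything in this computation.

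Relatedly, $(\mathcal L_{[c]}-\mathcal L_{[c_0]})\mathbf v$ contains the non-localized transport term $(c-c_0)\mathbf v'$. It is not "$O(\delta)$ times localized $v_1$", and it is tamed only after multiplication by $\zeta_B$ (Lemma \ref{lem:gKdV232}).
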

The proofs of lemmas  \ref{lem:lemdscrt} and \ref{prop:1virial} will be proved in \S \ref{sec:discr} and \S \ref{sec:virial}, respectively. Lemma \ref{prop:smooth11} will be proved in \S \ref{sec:smooth}.
\begin{proof}[Proof of Proposition \ref{prop:continuation}  assuming Lemma \ref{lem:lemdscrt}--\ref{prop:smooth11}.]
By $A_1 = B ^{3/5}$, see   \eqref{eq:relABg}, 
      \eqref{eq:sec:1virial11}--\eqref{eq:sec:smooth11} give \small
\begin{align*}
  & \| v_1 \|_{ L^2( I , \widehat{\Sigma})}   \lesssim       A _1  ^{3/2} B ^{-1}   \| \mathbf{v} \| _{L^2(I,  { \Sigma }_{1A A_1}   )} +     B ^{ 1/2}   \| \mathbf{v} \| _{L^2(I,  { \Sigma }_{2A A_1}   )} +  o _{B^{-1}} (1)  \epsilon
\\& \lesssim  o _{B^{-1}} (1)  \epsilon  +    \( A _1  ^{\frac{3}{2}  } B ^{-1}  + B ^{ 1/2} A _{1}  ^{-1}  \)  \| v_1 \|_{ L^2( I , \widehat{\Sigma})}  \lesssim  o _{B^{-1}} (1)  \epsilon  +   B ^{-\frac{1}{10}}      \| v_1 \|_{ L^2( I , \widehat{\Sigma})}    = o _{B^{-1}} (1)  \epsilon .
\end{align*}
\normalsize
Then $ \| v_1 \|_{ L^2( I , \widehat{\Sigma})}   =o _{B^{-1}} (1)  \epsilon . $ Inserting this in \eqref{eq:sec:1virial11} and \eqref{eq:sec:1virial12} shows that
\begin{align*}
    \| \mathbf{v} \| _{L^2(I,  { \Sigma }_{1A A_1}   )}  +       \| \mathbf{v} \| _{L^2(I,  { \Sigma }_{2A A_1}   )} =o _{B^{-1}} (1)  \epsilon .
\end{align*}
Finally using this information and Lemma  \ref{lem:lemdscrt} the proof of  Proposition \ref{prop:continuation} is completed. 
\end{proof}

\bigskip 

\begin{proof}[Proof of Theorem \ref{thm:main}]
By $\mathbf{u}\in C^0([0,+\infty),\mathcal{H}^1 )$ it is elementary that \eqref{eq:main2} can be extended to  $I=\R _+$,  in particular yielding \eqref{eq:asstab2}.
Consider  
\begin{equation}
  \begin{aligned}\label{eq:bolda}
   \mathbf{a}(t) &:= 2 ^{-1} \|  e^{- a\< x\>}   v _1(t)  \| _{L^2(\R )} ^2   .
  \end{aligned}
  \end{equation}
By \eqref{eq:v1}, $v_1\in C^0([0,+\infty),H^1)\cap C^1([0,+\infty),H^{-1})$. Hence $\mathbf a$ is locally absolutely continuous. The weak Leibniz rule, together with \eqref{eq:discrest1D}--\eqref{eq:discrest1c}, gives
\begin{align*}
|\dot{\mathbf a}(t)| = \left|\left\langle e^{-2a\langle x\rangle}v_1, cv_1'+v_2'+(\dot D-c)(Q_c'+v_1') -\dot c\partial_cQ_c \right\rangle\right| \lesssim\|\mathbf v(t)\|_{\mathcal H^1}^2.
\end{align*}
Here the pairing involving $v_2'$ is understood by integration by parts. 
Thus \eqref{eq:h1errorbounds} implies that $\mathbf a$ is uniformly Lipschitz. Moreover, $\mathbf a\in L^1(\R_+)$ by \eqref{eq:asstab2}. 
A nonnegative, uniformly continuous, integrable function tends to zero, so \eqref{eq:asstab20} follows.
Next we prove  \eqref{eq:asstab3}.  Since $M( \mathbf{Q}_{c}) $ is strictly increasing   in $c$ for $c\in ( (  p-1) ^{1/2}/2 , 1   )$ , it suffices to show that  $M(\mathbf{Q}_{c (t)})$ converges as $t\to +\infty$. 
By $\eta_2[c]=(q'(c))^{-1}\sigma_1\mathbf Q_c$, the second orthogonality condition in \eqref{61}   gives 
\begin{align*}
\<\mathbf v(t),\sigma_1\mathbf Q_{c(t)}\> =q'(c(t))\<\mathbf v(t),\eta_2[c(t)]\>=0.
\end{align*}
Consequently, conservation and translation invariance of
$M$ yield
\begin{align}\label{eq:equipart1}
 M(\mathbf u_0)
= M(\mathbf Q_{c(t)})+M(\mathbf v(t))
   \text{ for all }t\ge0.
\end{align}
Thus, our task is now to prove $\frac{d}{dt}M(\mathbf{v}) \in L^1(\R _+) $, which is sufficient to show the convergence of $M(\mathbf{v}(t))$ as $t\to +\infty$.  For $U(t)$ the operator in \eqref{eq:group}, 
let $\widetilde{\mathbf{v}}(t) =U^{-1}(t)\mathbf{v}(t) = U(-t)\mathbf{v}(t)$. Then  
\begin{align*} 
   \dot {\widetilde{\mathbf{v}} } &=   \dot D  \widetilde{\mathbf{v}}'- U(-t)  \partial _x(0, f  ( v_1 +Q_c) -f  (  Q_c)   )^\intercal  +(\dot D-c) U(-t)  \mathbf{Q}_c  ' -\dot c  U(-t)\partial _c\mathbf{Q}_c   \nonumber
\end{align*}
and,  for the second equality proceeding like for the differentiation of $\mathbf{a}$    and      using $  \sigma _1 U(-t)\sigma _1 =U^*(t)$,    \small
\begin{align*}
\frac{d}{dt}M(\mathbf{v})&= \frac{d}{dt}M(\widetilde{\mathbf{v}})  = \< \widetilde{\mathbf{v}} , \sigma _1 \dot { \widetilde{\mathbf{v}} } \>
= (\dot D -c ) \<\widetilde{\mathbf{v}}, \sigma _1 U(-t)\xi _1[c]\> -  \dot c \<\widetilde{\mathbf{v}}, \sigma _1  U(-t)\xi _2[c]\>  \\&   -\<\widetilde{\mathbf{v}} ,\sigma _1 U(-t)  \partial _x(0, f  ( v_1 +Q_c) -f  (  Q_c)   )^\intercal \>
 \\& =     \left [ (\dot D -c ) \<\mathbf{v}, \sigma _1  \xi _1[c]\> -  \dot c \<\mathbf{v},  \sigma _1\xi _2[c]\>  \right ]   +\<v _1',f ( v_1 +Q_c)   -f   (  Q_c)  \>
=:I+II .
\end{align*}  \normalsize
By the exponential decay of $\xi_1[c]$ and $\xi_2[c]$,
\begin{align*}
|I(t)|
\lesssim (|\dot D-c|+|\dot c|)
\bigl(
\|\mathbf v(t)\|_{\Sigma_{1AA_1}}
+\|\mathbf v(t)\|_{\Sigma_{2AA_1}}
\bigr).
\end{align*}
Thus \eqref{eq:main2} on $\R_+$ implies $I\in L^1(\R_+)$.

For $1<p\le2$,  we have $|f'(r)-f'(s)|\le C_p |r-s|^{p-1}$
 for $ r,s\in\R$, hence 
\begin{align*}
II(t)
=\int_{\R}v_1'
\bigl(f(Q_c+v_1)-f(Q_c)-f(v_1)\bigr)\,dx
=\int_{\R}v_1'v_1
\int_0^1\bigl(f'(Q_c+sv_1)-f'(sv_1)\bigr)\,ds\,dx.
\end{align*}
For $A$ sufficiently large,  
  $Q_c^{p-1}\lesssim\zeta_A^2$  uniformly for $c$ near $c_0$
 gives
\begin{align*}
|II(t)|
\lesssim\int_{\R}Q_c^{p-1}|v_1v_1'|\,dx
\lesssim\|\zeta_Av_1(t)\|_{H^1}^2
\lesssim
\|\mathbf v(t)\|_{\Sigma_{1AA_1}}^2
+\|\mathbf v(t)\|_{\Sigma_{2AA_1}}^2.
\end{align*}
By \eqref{eq:main2}, the last expression belongs to $L^1(\R_+)$.
This proves $II\in L^1(\R_+)$.
\end{proof}

\section{Proof of Lemma \ref{lem:lemdscrt} }\label{sec:discr}

 The proof is standard and is very similar   to \cite[\S 4]{CM25D1}.
 From \eqref{61}
\begin{align*}
    \< \dot { \mathbf{v} }  , \zeta_{B} \eta _1[c] \> =-\dot c   \<   \mathbf{v}  , \zeta_{B} \partial _c\eta _1[c] \>  ,
\end{align*}
  using \eqref{eq:ker1} and applying     $ \< \quad  , \zeta_{B} \eta _1[c] \>  $ to \eqref{eq:sist1} we
  obtain
\begin{align}&
  - ( \dot D -c )   \< \xi _1[c] +  \mathbf{v}' , \zeta_{B} \eta _1[c] \>      + \dot c  \left [ \< \xi _2[c] , \zeta_{B} \eta _1[c] \>    -    \<   \mathbf{v}  , \zeta_{B} \partial _c\eta _1[c] \> \right ] \label{eq:seceq}\\& = \<    \mathcal{L}_{[c]}    \mathbf{v}   ,  \zeta_{B}  \eta _1[c] \>  + \<  (0, f  ( Q_c + v_1   )-f'(Q_c)v_1  -f  (  Q_c)   )^\intercal   ,\( \zeta_{B} \eta _1[c]\) ' \>   =: \mathcal{A} _{1 }+\mathcal{A} _{2 }. \nonumber
\end{align}
We have
\begin{align*}
  \mathcal{A} _{1  }=  \<    \mathbf{v} ,  \zeta_{B}  \mathcal{L}_{[c]}^* \eta _1[c] \> + \<    \mathbf{v} , [  \mathcal{L}_{[c]}^*, \zeta_{B} ] \eta _1 [c] \> =:\mathcal{A} _{1 1 }+ \mathcal{A} _{1  2}.
\end{align*}
Now, by \eqref{eq:M2.4},  \eqref{61},    $1= \vartheta _{1A_1} + \vartheta _{2A_1} $ and the definition of $  \zeta_{A}$ in \eqref{def:zetaC}, we have
\begin{align*}
  \mathcal{A} _{1 1}&=    \<      \mathbf{v} ,  \zeta_{B}  \mathcal{L}_{[c]}^* \eta _1[c] \> = -\<     \mathbf{v} , \zeta_{B}  \eta _2[c] \>=\<     \mathbf{v} , (1-\zeta_{B})  \eta _2[c] \>   \\&=\< (\vartheta _{1A_1} + \vartheta _{2A_1} ) \zeta_{A}  \mathbf{v}  ,   \zeta_{A} ^{-1} \(1- e  ^{-\frac{|x|}{B}(1-\chi )}    \)  \eta _2[c] \>   .
\end{align*}
Then by the exponential decay of $\eta _2[c]$, consequence of the its definition \eqref{eq:ker2} and \eqref{eq:QQ}--\eqref{eq:QQ1},   
\begin{align}\label{eq:A11bound}
  |\mathcal{A} _{1 1 }| & \lesssim B ^{-1}   (   \| \mathbf{v}  \|_{  { \Sigma }_{1A A_1}}   +  \| \mathbf{v}  \|_{  { \Sigma }_{2A A_1}})
\end{align}
and, by $|\zeta_B^{(k)}|\lesssim B^{-1}\zeta_B$ for $1\le k\le3$, $\|\zeta_B/\zeta_A\|_{L^2(\R)}\lesssim B^{1/2}$, and $\|\eta_1[c]\|_{W^{n,\infty}(\R)} \leq C(n,c_0)$ for all $n$,
\begin{align}\label{eq:A12bound}
  |\mathcal{A} _{1 2 }| & \lesssim B ^{-1}  \left  \| \frac{ \zeta_{B}  }{\zeta_{A}}  \zeta_{A} (\vartheta _{1A_1} + \vartheta _{2A_1})  \mathbf{v}    \right \|_{   L^1( \R )   } \lesssim  B ^{-1/2}  (   \| \mathbf{v}  \|_{  { \Sigma }_{1A A_1}}   +  \| \mathbf{v}  \|_{  { \Sigma }_{2A A_1}}) .
\end{align}
   Next we claim
\begin{align}\label{eq:boundA2}
    | \mathcal{A}_2|\lesssim B ^{-1} \delta ^{\frac{\min  \{ p ,2\}-1}{4}} (\| v_1\|_{\widehat{\Sigma} }   +  \| \mathbf{v}  \|_{  { \Sigma }_{2A A_1}}) .
\end{align}  Like  in \cite {CM25D1}
   for $s\in (0,1)$ we partition the line    as
\begin{align}&
   \Omega_{1,t ,s } :=\{x\in \R\ |\  2| s v_1(t,x)| \le  Q   _{c (t)}(x) \}  \text{ and }\label{eq:partition}\\&  \Omega_{2,t,s } :=\R\setminus \Omega_{1,t ,s }  =\{x\in \R\ |\ 2| s v_1(t,x)|> Q   _{c (t)}(x)\} .\nonumber
\end{align}
For $\eta _{1 }[c] =( \eta _{11}[c] ,\eta _{12}[c])^\intercal $ and using   \eqref{eq:partition}  we have 
\begin{align*}
   \mathcal{A} _{ 2 }& = \sum _{i=1,2}\int _{0}^{1} ds \int _{\Omega_{i,t,s }} \(    f'( Q_c +s v_1 ) -  f'( Q  _{c} )   \) v_1 \( \zeta_{B} \eta _{12}[c]\) ' dx  =:  \sum _{i=1,2}\mathcal{A} _{2 i}  .
\end{align*}
 For $1<p\le2$, the range of $p$'s we consider in the proof,  recall the provisio in Remark \ref{rem:udermain},
 the concavity of  $f'$ along with $f'(0)=0$ gives
 \begin{align*}
 |f'(Q_c+sv_1)-f'(Q_c)|
 \lesssim_p s^{p-1}|v_1|^{p-1}.
 \end{align*}
 On $\Omega_{1,t,s}$, the inequality $2s|v_1|\le Q_c$ implies
 \begin{align*}
 s^{p-1}|v_1|^p
 \lesssim
 s^{(p-1)/2}Q_c^{(p-1)/2}|v_1|^{(p+1)/2}.
 \end{align*}
 Using this inequality, the boundedness of
 $(\zeta_B\eta_{12}[c])'$, and \eqref{eq:h1errorbounds}, we obtain
 \begin{align*}
 &|\mathcal A_{21}|
 \lesssim
 \int_0^1s^{p-1}\int_{\Omega_{1,t,s}}
 |v_1|^p|(\zeta_B\eta_{12}[c])'|\,dx\,ds\\
 &\lesssim
 \int_0^1s^{(p-1)/2}\int_{\Omega_{1,t,s}}
 Q_c^{(p-1)/2}|v_1|^{(p+1)/2}\,dx\,ds\lesssim
 \|v_1\|_{L^\infty}^{(p-1)/2}
 \|v_1\|_{\widehat\Sigma}  \lesssim
 B^{(p-1)/4}\delta^{(p-1)/2}
 \|v_1\|_{\widehat\Sigma}.
 \end{align*}
 Here we used the exponential decay of $Q_c$ and chose
 $\kappa$ in \eqref{eq:sigmatilde} sufficiently small. By the same concavity argument, $|f'(Q_c+sv_1)-f'(Q_c)| \lesssim_p  s^{p-1}|v_1|^{p-1} \le |v_1|^{p-1}$ on $\Omega_{2,t,s}$, we obtain
\begin{align*}
   |\mathcal{A} _{22}|&\lesssim  \int _{\R}|v_1| ^{p }   |\zeta_{B}' \eta _1[c] |   dx + \int _{\R}|v_1| ^{p }  |\zeta_{B}  \eta _1' [c] | dx
  =: \mathcal{A} _{221}+\mathcal{A} _{222}.
 \end{align*}
By choosing $\kappa$ sufficiently small, the profile formulas give
\begin{align*}
\|\cosh(\gamma\kappa x)\eta_1'[c]\|_{L^2}\lesssim1,
\end{align*}
uniformly for $c$ near $c_0$. Hence
\begin{align*}
\mathcal A_{222}\lesssim \|v_1\|_{L^\infty}^{p-1}\|v_1\|_{\widehat\Sigma}\lesssim
B^{(p-1)/2}\delta^{p-1}\|v_1\|_{\widehat\Sigma}\lesssim
B^{(p-1)/4}\delta^{(p-1)/2}\|v_1\|_{\widehat\Sigma},
\end{align*}
where the last inequality uses $B^{1/2}\delta\le1$.
 Using the partition of unity $1= \vartheta _{1A_1} + \vartheta _{2A_1} $    we write
\begin{align*}
   \mathcal{A} _{221} &=  \int _{\R}|v_1| ^{p }    \vartheta _{1A_1}  |\zeta_{B}' \eta _1[c] |   dx   +  \int _{\R}|v_1| ^{p }    \vartheta _{2A_1}  |\zeta_{B}' \eta _1[c] |   dx =:\mathcal{ A} _{2211}+\mathcal{A} _{2212}
\end{align*}
where, by $\| \zeta_{B}' /\zeta_{A}      \| _{L^2(\R )} \lesssim   B^{-1/2} $ and $   \|    v _1  \| _{L^\infty(\R )}^{p-1}\lesssim  B^{\frac{p-1}{2}} \delta  ^{\frac{p-1}{2}}$ by \eqref{eq:h1errorbounds} and Sobolev embedding, 
\begin{align*}
  |\mathcal{A} _{2212}|&\lesssim   \left \|    \frac{\zeta_{B}'}{\zeta_{A} }  \right \| _{L^2(\R )}      \left \|    \eta _1[c]  \right \| _{L^\infty (\R )}    \left \|    v _1 \right \| _{L^\infty(\R )}^{p-1}   \| \mathbf{v}  \|_{  { \Sigma }_{2A A_1}}\lesssim  B^{\frac{p-2}{2}} \delta  ^{\frac{p-1}{2}} \| \mathbf{v}  \|_{  { \Sigma }_{2A A_1}}.
\end{align*}
Finally, since $0<\gamma\le1$, we choose $\delta>0$ so small that $\delta^{\frac{p-1}{4}}\left(B^{\frac{p+3}{4}}+B^{\frac p2}\cosh(2\kappa A_1)\right)\ll1$. Then
\begin{align}
|\mathcal A_{2211}| & \lesssim \|\cosh(\gamma\kappa x)\|_{L^\infty(|x|\le2A_1)} \|\zeta_B'\|_{L^2} \|v_1\|_{L^\infty}^{p-1} \|\sech(\gamma\kappa x)v_1\|_{L^2}
\nonumber\\ 
&\lesssim
B^{\frac{p-2}{2}}\cosh(2\gamma\kappa A_1) \delta^{\frac{p-1}{2}}
\|v_1\|_{\widehat\Sigma}
\lesssim
B^{-1}\delta^{\frac{p-1}{4}}
\|v_1\|_{\widehat\Sigma}.
\label{eq:smallBdelta}
\end{align}
Together with the preceding estimates for
$\mathcal A_{21}$, $\mathcal A_{222}$ and $\mathcal A_{2212}$,
this proves \eqref{eq:boundA2}.
Notice also that this choice implies $B^2\delta\ll1$.
Finally  in \eqref{eq:seceq} for the coefficients of $\dot D -c $ and  $\dot  c $  we have
\begin{align}\label{eq:bdcoeff1}&
    \< \xi _1[c] +  \mathbf{v}' , \zeta_{B} \eta _1[c] \>  =1+ O(B^{-1})  \text{   and }\\& \< \xi _2[c] , \zeta_{B} \eta _1[c] \>    -    \<   \mathbf{v}  , \zeta_{B} \partial _c\eta _1[c] \> =  O(B^{-1}) .\nonumber
\end{align}
From the above and 
\begin{equation}\label{eq:reltidesigetal}
    \| v_1  \|_{   \widehat{\Sigma} } \lesssim  \| \mathbf{v}  \|_{  { \Sigma }_{1A A_1}}   +  \| \mathbf{v}  \|_{  { \Sigma }_{2A A_1}}.
\end{equation}
we derive
\begin{align} \label{eq:discrest1D1}
   |\dot D -c |   & \lesssim B^{-1} |\dot c | +B ^{-1/2 } (   \| \mathbf{v}  \|_{  { \Sigma }_{1A A_1}}   +  \| \mathbf{v}  \|_{  { \Sigma }_{2A A_1}}) .
\end{align}
Applying     $ \< \quad  ,  \eta _2[c] \>  $ to \eqref{eq:sist1}
  and using \eqref{eq:M2.4} we obtain
\begin{align*}&
  - ( \dot D -c )   \< \xi _1[c] +  \mathbf{v}' ,   \eta _2[c] \>      +\dot c  \left [1    -    \<   \mathbf{v}  ,   \partial _c\eta _2[c] \> \right ] 
  \\& =    \<  (0, f  ( Q_c + v_1   )-f'(Q_c)v_1  -f  (  Q_c)   )^\intercal   ,  \eta _2[c]  ' \>  \nonumber  .
\end{align*}
From 
\begin{align*} &
    \< \xi _1[c] +  \mathbf{v}' ,   \eta _2[c] \>  = -\<    \mathbf{v}  ,   \eta _2'[c] \> =O( \|  \mathbf{v}\|  _{L^2}     ) =O( \delta B^{1/2}    ) \text{   and }\\& \< \xi _2[c] ,   \eta _2[c] \>    -    \<   \mathbf{v}  ,   \partial _c\eta _2[c] \> =  1 + O( \delta B^{1/2}    )\approx 1 , 
\end{align*}
the identity $\mathcal L_{[c]}^*\eta_2[c]=0$,
the H\"older estimate for $f'$, the exponential decay of
$\eta_2'[c]$, and \eqref{eq:reltidesigetal}, we obtain
\begin{align*}
|\dot c|
\lesssim
\delta B^{1/2}|\dot D-c|
+B^{-1}\delta^{\frac{p-1}{4}}
\left(
\|\mathbf v\|_{\Sigma_{1AA_1}}
+\|\mathbf v\|_{\Sigma_{2AA_1}}
\right).
\end{align*}
Indeed, by \eqref{eq:h1errorbounds} and the choice of $\delta$ above,
the nonlinear term satisfies
\begin{align*}
\left|
\langle
(0,f(Q_c+v_1)-f(Q_c)-f'(Q_c)v_1)^\intercal,
\eta_2'[c]
\rangle
\right|
\lesssim
\|v_1\|_{L^\infty}^{p-1}\|v_1\|_{\widehat\Sigma}
\lesssim
B^{\frac{p-1}{2}}\delta^{p-1}
\|v_1\|_{\widehat\Sigma}
\lesssim
B^{-1}\delta^{\frac{p-1}{4}}
\|v_1\|_{\widehat\Sigma}.
\end{align*}
Substituting \eqref{eq:discrest1D1} gives
\begin{align*}
|\dot c|
\lesssim
\delta B^{-1/2}|\dot c|
+\left(\delta+B^{-1}\delta^{\frac{p-1}{4}}\right)
\left(
\|\mathbf v\|_{\Sigma_{1AA_1}}
+\|\mathbf v\|_{\Sigma_{2AA_1}}
\right).
\end{align*}
The choice of $\delta$ above implies $\delta B^{-1/2}\ll1$ and
$B\delta^{1-\frac{p-1}{4}}\ll1$.
Absorbing the first term therefore proves \eqref{eq:discrest1c}.
Substitution in \eqref{eq:discrest1D1} then proves \eqref{eq:discrest1D}, completing the proof of Lemma \ref{lem:lemdscrt} for $1<p\le 2$.

\section{The virial inequality: proof of Lemma \ref{prop:1virial} }\label{sec:virial}

Using the notation in \eqref{def:zetaC}  we set
\begin{align}\nonumber 
\mathcal{I}_{i} := 2^{-1}\<  \mathbf{v}  , \varphi _{iAA_1} \sigma _1\mathbf{v}  \>    \text{  for $i=1,2$ }.
\end{align}
This section is essentially a replica of  \cite[Sect. 5]{CM25D1}, although here the estimates are a refinement of   \cite[Lemma  5.1]{CM25D1}: the estimates below can be proved also in   \cite[Lemma  5.1]{CM25D1}.

\begin{lemma}\label{lem:1stV1}
There exist fixed constants $C_j=C_j(c_0)>0$ such that
\begin{align}
\label{eq:lem:1stV11}
\|\mathbf v\|_{\Sigma_{1AA_1}}^2
&\leq
-C_1\frac{d}{dt}\mathcal I_1
-C_2B^{-1}\frac{d}{dt}\mathcal I_2
+
C_3
\|v_1\|_{\widehat{\Sigma}}^2
,
\\
\label{eq:lem:1stV12}
\|\mathbf v\|_{\Sigma_{2AA_1}}^2
&\leq
-C_4A_1^{-2} \frac{d}{dt}\mathcal I_1-C_5\frac{d}{dt}\mathcal I_2
+
C_6  
A_1^{-2}\|v_1\|_{\widehat{\Sigma}}^2
.
\end{align}
\end{lemma}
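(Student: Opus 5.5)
We compute $\frac{d}{dt}\mathcal I_i$ directly from \eqref{eq:sist1}, with $\varphi=\varphi_{iAA_1}$ odd and $\varphi'=\zeta_A^2\vartheta_{iA_1}^2$. Since $\sigma_1$ is symmetric, $\frac{d}{dt}\mathcal I_i=\<\dot{\mathbf v},\varphi\sigma_1\mathbf v\>$. The linear part $\partial_x\sigma_1\diag(-\partial_x^2+1,1)\mathbf v+c\mathbf v'$ gives, after integration by parts, the standard Boussinesq virial identity
\begin{align*}
-\frac{d}{dt}\mathcal I_i=\frac12\int\varphi'\(3(v_1')^2+v_1^2+v_2^2+2c v_1v_2\)\,dx-\frac12\int\varphi'''v_1^2\,dx+\dots
\end{align*}
(up to normalization and the sign convention fixed by the orientation of $\varphi$). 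The quadratic form $v_1^2+2cv_1v_2+v_2^2$ is positive definite because $|c|<1$, with lower bound $(1-c)(v_1^2+v_2^2)$. The nonlinear term $-\partial_x(0,f(Q_c+v_1)-f(Q_c))$ contributes $\int\varphi\, v_1\,\partial_x(f(Q_c+v_1)-f(Q_c))$. After integration by parts this splits into a localized quadratic piece $-\int \varphi' (f'(Q_c)v_1^2+\dots)$, which is bounded by $\int Q_c^{p-1}|v_1|^2\lesssim\|v_1\|_{\widehat\Sigma}^2$ for $\kappa$ small, and into genuinely nonlinear pieces. The latter are controlled using $\|v_1\|_{L^\infty}\lesssim B^{1/2}\delta$ from \eqref{eq:h1errorbounds}, giving a small multiple of the $\Sigma$ norms or $\|v_1\|_{\widehat\Sigma}^2$ exactly as in the estimate of $\mathcal A_2$ in \S\ref{sec:discr}. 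The modulation terms $(\dot D-c)\<(\mathbf Q_c+\mathbf v)',\varphi\sigma_1\mathbf v\>$ and $\dot c\<\partial_c\mathbf Q_c,\varphi\sigma_1\mathbf v\>$ are handled with Lemma \ref{lem:lemdscrt}. For them we use $|\varphi|\lesssim A$ and the decay of $\mathbf Q_c$, and the term $(\dot D-c)\<\mathbf v',\varphi\sigma_1\mathbf v\>=-\frac{\dot D-c}{2}\<\mathbf v,\varphi'\sigma_1\mathbf v\>$ is absorbed since $B^{-1/2}\|\mathbf v\|_{\mathcal H^1}\ll1$.

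Next we pass from $\int\varphi'(\cdot)$ to the norms $\|\vartheta_{iA_1}\zeta_A\mathbf v\|_{\mathcal H^1}^2$. We write $w=\zeta_A\vartheta_{iA_1}v_1$ and expand $\varphi'(v_1')^2=(w')^2-(\zeta_A\vartheta_{iA_1})'\dots$. The commutator errors involve $|(\zeta_A\vartheta_{iA_1})'|\lesssim (A^{-1}+A_1^{-1})\zeta_A\,\mathbf 1$, together with $\varphi'''$ terms of size $A_1^{-2}\vartheta'$ and $A^{-1}$. This yields
\begin{align*}
\|\mathbf v\|_{\Sigma_{iAA_1}}^2\lesssim -\frac{d}{dt}\mathcal I_i+\|v_1\|_{\widehat\Sigma}^2+\text{(cross errors)}.
\end{align*}
For $i=1$ the cross error from $\vartheta_{1A_1}'$ lives on $A_1\le|x|\le2A_1$, where $\vartheta_{2A_1}\gtrsim1$ up to the usual square-root trick, and is bounded by $A_1^{-2}\|\mathbf v\|_{\Sigma_{2AA_1}}^2$. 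The $A^{-1}$ errors are small and get absorbed.

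For \eqref{eq:lem:1stV12}, the localized term $\int \varphi_2' Q_c^{p-1}v_1^2$ is supported where $|x|\ge A_1$. There $Q_c^{p-1}\lesssim e^{-cA_1}\sech^2(\gamma\kappa x)$, so it is far below $A_1^{-2}\|v_1\|_{\widehat\Sigma}^2$. The cross error from $\vartheta_{2A_1}'$ is $A_1^{-2}$ times the $\Sigma_1$ norm, which we bound by adding $A_1^{-2}$ times inequality for $i=1$. This explains the $A_1^{-2}\frac{d}{dt}\mathcal I_1$ term in \eqref{eq:lem:1stV12}. Symmetrically, in \eqref{eq:lem:1stV11} we add a small multiple $B^{-1}$ of the $i=2$ inequality to absorb the $A_1^{-2}\|\mathbf v\|_{\Sigma_2}^2$ error, using $A_1^{-2}=B^{-6/5}\ll B^{-1}$. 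Combining the two inequalities then gives both claims.

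The main obstacle is the high-derivative term. The energy space controls only $v_1\in H^1$ and $v_2\in L^2$, so terms such as $\<\varphi v_2, v_1'''\>$ and $\<\varphi v_1,v_2'\>$ must be rearranged purely by integration by parts, in the weak sense justified in $H^{-1}\times H^{-2}$, into forms involving only $(v_1')^2$, $v_1^2$, $v_2^2$ and $v_1v_2$. We must also check that the coefficient of $(v_1')^2$, namely $\tfrac32\varphi'$, has the correct sign. If the distributional justification is delicate, we would first prove the identity for regularized data and pass to the limit using the continuity of the flow, Theorem \ref{them:locwp}.
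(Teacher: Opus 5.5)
Your overall scheme is the paper's. You differentiate $\mathcal I_i$ along \eqref{eq:sist1}, extract the coercive part $\frac32\|(\zeta_A\vartheta_{iA_1}v_1)'\|_{L^2}^2+\frac{1-c}{2}\|\zeta_A\vartheta_{iA_1}\mathbf v\|_{L^2}^2$ up to $O(A^{-1}+A_1^{-2})$ cutoff errors, and bound the localized potential term by $\|v_1\|_{\widehat\Sigma}^2$. For $i=2$ that term is negligible, since $\varphi_{2AA_1}$ vanishes on $[-A_1,A_1]$. You then cross-combine the two inequalities with weights $B^{-1}$ and $A_1^{-2}$.

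\textbf{The step that fails as written.} For $(\dot D-c)\langle\xi_1[c],\varphi_{iAA_1}\sigma_1\mathbf v\rangle$ you invoke $|\varphi|\lesssim A$. Since \eqref{eq:discrest1D} bounds $|\dot D-c|$ by $B^{-1/2}(\|\mathbf v\|_{\Sigma_{1AA_1}}+\|\mathbf v\|_{\Sigma_{2AA_1}})$ with no power of $\delta$, this gives only $AB^{-1/2}(\|\mathbf v\|_{\Sigma_{1AA_1}}+\|\mathbf v\|_{\Sigma_{2AA_1}})^2$. That cannot be absorbed, because $A\gg B^2$ by \eqref{eq:relABg}.

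What you need is $|\varphi_{iAA_1}(x)|\le|x|$, which follows from $0\le\varphi_{iAA_1}'\le1$ and $\varphi_{iAA_1}(0)=0$, together with $\varphi_{2AA_1}=0$ on $[-A_1,A_1]$. Then $\zeta_A^{-1}\varphi_{iAA_1}\xi_1[c]$ is bounded in $L^2$ uniformly in $A$, and is $O(A_1^{-100})$ for $i=2$.

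The surviving cross term $B^{-1/2}\|\mathbf v\|_{\Sigma_{1AA_1}}\|\mathbf v\|_{\Sigma_{2AA_1}}$ in the $i=1$ inequality must be split as $\varepsilon_1\|\mathbf v\|_{\Sigma_{1AA_1}}^2+\varepsilon_1^{-1}B^{-1}\|\mathbf v\|_{\Sigma_{2AA_1}}^2$. This error is of size $B^{-1}$, not $A_1^{-2}$. Your combination absorbs it only if you add $C\varepsilon_1^{-1}B^{-1}$ times the $i=2$ inequality, and this is the actual origin of the $C_2B^{-1}\frac{d}{dt}\mathcal I_2$ term in \eqref{eq:lem:1stV11}.

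A smaller correction: on $A_1\le|x|\le2A_1$ it is not true that $\vartheta_{2A_1}\gtrsim1$, since it vanishes at $|x|=A_1$. Use $\vartheta_{1A_1}+\vartheta_{2A_1}=1$ instead. This bounds the cutoff error by $A_1^{-2}(\|\mathbf v\|_{\Sigma_{1AA_1}}^2+\|\mathbf v\|_{\Sigma_{2AA_1}}^2)$, and the first part is absorbed.

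\textbf{Two further steps need more than your sketch provides.}

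\emph{Nonlinear term.} Here the test function is $(\varphi_{iAA_1}v_1)'$, which contains the non-localized $\varphi_{iAA_1}v_1'$. So the remainder cannot be handled ``exactly as $\mathcal A_2$'', where the test function is a fixed profile. After subtracting $f'(Q_c)v_1$, the remainder behaves like $f(v_1)$ away from the soliton, and $\int|\varphi_{iAA_1}||v_1|^p|v_1'|\,dx$ is not controlled by the localized norms. You have two options:
\begin{itemize}
\item Complete the primitive, writing $(f(Q_c+v_1)-f(Q_c))v_1'=\partial_x[F(Q_c+v_1)-F(Q_c)-f(Q_c)v_1]-Q_c'(f(Q_c+v_1)-f(Q_c)-f'(Q_c)v_1)$. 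Then every term is localized by $\varphi_{iAA_1}'$ or by $Q_c'$.
\item Do as the paper does: peel off $f(v_1)$, whose contribution is exactly $\langle f(v_1)v_1-F(v_1),\varphi_{iAA_1}'\rangle$. Then estimate $f(Q_c+v_1)-f(Q_c)-f(v_1)$, which is localized near the soliton, through the splitting \eqref{eq:partition}.
\end{itemize}

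\emph{Justification of the identity.} Regularizing the data is not safe. Theorem \ref{them:locwp} gives no persistence of regularity, and for $1<p\le2$ the nonlinearity is only $C^{1,p-1}$. So smooth data need not produce solutions for which $\langle\dot{\mathbf v},\varphi_{iAA_1}\sigma_1\mathbf v\rangle$ is classically defined. The paper instead applies $J_\varepsilon=(1-\varepsilon\partial_x^2)^{-1}$ to \eqref{eq:sist1}; this commutes with the constant-coefficient part. It then differentiates $\frac12\langle J_\varepsilon\mathbf v,\varphi_{iAA_1}\sigma_1J_\varepsilon\mathbf v\rangle$ and passes to the limit $\varepsilon\to0$ in the time-integrated identity.
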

\begin{proof}
  To justify the identities below, set
$J_\varepsilon=(1-\varepsilon\partial_x^2)^{-1}$
and $\mathbf v^\varepsilon=J_\varepsilon\mathbf v$.
Applying $J_\varepsilon$ to \eqref{eq:sist1}, we obtain
$\mathbf v^\varepsilon\in C([0,T],H^3\times H^2)
\cap C^1([0,T],L^2\times L^2)$ for every $T>0$.
We can therefore differentiate
$\frac12\<\mathbf v^\varepsilon,
\varphi_{iAA_1}\sigma_1\mathbf v^\varepsilon\>$.
The nonlinear term in the regularized equation is
$-\partial_x(0,J_\varepsilon(f(Q_c+v_1)-f(Q_c)))^\intercal$.
After integration by parts, the contribution of the constant
coefficient linear operator is
\begin{align*}
-\frac32\int_{\R}
\varphi_{iAA_1}'|(v_1^\varepsilon)'|^2\,dx
+\frac12\int_{\R}
\varphi_{iAA_1}'''|v_1^\varepsilon|^2\,dx
-\frac12\int_{\R}\varphi_{iAA_1}'
\left(|v_1^\varepsilon|^2+|v_2^\varepsilon|^2\right)\,dx.
\end{align*}
Since $\mathbf v^\varepsilon\to\mathbf v$ in
$C([0,T],\mathcal H^1)$ and
\begin{align*}
J_\varepsilon(f(Q_c+v_1)-f(Q_c))
\longrightarrow f(Q_c+v_1)-f(Q_c)
\quad\text{in }C([0,T],L^2),
\end{align*}
all terms converge uniformly on $[0,T]$ after integration by
parts. Passing to the limit in the time-integrated identity
shows that $\mathcal I_i$ is locally absolutely continuous
and satisfies the following identity. Derivative pairings
below are understood through these integrations by parts.
\begin{align}\label{eq:vir2}
\dot{\mathcal I}_i
&=
\<\partial_x\diag(-\partial_x^2+1,1)\mathbf v,
\varphi_{iAA_1}\mathbf v\>
+c\<\mathbf v',\varphi_{iAA_1}\sigma_1\mathbf v\>
 \\
&\quad
+\<f(v_1+Q_c)-f(Q_c),(\varphi_{iAA_1}v_1)'\>
+(\dot D-c)\<\xi_1[c],\varphi_{iAA_1}\sigma_1\mathbf v\>
\nonumber\\
&\quad
-\dot c\<\xi_2[c],\sigma_1\varphi_{iAA_1}\mathbf v\>
+(\dot D-c)\<\mathbf v',\varphi_{iAA_1}\sigma_1\mathbf v\>
=:\sum_{1\le k\le6}B_k^{(i)}.
\nonumber
\end{align}
 Only for this proof, we   set $\omega ^{(i)}:= \zeta_A  \vartheta _{iA_1}\mathbf{v} = (\zeta_A  \vartheta _{iA_1}v_1, \zeta_A  \vartheta _{iA_1}v_2)^\intercal  =:(\omega ^{(i)}_1,  \omega ^{(i)}_2)^\intercal       $.
 We split
 \begin{align*}
   B_1^{(i)}=  \<   -{v}_1 ''' +v_1 '  ,\varphi _{iAA_1}  {v} _1  \>+\<   {v}_2 '    ,\varphi _{iAA_1}  {v} _2  \> =: B _{11}^{(i)}+ B^{(i)}_{12}.
 \end{align*}
 We claim that
 \begin{align} \label{eq:B1}
    B_1^{(i)} &=  -  \frac{3}{2} \| (\omega ^{(i)} _1 )'\| _{L^2}^{2}  -\frac{1}{2}  \|\omega ^{(i)} \| _{L^2}^{2} +  A _1 ^{-2}  O\(   \|\omega ^{(1)} _1\| _{L^2}^{2} +\|\omega ^{(2)} _1 \| _{L^2}^{2}    \)
 \end{align}
 It is straightforward  that
 \begin{align*}
  B^{(i)}_{12} =- 2 ^{-1} \|\omega ^{(i)}_2 \| _{L^2}^{2}  .
 \end{align*}
 Next we claim   \small
 \begin{align} \label{eq:B11}
   B _{11}^{(i)} &=  -  \frac{3}{2} \| (\omega ^{(i)} _1 )'\| _{L^2}^{2}  -\frac{1}{2}  \|\omega ^{(i)}_1 \| _{L^2}^{2} +   B _{111}^{(i)} \text{ with }   B _{111}^{(i )}:=\langle v_1^2, ((\zeta_A \vartheta _{i A_1}  ) ')^2-\frac{1}2 (\zeta_A \vartheta _{i A_1}  ) '' \zeta_A \vartheta _{i A_1}\rangle .
 \end{align}\normalsize
  Before proving \eqref{eq:B11}, we write
\begin{align*}
B_{111}^{(i)}
&=
\langle
\left((\zeta_A')^2-\frac12\zeta_A\zeta_A''\right)
\vartheta_{iA_1}^2,v_1^2
\rangle
+\<\zeta_A'\vartheta_{iA_1}'v_1,
\zeta_A\vartheta_{iA_1}v_1\>
+\langle
(\vartheta_{iA_1}')^2
-\frac12\vartheta_{iA_1}''\vartheta_{iA_1},
\zeta_A^2v_1^2
\rangle\\&
=:\sum_{1\le k\le3}B_{111k}^{(i)}.
\end{align*}
By the definition of $\zeta_A$,
$|\zeta_A'|\lesssim A^{-1}\zeta_A$ and
$|\zeta_A''|\lesssim A^{-1}\zeta_A$. Hence
\begin{align*}
|B_{1111}^{(i)}|
&\lesssim
A^{-1}\|\omega_1^{(i)}\|_{L^2}^2   \text{ and}\\
|B_{1112}^{(i)}+B_{1113}^{(i)}|
&\lesssim
A^{-1}\left(
\|\omega_1^{(1)}\|_{L^2}^2
+\|\omega_1^{(2)}\|_{L^2}^2
\right)
+A_1^{-2}\|v_1\|_{L^2(A_1\le|x|\le2A_1)}^2.
\end{align*}
By \eqref{eq:relABg}, we have $A^{-1}\lesssim A_1^{-2}$.
Using the partition of unity $  \vartheta _{1A_1} + \vartheta _{2A_1}=1 $ and the fact that $\zeta _A \approx 1$ for  $  |x|\le 2A_1$ we have
\begin{align*}
  A _1 ^{-2}\| v_1 \| ^{2}_{L^2(A_1\le |x|\le 2A_1)}\lesssim    A _1 ^{-2} \(  \|\omega ^{(1)}_1 \| _{L^2}^{2} +\|\omega ^{(2)}_1 \| _{L^2}^{2} \).
\end{align*}
It is evident that \eqref{eq:B11} implies \eqref{eq:B1}. We prove now \eqref{eq:B11}. First we write
 \begin{align*}
   B _{11}^{(i)} = -\<   {v}_1 '''   ,\varphi _{iAA_1}  {v} _1  \>  -   \|\omega ^{(i)}_1 \| _{L^2}^2    /2 
 \end{align*}
 and then, by integration by parts,
\begin{align*}
-\<v_1''',\varphi_{iAA_1}v_1\>
&=
\<v_1'',\zeta_A^2\vartheta_{iA_1}^2v_1\>
-\frac12\|\zeta_A\vartheta_{iA_1}v_1'\|_{L^2}^2 =
-\frac32\|(\omega_1^{(i)})'\|_{L^2}^2
+B_{111}^{(i)}.
\end{align*}
This yields \eqref{eq:B11} and hence \eqref{eq:B1}.
Proceeding with the other terms in \eqref{eq:vir2},
 it is elementary that
\begin{align}
 \nonumber B_2^{(i)} &= -c/2  \ \langle \omega ^{(i)} , \sigma _1 \omega ^{(i)} \rangle  \le c/2 \| \omega ^{(i)} \| _{L^2}^2.
\end{align}
We leave aside  momentarily $ B_3^{(i)}$.  We write
\begin{align*}
  B_4^{(i)}&= (\dot D -c )\<  \zeta _A ^{-1}  \xi _1[c]    \varphi _{iAA_1} , \zeta _A (\vartheta _{1A_1} + \vartheta _{2A_1} )  \sigma _1        \mathbf{v}   \>
\end{align*}
and using  the exponential decay of $\xi _j[c]$ and   \begin{align}
  \label{eq:suppvarphi1}\supp  \varphi _{2AA_1}\subseteq \supp  \vartheta _{2A_1}\subseteq \R \backslash (-A_1,A_1 ), 
\end{align}
by \eqref {eq:discrest1D}    we obtain for $i=1,2$
\begin{align*} &
   |B_4^{(i)}| \lesssim     A _1 ^{ -100 (i-1)}      B ^{-1/2}   (   \| \mathbf{v}  \|_{  { \Sigma }_{1A A_1}}   +  \| \mathbf{v}  \|_{  { \Sigma }_{2A A_1}})        (   \| \mathbf{v}  \|_{  { \Sigma }_{1A A_1}}   +  A _1 ^{ -100  } \| \mathbf{v}  \|_{  { \Sigma }_{2A A_1}}).
\end{align*}
Then
\begin{align}
\nonumber   |B_4^{(2)}| \lesssim     A _1 ^{ -100 }      (   \| \mathbf{v}  \|_{  { \Sigma }_{1A A_1}}  ^2 +  \| \mathbf{v}  \|_{  { \Sigma }_{2A A_1}} ^2 )
\end{align}
and
\begin{align*}
  |B_4^{(1)}| &\lesssim      B ^{-1/2}  \| \mathbf{v}  \|_{  { \Sigma }_{1A A_1}}  ^2 +  A _1 ^{ -100 } \| \mathbf{v}  \|_{  { \Sigma }_{2A A_1}}  ^2  +  B ^{-1/2} \| \mathbf{v}  \|_{  { \Sigma }_{1A A_1}}   \| \mathbf{v}  \|_{  { \Sigma }_{2A A_1}}
\end{align*}
for which for any preassigned fixed small $\varepsilon _1>0$   we derive 
\begin{align}\label{eq:b41}&
   |B_4^{(1)}| \lesssim        \varepsilon _1  \| \mathbf{v}  \|_{  { \Sigma }_{1A A_1}}  ^2 + \varepsilon _1 ^{-1}  B ^{-1 } \| \mathbf{v}  \|_{  { \Sigma }_{2A A_1}} ^2 . 
\end{align}
With a similar argument,  by   \eqref{eq:discrest1c} and the fact that $\delta >0$ is very small,   \begin{align}  \label{eq:b52}  |B_5^{(i)}| \lesssim A _1 ^{ -100 }      (   \| \mathbf{v}  \|_{  { \Sigma }_{1A A_1}}  ^2 +  \| \mathbf{v}  \|_{  { \Sigma }_{2A A_1}} ^2 ) \text{ for }i=1,2.
\end{align}
 By integration by parts,
\begin{align*}
B_6^{(i)}
&=
-\frac{\dot D-c}{2}
\langle\omega^{(i)},\sigma_1\omega^{(i)}\rangle  \text{ and so }
|B_6^{(i)}|
 \le
\frac{|\dot D-c|}{2}\|\omega^{(i)}\|_{L^2}^2
\lesssim
\delta\|\omega^{(i)}\|_{\mathcal H^1}^2.
\end{align*}
Here we used \eqref{eq:h1errorbounds} and
\eqref{eq:discrest1D}, which give $|\dot D-c|\lesssim\delta$.
Turning to  $ B_3^{(i)}$,  we   are basically in the same situation of   \cite{CM25D1}.
We write
\begin{align*}
 B_3  ^{(i)}= \<   f( Q  _{c} + v_1  )  -  f( Q  _{c} )-f(v_1)   ,(\varphi _{iAA_1} v_1)'  \>  + \< f(v_1)   ,(\varphi _{iAA_1} v_1)' \>=: B_{31}^{(i)}+B_{32}^{(i)} .
\end{align*}
Then, by \eqref{eq:h1errorbounds}, 
\begin{align*}
 |B_{32}^{(i)}|&=  | \< f(v_1) v_1 -F(v_1)   ,\zeta_A^2  \vartheta _{iA_1} ^2  \> |\lesssim \| v_1\| _{L^\infty (\R )}^{p-1} \| \omega ^{(i)}  \|_{     L^2(\R )}^2  \lesssim B ^{\frac{p-1}{2}} \delta^{p-1} \| \omega ^{(i)}  \|_{     L^2(\R )}^2.
\end{align*}
 Using the splitting \eqref{eq:partition} for $s=1$
 we split
\begin{align*}
  B_{31}^{(i)}= \sum _{k=1,2}  \<   f( Q  _{c} + v_1  )  -  f( Q  _{c} )-f(v_1)   , ( \varphi _{iAA_1} v_1)'  \>_{L^2(\Omega_{k,t,1  })}=: \sum _{k=1,2} B_{31k}^{(i)} .
\end{align*}
We  further split  \small
\begin{align*}
  B_{312}^{(i)}&=- \<    f( Q  _{c })    , ( \varphi _{iAA_1} v_1 )'  \>_{L^2(\Omega_{2,t ,1 })}+ \int _0^1 \<    f'( v_1 +sQ  _{c} ) Q  _{c}   , ( \varphi _{iAA_1} v_1 )'  \>_{L^2(\Omega_{2,t ,1 })}   ds=: B_{3121}^{(i)}+B_{3122}^{(i)}.
\end{align*} \normalsize
The partition of unity $1= \vartheta _{1A_1} + \vartheta _{2A_1} $,
  $Q  _{c} ^{p}\lesssim Q  _{c } ^{\frac{p-1}{2} } |v_1 |^{\frac{p-1}{2}} |v_1|$  in $\Omega_{2,t,1  }$ and     \eqref{eq:h1errorbounds} yield
\begin{align*}
   |B_{3121}^{(i)}|&\lesssim  \int_{\R}Q  _{c } ^{\frac{p-1}{2} }|v_1 |^{\frac{p-1 }{2}+1}   \left |   \varphi _{iAA_1} \zeta_{A}^{-1} \( (\vartheta _{1A_1} + \vartheta _{2A_1})    \zeta_{A}v_1 \) '  +\zeta_{A}v_1 (\varphi _{iAA_1} \zeta_{A}^{-1})'   \right | dx  \\& \lesssim  \| v_1 \| _{L^\infty (\R )}  ^{\frac{p-1 }{2} }\( \left \|   \( \vartheta _{1A_1} \zeta_A v_1 \) '       \right\|_{L^2(\R )} ^2 +  \left \|   \( \vartheta _{2A_1} \zeta_A v_1 \) '       \right\|_{L^2(\R )} ^2 +  \left \| \sech \(     \gamma \kappa  x\)     v_1   \right \|_{L^2(\R )} ^2   \)
\\&\lesssim B ^{\frac{p-1}{4}}\delta ^{\frac{p-1 }{2} }   (  \| \mathbf{v}   \|_{  { \Sigma }_{1A A_1}} ^2+ \| \mathbf{v}   \|_{  { \Sigma }_{2A A_1}}^2   + \|  v_1 \|_{ \widehat{\Sigma}  }^2 ) .
\end{align*}
Similarly, since $1<p\le2$ and $Q_c<2|v_1|$ on
$\Omega_{2,t,1}$, we have
\begin{align*}
Q_c|v_1|^{p-1}
\lesssim Q_c^{\frac{p-1}{2}}|v_1|^{\frac{p+1}{2}}
\quad\text{on }\Omega_{2,t,1}.
\end{align*}
Using the same argument as for $B_{3121}^{(i)}$, we obtain
\begin{align*}&
|B_{3122}^{(i)}|
\lesssim
\int_{\Omega_{2,t,1}}
Q_c|v_1|^{p-1}|(\varphi_{iAA_1}v_1)'|\,dx\\&
 \lesssim
\int_{\R}
Q_c^{\frac{p-1}{2}}|v_1|^{\frac{p+1}{2}}
|(\varphi_{iAA_1}v_1)'|\,dx \lesssim
B^{\frac{p-1}{4}}\delta^{\frac{p-1}{2}}
\left(
\|\mathbf v\|_{\Sigma_{1AA_1}}^2
+\|\mathbf v\|_{\Sigma_{2AA_1}}^2
+\|v_1\|_{\widehat\Sigma}^2
\right).
\end{align*}
 Next we split
\begin{align*}
B_{311}^{(i)} &= \<f'(Q_c)v_1,(\varphi_{iAA_1}v_1)'\>_{L^2(\Omega_{1,t,1})} +\int_0^1 \<[f'(Q_c+sv_1)-f'(Q_c)]v_1, (\varphi_{iAA_1}v_1)'\>_{L^2(\Omega_{1,t,1})}\,ds\\
&\quad -\int_0^1 \<f'(sv_1)v_1, (\varphi_{iAA_1}v_1)'\>_{L^2(\Omega_{1,t,1})}\,ds =:\sum_{1\le k\le3}B_{311k}^{(i)}.
\end{align*}
We have  \small
\begin{align}\nonumber
  |B_{3113}^{(i)}|&\lesssim \<  |v_1|^p   , |( \varphi _{iAA_1} v_1 )'| \>_{L^2(\Omega_{1,t,1  })}
  \\&  \lesssim  \< Q _c ^{\frac{p-1}{2}} |v_1| ^{\frac{p-1}{2}}    |v_1|   , \left |   \varphi _{iAA_1} \zeta_{A}^{-1} \( (\vartheta _{1A_1} + \vartheta _{2A_1})    \zeta_{A}v_1 \) '  +\zeta_{A}v_1 (\varphi _{iAA_1} \zeta_{A}^{-1})'   \right | \>_{L^2(\Omega_{1,t,1  })} \nonumber \\&\lesssim B ^{\frac{p-1}{4}}\delta ^{\frac{p-1 }{2} }   (  \| \mathbf{v}   \|_{  { \Sigma }_{1A A_1}} ^2+ \| \mathbf{v}   \|_{  { \Sigma }_{2A A_1}}^2   +\|  v_1 \|_{ \widehat{\Sigma}  }^2 ) .\label{eq:B2113}
\end{align}
\normalsize
By concavity  $|f'( Q  _{c} + sv _1  ) - f'( Q  _{c}   )|\le |f'(   sv _1 )|$    and as in \eqref{eq:B2113}
\begin{align*}
  |B_{3112}^{(i)}|&\lesssim  B ^{\frac{p-1}{4}}\delta ^{\frac{p-1 }{2} }   (  \| \mathbf{v}   \|_{  { \Sigma }_{1A A_1}} ^2+ \| \mathbf{v}   \|_{  { \Sigma }_{2A A_1}}^2   + \|  v_1 \|_{ \widehat{\Sigma}  }^2 ) .
\end{align*}
With $B_{3111}^{(i)}$  we make a clear distinction between the two cases $i=1,2$. For $i=1$, for a preassigned  fixed small constant $\varepsilon _1>0$, we have the following,  \small
\begin{align}
 \nonumber | B_{3111}^{(1)}|&= \left |\<    f'( Q  _{c}   )    v _1  ,  \varphi _{1AA_1} \zeta_{A}^{-1} \( (\vartheta _{1A_1} + \vartheta _{2A_1})    \zeta_{A}v_1 \) '  +\zeta_{A}v_1 (\varphi _{1AA_1} \zeta_{A}^{-1})' \>_{L^2(\Omega_{1,t,1  })}\right |\\& \lesssim   \varepsilon _1 \| \mathbf{v}    \|_{  { \Sigma }_{1A A_1}}^2  +A_1 ^{-100} \| \mathbf{v }  \|_{  { \Sigma }_{2A A_1}}^2 + \varepsilon _1^{-1} \|  v_1 \|_{ \widehat{\Sigma}  }^2      \nonumber
\end{align}\normalsize
    where we exploited  the exponential decay at infinity of $f'( Q  _{c}   )$ and    \eqref{eq:suppvarphi1}.
 In the $i=2$ case once again  by \eqref{eq:suppvarphi1}  we obtain the following,  \small
\begin{align}
\nonumber  | B_{3111}^{(2)}|&=\left  |\<    f'( Q  _{c}   )    v _1   ,   \varphi _{2AA_1} \zeta_{A}^{-1} \( (\vartheta _{1A_1} + \vartheta _{2A_1})    \zeta_{A}v_1 \) '  +\zeta_{A}v_1 (\varphi _{2AA_1} \zeta_{A}^{-1})'\>_{L^2(\Omega_{1,t,1  } \cap \{ |x|\ge A_1    \}     )}\right | \\& \lesssim  A _1 ^{-100}  (  \|  v_1 \|_{ \widehat{\Sigma}  }^2  +  \| \mathbf{ v}  \|_{  { \Sigma }_{1A A_1}}^2  +\| \mathbf{v}  \|_{  { \Sigma }_{2A A_1}}^2)  .\nonumber
\end{align}\normalsize
Hence we conclude  that
for any preassigned small constant $\varepsilon _1>0$  we have
\begin{align}
 \nonumber  &   | B_3^{(1)}  | \lesssim  \varepsilon _1 \| \mathbf{v}    \|_{  { \Sigma }_{1A A_1}}^2  +A_1 ^{-100} \| \mathbf{v }  \|_{  { \Sigma }_{2A A_1}}^2 + \varepsilon _1^{-1} \|  v_1 \|_{ \widehat{\Sigma}  }^2  \text{  and  }\\&  
\nonumber    | B_3^{(2)}  | \lesssim A _1 ^{-100}  ( \|  v_1 \|_{ \widehat{\Sigma}  }^2  +  \| \mathbf{ v}  \|_{  { \Sigma }_{1A A_1}}^2  +\| \mathbf{v}  \|_{  { \Sigma }_{2A A_1}}^2 )  .
\end{align}

By \eqref{eq:h1errorbounds}, we may assume that
$c(t)\le(1+c_0)/2$.
We choose $\varepsilon_1$ sufficiently small,
$B$ sufficiently large, and then $\delta$ sufficiently small.
In particular, we require
\begin{align*}
B^{\frac{p-1}{4}}\delta^{\frac{p-1}{2}}
\ll\min\{\varepsilon_1,A_1^{-100}\}.
\end{align*}
Since $A_1^{-2}=B^{-6/5}\le B^{-1}$, the preceding estimates, after absorbing the terms involving $\|\mathbf v\|_{\Sigma_{iAA_1}}^2$ into the corresponding
left-hand side, give
\begin{align}
\frac{1-c_0}{8}\|\mathbf v\|_{\Sigma_{1AA_1}}^2
&\le
-\dot{\mathcal I}_1
+C\left(
\|v_1\|_{\widehat\Sigma}^2
+B^{-1}\|\mathbf v\|_{\Sigma_{2AA_1}}^2
\right),
\label{eq:alvir1}\\
\frac{1-c_0}{8}\|\mathbf v\|_{\Sigma_{2AA_1}}^2
&\le
-\dot{\mathcal I}_2
+CA_1^{-2}\left(
\|\mathbf v\|_{\Sigma_{1AA_1}}^2
+\|v_1\|_{\widehat\Sigma}^2
\right),
\label{eq:alvir2}
\end{align}
where $C=C(c_0)>0$ is fixed.  Substituting \eqref{eq:alvir2} in \eqref{eq:alvir1} and bootstrapping, and likewise substituting \eqref{eq:alvir1} in \eqref{eq:alvir2} and bootstrapping,     gives \eqref{eq:lem:1stV11}--\eqref{eq:lem:1stV12}.

\end{proof}

\begin{proof}[Proof of Lemma \ref {prop:1virial}]
    We integrate in $t$ in the   interval $I$ the inequalities in \eqref{eq:lem:1stV11}--\eqref{eq:lem:1stV12} and   use
\begin{align*}&
  |\mathcal{I}_{i} (t)|\le \| \varphi _{iAA_1}\| _{L^\infty (\R ) }\| \mathbf  v(t) \| _{L^2 (\R ) }^2 \lesssim A  B \delta ^2 \text{  for $i=1,2$}.
\end{align*}
By Lemmas \ref{lem:lemdscrt} and \ref{lem:1stV1}, we obtain
\begin{align*}&
   \| \mathbf{v} \| _{L^2(I, { \Sigma }_{1A A_1} )} \lesssim  \sqrt{A B} \delta  +      \| v_1 \| _{L^2(I, \widehat{\Sigma}   )}       \text{ and} \\& \| \mathbf{v} \| _{L^2(I, { \Sigma }_{2A A_1} )} \lesssim  \sqrt{A B} \delta  +    A_1 ^{-1}   \|   v_1    \| _{L^2(I, \widehat{\Sigma}    )}    .
\end{align*}
After fixing $A$ and $B$, choose $\delta$ sufficiently small that
$\delta\le(AB)^{-3/4}$, consistently with \eqref{eq:relABg}. Then
\begin{align*}
 \sqrt{AB}\,\delta
 =\sqrt{AB}\,\delta^{2/3}\delta^{1/3}
 \le\delta^{1/3}.
\end{align*}
Substituting this bound in the preceding two inequalities proves
Lemma \ref{prop:1virial}.
 \end{proof}

\section{Some  spectral theory for $\mathcal{L}_{[c]}$ }\label{sec:prelspec}

We start by  seeking a nonzero  solution $\widetilde{\mathbf{v}}= \mathbf{a} e ^{\mu \xi} $  with $\mathbf{a}=(a_1,a_2)^\intercal \in \C^2$ of system 
\begin{align}
  \label{eq:noeilinv10} \gamma ^2 \Lambda  \widetilde{v}_1 &= c\partial _\xi \widetilde{v} _1  +\partial _\xi \widetilde{v}_2     \text{  and} \\   \label{eq:noeilinv20} \Lambda \widetilde{ v}_2  - c  \Lambda  \widetilde{v}_1&=  \partial _\xi (  - \partial ^2 _\xi +1) \widetilde{v}_1 .
\end{align} This  exists   only in correspondence to roots of the characteristic equation, 
\begin{align}\label{eq:mus1} \mathcal{F}(\Lambda , \mu , \gamma ):=
 \det  \begin{pmatrix}
    \gamma ^2 \Lambda  -c\mu  &   -\mu  \\   \mu  ^3 - \mu -c \Lambda &   \Lambda
    \end{pmatrix} = \gamma ^2\Lambda ^2 -\mu (  2c \Lambda +\mu -  \mu ^3  )=0.
\end{align}
The last equation reduces to $\Lambda =\Lambda _\pm $ with
\begin{align}\label{eq:lambdapm}
  \Lambda _\pm =\Lambda _\pm (\mu ) :   =      \gamma ^{-2}  \mu   \(   c \pm     \sqrt{ 1   - \gamma ^2 \mu ^2} \) .
\end{align}
Setting   $\Lambda _\pm (\im k  )  = \im \tau _\pm (  k  ) $ we obtain
  two odd strictly monotonic   diffeomorphisms
\begin{align}\label{eq:mus221}
 \R \ni k\to \tau _\pm (  k  )=  \gamma ^{-2}  k \left (   c \pm     \sqrt{ 1 + \gamma ^2 k ^2 } \right ) \in   \R .
\end{align}

\begin{lemma}\label{lem:roots1}
\begin{description}
  \item[a]   For any $\Re \Lambda >0$ and any $ 0<c< 1$  equation \eqref{eq:mus1} has only solutions $\mu \not \in \im \R$.  Counted with their multiplicity, there are two  in $\C_+$ and two in $\C _-$, see \ref {itm:Cpm}  Notation \ref{not:notation}. 
  \item[b] For any $\gamma_0\in(0,1)$, there exist $r>0$ and
functions
$\mu_j\in C^\omega(D_\C(0,r)\times(0,\gamma_0),\C)$,
$j=1,2,3,4$, holomorphic in $\Lambda$, such that
$\mathcal F(\Lambda,\mu_j(\Lambda,\gamma),\gamma)\equiv0$
and
 \begin{align}
 \label{lem:roots12}&  \mu _j(\Lambda , \gamma ) =  -((-1)^j1+ c)\Lambda  +O(\Lambda ^2)  \text{ for } j=2,3 \text{ and } \\&  \mu _j(\Lambda , \gamma ) =   (-1) ^j   +O(\Lambda  )  \text{ for } j=1 \text{ and }4.  \label{lem:roots13}
\end{align}

\item[c] For $\Lambda\in\im\R\setminus\{0\}$ and $0<c<1$,
the four roots are distinct and satisfy
\begin{align}\label{eq:mus21}
\Re\mu_1<0=\Re\mu_2=\Re\mu_3<\Re\mu_4.
\end{align}

 \item[d] For any $c_0\in(0,1)$, there exist
$\varepsilon_0>0$ with
$[c_0-\varepsilon_0,c_0+\varepsilon_0]\subset(0,1)$
and a conjugation-invariant neighborhood $\Upsilon$
as in \ref{itm:neigh1} of Notation \ref{not:notation},
such that the maps
\begin{align*}
(\Lambda,c)\longmapsto
\mu_j(\Lambda,\sqrt{1-c^2}),\qquad j=1,2,3,4,
\end{align*}
extend analytically to
$\Upsilon\times(c_0-\varepsilon_0,c_0+\varepsilon_0)$.
These extensions are holomorphic in $\Lambda$ and real
analytic in $c$; at $\Lambda\in\im\R$, they extend across
the imaginary axis.
After shrinking $\Upsilon$, the roots are distinct for
$\Lambda\in\Upsilon\setminus\{0\}$ and
\begin{align*}
\Re\mu_1<0,\quad \Re\mu_2<0,\quad
\Re\mu_3>0,\quad \Re\mu_4>0
   \text{ for }  \Lambda\in\Upsilon\cap\C_+ .
\end{align*}

\item[e]
 For  all $ \Lambda \in \im \R $   we  have \begin{align}  \label{mu1_mu4}
      \mu_1  (-\Lambda ,   \gamma )  & = -  \mu_4  (\Lambda ,   \gamma )  \text{   and }    \\  \label{mu22_mu33}
      \mu_j  (-\Lambda ,   \gamma )  & = -  \mu_j  (\Lambda ,   \gamma )   \text{   for $j=2,3$}   .
\end{align}
\item[f]  For $ \Lambda =\im \Lambda _I\in  \im \R $ and  $|\Lambda _I|\to\infty$ we have
\begin{equation}\label{eq:assroots} \begin{aligned}
 \mu _1&= -\sqrt{\gamma |\Lambda _I|}  +O(1)  , \quad \mu _4=  \sqrt{\gamma |\Lambda _I|}  +O(1) \text{ and } \\  \mu _j&=  (-1)^{j-1} \im     \sqrt{\gamma |\Lambda _I|} \sign \left (  \Lambda _I\right ) +O(1) \text{ for }j=2,3.
\end{aligned}
\end{equation}

\end{description}

\end{lemma}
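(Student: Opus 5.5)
The whole lemma is a study of the quartic $P_\Lambda(\mu):=\mathcal F(\Lambda,\mu,\gamma)=\mu^4-\mu^2-2c\Lambda\mu+\gamma^2\Lambda^2$, and I would prove the items in the order \textbf{c}, \textbf{a}, \textbf{b}, \textbf{d}, \textbf{e}, \textbf{f}.

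For \textbf{a} and \textbf{c} the key observation is that $\mu=\im k$ is a root exactly when $\Lambda=\Lambda_\pm(\im k)=\im\tau_\pm(k)\in\im\R$. So for $\Re\Lambda>0$ there are no imaginary roots. The number of roots in $\C_+$ is then constant on the connected half plane $\{\Re\Lambda>0\}$, by continuity of roots. To count them I take $\Lambda$ real and small: the roots are $\pm1+O(\Lambda)$ and two roots of size $O(\Lambda)$. Dividing by $\Lambda^2$ with $\mu=\Lambda\nu$, the small roots solve $-\nu^2-2c\nu+\gamma^2=0$, that is $\nu=-c\pm1$, one positive and one negative. This gives two roots in each half plane.

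For $\Lambda=\im\tau$ with $\tau\neq0$, the imaginary roots are $\im k$ with $\tau_+(k)=\tau$ or $\tau_-(k)=\tau$. Each of $\tau_\pm$ is an odd monotone diffeomorphism by \eqref{eq:mus221}, which uses $c<1\le\sqrt{1+\gamma^2k^2}$ for $\tau_-$. So there is exactly one $k_+$ and exactly one $k_-$, and they are distinct because $\tau_+(k)=\tau_-(k)$ forces $k=0$. These are $\mu_2,\mu_3$. The symmetry $\overline{P_\Lambda(\mu)}=P_{\bar\Lambda}(\bar\mu)=P_{-\Lambda}(\bar\mu)$, together with $P_{-\Lambda}(\mu)=P_\Lambda(-\mu)$, gives that $\mu\mapsto-\bar\mu$ preserves the roots. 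Hence the remaining two roots form a pair $\mu,-\bar\mu$ off the imaginary axis, which gives \eqref{eq:mus21}. Distinctness holds since the remaining pair has nonzero real part.

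For \textbf{b}, at $\Lambda=0$ the roots $\pm1$ are simple, so the implicit function theorem gives analytic $\mu_1,\mu_4$. The double root at $0$ is handled by the blow-up $\mu=\Lambda\nu$: the equation $\Lambda^2\nu^4-\nu^2-2c\nu+\gamma^2=0$ has simple roots $\nu=-c\pm1$ at $\Lambda=0$ (the discriminant is $4c^2+4\gamma^2=4$). The implicit function theorem again gives analyticity in $(\Lambda,\gamma)$ and yields \eqref{lem:roots12}.

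For \textbf{d}, away from $0$ on $\im\R$ the roots are simple by \textbf{c}, so they continue analytically to a neighborhood. Near $0$ they come from \textbf{b}, and the two pieces glue by uniqueness. The signs in $\C_+$ are obtained by differentiating: for $\Lambda=\im\tau+\epsilon$, one has $\partial_\Lambda\mu=-\partial_\Lambda P/\partial_\mu P$ at the imaginary roots. For $\mu=\im k$ this derivative equals $1/\tau_\pm'(k)$, real with the sign of $\tau_\pm'$: positive for $\tau_+$ and negative for $\tau_-$. Therefore one imaginary root moves right and one moves left, consistent with \textbf{a}. I need to label them $\mu_3,\mu_2$ accordingly and check that the labeling is consistent with \eqref{lem:roots12}: $\mu_2\approx(1-c)\Lambda$ fails to match the needed sign, so the indexing may instead be fixed by the continuity statement and the count in \textbf{a}. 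This bookkeeping is the main obstacle. One must check that the labels chosen near $0$, where $\mu_2\sim-(1+c)\Lambda$ moves left and $\mu_3\sim(1-c)\Lambda$ moves right, agree with the global ones, and that $\Upsilon$ can be uniform, which requires the large-$|\Lambda|$ asymptotics.

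For \textbf{e}, $P_{-\Lambda}(\mu)=P_\Lambda(-\mu)$ maps roots to negatives, and matching by real parts gives the stated pairing. For \textbf{f}, I would balance $\mu^4\approx-\gamma^2\Lambda^2$ with $\Lambda=\im\Lambda_I$, so $\mu^4\approx\gamma^2\Lambda_I^2$ and $\mu\approx\{\pm1,\pm\im\}\sqrt{\gamma|\Lambda_I|}$. A Rouché or Newton correction gives the $O(1)$ errors: writing $\mu=\sqrt{\gamma|\Lambda_I|}(\omega+\rho)$, the lower-order terms are $O(|\Lambda_I|^{-1/2})$ relative. The sign of the imaginary pair is then matched to $\tau_\pm$, which behave like $\pm\gamma^{-1}k|k|$ at infinity.
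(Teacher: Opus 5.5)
Your plan follows the paper's proof for (b), for the root count in (a), and for (d) and (f). To exclude imaginary roots when $\Re\Lambda>0$ you use $\mathcal F=\gamma^2(\Lambda-\Lambda_+(\mu))(\Lambda-\Lambda_-(\mu))$, which is neater than the paper's real/imaginary-part computation. There are, however, two gaps.

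\textbf{First gap: simplicity of the imaginary roots in (c).} Solving $\tau_\pm(k)=\tau$ shows that there are exactly two \emph{distinct} imaginary roots $\im k_\pm$. It says nothing about their multiplicities, so ``the remaining two roots'' is not justified. The reflection $\mu\mapsto-\bar\mu$ only forces the number of off-axis roots to be even, and zero is even. So a configuration with all four roots on $\im\R$ (e.g.\ $\im k_+$ double) is not excluded by your argument. You also use simplicity tacitly when you write $\partial_\Lambda\mu=-\partial_\Lambda P/\partial_\mu P$ in (d).

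The paper gets simplicity from strict convexity. Set $g(k):=\mathcal F(\im\tau,\im k,\gamma)=k^4+k^2+2c\tau k-\gamma^2\tau^2$. Then $g^{(n)}(k)=\im^n\partial_\mu^n\mathcal F(\im\tau,\im k,\gamma)$, $g''>0$ and $g(0)<0$, so $g$ has exactly two real zeros, both simple.

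It also follows at once in your own framework. Near $\mu=\im k$ with $k\ne0$, the factor $\Lambda-\Lambda_\mp(\mu)$ does not vanish, because $\Lambda_+(\im k)-\Lambda_-(\im k)=2\im\gamma^{-2}k\sqrt{1+\gamma^2k^2}\neq0$. The factor $\Lambda-\Lambda_\pm(\mu)$ has a simple zero, since $\Lambda_\pm'(\im k)=\tau_\pm'(k)\neq0$.

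\textbf{Second gap: the labelling in (d) and (e).} You leave this explicitly open as ``the main obstacle''. Your intermediate remark $\mu_2\approx(1-c)\Lambda$ is wrong: \eqref{lem:roots12} gives $\mu_2=-(1+c)\Lambda+O(\Lambda^2)$ and $\mu_3=(1-c)\Lambda+O(\Lambda^2)$. The gap closes as follows.

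\begin{itemize}
\item Since $\tau_-'(0)=(c-1)\gamma^{-2}=-(1+c)^{-1}$ and $\tau_+'(0)=(1-c)^{-1}$, the branches from (b) satisfy $\mu_2(\im\tau)=\im\tau_-^{-1}(\tau)$ and $\mu_3(\im\tau)=\im\tau_+^{-1}(\tau)$ for small $\tau\ne0$.
\item The four roots are distinct and continuous on each half-line of $\im\R\setminus\{0\}$. So the set where this identification holds is open and closed there, hence it holds everywhere. This is \eqref{eq:mus3pm}.
\item The same identification is what (e) needs for $j=2,3$. ``Matching by real parts'' cannot separate $\mu_2$ from $\mu_3$, since both have zero real part. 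Oddness of $\tau_\pm^{-1}$ gives \eqref{mu22_mu33} immediately.
\end{itemize}

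Finally, (d) does not require a uniform width for $\Upsilon$. It is only an open neighborhood of $\im\R$ in $\overline{\C}_+$; the paper lets its width vary along the axis and uses compactness only in $c$. So the large-$|\Lambda|$ asymptotics play no role in (d).
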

 \begin{proof}
    Assume by contradiction that  for $\Lambda \in \C_+$ there is a root of the form $
\mu=\im k  $  with $ k\in\mathbb R.$ Substituting in   equation \eqref{eq:mus1}  and using Notation \ref{not:notation}    we obtain
$$
(\im  k)^4-(\im   k)^2-2c(\Lambda _R  +\im\Lambda _I ) \im k+(1-c^2)(\Lambda _R +\im\Lambda _I)^2=0.
$$
Separating real and imaginary parts yields
$$
\begin{cases}
k^4+k^2+2\Lambda _Ic\,k+(1-c^2)(\Lambda _R  ^2-\Lambda _I^2)=0,\\[1mm]
-2\Lambda _R  c\,k+2(1-c^2)\Lambda _R  \Lambda _I=0
\end{cases}
$$
 yielding
$
k=\frac{\gamma ^2}{c}\,\Lambda _I.
$
Substituting this into the first equation gives the following contradiction
$$0=
k^4+k^2+2\Lambda _I c\,k+ \gamma ^2 (\Lambda _R ^2-\Lambda _I^2)
= c^{-4}
 {\gamma ^2}  (\Lambda _R  ^2c^4+\Lambda _I ^2c^2+\Lambda _I^4\gamma ^6 ) >0.
$$
   At $\Lambda=0$, the roots of
$\mathcal F(0,\mu,\gamma)=\mu^2(\mu-1)(\mu+1)$
are $\mu=\pm1$ and the double root $\mu=0$.
The implicit function theorem at $\mu=\pm1$ gives
the analytic roots $\mu_1$ and $\mu_4$.
For the remaining roots, set $\mu=\Lambda\nu$. Then
\begin{align}\label{eq:impftmu23}
\mathcal F(\Lambda,\Lambda\nu,\gamma)
&=-\Lambda^2\mathcal F_1(\Lambda,\nu,\gamma), \text{ where }
\mathcal F_1(\Lambda,\nu,\gamma)
:=\nu^2+2c\nu-\gamma^2-\Lambda^2\nu^4.
\end{align}
At $\Lambda=0$, the roots of $\mathcal F_1=0$ are
$\nu_\pm=-c\pm1$, and
\begin{align*}
\partial_\nu\mathcal F_1(0,-c\pm1,\gamma)=\pm2.
\end{align*}
The implicit function theorem therefore gives analytic
functions $\nu_\pm(\Lambda,\gamma)$ for $\Lambda \in D_\C (0,r)$.
The applications of this theorem can be made uniform for
$\gamma\in[0,\gamma_0]$, giving a common $r>0$.
Setting
$\mu_2=\Lambda\nu_-$ and $\mu_3=\Lambda\nu_+$
proves Claim \textbf{b}.

For each fixed $0<c<1$, these expansions show that, for
sufficiently small positive $\Lambda$,
$\mu_1,\mu_2<0<\mu_3,\mu_4$.
Since no root lies on $\im\R$ when $\Lambda\in\C_+$,
as proved above, the numbers of roots in $\C_+$ and
$\C_-$, counted with multiplicity, are constant on the
connected set $\C_+$.
Thus there are two roots in each half-plane, completing
the proof of Claim \textbf{a}.

Now let $\Lambda=\im\tau$, where $\tau\in\R\setminus\{0\}$.
For real $k$, we have
\begin{align*}
\mathcal F(\im\tau,\im k,\gamma)
&=k^4+k^2+2c\tau k-\gamma^2\tau^2,\\
\frac{d^2}{dk^2}\mathcal F(\im\tau,\im k,\gamma)
&=12k^2+2>0.
\end{align*}
This strictly convex polynomial is negative at $k=0$
and tends to $+\infty$ as $k\to\pm\infty$.
It therefore has exactly two real roots, both simple.
By \eqref{eq:mus221}, they are
$\tau_-^{-1}(\tau)$ and $\tau_+^{-1}(\tau)$.
Thus precisely two roots of \eqref{eq:mus1} are imaginary.
The remaining two roots are off the imaginary axis and,
by
\begin{align}\label{eq:mus22}
\mu_1+\mu_2+\mu_3+\mu_4=0,
\end{align}
have opposite nonzero real parts.
In particular, they are distinct, proving Claim \textbf{c}.

Away from $\Lambda=0$, the analytic implicit function
theorem applies to all four roots on the imaginary axis.
Near $\Lambda=0$, we use the analytic branches in
Claim \textbf{b}.
Gluing these branches gives the extensions in
Claim \textbf{d}.
By restricting $c$ to a compact interval about $c_0$
contained in $(0,1)$, the neighborhoods can be chosen
in common for these values of $c$.
Their width may depend on the point of the imaginary axis.
We may also choose $\Upsilon$ invariant under complex conjugation.
For the two imaginary roots, we have 
\begin{align*}
\partial_\Lambda\mu_2(\im\tau,\gamma)
&=\frac{1}{\tau_-'(\tau_-^{-1}(\tau))}<0 \text{ and }
\partial_\Lambda\mu_3(\im\tau,\gamma)
=\frac{1}{\tau_+'(\tau_+^{-1}(\tau))}>0.
\end{align*}
These inequalities and continuity of the other two roots
give the sign conditions in Claim \textbf{d} after
shrinking $\Upsilon$.
Claim  \textbf{e}   follows from the identity
$  \mathcal{F}(-\Lambda , -\mu , \gamma ) = \mathcal{F}(\Lambda , \mu , \gamma )$.   

To prove Claim \textbf{f}, set $s=|\Lambda_I|$ and
$\mu=\sqrt{\gamma s}\,\nu$. Dividing
$\mathcal F(\im\Lambda_I,\mu,\gamma)=0$ by $\gamma^2s^2$
gives
\begin{align*}
\psi (\nu ):= \nu^4-1-\frac{\nu^2}{\gamma s}
 -\frac{2\im c\,\operatorname{sign}(\Lambda_I)}
        {\gamma^{3/2}\sqrt{s}}\,\nu=0.
\end{align*}
Fix $c$, set $\varphi (\nu )=\nu ^4 -1$ and choose
\begin{align*}
 K>\frac{25}{16\gamma}+\frac{5c}{2\gamma^{3/2}}.
\end{align*}
For $s$ sufficiently large that $s\ge1$ and $K/\sqrt{s}\le1/4$,
consider the four circles
$|\nu-\zeta|=K/\sqrt{s}$, where
$\zeta\in\{1,-1,\im,-\im\}$.
On each circle, $|\nu|\le5/4$, and hence
\begin{align*}
|\psi (\nu ) -\varphi (\nu ) | =      \left|\frac{\nu^2}{\gamma s}
 +\frac{2\im c\,\operatorname{sign}(\Lambda_I)}
        {\gamma^{3/2}\sqrt{s}}\,\nu\right|
 \le\left(\frac{25}{16\gamma}
          +\frac{5c}{2\gamma^{3/2}}\right)s^{-1/2}
 <Ks^{-1/2}.
\end{align*}
For every other fourth root of unity $\zeta'\ne\zeta$,
$|\nu-\zeta'|\ge\sqrt2-1/4>1$  on the circle  $|\nu-\zeta|=K/\sqrt{s}$,     so
\begin{align*}
| \varphi (\nu ) | = |\nu^4-1|
 =|\nu-\zeta|
   \prod_{\substack{(\zeta')^4=1\\\zeta'\ne\zeta}}
      |\nu-\zeta'|
 \ge Ks^{-1/2}.
\end{align*}
Rouch\'e's theorem, see \cite{MR924157}, therefore gives exactly
one root of $\psi (\nu )$ in each of the four disjoint disks. Each such root
satisfies
\begin{align*}
 \nu=\zeta+O(s^{-1/2})  \text{  and }
 \mu=\sqrt{\gamma s}\,\zeta+O(1).
\end{align*}
Claim \textbf{c} identifies the roots near $-1$ and $1$
with $\mu_1$ and $\mu_4$. By \eqref{eq:mus221}, the imaginary
part of $\mu_2$ has sign opposite to $\Lambda_I$, whereas
that of $\mu_3$ has the same sign. Thus
\begin{align*}
 \mu_1&=-\sqrt{\gamma|\Lambda_I|}+O(1),&
 \mu_4&=\sqrt{\gamma|\Lambda_I|}+O(1),\\
 \mu_2&=-\im\sqrt{\gamma|\Lambda_I|}
            \operatorname{sign}(\Lambda_I)+O(1),&
 \mu_3&=\im\sqrt{\gamma|\Lambda_I|}
            \operatorname{sign}(\Lambda_I)+O(1),
\end{align*}
which proves \eqref{eq:assroots}.

For later use notice that  using the invertible functions $\tau _\pm :\R \to \R$ in \eqref{eq:mus221} we have
\begin{equation}\label{eq:mus3pm}
  \mu _2(\im \Lambda _I, \gamma ) =\im  \tau _- ^{-1}(\Lambda _I)  \text{ and }  \mu _3(\im \Lambda _I, \gamma ) =\im  \tau _+ ^{-1}(\Lambda _I) .
\end{equation}
  
 \end{proof}

  Besides system \eqref{A_Decompose}   and its homogeneous version
\begin{align}  \label{A_Decomposehom}&
\left ( d   /d\xi  -  ( A _\infty^{(0)}  + \Lambda A _\infty^{(1)} (\gamma)  +q(\xi))  \right ) \widetilde{V}   =  0,
\end{align} we  will be also interested to the adjoint   problem
\begin{align}  \label{A_djoint}&
 \left ( d   /d\xi +  ( A _\infty^{(0)}  + \Lambda A _\infty^{(1)} (\gamma)  +q(\xi))  ^\intercal \right )    W     =0 ,
\end{align}
which in the components $W=\left ( w_1, w_2, w_3, w_4    \right )^\intercal $ is
\begin{equation} \label{A_djointcoo1} \begin{aligned}
 w_1' &=-c \Lambda w_3-\gamma ^2 \Lambda w_4+p (p-1)Q ^{p-2} Q'w_3\\   w_2' &=-  w_1-w_3+c w_4+p  Q ^{p-1}  w_3\\ w_3' &=-  w_2 \text{ and }w_4'=\Lambda w_3.
\end{aligned}
\end{equation}
For $\Lambda\in\Upsilon$, the system associated
asymptotically to \eqref{A_Decomposehom} as
$|\xi|\to\infty$ is
\begin{align}\label{A_infinite}
\frac{d}{d\xi}\widetilde V
=(A_\infty^{(0)}+\Lambda A_\infty^{(1)}(\gamma))
\widetilde V.
\end{align}
Its space of solutions is generated by
\begin{align}\label{eq:vj0}
\widetilde V_j^{(0)}
=e^{\mu_j\xi}\mathbf a_j,\qquad
\mathbf a_j
=\left(1,\mu_j,\mu_j^2,
\frac{\gamma^2\Lambda}{\mu_j}-c\right)^\intercal,
\quad j=1,2,3,4.
\end{align}
The quotients at $\Lambda=0$ are understood by analytic
continuation. In particular,
\begin{align*}
\left.\frac{\Lambda}{\mu_2}\right|_{\Lambda=0}
=-\frac{1}{1+c},
\qquad
\left.\frac{\Lambda}{\mu_3}\right|_{\Lambda=0}
=\frac{1}{1-c}.
\end{align*}
Hence
$\mathbf a_2(0,\gamma)=(1,0,0,-1)^\intercal$ and
$\mathbf a_3(0,\gamma)=(1,0,0,1)^\intercal$,
and the four vectors remain a basis at zero.
The same convention is used in \eqref{eq:bj} below.
The denominators there have limits
$-2/(1+c)$ and $-2/(1-c)$ for $j=2,3$, respectively,
and $2$ for $j=1,4$.
Thus the dual vectors $\mathbf b_j$, defined in \eqref{eq:bj} below, also extend
analytically to zero.
We will also set $\mathbf{a}_j  =:  \(a _{j1}, a _{j2}, a _{j3} ,  a _{j4}  \)   ^\intercal $.     System
\begin{align}  \label{A_djointinf}&
 d  W /d\xi =- ( A _\infty^{(0)}  + \Lambda A _\infty^{(1)} (\gamma)    )  ^\intercal W,
\end{align}
or equivalently
\begin{equation} \label{A_djointinf1} \begin{aligned}
 w_1' &=-c \Lambda w_3-\gamma ^2 \Lambda w_4\\   w_2' &=-  w_1-w_3+c w_4\\ w_3' &=-  w_2 \text{ and }w_4'=\Lambda w_3,
\end{aligned}
\end{equation}
has space of solutions generated by
\begin{align}\label{eq:zj0}
   Z _j^{(0)}= e ^{-\mu _j \xi} \mathbf{b}_j \text{ with }  \< \mathbf{b}_j  , \mathbf{a} _k \> _{\C ^4}=\delta _{jk} ,
\end{align}
where   we will write $\mathbf{b}_j  =:  \(b _{j1}, b _{j2}, b _{j3} ,  b _{j4}  \)   ^\intercal  $ with
\begin{align} \nonumber
   &\mathbf{b}_j  =  \frac{\widetilde{\mathbf{b}}_j}{ \< \widetilde{\mathbf{b}}_j  , \mathbf{a} _j \> _{\C ^4}} \text{ with  } \widetilde{\mathbf{b}}_j = \(     \frac{\Lambda}{\mu _j} \( c- \gamma ^2  \frac{\Lambda}{\mu _j} \)  , \mu _j,1, -  \frac{\Lambda}{\mu _j}  \) ^\intercal   \text{ and}  \\& \< \widetilde{\mathbf{b}}_j  , \mathbf{a} _j \> _{\C ^4}= 2  \frac{\Lambda}{\mu _j} \( c- \gamma ^2  \frac{\Lambda}{\mu _j} \) +2\mu_j^2. \label{eq:bj}
\end{align}
By Lemma \ref{lem:transys} for $q(\xi )\equiv 0$   equation $ (\mathcal{L} _{[c]}^{(0)}-\lambda ) (v_1, v_2)^\intercal =  (F_1, F_2)^\intercal $ writes equivalently 
  as
\begin{align}  \label{A_Decomposenhcc}&
  d   \widetilde{V} /d\xi  =( A _\infty^{(0)}  + \Lambda A _\infty^{(1)} (\gamma)  )\widetilde{ V} +\widetilde{\mathbf{F}}  
\end{align}
Here $\widetilde V$ and $\widetilde{\mathbf F}$ are as in
\eqref{A_Decompose} and \eqref{eq:relkern2}, respectively.
For $\Lambda\in\Upsilon\cap\C_+$, the unique $L^2$ solution is \small
\begin{align}\label{eq:A_Decomposenhcc1}
\widetilde V(\xi)
=L(\Lambda)\widetilde{\mathbf F}(\xi)
&:=
\sum_{j=1}^2\mathbf a_j
\int_{-\infty}^{\xi}
e^{\mu_j(\xi-y)}
\<\widetilde{\mathbf F}(y),\mathbf b_j\>_{\C^4}\,dy
-\sum_{j=3}^4\mathbf a_j
\int_{\xi}^{+\infty}
e^{\mu_j(\xi-y)}
\<\widetilde{\mathbf F}(y),\mathbf b_j\>_{\C^4}\,dy.
\end{align}\normalsize
The sign conditions in Lemma \ref{lem:roots1} and
Young's inequality show that, for each fixed
$\Lambda\in\Upsilon\cap\C_+$,
$L(\Lambda)$ is bounded on $L^2(\R,\C^4)$.
For $\Lambda\in\im\R$ and
$\widetilde{\mathbf F}\in L^1(\R,\C^4)$,
the same formula yields  the pointwise $L(z)\widetilde{\mathbf F}(\xi)   \xrightarrow{z\to \Lambda } L(\Lambda)\widetilde{\mathbf F}(\xi) $       limit  with $z\in \C_+$
and defines the outgoing boundary value.
In particular we have
\small
\begin{align} \label{eq:tildev1v4}
  \( \widetilde{v}_1 , \widetilde{v}_4\)  ^\intercal =  \(   ( c \gamma^{-3}  L _{13}   +\gamma^{-1}  L _{14}) \widetilde{ F}_1  -     \gamma^{-3}  L _{13} \widetilde{ F}_2     , 
 ( c \gamma^{-3}  L _{43}   +\gamma^{-1}  L _{44}) \widetilde{ F}_1  -     \gamma^{-3}  L _{43} \widetilde{ F}_2 \)  ^\intercal
\end{align} \normalsize
where, for $\mathbf{c}_l  =   \(c _{l1}, c _{l2}, c _{l3} ,  c _{l4}  \)   ^\intercal $   for $\mathbf{c}_l=\mathbf{a}_l,\mathbf{b}_l$, 
 \small
\begin{align}\label{deflij}&
  L_{ij}g  = K _{ij}( \cdot ,  \Lambda , \gamma ) *g  \text{ with } \\&  K _{ij}(\xi , \Lambda , \gamma )=\sum_{\ell =1,2} a_{\ell i}(  \Lambda , \gamma )b_{\ell j}(  \Lambda , \gamma)  e^{\mu _\ell (  \Lambda , \gamma ) \xi} 1_{\R _+}(\xi )  - \sum_{\ell =3,4} a_{\ell i}(  \Lambda , \gamma ) b_{\ell j}(  \Lambda , \gamma ) e^{\mu _\ell (  \Lambda , \gamma ) \xi} 1_{\R _-}(\xi ). \nonumber
\end{align}
\normalsize
Since $\{\mathbf b_j\}_{j=1}^4$ is the dual basis of
$\{\mathbf a_j\}_{j=1}^4$ with respect to the bilinear pairing, $\sum_{j=1}^4\mathbf a_j\mathbf b_j^\intercal=I_4.$
Taking the $(\ell,3)$ entry gives
\begin{align}\label{eq:varpid}
\sum_{j=1}^4a_{j\ell}b_{j3}=\delta_{\ell3},
\qquad \ell=1,\dots,4.
\end{align}
When differentiating $K_{i3}$ in \eqref{deflij}
distributionally, the coefficient of the Dirac mass at
zero is therefore $\delta_{i3}$.
Consequently, for $i\ne3$ and initially for
$\widetilde F_2\in C_c^\infty(\R)$, we obtain
\begin{align} \label{eq:varpid1}
      L _{i 3} \widetilde{ F}_2 '=  \widetilde{ L} _{i 3} \widetilde{ F}_2: =  (  \sum_{\ell =1,2} a_{\ell i} b_{\ell 3}  \mu _\ell  e^{\mu _\ell   \xi} 1_{\R _+}   - \sum_{\ell =3,4} a_{\ell i}      b_{\ell 3}  \mu _\ell e^{\mu _\ell  \xi} 1_{\R _-}   )  * \widetilde{ F}_2 \text{ for  }i\neq 3.
\end{align}

\section{Dispersion and Kato smoothing  for the linearized equation }\label{sec:disp}

For $\mathbf f\in L^1(\R,\C^2)$, we define
\begin{align}\label{eq:projections}
\mathcal P_{[c]}\mathbf f
&:=
\sum_{i=1}^2\xi_i[c]\<\eta_i[c],\mathbf f\>,
\qquad
\mathcal Q_{[c]}\mathbf f
:=\mathbf f-\mathcal P_{[c]}\mathbf f.
\end{align}
These inner products are    absolutely convergent integrals because
$\eta_i[c]\in L^\infty(\R,\C^2 )$.
By \eqref{eq:M2.5},
$\mathcal P_{[c]}^2=\mathcal P_{[c]}$ and
$\mathcal Q_{[c]}^2=\mathcal Q_{[c]}$.
The exponential decay of $\xi_i[c]$ implies that these
are bounded projections on $L^{1,1}(\R,\C^2)$ and on
$\mathcal H^1\cap L^{1,1}(\R,\C^2)$.
For $g_2\in L^1(\R)$, the inner products  with derivatives
are defined by integration by parts:
\begin{align*}
\<\eta_i[c],(0,\partial_xg_2)^\intercal\>
:=
-\<\eta_i'[c],(0,g_2)^\intercal\>,
\qquad i=1,2.
\end{align*}
We consider the projection $\pi_1$ in \ref{itm:proj} of Notation \ref{not:notation}.
We first justify the Laplace transforms used below.

\begin{lemma}\label{lem:linear-group-growth}
For $c\in((p-1)^{1/2}/2,1)$, the operator  
$\mathcal L_{[c]}$ on $\mathcal H^1$, with domain
$H^3(\R)\times H^2(\R)$, generates a strongly continuous group and
\begin{align}\label{eq:linear-group-growth}
\|e^{t\mathcal L_{[c]}}\mathbf v_0\|_{\mathcal H^1}
\le C(c)(1+|t|)\|\mathbf v_0\|_{\mathcal H^1},\qquad t \in\mathbb{R}, \quad \text{for all }  \mathbf v_0 \in \mathcal H^1.
\end{align}
Consequently, the  $R_{\mathcal L_{[c]}}(z)=(\mathcal L_{[c]}-z)^{-1}$ is defined by
\begin{align}\label{eq:linear-laplace}
R_{\mathcal L_{[c]}}(z)\mathbf v_0
=-\int_0^\infty e^{-zt}e^{t\mathcal L_{[c]}}\mathbf v_0\,dt \quad
 \text{for } \Re z>0 \text{ and } \mathbf v_0 \in \mathcal H^1,
\end{align}
where the integral converges in $\mathcal H^1$.
\end{lemma}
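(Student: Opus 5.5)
The plan is to obtain the group from an energy-type structure, with $\mathcal L_{[c]}$ treated as a bounded perturbation of a skew-adjoint generator, and then get the linear growth bound from the spectral decomposition around the generalized kernel $\Span\{\xi_1[c],\xi_2[c]\}$.

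\textbf{Step 1: generation.} First I show that $\mathcal L_{[c]}$ generates a $C_0$-group on $\mathcal H^1$. I rewrite $\mathcal L_{[c]}=\mathcal L_{[c]}^{(0)}+T$, with $\mathcal L_{[c]}^{(0)}$ and $T$ as in \eqref{eq:eilinvs}.

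The free part $\mathcal L_{[c]}^{(0)}$ is a Fourier multiplier. Its symbol is diagonalizable, with eigenvalues $\im k(c\pm\sqrt{1+k^2})$, and the diagonalizing matrices are uniformly bounded in the $\mathcal H^1$ scaling, i.e. after conjugating by $\diag(\langle k\rangle,1)$. Equivalently, it is skew-adjoint for the inner product $\langle\diag(1-\partial_x^2,1)\cdot,\cdot\rangle$ up to the harmless $c\partial_x$ term; this is essentially the unitary group $U(t)$ in \eqref{eq:group} composed with a translation. Hence it generates a group of isometries in an equivalent norm on $\mathcal H^1$, with domain $H^3\times H^2$.

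The perturbation $T\mathbf v=-(0,\partial_x(f'(Q_c)v_1))^\intercal$ is bounded from $\mathcal H^1$ to $\mathcal H^1$, since $f'(Q_c)\in W^{1,\infty}$ and $v_1\in H^1$ give $\partial_x(f'(Q_c)v_1)\in L^2$. For $1<p<2$ one should check that $f'(Q_c)=pQ_c^{p-1}$ has a bounded derivative; it does, because $Q_c^{p-2}Q_c'\sim Q_c^{p-1}$. The bounded perturbation theorem then gives a group with $\|e^{t\mathcal L_{[c]}}\|\le Me^{\omega|t|}$, and the Laplace formula \eqref{eq:linear-laplace} holds for $\Re z>\omega$.

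\textbf{Step 2: linear growth via the conserved form.} To improve $e^{\omega|t|}$ to $1+|t|$, I use the quadratic form $\mathcal E(\mathbf v)=\frac12\langle\mathbf L_c\mathbf v,\mathbf v\rangle$. It is conserved by the linear flow because $\mathcal L_{[c]}=\partial_x\sigma_1\mathbf L_c$ with $\mathbf L_c$ self-adjoint and $\partial_x\sigma_1$ skew. I would verify this conservation on $H^3\times H^2$ data and extend it by density.

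Next I split $\mathbf v=\mathcal P_{[c]}\mathbf v+\mathcal Q_{[c]}\mathbf v$. Both projections commute with the flow by Lemma \ref{lem:kernel}, and $\mathcal P_{[c]}$ is bounded on $\mathcal H^1$ because $\eta_i[c]\in L^\infty$, $\eta_i'$ decays exponentially, and $\xi_i[c]$ is Schwartz. On the range of $\mathcal P_{[c]}$ the flow is explicit:
\[
e^{t\mathcal L_{[c]}}(a\xi_1+b\xi_2)=(a-tb)\xi_1+b\xi_2,
\]
which grows at most linearly. On the range of $\mathcal Q_{[c]}$, i.e. under $\langle\mathbf v,\eta_1\rangle=\langle\mathbf v,\eta_2\rangle=0$, I invoke the coercivity used in the orbital stability theory (Theorem \ref{thm:orbstab}, Maul\'en--Mu\~noz and Bona--Sachs): $\mathbf L_c$ has one negative direction, and $q'(c)>0$ by \eqref{eq:q_prime}, which makes $\mathcal E(\mathbf v)\gtrsim\|\mathbf v\|_{\mathcal H^1}^2$. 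The constraint $\langle\mathbf v,\sigma_1\mathbf Q_c\rangle=0$ is the relevant one, and $\eta_1$ handles the $\partial_c\mathbf Q_c$-type degeneracy. Together with $\mathcal E\lesssim\|\cdot\|_{\mathcal H^1}^2$, conservation then gives uniform boundedness of the flow on $\operatorname{Ran}\mathcal Q_{[c]}$, and hence \eqref{eq:linear-group-growth}.

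\textbf{Step 3: the resolvent formula.} With polynomial growth, the integral in \eqref{eq:linear-laplace} converges absolutely in $\mathcal H^1$ for every $\Re z>0$. The standard Hille--Yosida identity then identifies it with $-(\mathcal L_{[c]}-z)^{-1}$, which is consistent with Proposition \ref{prop:noeigh}\textbf{i}.

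The main obstacle is the coercivity of $\mathbf L_c$ on $\operatorname{Ran}\mathcal Q_{[c]}$. The orthogonality conditions coming from $\eta_1,\eta_2$ are not the usual $\{\sigma_1\mathbf Q_c,\mathbf Q_c'\}$ ones, so the negative direction must be controlled using $\langle\mathbf L_c\partial_c\mathbf Q_c,\partial_c\mathbf Q_c\rangle=-q'(c)<0$ together with the $\eta_2$ constraint, and the kernel direction using $\eta_1$. If this direct route becomes messy, a fallback is to apply the Lin--Zeng index theory \cite{LZ22Memoiers} already cited in Proposition \ref{prop:noeigh}: it yields exactly that the group is bounded on a complement of the generalized kernel, with at most linear growth coming from the Jordan block.
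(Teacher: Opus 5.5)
\textbf{Gap in Step 2.} Your Steps 1 and 3 are fine and agree with the paper. The gap is in Step 2: the splitting $\mathbf v=\mathcal P_{[c]}\mathbf v+\mathcal Q_{[c]}\mathbf v$ is not available on $\mathcal H^1$.

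Knowing $\eta_1[c]\in L^\infty$ does not let you pair it with $\mathbf v\in H^1\times L^2$, and in fact $\eta_1[c]\notin L^2$. By \eqref{eq:ker2} it converges to constant vectors at $\pm\infty$ whose difference is $\theta_1\sigma_1 d(c)=-\sigma_1 d(c)/q'(c)\neq0$ (a direct computation gives $d_1(c)+d_2(c)<0$). This is why the paper defines $\mathcal P_{[c]}$ only on $L^1$ and claims boundedness only on $L^{1,1}$ and $\mathcal H^1\cap L^{1,1}$.

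Two of your claims therefore fail:
\begin{itemize}
\item The bound $\|\mathcal P_{[c]}\mathbf v_0\|_{\mathcal H^1}\lesssim\|\mathbf v_0\|_{\mathcal H^1}$ is false.
\item The coercivity you need on $\{\langle\mathbf v,\eta_1[c]\rangle=\langle\mathbf v,\eta_2[c]\rangle=0\}$ is false. Let $\mathbf z\in\mathcal H^1\cap L^1$ satisfy $\langle\mathbf z,\sigma_1\mathbf Q_c\rangle=\langle\mathbf z,\xi_1[c]\rangle=0$, and set $\mathbf v=\mathbf z-\langle\mathbf z,\eta_1[c]\rangle\xi_1[c]$. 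Then $\mathbf v$ meets both of your constraints and $\langle\mathbf L_c\mathbf v,\mathbf v\rangle=\langle\mathbf L_c\mathbf z,\mathbf z\rangle\lesssim\|\mathbf z\|_{\mathcal H^1}^2$. Now take $\mathbf z$ a wide, low bump placed far out on a side where $\eta_1[c]$ has a nonzero limit, corrected by a tiny localized term to meet the two constraints. Then $\|\mathbf z\|_{\mathcal H^1}\to0$, while $\langle\mathbf z,\eta_1[c]\rangle$, and hence $\|\mathbf v\|_{\mathcal H^1}$, stays of order one.
\end{itemize}

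This cannot be patched inside your scheme. A projection onto $\Span\{\xi_1[c],\xi_2[c]\}$ commuting with the flow must have a $\xi_1$-coefficient functional $\ell$ solving $\mathcal L_{[c]}^*\ell=-\eta_2[c]$. Every tempered solution is $\eta_1[c]$ plus a multiple of $\sigma_1\mathbf Q_c$ plus a constant vector. Each such solution has different limits at $\pm\infty$, so it does not lie in $(\mathcal H^1)^*=H^{-1}\times L^2$. This reflects $0$ being a threshold of $\sigma(\mathcal L_{[c]})=\im\R$. Your Lin--Zeng fallback, phrased as boundedness on a complement of the generalized kernel, meets the same obstruction.

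\textbf{How the paper avoids it.} The paper never pairs with $\eta_1[c]$.
\begin{itemize}
\item It sets $b=\langle\eta_2[c],\mathbf v_0\rangle$. This is bounded on $\mathcal H^1$ since $\eta_2[c]$ is Schwartz, and constant along the flow since $\mathcal L_{[c]}^*\eta_2[c]=0$.
\item It puts $\mathbf w(t)=e^{t\mathcal L_{[c]}}\mathbf v_0-b\xi_2[c]$.
\item It replaces your $\eta_1$-constraint by plain $L^2$-orthogonality to $\xi_1[c]$: $a(t)=\langle\mathbf w(t),\xi_1[c]\rangle/\|\xi_1[c]\|_{L^2}^2$ and $\mathbf z=\mathbf w-a\xi_1[c]$.
\end{itemize}
Then $\langle\mathbf z,\sigma_1\mathbf Q_c\rangle=\langle\mathbf z,\xi_1[c]\rangle=0$, which are exactly the constraints of Lemma 2.1 of Maul\'en--Mu\~noz. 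Moreover $\langle\mathbf L_c\mathbf z,\mathbf z\rangle=\langle\mathbf L_c\mathbf w,\mathbf w\rangle$ is conserved, because $\partial_t\mathbf w=\mathcal L_{[c]}\mathbf w-b\xi_1[c]$ and $\mathbf L_c\xi_1[c]=0$. Hence $\sup_t\|\mathbf z(t)\|_{\mathcal H^1}\lesssim\|\mathbf v_0\|_{\mathcal H^1}$.

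The price is that this splitting is not flow-invariant, so $a(t)$ is not given by your explicit Jordan-block formula. Instead $\dot a=\langle\mathbf z,\mathcal L_{[c]}^*\xi_1[c]\rangle/\|\xi_1[c]\|_{L^2}^2-b$ is bounded by $C\|\mathbf v_0\|_{\mathcal H^1}$, and this produces the factor $1+|t|$. In Lin--Zeng terms, one works modulo $\ker\mathbf L_c$: there the hyperplane $\{\langle\mathbf v,\sigma_1\mathbf Q_c\rangle=0\}$ is invariant and $\langle\mathbf L_c\cdot,\cdot\rangle$ is positive on it. One does not need an invariant complement of the generalized kernel.
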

\begin{proof}
The free operator $\mathcal L_{[c]}^{(0)}$ generates a strongly continuous group $e^{t\mathcal L_{[c]}^{(0)}}$ on $\mathcal H^1$.  Since $f'(Q_c)\in W^{1,\infty}(\R)$, we have
\begin{align*}
\|(\mathcal L_{[c]}-\mathcal L_{[c]}^{(0)})\mathbf v\|_{\mathcal H^1}
=\|\partial_x(f'(Q_c)v_1)\|_{L^2}
\le C(c)\|\mathbf v\|_{\mathcal H^1}.
\end{align*}
By the bounded perturbation theorem, $\mathcal L_{[c]}$ generates a strongly continuous  group $e^{t\mathcal L_{[c]} }$ on $\mathcal H^1$.

From Lemma 2.1 of \cite{maulen2025asymptoticstabilitystablegood}, there exists $\delta(c)>0$ such that 
\begin{align}\label{eq:linear-constrained-coercivity}
\langle\mathbf L_c\mathbf z, \mathbf z \rangle
\ge\delta(c)\|\mathbf z\|_{\mathcal H^1}^2
\quad\text{if}\quad
\langle\mathbf z,\sigma_1\mathbf Q_c\rangle
=\langle\mathbf z,\xi_1[c]\rangle=0.
\end{align}
For $\mathbf v_0 \in H^3(\R)\times H^2(\R)$,  $\mathbf v = e^{t\mathcal L_{[c]}}\mathbf v_0 \in H^3(\R)\times H^2(\R)$  is the classical solution to $\partial_t \mathbf v = \mathcal L_{[c]}\mathbf v$. Set 
\begin{align}\label{eq:linear-growth-decomposition}
b=\langle\eta_2[c],\mathbf v_0\rangle,
\quad \mathbf w(t)&=e^{t\mathcal L_{[c]}}\mathbf v_0-b\xi_2[c],\quad
a(t)=\frac{\langle\mathbf w(t),\xi_1[c]\rangle}{\|\xi_1[c]\|_{L^2}^2},\quad
\mathbf z(t)=\mathbf w(t)-a(t)\xi_1[c].
\end{align}
Then, $\langle\mathbf z,\xi_1[c]\rangle\equiv 0$. Since $\mathbf L_c$ is self-adjoint and $\mathbf L_c\xi_1 = 0$, we have 
\[
\begin{split}
\langle \mathbf L_c \mathbf{z}, \mathbf{z} \rangle 
& =   \langle \mathbf L_c  \mathbf{w}, \mathbf{w} \rangle + \langle \mathbf L_c  \mathbf{w}, -a\xi_1 \rangle + \langle -a \mathbf L_c  \xi_1, \mathbf{w} \rangle + \langle -a \mathbf L_c  \xi_1,-a \xi_1 \rangle   =   \langle \mathbf L_c  \mathbf{w}, \mathbf{w} \rangle.
\end{split}
\] 
By \eqref{eq:M2.3}, we have
$\partial_t\mathbf w=\mathcal L_{[c]}\mathbf w-b\xi_1[c]$. Since $\mathbf L_c\xi_1[c]=0$ and $\partial_x\sigma_1$ is skew-adjoint,
\[
\partial_t\langle \mathbf L_c \mathbf w, \mathbf w\rangle = 2\Re \langle \partial_t \mathbf w, \mathbf L_c \mathbf w \rangle = 2\Re \langle \mathcal L_{[c]}\mathbf w-b\xi_1[c], \mathbf L_c \mathbf w \rangle = 2\Re \langle \partial_x \sigma_1\mathbf L_c\mathbf w, \mathbf L_c \mathbf w \rangle = 0,
\]
and hence, $\langle \mathbf L_c \mathbf{w}(t), \mathbf{w}(t) \rangle$ is conserved. Combining all, we see that
\[
\begin{split}
\|\mathbf z\|_{\mathcal H^1}^2 
& \leq \langle \mathbf L_c \mathbf{z}, \mathbf{z} \rangle = \langle \mathbf L_c \mathbf{w}, \mathbf{w} \rangle = \langle \mathbf L_c \mathbf{w}(0), \mathbf{w}(0) \rangle =   \langle \mathbf L_c \mathbf v_0, \mathbf v_0 \rangle - 2\Re \langle \mathbf L_c \mathbf v_0, b\xi_2 \rangle + |b|^2 \langle \mathbf L_c \xi_2, \xi_2 \rangle \\
& \leq C(c) \|\mathbf v_0\|_{\mathcal H^1}^2.
\end{split}
\]
Hence, 
\begin{align}\label{eq:linear-growth-transverse}
\sup_{t\in\R}\|\mathbf z(t)\|_{\mathcal H^1}
\le C(c)\|\mathbf v_0\|_{\mathcal H^1}.
\end{align}
On the other hand, we have
\begin{align*}
\dot a(t)=\frac{\langle\mathbf z(t),\mathcal L_{[c]}^*\xi_1[c]\rangle}
{\|\xi_1[c]\|_{L^2}^2}-b.
\end{align*}
Integrating over $[0,t]$ and  using \eqref{eq:linear-growth-transverse} give $|a(t)|\le C(c)(1+|t|)\|\mathbf v_0\|_{\mathcal H^1}$. With \eqref{eq:linear-growth-decomposition}, this proves
\eqref{eq:linear-group-growth}.
For $\mathbf v_0 \in \mathcal H^1$, the above calculations can be justified by approximation by smooth data in
$\mathcal H^1$. 

Finally,
\eqref{eq:linear-group-growth} makes the integral in
\eqref{eq:linear-laplace} convergent for every $\Re z>0$. Hence, $\Re z>0$ is included in the resolvent set of $\mathcal L_{[c]}$ and, in particular,   we have
\[
(\mathcal L_{[c]}-z)^{-1}\mathbf v_0
=-\int_0^\infty e^{-zt}e^{t\mathcal L_{[c]}}\mathbf v_0\,dt
\]
for all $\mathbf v_0 \in \mathcal H^1$ (see Theorem 1.10 in Chapter II of \cite{EN2000}).

\end{proof}

We prove the following proposition later in this section.
\begin{proposition}
\label{thm:pwdisp}
For every $c\in((p-1)^{1/2}/2,1)$ there exists $C(c)>0$ such that
\begin{align}\label{eq:pwdisp1}
\|\pi_1e^{t\mathcal L_{[c]}}\mathcal Q_{[c]}\mathbf v_0\|_
{L^2((0,+\infty),\widehat\Sigma)}
\le C(c)\|\mathbf v_0\|_{\mathcal H^1\cap L^{1,1}(\R,\C^2)}.
\end{align}
\end{proposition}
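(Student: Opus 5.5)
The plan is Kato smoothing via Plancherel in time. By Lemma \ref{lem:linear-group-growth} we can Laplace transform in $t$. For $\Re z>0$ we have
\[
\int_0^\infty e^{-zt}\pi_1 e^{t\mathcal L_{[c]}}\mathcal Q_{[c]}\mathbf v_0\,dt=-\pi_1R_{\mathcal L_{[c]}}(z)\mathcal Q_{[c]}\mathbf v_0 .
\]
By Plancherel in the time variable, applied for $z=\epsilon+\im\lambda_I$ with $\epsilon\to0^+$ and then using Fatou,
\[
\|\pi_1e^{t\mathcal L_{[c]}}\mathcal Q_{[c]}\mathbf v_0\|_{L^2((0,\infty),\widehat\Sigma)}^2\lesssim\sup_{\epsilon>0}\int_\R\|\sech(\gamma\kappa x)\pi_1R_{\mathcal L_{[c]}}(\epsilon+\im\lambda_I)\mathcal Q_{[c]}\mathbf v_0\|_{L^2}^2\,d\lambda_I .
\]
The goal is therefore a bound on this last integral by $\|\mathbf v_0\|^2_{\mathcal H^1\cap L^{1,1}}$. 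We do not prove a uniform operator bound on the resolvent. Instead we use a pointwise-in-$\lambda$ representation of $\pi_1R(\lambda)\mathbf F$ through Jost functions, and we split the frequency range into low $|\lambda_I|\le\lambda_0$, intermediate $\lambda_0\le|\lambda_I|\le\lambda_1$, and high $|\lambda_I|\ge\lambda_1$.

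\textbf{Resolvent construction.} Pass to the first-order system \eqref{A_Decompose} via Lemma \ref{lem:transys}. Let $f_1,f_2$ be the Jost solutions decaying or oscillating at $-\infty$ and $f_3,f_4$ those at $+\infty$, asymptotic to $\widetilde V_j^{(0)}$ by Lemma \ref{lem:roots1}. From these, build the Green kernel
\[
G(\xi,\eta,\Lambda)=\sum_{j}f_j(\xi)\,\frac{\langle *(\cdots)\rangle}{\widehat W(\Lambda)} ,
\]
using the wedge and Hodge-star formalism of Notation \ref{not:notation}, with $\widehat W$ the outgoing Evans determinant. By Lemma \ref{lem:nozerodet}, which follows from Proposition \ref{prop:noeigh}\textbf{iii}, $\widehat W\neq0$ on $\im\R\setminus\{0\}$. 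Analyticity in $\Lambda\in\Upsilon$ then gives uniform bounds of $G$ on compact subsets of $\im\R\setminus\{0\}$. In the intermediate range, the kernel against the weight $\sech(\gamma\kappa x)$ is bounded. Hence $\|\sech\,\pi_1R(\lambda)\mathbf F\|_{L^2}\lesssim\|\mathbf F\|_{L^1}$, and the derivative in $F_2$ is absorbed by \eqref{eq:varpid1}. Combined with the Laplace representation, this gives an $L^2_\lambda$ bound: since the range is compact, we can simply write $R(\lambda)\mathbf F=\lambda^{-1}(R(\lambda)\mathcal L\mathbf F-\mathbf F)$, or better, use the $L^2_t$ Plancherel on a truncated interval.

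\textbf{High frequencies.} Treat $\mathcal L_{[c]}$ as a perturbation of $\mathcal L^{(0)}_{[c]}$. The free resolvent is given by \eqref{eq:A_Decomposenhcc1}, with the asymptotics \eqref{eq:assroots}. The weighted free estimate is the classical Kato smoothing for the two dispersive branches $\tau_\pm(k)$, whose derivative grows like $|k|$. This yields
\[
\int\|\sech\,\pi_1R_0\mathbf F\|^2d\lambda\lesssim\|\mathbf F\|_{\mathcal H^1}^2 ,
\]
where the derivative loss from $v_2'$ and $v_1''$ is offset by the fact that we only take $\pi_1$. The perturbation $T$ is handled by a Born series, since $\|\sech^{-1}\pi_1 R_0(\lambda)\partial_x(0,f'(Q_c)\cdot)\sech\|\to0$ as $|\lambda|\to\infty$. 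That decay follows from $\mu_{2,3}\sim\sqrt{|\Lambda|}$.

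\textbf{Main obstacle: low frequencies.} Here $\widehat W$ vanishes at $\Lambda=0$, because of the generalized kernel $\xi_1,\xi_2$. The projection $\mathcal Q_{[c]}$ must cancel the singular part. I would expand $G=\Lambda^{-2}G_{-2}+\Lambda^{-1}G_{-1}+O(1)$ with $G_{-2},G_{-1}$ spanned by $\xi_i\otimes\eta_j$, consistent with Lemma \ref{lem:kernel}. Then I would verify that on $\mathcal Q_{[c]}\mathbf v_0$ these terms cancel, leaving a bounded kernel. Because $\eta_1[c]$ is not decaying and $\mu_{2,3}=O(\Lambda)$ produce kernels like $e^{\mu_j\xi}$ that grow linearly in $\xi$ after expansion, the remainder is controlled by $\|\mathbf v_0\|_{L^{1,1}}$; this is where the $L^{1,1}$ norm enters.
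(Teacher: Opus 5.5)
Your proposal follows the paper's proof. The paper also uses Plancherel in time at $\Re z=\varepsilon\to0^+$ with Fatou. At high frequencies it uses the free smoothing estimate plus absorption of the localized term $L_{23}(q_{32}\widetilde v_1)$, whose kernel gains $\langle\Lambda\rangle^{-1/2}$. On bounded frequencies it uses the uniform $L^{1,1}\to\widehat\Sigma$ resolvent bound of Proposition \ref{prop:smooth2}. That bound is exactly your Jost-function program: $\widehat W\neq0$ on $\im\R\setminus\{0\}$ by Lemma \ref{lem:nozerodet}, and the $\Lambda^{-2},\Lambda^{-1}$ parts cancel on the range of $\mathcal Q_{[c]}$. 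The paper controls the remaining part through the modified Jost functions $f_2,f_3$, so that only the factor $e^{\mu_2(\xi-y)}-1=O(|\Lambda||\xi-y|)$ costs the extra $L^{1,1}$ weight.

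Minor fixes:
\begin{itemize}
\item The absorbed operator is $\sech(\gamma\kappa x)\,\pi_1R_0\partial_x(0,f'(Q_c)\cosh(\gamma\kappa x)\,\cdot\,)^\intercal$; your weights are reversed.
\item On a compact $\lambda$-range, the $L^2_\lambda$ bound follows simply by integrating the uniform bound. The identity $R\mathbf F=\lambda^{-1}(R\mathcal L\mathbf F-\mathbf F)$ is not needed.
\item You also need $\|\mathcal Q_{[c]}\mathbf v_0\|_{\mathcal H^1}\lesssim\|\mathbf v_0\|_{\mathcal H^1}+\|\mathbf v_0\|_{L^{1,1}}$, since $\eta_1[c]\notin L^2$.
\end{itemize}
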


\begin{proposition}\label{prop:smooth}
For every $c\in((p-1)^{1/2}/2,1)$ there exists $C(c)>0$ such that
\begin{align}\label{eq:smooth1}
\left\|\pi_1\int_0^t e^{(t-s)\mathcal L_{[c]}}
\mathcal Q_{[c]}f(s)\,ds\right\|_{L^2(\R_+,\widehat\Sigma)}
\le C(c)\|f\|_{L^{1,1}(\R,L^2(\R_+,\C^2))}.
\end{align}
\end{proposition}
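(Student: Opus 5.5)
The plan is to prove \eqref{eq:smooth1} by Kato's method: take the Laplace transform in time, reduce to a bound on the resolvent uniform in the spectral parameter, and return via Plancherel. Extend $f$ by zero for $s<0$ and set $\mathbf u(t):=\int_0^t e^{(t-s)\mathcal L_{[c]}}\mathcal Q_{[c]}f(s)\,ds$. By Lemma \ref{lem:linear-group-growth}, $\|\mathbf u(t)\|_{\mathcal H^1}$ grows at most polynomially, at least for $f$ in a dense class (smooth, compactly supported in $(t,x)$), so the Laplace transform at $z=\varepsilon+\im\tau$, $\varepsilon>0$, is well defined. By \eqref{eq:linear-laplace} it equals
\begin{align*}
\widehat{\mathbf u}(\varepsilon+\im\tau)=-R_{\mathcal L_{[c]}}(\varepsilon+\im\tau)\,\mathcal Q_{[c]}\widehat f(\varepsilon+\im\tau).
\end{align*}
Plancherel in $t$ gives
\begin{align*}
\|e^{-\varepsilon t}\pi_1\mathbf u\|_{L^2(\R_+,\widehat\Sigma)}=(2\pi)^{-1/2}\|\pi_1R_{\mathcal L_{[c]}}(\varepsilon+\im\tau)\mathcal Q_{[c]}\widehat f(\varepsilon+\im\tau)\|_{L^2_\tau\widehat\Sigma}.
\end{align*}
Once we have a bound uniform in $\varepsilon$, we let $\varepsilon\to0^+$ and use Fatou, then remove the dense-class assumption by density.

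The core estimate is a kernel bound. Write $\pi_1R_{\mathcal L_{[c]}}(z)\mathcal Q_{[c]}g(x)=\int_\R\mathcal K(x,y,z)g(y)\,dy$ with a $1\times2$ kernel $\mathcal K$. We aim to show
\begin{align}\label{eq:kerneltarget}
\sup_{z\in\Upsilon\cap\C_+}\|\sech(\gamma\kappa\,\cdot)\,\mathcal K(\cdot,y,z)\|_{L^2(\R)}\le C(c)\<y\>\quad\text{for all }y\in\R .
\end{align}
Granting this, Minkowski's inequality gives
\begin{align*}
\|\pi_1R\mathcal Q\widehat f\|_{L^2_\tau\widehat\Sigma}\le\int_\R\Big\|\|\sech(\gamma\kappa\,\cdot)\mathcal K(\cdot,y,\varepsilon+\im\tau)\|_{L^2_x}|\widehat f(\varepsilon+\im\tau,y)|\Big\|_{L^2_\tau}dy\le C\int_\R\<y\>\|\widehat f(\varepsilon+\im\cdot,y)\|_{L^2_\tau}dy .
\end{align*}
By Plancherel again, $\|\widehat f(\varepsilon+\im\cdot,y)\|_{L^2_\tau}\le\sqrt{2\pi}\,\|f(\cdot,y)\|_{L^2(\R_+)}$, which is exactly $\|f\|_{L^{1,1}(\R,L^2(\R_+))}$ after integration in $y$.

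To prove \eqref{eq:kerneltarget}, we construct the resolvent through Lemma \ref{lem:transys}: in the variables $\xi=\gamma x$, $\lambda=\gamma^3\Lambda$, the equation $(\mathcal L_{[c]}-\lambda)\mathbf v=\mathbf F$ becomes \eqref{A_Decompose}. We use the Jost solutions $f_j(\xi,\Lambda,\gamma)$ of \eqref{A_Decomposehom} asymptotic to $\widetilde V_j^{(0)}$, with the decaying ones $f_1,f_2$ at $+\infty$ and $f_3,f_4$ at $-\infty$ for $\Lambda\in\Upsilon\cap\C_+$ (Lemma \ref{lem:roots1} \textbf{d}). We also use the adjoint solutions of \eqref{A_djoint}, and express the Green's kernel as a sum of products (Jost)$\times$(adjoint Jost)$/\widehat W(\Lambda)$, exactly as the free kernel $K_{ij}$ in \eqref{deflij} is built from the $\mathbf a_j,\mathbf b_j$. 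The bound then splits into three regimes. In the \emph{intermediate} regime, $\Lambda$ ranges over compact subsets of $\im\R\setminus\{0\}$. There $\widehat W\neq0$ by Lemma \ref{lem:nozerodet}, which rests on Proposition \ref{prop:noeigh} \textbf{iii}, and the Jost functions are bounded for $\xi$ on the appropriate half-line. Since $\mu_2,\mu_3$ are imaginary there, the kernel is bounded, and the $\sech$ weight makes it square integrable in $x$. In the \emph{high energy} regime, \eqref{eq:assroots} shows that $\mu_1,\mu_4$ are real with modulus of order $|\Lambda|^{1/2}$ and $\mu_2,\mu_3$ oscillatory. The potential $q$ is then a perturbation of order $|\Lambda|^{-1/2}$, and $\widehat W$ is comparable to its free value. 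The first component of the kernel involves $a_{j1}=1$ and $b_{j3},b_{j4}$, which decay like powers of $\mu_j^{-1}$ and $\Lambda/\mu_j$ by \eqref{eq:bj}, so it is uniformly bounded. Here the factor $\gamma^{-3}$ and the $F_2$ slot in \eqref{eq:relkern2} have to be tracked, and the regularization through $\pi_1$ noted in the introduction must offset the missing derivative.

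The \emph{low energy} regime $\Lambda\to0$ is the step I expect to be the main obstacle. There $\widehat W(\Lambda)$ vanishes to order $\Lambda^2$, reflecting the Jordan block $\{\xi_1[c],\xi_2[c]\}$ of Lemma \ref{lem:kernel}, and the roots $\mu_2,\mu_3=O(\Lambda)$ merge at $0$. I would first expand $R_{\mathcal L_{[c]}}(z)$ in a Laurent series near $0$, using the analytic branches of Lemma \ref{lem:roots1} \textbf{b} and the analyticity of the Jost functions in $\Lambda$. The polar part should be $-z^{-1}\mathcal P_{[c]}-z^{-2}\xi_1[c]\<\eta_2[c],\cdot\>$, by \eqref{eq:M2.3}--\eqref{eq:M2.5}, and it is annihilated by $\mathcal Q_{[c]}$. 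The remainder kernel then involves $\eta_1[c]$, which is not decaying, together with the terms $\int_{-\infty}^x$ and $e^{\mu_{2,3}(\xi-y)}$ with $\mu_{2,3}\to0$. These produce at most linear growth in $y$, and that growth is what the weight $\<y\>$ in $L^{1,1}$ pays for. Here one has to verify carefully that the subtraction by $\mathcal Q_{[c]}$ cancels every term of order $z^{-1}$ and $z^{-2}$, and that the remaining $O(1)$ kernel is uniform as $z\to0$ within $\C_+$. Finally, the continuity of the outgoing limit, $L(z)\to L(\Lambda)$ as in the discussion after \eqref{eq:A_Decomposenhcc1}, lets us pass the uniform bound from $\Upsilon\cap\C_+$ to $\varepsilon\to0^+$. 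For $z$ with $\Re z$ large, i.e. outside $\Upsilon$, the bound follows from Lemma \ref{lem:linear-group-growth}.
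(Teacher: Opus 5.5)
Your reduction is the paper's own proof of this proposition: Laplace transform in $t$, Plancherel in $\widehat\Sigma$, Minkowski in $y$, then $\varepsilon\to0^+$ and density. The paper states the input you need, $\sup_{0\le\varepsilon<\varepsilon_0(c),\,\lambda\ne0}\|\pi_1R_{\mathcal L_{[c]}}(\varepsilon+\im\lambda)\mathcal Q_{[c]}\|_{\mathcal L(L^{1,1},\widehat\Sigma)}\le C(c)$, as Proposition~\ref{prop:smooth2}; this is equivalent to your kernel bound. It proves it with the same high/intermediate/low split, and the intermediate range rests on $\widehat W\neq0$ from Lemma~\ref{lem:nozerodet}, as you say.

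Two side remarks on your first two regimes:
\begin{itemize}
\item The bound is needed on a full strip $0<\Re z\le\varepsilon_0$, not on $\Upsilon$, whose width may shrink at high frequency. Also, Lemma~\ref{lem:linear-group-growth} is an $\mathcal H^1\to\mathcal H^1$ estimate and gives nothing from $L^{1,1}$ to $\widehat\Sigma$; you do not need large $\Re z$ anyway.
\item At high energy no derivative of the source is missing for $L^{1,1}$ data; that issue belongs to Proposition~\ref{prop:smoothreg}. The derivative to control sits in the potential, $q_{32}\widetilde v_2=q_{32}\widetilde v_1'$. The paper removes it via $q_{31}\widetilde v_1+q_{32}\widetilde v_2=(q_{32}\widetilde v_1)'$ and $L_{13}\partial_\xi=L_{23}$, giving a scalar contraction with factor $\langle\Lambda\rangle^{-1/2}$.
\end{itemize}

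The genuine gap is the low-energy regime, which you flag as the main obstacle and then only assert. Since $0$ is not isolated in $\sigma(\mathcal L_{[c]})=\im\R$, $R_{\mathcal L_{[c]}}(z)$ has no Laurent expansion at $0$ as an operator on $L^2$. Only its kernel is pointwise meromorphic, and bounding the remainder by $C\langle y\rangle$ after the $\sech$ weight in $x$ is the whole difficulty. A direct attempt fails for two reasons.
\begin{itemize}
\item $\Lambda$-derivatives falling on $e^{\mu_2(\Lambda)(\xi-y)}$ produce factors $(\xi-y)^k$. Since $\widehat W$ vanishes to second order, you need $k\ge2$, and $\langle y\rangle$ cannot pay for that.
\item In the natural representation by $f_1,\widehat f_2,\widehat f_3,f_4$, consider the pieces $f_1(\xi)\widehat Z_4(y)^\intercal$ and $\widehat f_2(\xi)\widehat Z_3(y)^\intercal$ on $y<\xi<0$. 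Each contains $\pm c_{11}c_{21}\widehat W^{-1}F_1(\xi)\,[*(F_1\wedge\widehat f_3\wedge f_4)(y)]^\intercal$, of size $|c_{11}c_{21}/\widehat W|\,|e^{\mu_1\xi-\mu_2y}|$. This grows like $e^{\nu|\xi|}$, is not absorbed by $\sech(\gamma\kappa x)$, and cancels only between the two pieces.
\end{itemize}

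The paper resolves this with the following ingredients.
\begin{itemize}
\item Modified Jost solutions $f_2=\Lambda\widehat f_2-c_2f_1$ and $f_3=\Lambda\widehat f_3-c_3f_4$, with $c_2=\Lambda c_{21}/D(\Lambda)$ and $c_3=\Lambda c_{34}/D(-\Lambda)$. These remove the $F_1$ (resp.\ $F_4$) component and leave only one-sided polynomial growth of $\partial_\Lambda^m m_2$ and $\partial_\Lambda^m m_3$ (Lemma~\ref{lem:bdm2}).
\item The form $N/(\Lambda^2\widehat W)$, in which $Z_i=*(f_i\wedge f_1\wedge f_4)$ vanish to third order at $\Lambda=0$.
\item The explicit jets $\widehat N(0)$ and $\widehat N'(0)$. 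They pair $\widetilde{\mathbf F}$ only with $E_2$ and $E_1[c]$, so they vanish exactly when $\mathcal P_{[c]}F=0$, by Lemma~\ref{lem:relkern}. This is the rigorous form of your claim that $\mathcal Q_{[c]}$ kills the polar part. Incidentally, the Jordan term is $+z^{-2}\xi_1[c]\langle\eta_2[c],\cdot\rangle$.
\item A splitting of the fourth-order remainder of $f_2Z_3$ into two parts:
\begin{itemize}
\item A part that keeps $e^{\mu_2(\Lambda)(\xi-y)}$ unexpanded. Its growth $(1+|\xi|+|y_+|)^4\lesssim\langle\xi\rangle^4$ on $\{y<\xi\}$ is absorbed by the weight.
\item The part $\frac{\Lambda^3}{3!}(e^{\mu_2(\xi-y)}-1)m_2(\xi,0)T_3(y)$. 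This is the only source of a factor $|\xi-y|$, and $L^{1,1}$ pays for it because $m_2(\cdot,0)$ is a multiple of $\Xi_1[c]$.
\end{itemize}
\end{itemize}
Without such a reorganization, the uniform low-energy bound, and hence \eqref{eq:smooth1}, is not established.
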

\begin{proof}
  From  \cite[Lemma 3.5]{katoMA1966}   for any $\varepsilon >0$ and \eqref{eq:pwdisp1}
\begin{align}\label{eq:smooth--}
  \int _{\R} \|  \pi _1 R_{\mathcal{L} _{[c]}} (  \im \lambda + \varepsilon  ) \mathcal Q_{[c]}  \mathbf{w}_0 \| _{ \widehat{\Sigma }}^2 \,d\lambda=2\pi \int _{\R _+} \| \pi _1 e^{  t  \mathcal{L}_{[c]} }\mathcal Q_{[c]}     e^{-\varepsilon t} \mathbf{w}_0 \| _{ \widehat{\Sigma }}^2 dt  \le C(c)    \|    \mathbf{w}_0 \| _{ \mathcal{H}^1 \bigcap  L^{1,1}  (\R ) } ^2.
\end{align}
 Then $F(z ):=\pi _1 R_{\mathcal{L} _{[c]}} ( z  ) \mathcal Q_{[c]}  \mathbf{w}_0$ is in the Hardy class $H^2(\C _+, \widehat{\Sigma} )$  with     nontangential limit
$  F_*(\im \lambda   ):=\pi _1 R_{\mathcal{L} _{[c]}} ( \im \lambda +0  ) \mathcal Q_{[c]}  \mathbf{w}_0\in L^2( \R_\lambda , \widehat{\Sigma} )$, see \cite[Remark 1.4]{katoMA1966},  where by standard arguments
\begin{align}\label{eq:smooth--+}
   \|  F_* \| _{L^2( \R_\lambda , \widehat{\Sigma} )}=\sup _{\varepsilon >0} \|  F (\im \lambda +\varepsilon ) \| _{L^2( \R_\lambda , \widehat{\Sigma} )}.
\end{align}
For, 
    $f\in C_c^{ \infty}([0, +\infty )\times \R , \C ^2) $, with $\mathcal{ F} \left [  g \right ] =\widehat{g}$ the Fourier transform in the time variable,   \small
 \begin{align*} &
  \pi _1 \int _\R  dt e^{-(\im \lambda  + \varepsilon ) t} \int _0 ^t    e^{  (t-s) \mathcal{L} _{[c]} }\mathcal Q  _{[c]} f(s) ds =  \pi _1 \int _{0}^{+\infty} ds  e^{- \im \lambda   s}   \int _{0}^{+\infty}  e^{-(\im \lambda  + \varepsilon ) t}   e^{  t\mathcal{L} _{[c]} }   \mathcal    Q  _{[c]}  e^{-  \varepsilon   s} f(s) dt\\& =-  \pi _1\int _{0}^{+\infty}    e^{-\im \lambda s}  R_{\mathcal{L} _{[c]} }(\im \lambda  + \varepsilon )\mathcal Q  _{[c]} f(s) ds =- \pi _1 R_{\mathcal{L} _{[c]} }     ( \im \lambda  +\varepsilon  ) \mathcal Q  _{[c]}  \mathcal{ F} \left [  e^{-  \varepsilon   s}   f \right ](\lambda) \text{ for any }\varepsilon >0,
 \end{align*} \normalsize
  with the following,  where the first inequality follows from \eqref{eq:smooth--+},
\begin{equation}\nonumber
     \begin{aligned}
        &
      \left \| \pi _1 \int _0 ^t    e^{  (t-s) \mathcal{L}_{[c]} }\mathcal Q_{[c]}   f(s) ds  \right \| _{L^2 \( \R _+ ,    \widehat{\Sigma}\) } \le  \liminf _{\varepsilon\to 0^+} \left \|  \pi _1 e^{-  \varepsilon   t} \int _0 ^t    e^{  (t-s) \mathcal{L}_{[c]} }\mathcal Q_{[c]}   f(s) ds  \right \| _{L^2 \( \R _+ ,    \widehat{\Sigma}\) }
\\& =(2\pi ) ^{-1} \liminf _{\varepsilon\to 0^+}
 \left \|   \pi _1  R_{\mathcal{L}  _{[c]}}  (\im \lambda   +\varepsilon )   \mathcal Q_{[c]}   \mathcal{ F} \left [  e^{-  \varepsilon   s}   f \right ] (\lambda)   \right \| _{L^2 \( \R  ,    \widehat{\Sigma}\) }
 \\& \le (2\pi ) ^{-1} \liminf _{\varepsilon\to 0^+}
 \left \|   \pi _1  R_{\mathcal{L}  _{[c]}}  (\im \lambda   +0)   \mathcal Q_{[c]}   \mathcal{ F} \left [  e^{-  \varepsilon   s}   f \right ] (\lambda)   \right \| _{L^2 \( \R  ,    \widehat{\Sigma}\) }
 \\&\le   (2\pi ) ^{-1}  \sup _{\substack { \lambda \in \R }} \left \|  \pi _1    R_{\mathcal{L}_{[c]} }  (\im \lambda + 0  )  \mathcal  Q_{[c]}   \right \| _{    \mathcal{L} \(  L^{1,1}  \( \R\),   \widehat{\Sigma} \)    } \sup   _{\varepsilon>0}  \left \|       \mathcal{ F} \left [  e^{-  \varepsilon   s}   f \right ] (\lambda)    \right \| _{L^2 \( \R   ,     L^{1,1}  \( \R\)\) }\\&\le   (2\pi ) ^{-1}  \sup _{\substack { \lambda \in \R }} \left \| \pi _1    R_{\mathcal{L}_{[c]} }  (\im \lambda + 0  )  \mathcal  Q_{[c]}   \right \| _{    \mathcal{L} \( L^{ 1, 1}\( \R\),   \widehat{\Sigma} \)    }   \|       f \| _{  L^{1,1}  \( \R   ,    L^{ 2}\( \R _+\)\) }  .
     \end{aligned}
  \end{equation}
Then  \eqref{eq:smooth1}  follows from    {Proposition} \ref{prop:smooth2} below. 
\end{proof}

\begin{proposition}\label{prop:smooth2}
For every $c\in((p-1)^{1/2}/2,1)$ there exist   $C(c)>0$   and  $\varepsilon_0(c)>0$  
\begin{align}\label{eq:smooth21strip}
\sup_{\substack{0\le \varepsilon<\varepsilon_0(c)\\\lambda\in \R\setminus\{0\}}}
\|\pi_1R_{\mathcal L_{[c]}}(\varepsilon+\im\lambda)
\mathcal Q_{[c]}\|_
{\mathcal L(L^{1,1}(\R,\C^2),\widehat\Sigma)}\le C(c).
\end{align}
\end{proposition}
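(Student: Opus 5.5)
We will construct the resolvent explicitly through Jost functions of the first order system \eqref{A_Decompose} and bound its kernel uniformly. After the change of variables of Notation \ref{not:notation}, item \ref{itm:chvar}, the equation $(\mathcal L_{[c]}-\lambda)\mathbf v=\mathbf F$ becomes \eqref{A_Decompose} with forcing $\widetilde{\mathbf F}$ as in \eqref{eq:relkern2}. For $\Lambda\in\Upsilon$ we take Jost solutions $f_j(\xi,\Lambda,\gamma)\sim e^{\mu_j\xi}\mathbf a_j$: $f_1,f_2$ as $\xi\to-\infty$ and $f_3,f_4$ as $\xi\to+\infty$, defined by Volterra equations using the decay of $q$. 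We also take the analogous adjoint solutions of \eqref{A_djoint}. The outgoing resolvent kernel is then a Green matrix of the form $\sum$ (Jost)$\otimes$(adjoint Jost)$/\widehat W(\Lambda)$, where $\widehat W=\det(f_1,f_2,f_3,f_4)$. Equivalently, via the Hodge star \eqref{eq:hodgestar}, the row vectors are $*(f_i\wedge f_j\wedge f_k)$. Its first component gives $\pi_1R_{\mathcal L_{[c]}}$. For $\Re\Lambda>0$ this agrees with the $L^2$ resolvent of Lemma \ref{lem:linear-group-growth}, and the bounds extend to $0\le\varepsilon<\varepsilon_0$ by continuity in $\Upsilon$.

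We split into three regimes. In the \emph{intermediate regime} $\delta_1\le|\Lambda|\le M$, the key fact is $\widehat W(\Lambda)\ne0$ on $\im\R\setminus\{0\}$ (Lemma \ref{lem:nozerodet}). A zero would give a solution bounded on both sides, decaying exponentially through $f_1$ and $f_4$ and oscillating through $f_2$ and $f_3$. A conserved bilinear quantity (the flux associated with the $\sigma_1$-skew structure, i.e.\ the Wronskian with the conjugate or reflected solution, using Lemma \ref{lem:roots1}\textbf{e}) forces the oscillatory coefficients to vanish. The result is an $L^2$ eigenfunction, which is excluded by Proposition \ref{prop:noeigh}\textbf{iii}. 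With $\widehat W\neq 0$, compactness gives $|\widehat W|\gtrsim1$ on $\Upsilon$ near this compact set. The Jost functions are bounded by $e^{\Re\mu_j\xi}\langle\xi\rangle$-type estimates, so the kernel is bounded by $\langle\xi\rangle\langle y\rangle$. Testing against $\mathrm{sech}(\gamma\kappa x)$ and $L^{1,1}$ absorbs the polynomial growth from the oscillatory modes.

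In the \emph{high energy regime} $|\Lambda|\ge M$, we use \eqref{eq:assroots}. The roots $\mu_1$ and $\mu_4$ separate like $\pm\sqrt{\gamma|\Lambda_I|}$, and $q$ becomes a perturbation of size $O(|\Lambda|^{-1/2})$ after rescaling. Hence $\widehat W$ is close to the free determinant and the resolvent is close to the free kernel $L(\Lambda)$ in \eqref{deflij}. For the first component, we read off from \eqref{eq:tildev1v4} that $L_{13},L_{14}$ carry the factors $b_{\ell3}\sim\mu_\ell^{-2}$. This is the gain that compensates the derivative in $F_2$ and the factor $\gamma^{-3}$, and it gives uniform boundedness. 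For $F_2$ we use \eqref{eq:varpid1} where needed.

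The \emph{low energy regime} $|\Lambda|\le\delta_1$ is the main obstacle. There $\widehat W$ vanishes at $\Lambda=0$, to order two because of the generalized kernel $\xi_1,\xi_2$ from Lemma \ref{lem:kernel}, and the double root $\mu_2,\mu_3\approx0$ in Lemma \ref{lem:roots1}\textbf{b} makes the Jost functions degenerate. The plan is to Laurent-expand the Green kernel in $\Lambda$ using the analytic branches. The singular part should be $\Lambda^{-2}\xi_1\langle\eta_2,\cdot\rangle+\Lambda^{-1}(\dots)$, i.e.\ exactly the Riesz-projection part $\mathcal P_{[c]}$. This holds because $\eta_1,\eta_2$ are bounded but not decaying, consistent with the choice in \eqref{eq:ker2} (the constant $s_c/(2q'(c))$ is what matches the nondecaying adjoint modes). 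Composing with $\mathcal Q_{[c]}$ should therefore remove the poles. We would verify the cancellation by checking $\langle\eta_i,\mathcal Q_{[c]}\mathbf f\rangle=0$ and using $q'(c)\ne0$ from \eqref{eq:q_prime} to identify the leading coefficient of $\widehat W$ as a nonzero multiple of $q'(c)\Lambda^2$. The remaining regular part is bounded from $L^{1,1}$ to $\widehat\Sigma$, uniformly for $\lambda\ne0$ and $0\le\varepsilon<\varepsilon_0$.
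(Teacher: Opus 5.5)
Your three-regime split and your spectral inputs match the paper. Those inputs are: $\widehat W\neq0$ on $\im\R\setminus\{0\}$ combined with Proposition \ref{prop:noeigh}\textbf{iii}, and $\widehat W\sim q'(c)\Lambda^2$ at zero with the singular terms annihilated by $\mathcal Q_{[c]}$. At high energy you perturb the Jost functions, whereas Lemma \ref{lem:largterm} closes the scalar equation \eqref{eq:high-scalar-resolvent} for $\widetilde v_1$ and absorbs $L_{23}(q_{32}\widetilde v_1)$ using the $\langle\Lambda\rangle^{-1/2}$ gain; either route works.

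The genuine gap is the low-energy regime. You rightly call it the main obstacle, but then settle it by asserting that the regular part is bounded. Identifying the polar part with $\mathcal P_{[c]}$ is only half the work. The real issue is a bound on the remainder that is uniform in $(\xi,y)$ as $\Lambda\to0$, and a Laurent expansion of the Jost representation does not give it. By \eqref{eq:connections}, for $\xi\to-\infty$ one has $\widehat f_2=c_{21}F_1+\dots$, where $F_1=e^{\mu_1\xi}M_1$ grows exponentially ($\mu_1\approx-1$). The corresponding pieces of $\widehat R_1$ and $\widehat R_2$ carry the factor $c_{21}D/\widehat W=O(\Lambda)$ times this exponential, and they cancel only in the sum. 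So they cannot be expanded separately, and $\sech(\gamma\kappa x)$ with small $\kappa$ does not absorb them. Moreover, on $\im\R$ the central factors $e^{\mu_{2,3}(\xi-y)}$ have modulus one, so every $\partial_\Lambda$ costs a power of $|\xi-y|$.

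The missing idea is the renormalized basis $f_2=\Lambda\widehat f_2-c_2f_1$, $f_3=\Lambda\widehat f_3-c_3f_4$, with $c_2=\Lambda c_{21}/D(\Lambda)$ and $c_3=\Lambda c_{34}/D(-\Lambda)$; see \eqref{eq:defc0}--\eqref{eq:deff23}. These coefficients are analytic precisely because $D(0)=D'(0)=0<D''(0)$ and $c_{21}(0)=c_{34}(0)=0$, by \eqref{eq:ddDexplicit} and \eqref{eq:central-connections-zero}. The identity $\Lambda c_{21}-c_2c_{11}=0$ removes the growing $F_1$ component, which gives the $\Lambda$-uniform polynomial bounds of Lemma \ref{lem:bdm2}. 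The argument then proceeds in four steps:
\begin{enumerate}
\item Write $\widehat R=N/(\Lambda^2\widehat W)$.
\item Subtract the Taylor polynomial of $N$ through order three. It equals $\Lambda^2\widehat N(0)+\Lambda^3\widehat N'(0)$ and vanishes when $\mathcal P_{[c]}F=0$, by Lemmas \ref{lem:numerator-jets} and \ref{lem:relkern}; this is where the specific constants in $\eta_1$ enter.
\item Bound the fourth-order integral remainder by $|\Lambda|^4$ times polynomial weights, against $|\Lambda^2\widehat W|\sim|\Lambda|^4$.
\item Handle the piece containing $e^{\mu_2(\xi-y)}-1=O(|\Lambda||\xi-y|)$; this is where the $L^{1,1}$ norm is really used.
\end{enumerate}

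Two smaller points. In the intermediate regime, ``the Wronskian with the conjugate or reflected solution'' does not specify a usable quantity. You need a real flux that is constant in $\xi$ (the paper's $\mathcal J$ in \eqref{eq:def_flux}, whose coefficients involve $f'(Q)$), together with the explicit opposite signs \eqref{eq:JV2}--\eqref{eq:JV3} on the two outgoing modes; that sign check is the whole content of Lemma \ref{lem:nozerodet}. Also, your normalizations are reversed: since $\Re\mu_1<0<\Re\mu_4$, the solutions $f_1,\widehat f_2$ must be normalized at $+\infty$ and $\widehat f_3,f_4$ at $-\infty$, as in \eqref{eq:limm1}.
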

The proof of the estimates in {Proposition} \ref{prop:smooth2} is distinguished between near zero, considered in  \S \ref{sec:proofsmooth2} and away
from zero, which we consider now. 

\begin{lemma}\label{lem:largterm}
For every $c\in((p-1)^{1/2}/2,1)$ and sufficiently large
$\mathbf c_1>0$,
\begin{align}\label{eq:largterm1}
\sup_{\substack {|\lambda|\ge\gamma^3\mathbf c_1\\ 0\le \varepsilon\le\gamma^3}}
\|\pi_1R_{\mathcal L_{[c]}}(\im\lambda+\varepsilon)\|_
{\mathcal L(L^1(\R,\C^2),\widehat\Sigma)}\le C(c).
\end{align}
\end{lemma}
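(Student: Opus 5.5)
We will prove \eqref{eq:largterm1} by building the resolvent through the rescaled first-order system of Lemma \ref{lem:transys}, with the potential $q(\xi)$ treated perturbatively. Since $|\Lambda|=|\lambda|\gamma^{-3}\ge \mathbf c_1$ is large, the decay rates of the free solutions are large. Indeed, by Lemma \ref{lem:roots1}\textbf{f}, extended by continuity to $0\le\Re\Lambda\le1$ (the same Rouch\'e argument works uniformly there), the roots satisfy $|\mu_j|\approx\sqrt{\gamma|\Lambda|}$.

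\textbf{Step 1: reduction to the first-order system.} Fix $z=\im\lambda+\varepsilon$ and write $\Lambda=\gamma^{-3}z$. We solve $(\mathcal L_{[c]}-z)\mathbf v=\mathbf F$ with $\mathbf F\in L^1$, which by Lemma \ref{lem:transys} is the system \eqref{A_Decompose}. Using the free outgoing solution operator $L(\Lambda)$ of \eqref{eq:A_Decomposenhcc1}, the system becomes the fixed-point equation
\begin{align*}
\widetilde V=L(\Lambda)\widetilde{\mathbf F}+L(\Lambda)\bigl(q\widetilde V\bigr).
\end{align*}
Only the third row of $q$ is nonzero, and it involves only $\widetilde v_1$ and $\widetilde v_2$. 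Moreover $\widetilde{\mathbf F}$ enters only through the third and fourth columns of the kernel. It therefore suffices to control the entries $L_{i3}$ and $L_{i4}$ for $i=1,2$.

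\textbf{Step 2: kernel bounds.} From \eqref{eq:vj0}--\eqref{eq:bj} and the large-$\Lambda$ asymptotics we read off $a_{\ell 1}=1$, $a_{\ell 2}=\mu_\ell$, $b_{\ell3}\sim(2\mu_\ell^2)^{-1}$ and $b_{\ell4}=O(|\Lambda|/|\mu_\ell|^3)=O(|\mu_\ell|^{-1})$, up to powers of $\gamma$. Each term $e^{\mu_\ell\xi}1_{\R_\pm}$ is bounded because of the sign conditions: at $\varepsilon=0$, $\mu_2$ and $\mu_3$ are purely imaginary, and for $\varepsilon>0$ they lie in the correct half-planes. This gives $\|K_{13}\|_{L^\infty}\lesssim|\Lambda|^{-1}$ and $\|K_{23}\|_{L^\infty}\lesssim|\Lambda|^{-1/2}$, which are small. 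The $K_{14}$ entry carries the prefactor $\gamma^{-1}$ and gives $\|K_{14}\|_{L^\infty}\lesssim|\Lambda|^{-1/2}$, while $c\gamma^{-3}K_{13}$ is even smaller. Hence $\|\widetilde v_1\|_{L^\infty}+\|\widetilde v_2\|_{L^\infty}|\Lambda|^{-1/2}\lesssim\|\mathbf F\|_{L^1}$ for the free part. The rescaling $\xi=\gamma x$ only costs constants depending on $c$.

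\textbf{Step 3: perturbation, which is the main obstacle.} The Neumann series for $(I-L(\Lambda)q)^{-1}$ must be closed in the space $X=\{(\widetilde v_1,\widetilde v_2): \|\widetilde v_1\|_{L^\infty}+|\Lambda|^{-1/2}\|\widetilde v_2\|_{L^\infty}<\infty\}$. The map $\widetilde V\mapsto L_{i3}(q_{31}\widetilde v_1+q_{32}\widetilde v_2)$ has norm bounded by $\|q\|_{L^1}$ times the kernel bounds of Step 2. The first component gains $|\Lambda|^{-1}+|\Lambda|^{-1}|\Lambda|^{1/2}$, and the second gains $|\Lambda|^{-1/2}\cdot|\Lambda|^{-1/2}\bigl(1+|\Lambda|^{1/2}\bigr)$ after the weight is accounted for. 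Both are $O(|\Lambda|^{-1/2})$ in $X$, which is less than $1/2$ once $\mathbf c_1$ is large.

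The delicate point is uniformity as $\varepsilon\to0$. We must avoid any dependence on $\Re\mu_{2,3}$, which vanishes on the axis, so we use only $|e^{\mu_\ell\xi}|\le1$ on the relevant half-line, never integrability. In addition, the $p\in(1,2]$ singularity $Q^{p-2}Q'$ inside $q$ is harmless because $Q^{p-2}|Q'|\lesssim Q^{p-1}\in L^1$.

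\textbf{Step 4: conclusion.} We obtain $\|v_1\|_{L^\infty}\le C(c)\|\mathbf F\|_{L^1}$ uniformly in $|\lambda|\ge\gamma^3\mathbf c_1$ and $0\le\varepsilon\le\gamma^3$. Since $\sech(\gamma\kappa x)\in L^2$, this gives $\|v_1\|_{\widehat\Sigma}\le C(c)\|\mathbf F\|_{L^1}$. For $\varepsilon>0$ the constructed solution lies in $L^2$: the free part lies in $L^2$, and the correction $L(\Lambda)(q\widetilde V)$ has an $L^1\cap L^\infty$ source. It is therefore the resolvent given by Lemma \ref{lem:linear-group-growth}, first for $\mathbf F\in L^1\cap L^2$ and then for general $\mathbf F$ by density. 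At $\varepsilon=0$, the statement is about the boundary value, obtained by passing to the limit in the uniform bound.
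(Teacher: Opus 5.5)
Your argument is correct, but it closes the perturbation differently from the paper.

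\textbf{The paper's route.} The paper does not run a Neumann series for the pair $(\widetilde v_1,\widetilde v_2)$. It uses $q_{31}=q_{32}'$ and $\widetilde v_2=\widetilde v_1'$ to write $q_{31}\widetilde v_1+q_{32}\widetilde v_2=(q_{32}\widetilde v_1)'$. It then uses the identity $L_{13}\partial_\xi=L_{23}$, which has no Dirac term since $\sum_j a_{j1}b_{j3}=0$. This gives the closed scalar equation \eqref{eq:high-scalar-resolvent} for $\widetilde v_1$ alone. That equation is contracted in the weighted space $\sech(\kappa\xi)L^2_\xi$, using $\|K_{23}\|_{L^\infty}\lesssim\langle\Lambda\rangle^{-1/2}$ and $q_{32}\cosh(\kappa\xi)\in L^2$.

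\textbf{Your route.} You keep both components and compensate for $\widetilde v_2=\widetilde v_1'$ being larger by $|\Lambda|^{1/2}$ through the anisotropic weight in $X$. You then contract in $L^\infty$ using $q\in L^1$. Your bookkeeping checks out, including $\|K_{24}\|_{L^\infty}=O(1)$, which the weight absorbs, and $Q^{p-2}|Q'|\lesssim Q^{p-1}$. It yields the stronger pointwise bound with the same decay rates $\langle\Lambda\rangle^{-1/2}$ and $\langle\Lambda\rangle^{-1}$ in $\widetilde F_1,\widetilde F_2$. You also make explicit the identification with the resolvent, which the paper leaves implicit.

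\textbf{What each buys.} Your version is more elementary: it needs neither the derivative identity nor the Dirac cancellation. The paper's version is set up for reuse. The scalar equation with a contraction on $\sech(\kappa\xi)L^2$ is what is needed in the proof of Proposition \ref{thm:pwdisp}. There the data are in $\mathcal H^1$ rather than $L^1$, and the free part is controlled only through \eqref{eq:free-resolvent-l2}, i.e.\ in $L^2_\lambda$ of a weighted $L^2_\xi$ norm. At $\varepsilon=0$ the free kernel is not even absolutely convergent on $L^2$ data, so a contraction in $L^\infty_\xi$ would not transfer directly. The same identity $L_{13}\partial_\xi=L_{23}$ is reused for the derivative source in Proposition \ref{prop:smoothreg}.

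\textbf{Small correction.} In fact $b_{\ell3}\approx(4\mu_\ell^2)^{-1}$, not $(2\mu_\ell^2)^{-1}$. In \eqref{eq:bj} the term $-2\gamma^2\Lambda^2/\mu_\ell^2\approx 2\mu_\ell^2$ is of the same order as $2\mu_\ell^2$. The clean way to see that these add rather than cancel is the identity $\langle\widetilde{\mathbf b}_\ell,\mathbf a_\ell\rangle_{\C^4}=4\mu_\ell^2-2-2c\Lambda/\mu_\ell$.
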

\begin{proof} For $z=\im\lambda+\varepsilon$
write $z=\gamma^3\Lambda$, $\xi=\gamma x$,
$F(x)=\widetilde F(\xi)$ and
$v_1(x)=\widetilde v_1(\xi)$, $v_2(x)=\widetilde v_4(\xi)$.
By \eqref{A_Decompose} and \eqref{deflij}, the outgoing solution satisfies
\begin{align}\label{eq:smooth12}
\widetilde v_i
=L_{i3}(q_{31}\widetilde v_1+q_{32}\widetilde v_2)
+\gamma^{-3}L_{i3}(c\widetilde F_1-\widetilde F_2)
+\gamma^{-1}L_{i4}\widetilde F_1.
\end{align}
The identities $q_{31}=q_{32}'$ and
$\widetilde v_2=\widetilde v_1'$ in \eqref{A_Decompose} give
$q_{31}\widetilde v_1+q_{32}\widetilde v_2
=(q_{32}\widetilde v_1)'$.
By \eqref{eq:varpid1} and $a_{j2}=\mu_j a_{j1}$,
$L_{13}\partial_\xi=L_{23}$; there is no Dirac term because
$\sum_j a_{j1}b_{j3}=0$ in \eqref{eq:varpid}.
Thus \eqref{eq:smooth12} for $i=1$ becomes
\begin{align}\label{eq:high-scalar-resolvent}
\widetilde v_1
=L_{23}(q_{32}\widetilde v_1)
+\gamma^{-3}L_{13}(c\widetilde F_1-\widetilde F_2)
+\gamma^{-1}L_{14}\widetilde F_1.
\end{align}
By \eqref{eq:assroots}, $|\mu_j|\asymp_c|\Lambda|^{1/2}$ at
high energy. The denominator in \eqref{eq:bj} also equals
$$\partial_\mu\mathcal F(\Lambda,\mu_j,\gamma)/\mu_j
=4\mu_j^2-2-2c\Lambda/\mu_j$$ and has modulus comparable to
$|\Lambda|$. These estimates hold uniformly for
$0\le\Re\Lambda\le1$ and sufficiently large $|\Im\Lambda|$. 
The exponentials in the   kernels \eqref{deflij} have
modulus at most one on their integration half-lines.
Since $a_{j1}=1$, $a_{j2}=\mu_j$, $b_{j3}$ is the reciprocal of
that denominator and $b_{j4}=-(\Lambda/\mu_j)b_{j3}$,
\begin{align}\label{eq:high-kernel-bounds}
\|\sech(\kappa\xi)L_{13}h\|_{L^2_\xi}
&\le C(c)\langle\Lambda\rangle^{-1}\|h\|_{L^1_\xi},\\
\|\sech(\kappa\xi)L_{14}h\|_{L^2_\xi}
+\|\sech(\kappa\xi)L_{23}h\|_{L^2_\xi}
&\le C(c)\langle\Lambda\rangle^{-1/2}\|h\|_{L^1_\xi}.
\nonumber
\end{align}
Choose $\kappa>0$ sufficiently small that
$q_{32}\cosh(\kappa\xi)\in L^2_\xi$.
Cauchy--Schwarz and \eqref{eq:high-kernel-bounds} yield
\begin{align}\label{eq:high-scalar-contraction}
\|\sech(\kappa\xi)L_{23}(q_{32}h)\|_{L^2_\xi}
\le C(c)\langle\Lambda\rangle^{-1/2}
\|\sech(\kappa\xi)h\|_{L^2_\xi}.
\end{align}
For sufficiently large $\mathbf c_1$, the factor in
\eqref{eq:high-scalar-contraction} is at most $1/2$.
Hence 
\begin{align}\label{eq:largterm2}
\|\sech(\kappa\xi)\widetilde v_1\|_{L^2_\xi}
\le C(c)\bigl(
\langle\Lambda\rangle^{-1/2}\|\widetilde F_1\|_{L^1_\xi}
+\langle\Lambda\rangle^{-1}\|\widetilde F_2\|_{L^1_\xi}\bigr).
\end{align}
Finally,
\begin{align}\label{eq:resolvent-weight-scaling}
\|v_1\|_{\widehat\Sigma}
=\gamma^{-1/2}\|\sech(\kappa\xi)\widetilde v_1\|_{L^2_\xi},
\qquad
\|\widetilde F\|_{L^1_\xi}=\gamma\|F\|_{L^1_x}.
\end{align}
Equations \eqref{eq:largterm2} and \eqref{eq:resolvent-weight-scaling}
prove \eqref{eq:largterm1} and the strip estimate.
\end{proof}

\begin{proof}[Proof of Proposition \ref{thm:pwdisp}]
We first record the free estimate
\begin{align}\label{eq:free-local-smoothing}
\|\pi_1e^{t\mathcal L_{[c]}^{(0)}}F\|_
{L^2(\R,\widehat\Sigma)}\le C(c)\|F\|_{\mathcal H^1}.
\end{align}
For Schwartz data, the first component is the sum of the two Fourier integrals
\begin{align}\label{eq:free-branches}
I_\pm (t,x):=\frac1{4\pi}\int_\R e^{\im kx+\im t\omega_\pm(k)}
\left(\widehat F_1(k)\pm
\frac{\widehat F_2(k)}{\sqrt{1+k^2}}\right)\,dk,
\qquad
\omega_\pm(k)=ck\pm k\sqrt{1+k^2}.
\end{align}
By  
$|\omega_\pm'(k)|\ge1-c>0$ and Plancherel, for $x\in \R $ fixed
\begin{align*}
      \| I_\pm (t,x)\| _{L^2(\R _t)}  \approx    \| \left(\widehat F_1(k_\pm (\omega ))\pm
\frac{\widehat F_2(k_\pm (\omega ))}{\sqrt{1+k_\pm (\omega )^2}}\right)  k_\pm '(\omega )   \| _{L^2(\R _\omega )}\lesssim   \|F\|_{\mathcal H^1}
\end{align*}
so that the following yields \eqref{eq:free-local-smoothing},
\begin{align*}
      \| \sech  (\gamma\kappa x)  I_\pm (t,x)\| _{L^2(\R ^2)}   \lesssim   \|F\|_{\mathcal H^1}.
\end{align*}
Equivalently this yields  the following, for all $F \in \mathcal H^1$ and all $0\le \varepsilon \le \varepsilon _0(c)\ll 1$
\begin{align}\label{eq:free-resolvent-l2}
\|\pi_1R_{\mathcal L_{[c]}^{(0)}}( \im\lambda   +\varepsilon )F\|_
{L^2_\lambda(\R,\widehat\Sigma)}\le C(c)\|F\|_{\mathcal H^1}.
\end{align} 
We now set
 $F=\mathcal Q_{[c]}\mathbf v_0.$
 For $|\lambda|\geq\gamma^3\mathbf c_1$, we use
\eqref{eq:high-scalar-resolvent}. By
\eqref{eq:high-scalar-contraction}, after increasing $\mathbf c_1$
if necessary, the term containing $\widetilde v_1$ on the right hand
side of \eqref{eq:high-scalar-resolvent} can be absorbed into the
left hand side. The remaining two terms are precisely, after the
scaling in \eqref{itm:chvar1}--\eqref{eq:relkern2}, the first
component of the outgoing free resolvent. Therefore,
using also \eqref{eq:resolvent-weight-scaling} and
\eqref{eq:free-resolvent-l2}, we obtain for all $0\le \varepsilon \le \varepsilon _0(c) $
\begin{align}\label{eq:homogeneous-high-bound}
\|\pi_1R_{\mathcal L_{[c]}}(\im\lambda+\varepsilon)F\|_
{L^2(\{|\lambda|\geq\gamma^3\mathbf c_1\},\widehat\Sigma)}
\le C(c)\|F\|_{\mathcal H^1}.
\end{align}
 On  $|\lambda|\leq\gamma^3\mathbf c_1$,
\eqref{eq:smooth21strip} gives for all $0\le \varepsilon \le \varepsilon _0(c) $
\begin{align*}
\|\pi_1R_{\mathcal L_{[c]}}(\im\lambda+\varepsilon)
\mathcal Q_{[c]}\mathbf v_0\|_
{L^2(\{|\lambda|\leq\gamma^3\mathbf c_1\},\widehat\Sigma)}
\lesssim
\|\mathbf v_0\|_{L^{1,1}}.
\end{align*}
Moreover, by \eqref{eq:projections} and the exponential decay of
$\xi_1[c]$ and $\xi_2[c]$,
\begin{align}\label{eq:projected-energy-bound}
\|\mathcal Q_{[c]}\mathbf v_0\|_{\mathcal H^1}
\lesssim
\|\mathbf v_0\|_{\mathcal H^1}
+
\|\mathbf v_0\|_{L^{1,1}}.
\end{align}
Combining the last two estimates with
\eqref{eq:homogeneous-high-bound}, we conclude that for all $0\le \varepsilon \le \varepsilon _0(c) $
\begin{align}\label{eq:boundary-resolvent-L2}
\|\pi_1R_{\mathcal L_{[c]}}(\im\lambda+\varepsilon)
\mathcal Q_{[c]}\mathbf v_0\|_
{L^2(\R_\lambda,\widehat\Sigma)}
\lesssim
\|\mathbf v_0\|_{\mathcal H^1\cap L^{1,1}}.
\end{align}
This  and  the equality in \eqref{eq:smooth--}  yield the inequality in  \eqref{eq:smooth--} and \eqref{eq:pwdisp1}  by Fathou's lemma for $\varepsilon\to 0^+$.

\end{proof}

Finally we need the following addition to Proposition \ref{prop:smooth};
its proof is given in Sect. \ref{sec:proofsmoothreg}.
\begin{proposition}\label{prop:smoothreg}
For every $c\in((p-1)^{1/2}/2,1)$ there exists $C(c)>0$ such that
\begin{align}\label{eq:smoothreg1}
\left\|\pi_1\int_0^t e^{(t-s)\mathcal L_{[c]}}
\mathcal Q_{[c]}(0,\partial_xg_2(s))^\intercal\,ds\right\|_
{L^2(\R_+,\widehat\Sigma)}
\le C(c)\|g_2\|_{L^1(\R,L^2(\R_+))}.
\end{align}
\end{proposition}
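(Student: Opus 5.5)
The plan is to follow exactly the scheme of the proof of Proposition \ref{prop:smooth}: by Kato's argument (Fourier--Laplace transform in time, Plancherel and the Hardy class boundary values) it suffices to prove the uniform resolvent bound
\begin{align*}
\sup_{\substack{0\le \varepsilon<\varepsilon_0(c)\\ \lambda\in\R\setminus\{0\}}}
\|\pi_1R_{\mathcal L_{[c]}}(\varepsilon+\im\lambda)\mathcal Q_{[c]}(0,\partial_x g_2)^\intercal\|_{\widehat\Sigma}\le C(c)\|g_2\|_{L^1(\R)} .
\end{align*}
Indeed, replacing $f(s)$ by $(0,\partial_xg_2(s))^\intercal$ in the chain of inequalities of that proof, with $\mathcal F[e^{-\varepsilon s}g_2]$ in $L^2(\R_\lambda,L^1(\R))$ bounded by Minkowski through $\|g_2\|_{L^1(\R,L^2(\R_+))}$, gives \eqref{eq:smoothreg1}. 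First, the projection is harmless: $\mathcal P_{[c]}(0,\partial_xg_2)^\intercal=-\sum_i\xi_i[c]\<\eta_i'[c],(0,g_2)^\intercal\>$, and $\eta_i'[c]$ is bounded and exponentially decaying, so this term is $\xi_i[c]$ times a coefficient bounded by $\|g_2\|_{L^1}$, hence it lies in $L^{1,1}$ with norm $\lesssim\|g_2\|_{L^1}$. On the range of $\mathcal P_{[c]}$ the operator $R\,\mathcal Q_{[c]}$ vanishes, so what is left is to bound $\pi_1R_{\mathcal L_{[c]}}(z)(0,\partial_xg_2)^\intercal$, up to terms already handled by Proposition \ref{prop:smooth2}. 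Splitting this way rather than differentiating is what preserves the $\mathcal Q_{[c]}$ structure needed near $\lambda=0$.

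For the high energy region $|\lambda|\ge\gamma^3\mathbf c_1$, I will use \eqref{eq:high-scalar-resolvent} with $\widetilde F_1=0$ and $\widetilde F_2=\gamma\,\partial_\xi\widetilde g_2$. The source term becomes $-\gamma^{-3}L_{13}\partial_\xi(\gamma\widetilde g_2)=-\gamma^{-2}L_{23}\widetilde g_2$ by \eqref{eq:varpid1}, with no Dirac term because $i=1\ne3$. By \eqref{eq:high-kernel-bounds} this is bounded by $\langle\Lambda\rangle^{-1/2}\|\widetilde g_2\|_{L^1}$. The contraction \eqref{eq:high-scalar-contraction} then closes exactly as in Lemma \ref{lem:largterm}. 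The mechanism is that the derivative on $g_2$ is absorbed because $L_{13}$ gains $\langle\Lambda\rangle^{-1}$ while $\partial_\xi$ costs only $|\mu_j|\sim\langle\Lambda\rangle^{1/2}$. This is the regularizing effect of measuring only $v_1$ that the introduction mentions.

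For bounded $|\lambda|$, I will treat the intermediate region $\gamma^3\mathbf c_1\ge|\lambda|\ge\lambda_0$ and the region near zero by the Jost function representation of the resolvent kernel used for Proposition \ref{prop:smooth2}. In that representation $\pi_1R(z)(0,F_2)^\intercal$ is an integral $\int G_{13}(x,y,z)F_2(y)\,dy$, up to scaling, of the $(1,3)$ entry of the outgoing Green matrix. Integrating by parts in $y$ transfers the derivative onto $G_{13}$, and $\partial_yG_{13}$ is again expressible through Jost functions and $\mathbf b$-type dual vectors, as in \eqref{eq:varpid1}. Since $\sum_ja_{j1}b_{j3}=0$, $G_{13}$ is continuous across $x=y$, so no delta term appears. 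The needed bound is then $\sup_{x,y}|\sech(\gamma\kappa x)\,\partial_yG_{13}(x,y,z)|$ in $L^2_x$ uniformly in $y$, which away from zero follows from the nonvanishing of $\widehat W$ (Lemma \ref{lem:nozerodet}) and continuity in $z$ up to $\im\R$.

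The main obstacle is the neighborhood of $\lambda=0$. There $\mu_2,\mu_3=O(\Lambda)$, the Wronskian vanishes because of the generalized kernel $\xi_1,\xi_2$, and $\mathcal Q_{[c]}$ must be used to cancel the singular part. I will handle it as in \S\ref{sec:proofsmooth2}: expand the Green matrix in Laurent form in $\Lambda$, whose singular part is a finite rank operator $\xi_i\otimes\eta_j$ annihilated after applying $\mathcal Q_{[c]}$. The point to check is that the regular remainder, after integrating by parts in $y$, stays bounded. I expect this to work because $\partial_y$ applied to the slowly varying modes $e^{-\mu_{2,3}y}$ produces an extra factor $\mu_{2,3}=O(\Lambda)$, which only improves the estimate. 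Still, the bookkeeping of the $\eta_1$ term, with its nondecaying constant part $\frac{s_c}{2q'(c)}(1,-1)^\intercal$, requires care. Here the boundedness of $\eta_i'[c]$, rather than of $\eta_i[c]$, is what keeps the bound in $L^1$ rather than $L^{1,1}$.
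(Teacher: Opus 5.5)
Your proposal follows the paper's proof. The paper likewise reduces via Kato's argument to the strip resolvent bound \eqref{eq:derivative-resolvent-strip} and closes the high-energy identity $\widetilde v_1=L_{23}(q_{32}\widetilde v_1)-\gamma^{-2}L_{23}\widetilde g$ with \eqref{eq:high-scalar-contraction}. At intermediate energies it integrates by parts against the Green matrix, whose $y$-derivative is controlled through the adjoint equation. Near zero it subtracts the finite-rank part annihilated by $\mathcal Q_{[c]}$ and integrates by parts in $y$, as in Lemmas \ref{lem:interm} and \ref{lem:low-derivative}.

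One claim needs tightening: that it suffices to bound $\pi_1R_{\mathcal L_{[c]}}(z)(0,\partial_xg_2)^\intercal$ without $\mathcal Q_{[c]}$. That quantity generically has a double pole at $z=0$, so near zero you must split it as $\widehat R^{(1)}(\Lambda)\widetilde{\mathbf F_g}-\widehat R^{(1)}(\Lambda)\widetilde{\mathcal P_{[c]}F_g}$, as the paper does. Your Laurent-expansion paragraph in effect already does this.
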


\section{Bounding $\| v_1 \|_{   {\widehat{\Sigma}}     }$  by Kato smoothing: Proof of Lemma \ref{prop:smooth11}}\label{sec:smooth}

In this section the weight in $\widehat\Sigma$ is fixed at
$c=c_0$. All constants are uniform for $c(t)$ in a sufficiently
small compact interval about $c_0$.
Multiplying the   equation in \eqref{eq:sist1}  by $\zeta_{B}$  and using the linearized operator in \eqref{eq:Lin}

\begin{align} \label{eq:prekato1}
   \partial _t \( \zeta_{B} {\mathbf{v} }\)  &=   \mathcal{L}_{[c_0]} \( \zeta_{B} {\mathbf{v} }\) + [ \zeta_{B} , \mathcal{L}_{[c _0 ]} ] \mathbf{v}        - \zeta_{B}\partial _x(0, f  ( v_1 +Q_c) -f  (  Q_c)-  f  '(  Q_c)  v_1 )^\intercal \\& +      \zeta_{B} \(   \mathcal{L}_{[c ]} - \mathcal{L}_{[c_0]}  \)   {\mathbf{v} }  +(\dot D-c) \zeta_{B} (  \mathbf{Q}_c   +\mathbf{v} )' -\dot c  \zeta_{B} \partial _c\mathbf{Q}_c . \nonumber
\end{align}
Next, for $\mathcal P=\mathcal P_{[c_0]}$ and
$\mathcal Q=\mathcal Q_{[c_0]}$, we consider the decomposition
 \begin{align}
  \label{eq:deftildw}
    \zeta_{B}\mathbf{v}=\mathcal P \zeta_{B}\mathbf{v} +  \mathbf{z} \text { where } \mathbf{z}:=\mathcal  Q \zeta_{B}\mathbf{v} .
 \end{align}

\begin{lemma}\label{lem:disczbv} We have
\begin{align}\label{eq:disczbv}
\|\mathcal P\zeta_B\mathbf v\|_{L^2}
\lesssim B^{-1}\left(
\|\mathbf v\|_{\Sigma_{1AA_1}}+
\|\mathbf v\|_{\Sigma_{2AA_1}}\right).
\end{align}
\end{lemma}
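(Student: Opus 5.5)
We need to bound $\mathcal P\zeta_B\mathbf v=\sum_{i=1,2}\xi_i[c_0]\<\eta_i[c_0],\zeta_B\mathbf v\>$. Since $\xi_i[c_0]$ decays exponentially, $\|\xi_i[c_0]\|_{L^2}\lesssim 1$, so it suffices to prove
\begin{align*}
|\<\eta_i[c_0],\zeta_B\mathbf v\>|\lesssim B^{-1}\left(\|\mathbf v\|_{\Sigma_{1AA_1}}+\|\mathbf v\|_{\Sigma_{2AA_1}}\right),\qquad i=1,2 .
\end{align*}
The plan is to use the modulation orthogonality conditions \eqref{61} at the current velocity $c=c(t)$, namely $\<\mathbf v,\zeta_B\eta_1[c]\>=\<\mathbf v,\eta_2[c]\>=0$, and estimate the discrepancies.

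For $i=2$, I would write
\begin{align*}
\<\eta_2[c_0],\zeta_B\mathbf v\>=\<\mathbf v,\eta_2[c]\>-\<\mathbf v,(1-\zeta_B)\eta_2[c]\>+\<\zeta_B\mathbf v,\eta_2[c_0]-\eta_2[c]\>.
\end{align*}
The first term vanishes. The second is handled exactly as $\mathcal A_{11}$ in \S\ref{sec:discr}: $1-\zeta_B$ vanishes on $[-1,1]$ and is $\lesssim |x|/B$ elsewhere, while $\eta_2[c]$ decays exponentially. Inserting $1=\vartheta_{1A_1}+\vartheta_{2A_1}$ and $\zeta_A^{-1}$ then gives a bound $B^{-1}(\|\mathbf v\|_{\Sigma_{1AA_1}}+\|\mathbf v\|_{\Sigma_{2AA_1}})$. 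For the third term, $|\eta_2[c_0]-\eta_2[c]|\lesssim|c-c_0|e^{-\kappa'|x|}$ uniformly, and $|c-c_0|\lesssim\delta\ll B^{-1}$ by \eqref{eq:h1errorbounds}. So this term is $\lesssim\delta(\|\mathbf v\|_{\Sigma_{1AA_1}}+\|\mathbf v\|_{\Sigma_{2AA_1}})$.

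For $i=1$, the difficulty is that $\eta_1[c]$ does not decay: it tends to constants at $\pm\infty$ because of the primitive $\int_{-\infty}^x\partial_c\mathbf Q_c$ and the constant vector $\binom{1}{-1}$. I would write
\begin{align*}
\<\eta_1[c_0],\zeta_B\mathbf v\>=\<\zeta_B\eta_1[c],\mathbf v\>+\<\zeta_B(\eta_1[c_0]-\eta_1[c]),\mathbf v\>,
\end{align*}
where the first term vanishes by \eqref{61}. In the second term $\eta_1[c_0]-\eta_1[c]$ is bounded by $|c-c_0|$ in $L^\infty$ but does not decay, so I would use $\|\zeta_B/\zeta_A\|_{L^2}\lesssim B^{1/2}$ as in \eqref{eq:A12bound}. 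This gives
\begin{align*}
|\<\zeta_B(\eta_1[c_0]-\eta_1[c]),\mathbf v\>|\lesssim\delta B^{1/2}\left(\|\mathbf v\|_{\Sigma_{1AA_1}}+\|\mathbf v\|_{\Sigma_{2AA_1}}\right),
\end{align*}
and $\delta B^{1/2}\le B^{-1}$ because $B^2\delta\ll1$, as noted after \eqref{eq:smallBdelta}.

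The main obstacle is this $i=1$ term: it is controlled only because $c(t)$ stays $\delta$-close to $c_0$, so that this closeness beats the $B^{1/2}$ loss coming from the non-decaying $\eta_1$. I would double-check the Lipschitz dependence of $\eta_1[c]$ in $c$ in $L^\infty$, which follows from the explicit formulas \eqref{eq:ker2}--\eqref{eq:deftheta} and from $q'(c)>0$ near $c_0$. Summing the contributions for $i=1,2$ yields \eqref{eq:disczbv}.
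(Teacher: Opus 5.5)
Your proposal is correct and follows the paper's proof essentially verbatim. Both arguments rewrite the two coefficients using the orthogonality conditions \eqref{61} at $c=c(t)$. Both bound $\langle \mathbf v,(1-\zeta_B)\eta_2[c]\rangle$ via $|1-\zeta_B|\lesssim B^{-1}|x|$ and the decay of $\eta_2[c]$. Both control the non-decaying difference $\eta_1[c_0]-\eta_1[c]$ by $|c-c_0|\,\|\zeta_B/\zeta_A\|_{L^2}\lesssim \delta B^{1/2}$, which is absorbed using $\delta B^{3/2}\ll 1$.
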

\begin{proof}
By \eqref{61} and \eqref{eq:projections}, the coefficients of
$\xi_1[c_0]$ and $\xi_2[c_0]$ in $\mathcal P\zeta_B\mathbf v$ are
\begin{align}\label{eq:disczbv-coeff}
\langle\zeta_B\mathbf v,\eta_1[c_0]\rangle
&=\langle\zeta_B\mathbf v,\eta_1[c_0]-\eta_1[c]\rangle,\\
\langle\zeta_B\mathbf v,\eta_2[c_0]\rangle
&=\langle\mathbf v,(\zeta_B-1)\eta_2[c]\rangle
+\langle\zeta_B\mathbf v,\eta_2[c_0]-\eta_2[c]\rangle.
\nonumber
\end{align}
The partition of unity and $A\ge2B$ give
\begin{align}\label{eq:localized-energy-control}
\|\zeta_A\mathbf v\|_{\mathcal H^1}
&\le\|\mathbf v\|_{\Sigma_{1AA_1}}
+\|\mathbf v\|_{\Sigma_{2AA_1}},\\
\|\zeta_B/\zeta_A\|_{L^2}&\lesssim B^{1/2}.
\nonumber
\end{align}
Lemma \ref{lem:kernel}, the decay of $\eta_2[c]$, and
$|1-\zeta_B(x)|\lesssim B^{-1}|x|$ therefore imply
\begin{align*}
|\langle\zeta_B\mathbf v,\eta_1[c_0]\rangle|
&\lesssim |c-c_0|B^{1/2}\|\zeta_A\mathbf v\|_{L^2},\\
|\langle\zeta_B\mathbf v,\eta_2[c_0]\rangle|
&\lesssim(B^{-1}+|c-c_0|)\|\zeta_A\mathbf v\|_{L^2}.
\end{align*}
Now \eqref{eq:h1errorbounds} and $\delta B^{3/2}\ll1$ give
\eqref{eq:disczbv}, since $\xi_i[c_0]\in L^2$.
\end{proof}

For exponentially localized energy vectors $\mathbf w$, integration
by parts and \eqref{eq:M2.3}--\eqref{eq:M2.5} give
\begin{align}\label{eq:localized-projection-commutation}
\mathcal P\mathcal L_{[c_0]}\mathbf w
=-\xi_1[c_0]\langle\eta_2[c_0],\mathbf w\rangle
=\mathcal L_{[c_0]}\mathcal P\mathbf w.
\end{align}
Differentiated inputs in this identity are paired by integration
by parts. Applying $\mathcal Q$ to \eqref{eq:prekato1} and using
\eqref{eq:localized-projection-commutation}, we obtain
\begin{align}
    \partial _t \mathbf{z}  &= \mathcal{L}_{[c_0]}  \mathbf{z}  +\mathcal Q [ \zeta  _B,\mathcal{L}_{[c_0]} ]\mathbf{v}  \nonumber
\\&   + (\dot D-c) \mathcal  Q\zeta  _B        \xi _1[c]  -   \dot c \  \mathcal Q\zeta  _B   \xi _2[c]    \label{eq:gKdV22}
\\&  + \mathcal Q\zeta  _B    (\dot D-c )\mathbf{v} '
+  \mathcal Q  \zeta_{B} \left (  \mathcal{L}_{[c ]}- \mathcal{L}_{[c_0]}   \right )\mathbf{v}    \label{eq:gKdV23}  \\&    \label{eq:gKdV24}    - \mathcal Q \zeta  _B \partial _x(0, f  ( v_1 +Q_c) -f  (  Q_c)-  f  '(  Q_c)  v_1 )^\intercal  .  
\end{align}
 We have
\begin{align}\label{eq:expv1}
  \mathbf{z} &=e^{  t \mathcal{L} _{[c_0]}}  \mathbf{z}(0) +   \int _{0}^t e^{ (t-s) \mathcal{L} _{[c_0]}}\mathcal Q [\zeta_{B}, \mathcal{L} _{[c_0]} ]\mathbf{v} ds \\& \quad+     \int _{0}^t e^{  (t-s)\mathcal{L}_{[c_0]} } \text{lines \eqref{eq:gKdV22}--\eqref{eq:gKdV24}} ds.\label{eq:expv2}
\end{align}
\begin{lemma}\label{lem:w0} We have
\begin{align}\label{eq:w0}
\|\pi_1e^{t\mathcal L_{[c_0]}}\mathbf z(0)
\|_{L^2(\R_+,\widehat\Sigma)}\le\sqrt\delta.
\end{align}
\end{lemma}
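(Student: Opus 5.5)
The plan is to apply Proposition \ref{thm:pwdisp} with $c=c_0$ and $\mathbf v_0=\zeta_B\mathbf v(0)$. Since $\mathcal Q=\mathcal Q_{[c_0]}$ is a projection, we have $\mathbf z(0)=\mathcal Q\zeta_B\mathbf v(0)$. Then \eqref{eq:pwdisp1} gives
\begin{align*}
\|\pi_1e^{t\mathcal L_{[c_0]}}\mathbf z(0)\|_{L^2(\R_+,\widehat\Sigma)}
\le C(c_0)\|\zeta_B\mathbf v(0)\|_{\mathcal H^1\cap L^{1,1}(\R,\C^2)} .
\end{align*}
It therefore suffices to show that $C(c_0)\|\zeta_B\mathbf v(0)\|_{\mathcal H^1\cap L^{1,1}}\le\sqrt\delta$. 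Here $C(c_0)$ depends only on $c_0$, while $\delta$ may be taken as small as we like relative to $B$, in accordance with \eqref{eq:relABg}.

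For the $\mathcal H^1$ part, $|\zeta_B'|\lesssim B^{-1}\zeta_B\le 1$, so $\|\zeta_B\mathbf v(0)\|_{\mathcal H^1}\lesssim\|\mathbf v(0)\|_{\mathcal H^1}$. At $t=0$ we have $\mathbf u_0\in D_{\mathcal H^1}(\mathbf Q_{c_0}(\cdot-x_0),\delta)$, and \eqref{eq:mod2} with $D_0=x_0$ gives $|c(0)-c_0|+B^{-1/2}|D(0)-x_0|\lesssim\delta$. Hence
\begin{align*}
\|\mathbf v(0)\|_{\mathcal H^1}\lesssim \delta+\|\mathbf Q_{c(0)}-\mathbf Q_{c_0}\|_{\mathcal H^1}+\|\mathbf Q_{c_0}(\cdot-(x_0-D(0)))-\mathbf Q_{c_0}\|_{\mathcal H^1}\lesssim B^{1/2}\delta ,
\end{align*}
in agreement with \eqref{eq:h1errorbounds}. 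For the weighted part, Cauchy--Schwarz gives
\begin{align*}
\|\zeta_B\mathbf v(0)\|_{L^{1,1}}\le\|\langle x\rangle\zeta_B\|_{L^2}\|\mathbf v(0)\|_{L^2}\lesssim B^{3/2}\cdot B^{1/2}\delta=B^2\delta ,
\end{align*}
using $\|\langle x\rangle e^{-|x|/B}\|_{L^2}\lesssim B^{3/2}$.

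Together these give $\|\pi_1e^{t\mathcal L_{[c_0]}}\mathbf z(0)\|_{L^2(\R_+,\widehat\Sigma)}\le C(c_0)B^2\delta$. The requirement $\log(\delta^{-1})\gg\log(\epsilon^{-1})\gg A\gg B$ in \eqref{eq:relABg} lets us assume $C(c_0)B^2\sqrt\delta\le1$, which yields \eqref{eq:w0}.

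The only point needing care is the regularity and justification needed to apply Proposition \ref{thm:pwdisp}. Its input must lie in $\mathcal H^1\cap L^{1,1}$, and the bound above shows that $\zeta_B\mathbf v(0)$ does. We must also make sure the weight $\widehat\Sigma$ is the one at $c_0$, as stipulated at the start of this section. No other step involves more than bookkeeping of powers of $B$ against $\delta$.
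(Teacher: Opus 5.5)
Your proposal is correct and follows essentially the paper's proof. Like the paper, you apply \eqref{eq:pwdisp1} to $\zeta_B\mathbf v(0)$, bound $\|\zeta_B\mathbf v(0)\|_{\mathcal H^1\cap L^{1,1}}\lesssim B^2\delta$ using Cauchy--Schwarz and \eqref{eq:h1errorbounds}, and then choose $\delta$ small after $B$ as allowed by \eqref{eq:relABg}.
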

\begin{proof}
By \eqref{eq:h1errorbounds} and Cauchy--Schwarz,
\begin{align*}
\|\zeta_B\mathbf v(0)\|_{\mathcal H^1\cap L^{1,1}}
\lesssim (1+B^{3/2})\|\mathbf v(0)\|_{\mathcal H^1}
\lesssim B^2\delta.
\end{align*}
Apply \eqref{eq:pwdisp1} and choose $\delta$ sufficiently small
after $B$ as in \eqref{eq:relABg}.
\end{proof}

\begin{lemma}\label{lem:gKdV22} We have
\begin{align}\label{eq:gKdV22-estimate}
\left\|\pi_1\int_0^t e^{(t-s)\mathcal L_{[c_0]}}
\text{line \eqref{eq:gKdV22}}\,ds\right\|_{L^2(I,\widehat\Sigma)}
\lesssim\|\dot D-c\|_{L^2(I)}+\|\dot c\|_{L^2(I)}.
\end{align}
\end{lemma}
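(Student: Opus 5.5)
We apply Proposition \ref{prop:smooth} directly to the two source terms in line \eqref{eq:gKdV22}:
\begin{align*}
f(s):=(\dot D(s)-c(s))\,\mathcal Q\zeta_B\xi_1[c(s)]-\dot c(s)\,\mathcal Q\zeta_B\xi_2[c(s)].
\end{align*}
The right-hand side of \eqref{eq:smooth1} is the norm $\|f\|_{L^{1,1}(\R_x,L^2(\R_+,\C^2))}$, with the $L^2$ norm in time taken inside and the weighted $L^1$ norm in $x$ taken outside. To use it, we extend $\dot D-c$ and $\dot c$ by zero outside $I$. Since the Duhamel integral on $I$ only involves $s\in[0,t]\subset I$, this does not change the left-hand side of \eqref{eq:gKdV22-estimate} restricted to $I$. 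It also lets us replace $L^2(I)$ by $L^2(\R_+)$ on the right.

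The key step is a pointwise-in-$x$ bound on $f$ that is uniform in $s$. We need a function $g\in L^{1,1}(\R)$, depending on $c_0$ only, such that
\begin{align*}
|f(s,x)|\le g(x)\bigl(|\dot D(s)-c(s)|+|\dot c(s)|\bigr).
\end{align*}
First, $\zeta_B\le1$, and $\xi_j[c]$ decays exponentially at a rate uniform for $c$ near $c_0$. Hence $|\zeta_B\xi_j[c](x)|\lesssim e^{-\gamma_0|x|/2}$, where $\gamma_0=\sqrt{1-c_0^2}$ and we use \eqref{eq:h1errorbounds} to keep $c(t)$ near $c_0$. Second, we must handle the projection. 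By \eqref{eq:projections}, $\mathcal Q h=h-\sum_i\xi_i[c_0]\langle\eta_i[c_0],h\rangle$. The $\eta_i[c_0]$ are bounded, so $|\langle\eta_i[c_0],\zeta_B\xi_j[c]\rangle|\lesssim1$. The $\xi_i[c_0]$ also decay exponentially. Therefore
\begin{align*}
|\mathcal Q\zeta_B\xi_j[c(s)](x)|\lesssim e^{-\gamma_0|x|/2}=:g(x)/C
\end{align*}
uniformly in $s$ and in $B$, and $g$ lies in $L^{1,1}$.

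It then remains to take norms:
\begin{align*}
\|f\|_{L^{1,1}(\R,L^2(\R_+))}\le\int_\R\langle x\rangle g(x)\,\bigl\||\dot D-c|+|\dot c|\bigr\|_{L^2(\R_+)}\,dx\lesssim\|\dot D-c\|_{L^2(I)}+\|\dot c\|_{L^2(I)}.
\end{align*}
Combined with \eqref{eq:smooth1} at $c=c_0$, this gives \eqref{eq:gKdV22-estimate}.

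The only step needing care is the uniformity of these bounds. The constants must not depend on $B$, which holds because $\zeta_B\le1$. They must also not depend on $s$ through $c(s)$, which holds because the profiles are smooth in $c$ on a compact neighbourhood of $c_0$. The functional-analytic justification is already contained in Proposition \ref{prop:smooth}, which was proved for general $f$ by density. Our $f$ is continuous in $x$ with values in $L^2_t$ and is dominated as above, so it can be approximated by the test functions used there.
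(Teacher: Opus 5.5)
Your argument is correct and is essentially the paper's proof: extend the source by zero outside $I$, bound it in $L^{1,1}(\R,L^2(I))$ using $\zeta_B\le1$ and the exponential decay of $\xi_i[c]$ uniformly in $c$, and apply \eqref{eq:smooth1}. The only difference is that you keep $\mathcal Q$ inside the source, so you need $\mathcal Q^2=\mathcal Q$ (true by \eqref{eq:M2.5}, though you use it only implicitly) and your extra estimate on the projection; the paper instead feeds $\zeta_B((\dot D-c)\xi_1[c]-\dot c\,\xi_2[c])$ directly into \eqref{eq:smooth1}, whose left-hand side already contains $\mathcal Q_{[c]}$, so no bound on $\mathcal Q$ is needed.
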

\begin{proof}
The vectors $\xi_i[c]$ decay exponentially, uniformly for $c$
in the compact modulation interval. Consequently
\begin{align*}
\|\zeta_B((\dot D-c)\xi_1[c]-\dot c\xi_2[c])
\|_{L^{1,1}(\R,L^2(I))}
\lesssim\|\dot D-c\|_{L^2(I)}+\|\dot c\|_{L^2(I)}.
\end{align*}
Extend the source by zero outside $I$ and apply \eqref{eq:smooth1}.
\end{proof}

\begin{lemma}\label{lem:gKdV24} We have
\begin{align}\label{eq:gKdV24-estimate}
\left\|\pi_1\int_0^t e^{(t-s)\mathcal L_{[c_0]}}
\text{line \eqref{eq:gKdV24}}\,ds\right\|_{L^2(I,\widehat\Sigma)}
\lesssim\delta^{\frac{\min\{p,2\}-1}{4}}
\left(\|\mathbf v\|_{L^2(I,\Sigma_{1AA_1})}
+\|\mathbf v\|_{L^2(I,\Sigma_{2AA_1})}\right).
\end{align}
\end{lemma}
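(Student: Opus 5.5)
The source in line \eqref{eq:gKdV24} has the form $-\mathcal Q\zeta_B\partial_x(0,N)^\intercal$ with
\[
N:=f(v_1+Q_c)-f(Q_c)-f'(Q_c)v_1 .
\]
I would first move $\zeta_B$ past the derivative, $\zeta_B\partial_x N=\partial_x(\zeta_BN)-\zeta_B'N$. This splits the source into two pieces:
\[
-\mathcal Q(0,\partial_x(\zeta_BN))^\intercal+\mathcal Q(0,\zeta_B'N)^\intercal .
\]
I would treat the first piece with Proposition \ref{prop:smoothreg}, with $g_2=\zeta_BN$, and the second with Proposition \ref{prop:smooth}, extending sources by zero outside $I$. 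So it suffices to prove
\begin{align*}
\|\zeta_B N\|_{L^1(\R,L^2(I))}+\|\langle x\rangle\zeta_B' N\|_{L^1(\R,L^2(I))}
\lesssim\delta^{\frac{p-1}{4}}\bigl(\|\mathbf v\|_{L^2(I,\Sigma_{1AA_1})}+\|\mathbf v\|_{L^2(I,\Sigma_{2AA_1})}\bigr).
\end{align*}
By Minkowski, $\|\cdot\|_{L^1_xL^2_t}\le\|\cdot\|_{L^2_tL^1_x}$, so it is enough to bound $\|\langle x\rangle\zeta_B N(t)\|_{L^1_x}$ pointwise in $t$ by $\delta^{\frac{p-1}{4}}\|\zeta_A\mathbf v(t)\|_{\mathcal H^1}$. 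Then \eqref{eq:localized-energy-control} converts this into the two $\Sigma$ seminorms. The factor $|\zeta_B'|\lesssim B^{-1}\zeta_B$ makes the second term no worse than the first.

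For the pointwise bound, take $1<p\le2$ and reuse the concavity estimate from the proof of \eqref{eq:boundA2}:
\[
|N|\le\int_0^1|f'(Q_c+sv_1)-f'(Q_c)|\,|v_1|\,ds\lesssim|v_1|^p .
\]
Then
\[
\|\langle x\rangle\zeta_B|v_1|^p\|_{L^1}\le\|v_1\|_{L^\infty}^{p-1}\,\|\langle x\rangle\zeta_B/\zeta_A\|_{L^2}\,\|\zeta_Av_1\|_{L^2}.
\]
Here $\|\langle x\rangle\zeta_B/\zeta_A\|_{L^2}\lesssim B^{3/2}$ because $A\gg B$. Also $\|v_1\|_{L^\infty}^{p-1}\lesssim(B^{1/2}\delta)^{p-1}$ by \eqref{eq:h1errorbounds} and Sobolev embedding. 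The total prefactor is $B^{3/2+(p-1)/2}\delta^{p-1}$. This is $\lesssim\delta^{\frac{p-1}{4}}$ once $\delta$ is chosen small after $B$, as in \eqref{eq:relABg} and the choice made before \eqref{eq:smallBdelta}. For $p>2$ the same argument works with $|N|\lesssim Q_c^{p-2}|v_1|^2+|v_1|^p$, giving $\delta^{1/4}$.

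The main obstacle I expect is justifying that $\partial_x$ may be dropped from the source at the cost of only an $L^1_x$ norm of $g_2$. That is exactly Proposition \ref{prop:smoothreg}, which I take as given. The only other care needed is checking that the $B$-dependent losses, namely the $\langle x\rangle$ weight in $L^{1,1}$ and the $L^\infty$ bound through $B^{1/2}\delta$, are all absorbed by the smallness of $\delta$. No other structure of the nonlinearity is needed.
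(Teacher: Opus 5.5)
Your plan uses the same ingredients as the paper. These are the product rule $\zeta_B\partial_xN=\partial_x(\zeta_BN)-\zeta_B'N$, Proposition \ref{prop:smoothreg} for the first piece, Proposition \ref{prop:smooth} for the second, the bound $|N|\lesssim\|v_1\|_{L^\infty}^{\min\{p,2\}-1}|v_1|$, and Cauchy--Schwarz in $x$ against $\zeta_A$.

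\textbf{The gap.} Your reduction ``by Minkowski, $\|\cdot\|_{L^1_xL^2_t}\le\|\cdot\|_{L^2_tL^1_x}$'' is backwards. Minkowski's integral inequality gives $\|F\|_{L^2_tL^1_x}\le\|F\|_{L^1_xL^2_t}$, and the reverse fails in general. For example, $F(t,x)=1_{[0,1]}(x-t)$ on $t\in[0,T]$ has $\|F\|_{L^2_tL^1_x}=\sqrt T$ but $\|F\|_{L^1_xL^2_t}\ge T-1$. The source norms in \eqref{eq:smooth1} and \eqref{eq:smoothreg1} are of the stronger type, $L^1_xL^2_t$ (weighted by $\langle x\rangle$ for \eqref{eq:smooth1}). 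This matches how \eqref{eq:smooth1} is proved: Plancherel in $t$ at fixed $x$, then Minkowski in the correct direction. So a bound on $\|\langle x\rangle\zeta_BN(t)\|_{L^1_x}$ pointwise in $t$ does not control what you need.

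\textbf{The fix.} It requires nothing new, only the right order of operations. The $L^\infty$ bound from \eqref{eq:h1errorbounds} is uniform on $I\times\R$. Hence $|N(t,x)|\lesssim(B^{1/2}\delta)^{\min\{p,2\}-1}|v_1(t,x)|$ holds pointwise in $(t,x)$. Take the $L^2(I)$ norm at fixed $x$ first, then apply Cauchy--Schwarz in $x$:
\[
\int_\R\langle x\rangle\zeta_B\|N(\cdot,x)\|_{L^2(I)}\,dx\lesssim(B^{1/2}\delta)^{\min\{p,2\}-1}\,\|\langle x\rangle\zeta_B/\zeta_A\|_{L^2}\,\|\zeta_Av_1\|_{L^2(I\times\R)} .
\]
This is exactly the paper's estimate. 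Your factor $B^{3/2}$ replaces the paper's $B^{1/2}$, which it gets from $\|\zeta_B/\zeta_A\|_{L^2}+\|\zeta_B'/\zeta_A\|_{L^{2,1}}\lesssim B^{1/2}$. The difference is harmless because $\delta$ is chosen after $B$. With this change the rest of your argument, including the $p>2$ remark, goes through as written.
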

\begin{proof}
The product rule gives
\begin{align}\label{eq:nonlinear-product-rule}
\zeta_B\partial_x\bigl(f(Q_c+v_1)-f(Q_c)-f'(Q_c)v_1\bigr)
={}&\partial_x\bigl(\zeta_B(f(Q_c+v_1)-f(Q_c)-f'(Q_c)v_1)\bigr)
\nonumber\\
&-\zeta_B'\bigl(f(Q_c+v_1)-f(Q_c)-f'(Q_c)v_1\bigr).
\end{align}
For $1<p\le2$, the H\"older estimate for $f'$ gives the
following bound; for $p>2$ it follows from the local Lipschitz
bound for $f'$ on the bounded range of $Q_c+sv_1$:
\begin{align}\label{eq:nonlinear-remainder-bound}
|f(Q_c+v_1)-f(Q_c)-f'(Q_c)v_1|
\lesssim \|v_1\|_{L^\infty(I\times\R)}^{\min\{p,2\}-1}|v_1|.
\end{align}
Also, directly from \eqref{def:zetaC},
\begin{align}\label{eq:nonlinear-cutoff-norms}
\|\zeta_B/\zeta_A\|_{L^2}
+\|\zeta_B'/\zeta_A\|_{L^{2,1}}\lesssim B^{1/2}.
\end{align}
Apply \eqref{eq:smoothreg1} to the first term of
\eqref{eq:nonlinear-product-rule} and \eqref{eq:smooth1} to
the second. Cauchy--Schwarz in $x$, \eqref{eq:nonlinear-remainder-bound},
\eqref{eq:nonlinear-cutoff-norms}, and \eqref{eq:h1errorbounds}
bound the resulting expression by
\begin{align*}
C B^{1/2}(B^{1/2}\delta)^{\min\{p,2\}-1}
\|\zeta_Av_1\|_{L^2(I\times\R)}.
\end{align*}
By \eqref{eq:localized-energy-control} and the choice of $\delta$
after $B$, this proves \eqref{eq:gKdV24-estimate}.
The argument applies to energy solutions by the continuous
extensions in Propositions \ref{prop:smooth} and \ref{prop:smoothreg}.
\end{proof}

\begin{lemma}\label{lem:gKdV231} We have
\begin{align}\label{eq:gKdV231-estimate}
\left\|\pi_1\int_0^t e^{(t-s)\mathcal L_{[c_0]}}
\mathcal Q\zeta_B(\dot D-c)\mathbf v'\,ds
\right\|_{L^2(I,\widehat\Sigma)}
\lesssim\sqrt\delta\left(
\|\mathbf v\|_{L^2(I,\Sigma_{1AA_1})}
+\|\mathbf v\|_{L^2(I,\Sigma_{2AA_1})}\right).
\end{align}
\end{lemma}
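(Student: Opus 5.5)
The idea is to treat the source $\mathcal Q\zeta_B(\dot D-c)\mathbf v'$ the same way as the nonlinear term in Lemma \ref{lem:gKdV24}. We split it into components, move the derivative off $\mathbf v$ by the product rule, and then apply Proposition \ref{prop:smooth} to the undifferentiated pieces and Proposition \ref{prop:smoothreg} to the differentiated second component. We write
\begin{align*}
\zeta_B(\dot D-c)\mathbf v'=(\dot D-c)\bigl(\partial_x(\zeta_B\mathbf v)-\zeta_B'\mathbf v\bigr).
\end{align*}
The term $-(\dot D-c)\zeta_B'\mathbf v$ carries no derivative. Since $|\zeta_B'|\lesssim B^{-1}\zeta_B$ and $\|\zeta_B/\zeta_A\|_{L^{2,1}}\lesssim B^{3/2}$, Cauchy--Schwarz in $x$ bounds its $L^{1,1}(\R,L^2(I))$ norm by $B^{1/2}\|(\dot D-c)\zeta_A\mathbf v\|_{L^2(I\times\R)}$. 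Proposition \ref{prop:smooth} then controls the corresponding Duhamel term.

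For $\partial_x(\zeta_B\mathbf v)$ the two components behave differently. The second component $(0,\partial_x(\zeta_Bv_2))^\intercal$ has exactly the structure of Proposition \ref{prop:smoothreg} with $g_2=(\dot D-c)\zeta_Bv_2$. Its $L^1(\R,L^2(I))$ norm is bounded by $\|\zeta_B/\zeta_A\|_{L^2}\|(\dot D-c)\zeta_Av_2\|_{L^2(I\times\R)}\lesssim B^{1/2}\|(\dot D-c)\zeta_A\mathbf v\|_{L^2(I,L^2)}$. The first component $(\partial_x(\zeta_Bv_1),0)^\intercal$ needs no integration by parts, because $v_1\in H^1$ is controlled by the energy-type seminorms. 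So we apply Proposition \ref{prop:smooth} directly, with $\|\partial_x(\zeta_Bv_1)\|_{L^{1,1}}\lesssim B^{3/2}\|\zeta_Av_1\|_{H^1}$ (again Cauchy--Schwarz with the weight $\langle x\rangle\zeta_B/\zeta_A$).

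In all three pieces we pull out $\sup_I|\dot D-c|$. By \eqref{eq:discrest1D} and \eqref{eq:h1errorbounds}, $|\dot D-c|\lesssim B^{-1/2}\|\zeta_A\mathbf v\|_{\mathcal H^1}\lesssim\delta$ pointwise in $t$. Using \eqref{eq:localized-energy-control}, the total is bounded by
\begin{align*}
C B^{3/2}\delta\left(\|\mathbf v\|_{L^2(I,\Sigma_{1AA_1})}+\|\mathbf v\|_{L^2(I,\Sigma_{2AA_1})}\right),
\end{align*}
and since $\delta$ is chosen after $B$ we have $B^{3/2}\delta\le\sqrt\delta$, which gives \eqref{eq:gKdV231-estimate}.

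The main obstacle is the second-component derivative. Proposition \ref{prop:smooth} alone cannot control $\partial_xv_2$, because $v_2$ is only in $L^2$, and this is exactly why Proposition \ref{prop:smoothreg} is needed. One point needs checking: $\mathcal Q$ acting on $(0,\partial_xg_2)^\intercal$ must be understood through the integration-by-parts convention for $\langle\eta_i[c_0],\cdot\rangle$ stated after \eqref{eq:projections}, so that \eqref{eq:smoothreg1} applies verbatim. The argument is first carried out for regularized solutions and then extended to energy solutions by the continuous extensions of Propositions \ref{prop:smooth} and \ref{prop:smoothreg}, as in Lemma \ref{lem:gKdV24}.
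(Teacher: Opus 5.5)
Your proof is correct and is essentially the paper's argument: the product rule on the $v_2'$ component, then Proposition \ref{prop:smoothreg} for $\partial_x((\dot D-c)\zeta_Bv_2)$ and Proposition \ref{prop:smooth} for everything else, ending with the same bound $CB^{3/2}\delta$ times the localized norms. The differences are cosmetic: the paper feeds $\zeta_B(\dot D-c)v_1'$ to \eqref{eq:smooth1} without rewriting it, and it puts $\dot D-c$ in $L^2(I)$ via \eqref{eq:discrest1D} and $\mathbf v$ in $L^\infty(I,\mathcal H^1)$ via \eqref{eq:h1errorbounds}, whereas you use $\sup_I|\dot D-c|\lesssim\delta$ and $\zeta_A\mathbf v\in L^2(I,\mathcal H^1)$.
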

\begin{proof}
Use \eqref{eq:smooth1} for the first component of the source.
For the second component use
\begin{align}\label{eq:transport-product-rule}
\zeta_B(\dot D-c)v_2'
=\partial_x\bigl((\dot D-c)\zeta_Bv_2\bigr)
-(\dot D-c)\zeta_B'v_2,
\end{align}
and apply \eqref{eq:smoothreg1} and \eqref{eq:smooth1}, respectively.
The resulting three source norms are bounded by
\begin{align*}
&\|\zeta_B(\dot D-c)v_1'\|_{L^{1,1}(\R,L^2(I))}
+\|(\dot D-c)\zeta_Bv_2\|_{L^1(\R,L^2(I))}
+\|(\dot D-c)\zeta_B'v_2\|_{L^{1,1}(\R,L^2(I))}\\
&\qquad\lesssim B^{3/2}
\|\dot D-c\|_{L^2(I)}\|\mathbf v\|_{L^\infty(I,\mathcal H^1)}\\
&\qquad\lesssim B^{3/2}\delta
\left(\|\mathbf v\|_{L^2(I,\Sigma_{1AA_1})}
+\|\mathbf v\|_{L^2(I,\Sigma_{2AA_1})}\right).
\end{align*}
Here Cauchy--Schwarz in $x$ is applied before integrating
$|\dot D-c|^2\|\mathbf v(t)\|_{\mathcal H^1}^2$ in $t$.
The last bound uses \eqref{eq:discrest1D} and \eqref{eq:h1errorbounds}.
Choosing $\delta$ sufficiently small proves \eqref{eq:gKdV231-estimate}.
\end{proof}

\begin{lemma}\label{lem:gKdV232} We have
   \begin{align}\label{eq:gKdV242}
     \left  \|     \pi _1 \int _{0}^t e^{  (t-s)\mathcal{L}_{[c_0]} }  \mathcal Q  \zeta_{B} \(  \mathcal{L}_{[c ]}- \mathcal{L}_{[c_0]}   \)\mathbf{v} ds\right \| _{L^2 (I, \widehat{\Sigma})}\lesssim   \sqrt{\delta}  (\| \mathbf{v} \| _{L^2(I,  { \Sigma }_{1A A_1}    )}+\| \mathbf{v} \| _{L^2(I,  { \Sigma }_{2A A_1}   )}) .
   \end{align}
\end{lemma}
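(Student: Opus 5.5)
Compute $\mathcal L_{[c]}-\mathcal L_{[c_0]}$ explicitly and treat the resulting source exactly as in Lemmas \ref{lem:gKdV24} and \ref{lem:gKdV231}. From \eqref{eq:lin0}--\eqref{eq:Lin},
\begin{align*}
\(\mathcal L_{[c]}-\mathcal L_{[c_0]}\)\mathbf v
=\partial_x\sigma_1\begin{pmatrix}
(c^2-c_0^2)+(\gamma^2-\gamma_0^2)-(f'(Q_c)-f'(Q_{c_0})) & c-c_0\\
c-c_0 & 0
\end{pmatrix}\mathbf v ,
\end{align*}
with $\gamma_0=\sqrt{1-c_0^2}$. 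Since $c^2+\gamma^2=1=c_0^2+\gamma_0^2$, the constant diagonal terms cancel. Hence
\begin{align*}
\(\mathcal L_{[c]}-\mathcal L_{[c_0]}\)\mathbf v
=\((c-c_0)v_1' ,\ \partial_x\bigl((c-c_0)v_2-(f'(Q_c)-f'(Q_{c_0}))v_1\bigr)\)^\intercal .
\end{align*}
Every coefficient carries a factor of size $|c-c_0|\lesssim\delta$ by \eqref{eq:h1errorbounds}. For the potential term, $|f'(Q_c)-f'(Q_{c_0})|\lesssim|c-c_0|\,Q_{c_0}^{p-1}\langle x\rangle$ up to an exponentially decaying factor. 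For $1<p\le2$ one uses that $Q_c^{p-1}$ is smooth in $c$ as an explicit $\sech^2$ profile.

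Multiply by $\zeta_B$ and commute the derivative out, as in \eqref{eq:transport-product-rule}. The first component $\zeta_B(c-c_0)v_1'$ goes into \eqref{eq:smooth1}. In the second component, write $\zeta_B\partial_x g=\partial_x(\zeta_B g)-\zeta_B' g$ with $g=(c-c_0)v_2-(f'(Q_c)-f'(Q_{c_0}))v_1$. Apply \eqref{eq:smoothreg1} to $\partial_x(\zeta_B g)$ and \eqref{eq:smooth1} to $\zeta_B' g$. The projection $\mathcal Q$ is harmless because both propositions are stated for $\mathcal Q_{[c_0]}$ applied to the source.

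The source norms are estimated by writing $\zeta_B=(\zeta_B/\zeta_A)\zeta_A$ and applying Cauchy--Schwarz in $x$, using \eqref{eq:nonlinear-cutoff-norms} and $\|\langle x\rangle\zeta_B/\zeta_A\|_{L^2}\lesssim B^{3/2}$. Then Minkowski and $L^2$ in $t$ give a bound of order
\begin{align*}
B^{3/2}\sup_{t\in I}|c(t)-c_0|\;\|\zeta_A\mathbf v\|_{L^2(I,\mathcal H^1)}
\lesssim B^{3/2}\delta\(\|\mathbf v\|_{L^2(I,\Sigma_{1AA_1})}+\|\mathbf v\|_{L^2(I,\Sigma_{2AA_1})}\),
\end{align*}
where \eqref{eq:localized-energy-control} is used in the last step. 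The potential term is even better, since $Q_{c_0}^{p-1}\langle x\rangle$ decays exponentially and needs no $B$ loss. Choosing $\delta$ after $B$ as in \eqref{eq:relABg}, so that $B^{3/2}\sqrt\delta\le1$, gives \eqref{eq:gKdV242}.

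The main obstacle is the derivative falling on $v_2$ and on the potential term in the second component. $v_2'$ is not controlled by the energy, so these terms must be routed through Proposition \ref{prop:smoothreg} rather than \eqref{eq:smooth1}. One also has to check that the $L^1$-in-$x$ norm required by \eqref{eq:smoothreg1} is finite with only the $B^{1/2}$ loss coming from $\|\zeta_B/\zeta_A\|_{L^2}$.
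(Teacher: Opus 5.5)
Your proposal is correct and follows the paper's proof essentially step for step. It uses the same explicit difference $(c-c_0)\mathbf v'+(0,\partial_x((f'(Q_{c_0})-f'(Q_c))v_1))^\intercal$, routes the differentiated second component through \eqref{eq:smoothreg1} via the product rule with the $\zeta_B'$ remainder through \eqref{eq:smooth1}, and obtains the same $B^{3/2}\delta$ bound absorbed by choosing $\delta$ after $B$.

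One small imprecision: $\gamma(t)$ may be smaller than $\gamma_0$, so the potential bound should read $|f'(Q_c)-f'(Q_{c_0})|\lesssim|c-c_0|e^{-\nu|x|}$ for some $\nu>0$ uniform on the modulation interval, as in \eqref{eq:potential-difference}, rather than carry the weight $Q_{c_0}^{p-1}\langle x\rangle$. This changes nothing in the rest of the argument.
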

\begin{proof}
The exact operator difference is
\begin{align}\label{eq:linearization-difference}
(\mathcal L_{[c]}-\mathcal L_{[c_0]})\mathbf v
=(c-c_0)\mathbf v'
+\bigl(0,\partial_x((f'(Q_{c_0})-f'(Q_c))v_1)\bigr)^\intercal.
\end{align}
By smooth dependence of the positive profiles on $c$, for some
$\nu>0$,
\begin{align}\label{eq:potential-difference}
|f'(Q_{c_0}(x))-f'(Q_c(x))|
\lesssim |c-c_0|e^{-\nu|x|},
\end{align}
uniformly on the modulation interval.
For the first term in \eqref{eq:linearization-difference}, use
\eqref{eq:transport-product-rule} with $\dot D-c$ replaced by
$c-c_0$, and use its $L^\infty(I)$ norm instead of its $L^2(I)$
norm. The source estimates now follow from
\eqref{eq:localized-energy-control} and
\begin{align*}
\|\zeta_B/\zeta_A\|_{L^{2,1}}\lesssim B^{3/2},
\qquad
\|c-c_0\|_{L^\infty(I)}\lesssim\delta.
\end{align*}
For the second term of \eqref{eq:linearization-difference},
move $\zeta_B$ inside the derivative and retain the remainder
with $\zeta_B'$. Equations \eqref{eq:smoothreg1},
\eqref{eq:smooth1}, and \eqref{eq:potential-difference} apply.
Altogether the left-hand side of \eqref{eq:gKdV242} is bounded by
\begin{align*}
C B^{3/2}\delta\left(
\|\mathbf v\|_{L^2(I,\Sigma_{1AA_1})}
+\|\mathbf v\|_{L^2(I,\Sigma_{2AA_1})}\right),
\end{align*}
which proves \eqref{eq:gKdV242} by decreasing $\delta$.
\end{proof}

We now examine the commutator term in \eqref{eq:expv1}.
Direct computation gives
\begin{align}\label{eq:cutoff-commutator}
[\zeta_B,\mathcal L_{[c_0]}]
={}&\left(3\zeta_B'\partial_x^2+3\zeta_B''\partial_x
+\zeta_B'''+\zeta_B'f'(Q_{c_0})\right)
\begin{pmatrix}0&0\\1&0\end{pmatrix}
-\zeta_B'\begin{pmatrix}c_0&1\\1&c_0\end{pmatrix}.
\end{align}
\begin{lemma}\label{lem:estcomm11} We have
\begin{align}\label{eq:comm-high-estimate}
&\left\|\pi_1\int_0^t e^{(t-s)\mathcal L_{[c_0]}}\mathcal Q
\left(3\zeta_B'\partial_x^2+3\zeta_B''\partial_x
+\zeta_B'''+\zeta_B'f'(Q_{c_0})\right)(0,v_1)^\intercal\,ds
\right\|_{L^2(I,\widehat\Sigma)}\nonumber\\
&\qquad\lesssim B^{-1/2}\left(
\|\mathbf v\|_{L^2(I,\Sigma_{1AA_1})}
+\|\mathbf v\|_{L^2(I,\Sigma_{2AA_1})}\right).
\end{align}
\end{lemma}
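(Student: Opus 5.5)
The plan is to remove the second derivative $\zeta_B'\partial_x^2 v_1$ by putting it inside a total derivative, and then to treat each piece with either Proposition \ref{prop:smoothreg} or Proposition \ref{prop:smooth}. The starting point is the identity
\begin{align*}
3\zeta_B'v_1''+3\zeta_B''v_1'=\partial_x\bigl(3\zeta_B'v_1'\bigr).
\end{align*}
With it, the source in \eqref{eq:comm-high-estimate} becomes
\begin{align*}
(0,\partial_x g_2)^\intercal+(0,h)^\intercal,\qquad
g_2:=3\zeta_B'v_1',\qquad
h:=\zeta_B'''v_1+\zeta_B'f'(Q_{c_0})v_1 .
\end{align*}
As in Lemmas \ref{lem:gKdV22}--\ref{lem:gKdV232}, I extend both sources by zero outside $I$. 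I then apply \eqref{eq:smoothreg1} to the $g_2$ term and \eqref{eq:smooth1} to the $h$ term. It therefore suffices to prove
\begin{align*}
\|g_2\|_{L^1(\R,L^2(I))}+\|h\|_{L^{1,1}(\R,L^2(I))}\lesssim B^{-1/2}\bigl(\|\mathbf v\|_{L^2(I,\Sigma_{1AA_1})}+\|\mathbf v\|_{L^2(I,\Sigma_{2AA_1})}\bigr).
\end{align*}

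\emph{The $g_2$ term.} This is where the lost derivative is paid for. Proposition \ref{prop:smoothreg} only requires $g_2$ in $L^1_x$ with no spatial weight, and it involves only one derivative of $v_1$, which the energy-type norms control. I first use Minkowski to put the $L^2(I)$ norm inside. Cauchy--Schwarz in $x$ then gives
\begin{align*}
\|g_2\|_{L^1(\R,L^2(I))}\lesssim \|\zeta_B'/\zeta_A\|_{L^2}\,\|\zeta_Av_1'\|_{L^2(I\times\R)}.
\end{align*}
The bound $|\zeta_B'|\lesssim B^{-1}\zeta_B$ together with \eqref{eq:localized-energy-control} gives $\|\zeta_B'/\zeta_A\|_{L^2}\lesssim B^{-1}B^{1/2}=B^{-1/2}$. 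For the other factor, $|\zeta_A'|\lesssim A^{-1}\zeta_A$ gives
\begin{align*}
\|\zeta_Av_1'\|_{L^2}\le\|(\zeta_Av_1)'\|_{L^2}+A^{-1}\|\zeta_Av_1\|_{L^2}.
\end{align*}
The partition of unity $1=\vartheta_{1A_1}+\vartheta_{2A_1}$ then bounds this by $\|\mathbf v\|_{\Sigma_{1AA_1}}+\|\mathbf v\|_{\Sigma_{2AA_1}}$.

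\emph{The $h$ term.} Here the weight $\langle x\rangle$ of $L^{1,1}$ is the danger, because $\|\langle x\rangle\zeta_B/\zeta_A\|_{L^2}\sim B^{3/2}$. The crude bound $|\zeta_B'''|\lesssim B^{-1}\zeta_B$ is therefore not enough, and I will use the finer structure of $\zeta_B$ from \eqref{def:zetaC}. For $|x|\ge2$ we have $\zeta_B=e^{-|x|/B}$, so $|\zeta_B'|\lesssim B^{-1}\zeta_B$ and $|\zeta_B'''|\lesssim B^{-3}\zeta_B$. The region $1\le|x|\le2$ is compact, and there all derivatives of $\zeta_B$ are $O(B^{-1})$. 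This yields
\begin{align*}
\|\langle x\rangle\zeta_B'''/\zeta_A\|_{L^2}\lesssim B^{-3}B^{3/2}+B^{-1}\lesssim B^{-1}.
\end{align*}
For $\zeta_B'f'(Q_{c_0})$, the exponential decay of $f'(Q_{c_0})$ absorbs the weight, so $\|\langle x\rangle\zeta_B'f'(Q_{c_0})/\zeta_A\|_{L^2}\lesssim B^{-1}$. Cauchy--Schwarz in $x$ then gives
\begin{align*}
\|h\|_{L^{1,1}(\R,L^2(I))}\lesssim B^{-1}\|\zeta_Av_1\|_{L^2(I\times\R)}.
\end{align*}
This is better than required, and it closes the estimate.

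The main obstacle is the $v_1''$ term. It is handled by the divergence identity together with the unweighted $L^1$ requirement of Proposition \ref{prop:smoothreg}; I will take that proposition as given. The other point needing care is keeping the $\langle x\rangle$ weight from costing powers of $B$, which is why the $h$ term uses the pointwise structure of $\zeta_B$ rather than the crude derivative bound. For energy solutions I will justify all identities by the continuous extensions in Propositions \ref{prop:smooth} and \ref{prop:smoothreg}, exactly as at the end of the proof of Lemma \ref{lem:gKdV24}.
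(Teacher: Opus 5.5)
Your proposal is correct and follows the paper's proof essentially step by step. You use the same divergence identity $3\zeta_B'v_1''+3\zeta_B''v_1'=3\partial_x(\zeta_B'v_1')$, and you apply \eqref{eq:smoothreg1} to the divergence term and \eqref{eq:smooth1} to the rest, with the same cutoff bounds: $B^{-1/2}$ for $\zeta_B'/\zeta_A$ in $L^2$, and $B^{-1}$ in $L^{2,1}$ for $\zeta_B'''/\zeta_A$ and $\zeta_B'f'(Q_{c_0})/\zeta_A$, obtained from the fine structure of $\zeta_B$. You also close, as the paper does, via $|\zeta_A'|\lesssim A^{-1}\zeta_A$ and the partition of unity.
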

\begin{proof}
The first two differentiated terms satisfy the exact identity
\begin{align}\label{eq:commutator-product-rule}
3\zeta_B'v_1''+3\zeta_B''v_1'=3\partial_x(\zeta_B'v_1').
\end{align}
By \eqref{def:zetaC}, including its fixed transition region,
\begin{align}\label{eq:commutator-cutoff-bounds}
\|\zeta_B'/\zeta_A\|_{L^2}&\lesssim B^{-1/2},\\
\|\zeta_B'''/\zeta_A\|_{L^{2,1}}
+\|\zeta_B'f'(Q_{c_0})/\zeta_A\|_{L^{2,1}}
&\lesssim B^{-1}.
\nonumber
\end{align}
Indeed, outside $[-2,2]$ the $k$th derivative of $\zeta_B$
has size at most $B^{-k}\zeta_B$; on $[-2,2]$ each positive
derivative is $O(B^{-1})$.
Apply \eqref{eq:smoothreg1} to \eqref{eq:commutator-product-rule}
and \eqref{eq:smooth1} to the other two terms. The bounds
\eqref{eq:commutator-cutoff-bounds} and Cauchy--Schwarz reduce
the result to
\begin{align*}
C B^{-1/2}\left(
\|\zeta_Av_1'\|_{L^2(I\times\R)}
+\|\zeta_Av_1\|_{L^2(I\times\R)}\right).
\end{align*}
Since $|\zeta_A'/\zeta_A|\lesssim A^{-1}$,
\eqref{eq:localized-energy-control} proves \eqref{eq:comm-high-estimate}.
\end{proof}

\begin{lemma}\label{lem:estcomm21} We have
\begin{align}\label{eq:comm-low-estimate}
&\left\|\pi_1\int_0^t e^{(t-s)\mathcal L_{[c_0]}}
\mathcal Q\zeta_B'\begin{pmatrix}c_0&1\\1&c_0\end{pmatrix}
\mathbf v\,ds\right\|_{L^2(I,\widehat\Sigma)}\lesssim A_1^{3/2}B^{-1}
\|\mathbf v\|_{L^2(I,\Sigma_{1AA_1})}
+B^{1/2}\|\mathbf v\|_{L^2(I,\Sigma_{2AA_1})}.
\end{align}
\end{lemma}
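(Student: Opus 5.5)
The plan is to apply the inhomogeneous smoothing estimate \eqref{eq:smooth1} of Proposition \ref{prop:smooth}, with source $f(s)=\zeta_B'\begin{pmatrix}c_0&1\\1&c_0\end{pmatrix}\mathbf v(s)$ extended by zero outside $I$. This reduces \eqref{eq:comm-low-estimate} to bounding
\begin{align*}
\|\zeta_B'\mathbf v\|_{L^{1,1}(\R,L^2(I))}=\int_\R\langle x\rangle|\zeta_B'(x)|\,\|\mathbf v(\cdot,x)\|_{L^2(I)}\,dx .
\end{align*}
Unlike the other commutator terms, $\zeta_B'$ gains only $B^{-1}$ while the weight $\langle x\rangle$ over the region $|x|\lesssim B$ costs roughly $B$. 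So the whole point is to split the integral with the partition of unity $1=\vartheta_{1A_1}+\vartheta_{2A_1}$ and treat the two regions differently.

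\textbf{Inner region.} On $\supp\vartheta_{1A_1}\subset\{|x|\le2A_1\}$ we have $\zeta_A\approx1$ and $|\zeta_B'|\lesssim B^{-1}$. Cauchy--Schwarz in $x$ then gives
\begin{align*}
\int\langle x\rangle|\zeta_B'|\vartheta_{1A_1}\|\mathbf v\|_{L^2(I)}\,dx
\lesssim B^{-1}\|\langle x\rangle 1_{|x|\le2A_1}\|_{L^2}\,\|\vartheta_{1A_1}\zeta_A\mathbf v\|_{L^2(I\times\R)}
\lesssim B^{-1}A_1^{3/2}\|\mathbf v\|_{L^2(I,\Sigma_{1AA_1})}.
\end{align*}
This uses $\|\langle x\rangle\|_{L^2(|x|\le2A_1)}\lesssim A_1^{3/2}$ and produces exactly the first term on the right of \eqref{eq:comm-low-estimate}.

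\textbf{Outer region.} On $\supp\vartheta_{2A_1}$ write $\zeta_B'\mathbf v=(\zeta_B'/\zeta_A)\,\zeta_A\vartheta_{2A_1}\mathbf v$. By Cauchy--Schwarz this term is bounded by
\begin{align*}
\|\langle x\rangle\zeta_B'/\zeta_A\|_{L^2}\,\|\mathbf v\|_{L^2(I,\Sigma_{2AA_1})}.
\end{align*}
Since $|\zeta_B'|\lesssim B^{-1}e^{-|x|/B}$ and $\zeta_A^{-1}=e^{|x|/A}$ with $A\gg B$, we have $\int x^2B^{-2}e^{-2|x|(1/B-1/A)}\,dx\lesssim B^{-2}B^3=B$. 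Hence $\|\langle x\rangle\zeta_B'/\zeta_A\|_{L^2}\lesssim B^{1/2}$, which gives the second term $B^{1/2}\|\mathbf v\|_{L^2(I,\Sigma_{2AA_1})}$. Near $|x|\le2$ the transition region of $\chi$ only contributes $O(B^{-1})$ and is harmless.

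\textbf{Main obstacle.} The only delicate point is the bookkeeping of powers of $B$ and $A_1$. The crude outer bound of size $B^{1/2}$ is acceptable only because Lemma \ref{prop:1virial} supplies the factor $A_1^{-1}=B^{-3/5}$ for $\Sigma_{2AA_1}$. The inner bound likewise relies on $A_1^{3/2}B^{-1}=B^{-1/10}$. I would check that $\zeta_B'$ is truly $O(B^{-1})$ uniformly, including on the transition region of $\chi$. I would also verify that \eqref{eq:smooth1} accepts a source in $L^{1,1}(\R,L^2(I))$ once it is extended by zero outside $I$, which is the same justification used in Lemma \ref{lem:gKdV22}.
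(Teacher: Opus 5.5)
Your proof is correct and follows the paper's argument essentially verbatim. You apply \eqref{eq:smooth1} to the source extended by zero outside $I$, split it with $\vartheta_{1A_1}+\vartheta_{2A_1}$, and use Cauchy--Schwarz in $x$. On the inner piece you use $\|\zeta_B'/\zeta_A\|_{L^\infty}\lesssim B^{-1}$ and $\|1_{[-2A_1,2A_1]}\|_{L^{2,1}}\lesssim A_1^{3/2}$, and on the outer piece $\|\zeta_B'/\zeta_A\|_{L^{2,1}}\lesssim B^{1/2}$.
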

\begin{proof}
By \eqref{eq:smooth1} and the partition of unity, the
left-hand side of \eqref{eq:comm-low-estimate} is bounded by
\begin{align*}
C\|\zeta_B'(\vartheta_{1A_1}+\vartheta_{2A_1})\mathbf v
\|_{L^{1,1}(\R,L^2(I))}.
\end{align*}
Use
\begin{align*}
\|\zeta_B'/\zeta_A\|_{L^\infty}&\lesssim B^{-1},&
\|1_{[-2A_1,2A_1]}\|_{L^{2,1}}&\lesssim A_1^{3/2},&
\|\zeta_B'/\zeta_A\|_{L^{2,1}}&\lesssim B^{1/2}.
\end{align*}
Cauchy--Schwarz in $x$ gives \eqref{eq:comm-low-estimate}.
\end{proof}

\textit{Proof of Lemma \ref{prop:smooth11}.}
The cutoff definition gives
\begin{align}\label{eq:local-cutoff-remainder}
\|\sech(\gamma\kappa x)(1-\zeta_B)v_1\|_{L^2}
\lesssim B^{-1}\|\zeta_Av_1\|_{L^2},
\qquad \gamma=\sqrt{1-c_0^2},
\end{align}
because $|1-\zeta_B(x)|\lesssim |x|/B$ and $A$ is sufficiently
large compared with $(\gamma\kappa)^{-1}$.
Combining \eqref{eq:deftildw}, \eqref{eq:disczbv}, and
\eqref{eq:local-cutoff-remainder}, we obtain
\begin{align}\label{eq:reconstruct-local-first}
\|v_1\|_{L^2(I,\widehat\Sigma)}
\le\|\pi_1\mathbf z\|_{L^2(I,\widehat\Sigma)}
+C B^{-1}\left(
\|\mathbf v\|_{L^2(I,\Sigma_{1AA_1})}
+\|\mathbf v\|_{L^2(I,\Sigma_{2AA_1})}\right).
\end{align}
Apply \eqref{eq:w0}, \eqref{eq:gKdV22-estimate},
\eqref{eq:gKdV24-estimate}, \eqref{eq:gKdV231-estimate},
\eqref{eq:gKdV242}, \eqref{eq:comm-high-estimate}, and
\eqref{eq:comm-low-estimate} to \eqref{eq:expv1}--\eqref{eq:expv2}.
The modulation contribution is bounded by
$C B^{-1/2}(\|\mathbf v\|_{L^2(I,\Sigma_{1AA_1})}
+\|\mathbf v\|_{L^2(I,\Sigma_{2AA_1})})$
by \eqref{eq:discrest1D}--\eqref{eq:discrest1c}.
Since $A_1=B^{3/5}$, the terms with coefficient $B^{-1/2}$
or $B^{-1}$ are bounded by the two explicit terms in
\eqref{eq:sec:smooth11}. Finally, \eqref{eq:main2} and a
sufficiently small choice of $\delta$ after $B$ and $\epsilon$
make the initial and nonlinear contributions
$o_{B^{-1}}(1)\epsilon$. Substitution in
\eqref{eq:reconstruct-local-first} proves \eqref{eq:sec:smooth11}.
\qed

In \S \ref{sec:jost}--\ref{sec:proofsmooth2} below we prove the small energy part of Proposition \ref{prop:smooth2}. Finally, the proof of   Proposition \ref{prop:smoothreg}    is in \S \ref{sec:proofsmoothreg}.

\section{The Jost functions}\label{sec:jost}
Throughout this section, $c$ is fixed in $((p-1)^{1/2}/2,1)$ and
$\gamma=\sqrt{1-c^2}$. Constants may depend on $p$ and $c$.
We use the first-order system \eqref{A_Decomposehom}, its adjoint
\eqref{A_djoint}, and the analytic free bases from
\eqref{eq:vj0}--\eqref{eq:bj}. In particular, the matrix
\begin{align}
    \label{eq:matrixbfA}\text{$\mathbf A=(\mathbf a_1,\mathbf a_2,\mathbf a_3,\mathbf a_4)$ is
invertible near zero and $\det\mathbf A(0)=-4$.}
\end{align}
The roots satisfy, on a sufficiently small closed right half-disk,
\begin{align}\label{eq:low-root-signs}
\Re\mu_1\le-\nu<0<\nu\le\Re\mu_4,\qquad
\Re\mu_2\le0\le\Re\mu_3,\qquad
|\mu_2|+|\mu_3|\le C|\Lambda|.
\end{align}
Indeed these follow from Lemma \ref{lem:roots1}, with the inequalities
for the central roots obtained by integrating their horizontal real
derivatives from the imaginary axis. We also have
$|\partial_\xi^n q(\xi)|\le C_ne^{-(p-1)|\xi|}$.
The rank-one matrices $\mathbf a_j\mathbf b_j^\intercal$ are analytic
at zero, even though the two central eigenvalues coincide there.

\begin{lemma}\label{lem:estimate_f}
Fix $T>0$ and $0<\rho<\min\{1,p-1\}$.
There exist a bounded open neighborhood $\mathcal O=\mathcal O_T\subset\C$
of $\im[-T,T]$ and solutions
$f_1,\widehat f_2,\widehat f_3,f_4,F_1,F_2,F_3,F_4$
of \eqref{A_Decomposehom}, defined on $\R\times\mathcal O$,
smooth in $\xi$ and holomorphic in $\Lambda$, with the following
representations and asymptotics:
\begin{align}\label{eq:limm1}
\begin{array}{c|c|c}
j & +\infty & -\infty\\ \hline
1 & f_1=e^{\mu_1\xi}m_1 \text{ with }m_1\xrightarrow{\xi\to +\infty}\mathbf a_1
  & F_1=e^{\mu_1\xi}M_1\text{ with }M_1\xrightarrow{\xi\to -\infty}\mathbf a_1\\
2 & \widehat f_2=e^{\mu_2\xi}\widehat m_2\text{ with }\widehat m_2\xrightarrow{\xi\to +\infty}\mathbf a_2
  & F_2=e^{\mu_2\xi}M_2\text{ with }M_2\xrightarrow{\xi\to -\infty}\mathbf a_2\\
3 & F_3=e^{\mu_3\xi}M_3\text{ with }M_3\xrightarrow{\xi\to +\infty}\mathbf a_3
  & \widehat f_3=e^{\mu_3\xi}\widehat m_3\text{ with }\widehat m_3\xrightarrow{\xi\to -\infty}\mathbf a_3\\
4 & F_4=e^{\mu_4\xi}M_4\text{ with }M_4\xrightarrow{\xi\to +\infty}\mathbf a_4
  & f_4=e^{\mu_4\xi}m_4\text{ with }m_4\xrightarrow{\xi\to -\infty}\mathbf a_4
\end{array}
\end{align}
The roots $\mu_j$ and the free bases $\mathbf a_j,\mathbf b_j$
are holomorphic on $\mathcal O$, which may be chosen invariant
under $\Lambda\mapsto-\Lambda$ and complex conjugation.

Furthermore, for every $n,m\ge0$ there is $C_{nm}>0$ such that
\begin{align}\label{eq:m1_estimate_partial}
\left|
\partial_\xi^n\partial_\Lambda^m
\bigl(\mathfrak m(\xi,\Lambda)-\mathbf a_j(\Lambda)\bigr)
\right|
\le C_{nm}e^{-\rho|\xi|},
\qquad \Lambda\in\mathcal O,
\end{align}
where $\mathfrak m$ ranges over all eight factors
$m_1,\widehat m_2,\widehat m_3,m_4,M_1,M_2,M_3,M_4$,
and $j$ is the index of its row in \eqref{eq:limm1}.
The estimate holds for $\xi\ge0$ for factors in the $+\infty$
column, and for $\xi\le0$ for factors in the $-\infty$ column.

There is also a solution $g_1$ of \eqref{A_djoint}, defined on
$\R\times\mathcal O$, smooth in $\xi$ and holomorphic in $\Lambda$,
such that
\begin{align}\label{eq:g1}
e^{\mu_1(\Lambda)\xi}g_1(\xi,\Lambda)
\xrightarrow{\xi\to-\infty} \mathbf b_1(\Lambda).
\end{align}
For every $n,m\ge0$, it satisfies
\begin{align}\label{eq:g1-normalized-estimate}
\left|
\partial_\xi^n\partial_\Lambda^m
\left(
e^{\mu_1(\Lambda)\xi}g_1(\xi,\Lambda)-\mathbf b_1(\Lambda)
\right)
\right|
\le C_{nm}e^{\rho\xi},
\qquad \xi\le0,\quad \Lambda\in\mathcal O.
\end{align}
For each $\Lambda\in\mathcal O$, the solutions $f_1,f_4,g_1$
are uniquely determined by their respective normalized limits.
No uniqueness is asserted for the other six solutions.

The constants $C_{nm}$ may depend on $p,c,\rho,T$,
but are independent of $\xi$ and $\Lambda$ in the stated ranges.
For each fixed $T$, the solutions can be chosen locally real analytic in $c$,
with a common neighborhood $\mathcal O$ and locally uniform
constants when $c$ varies near its fixed value.
\end{lemma}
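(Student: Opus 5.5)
The plan is to build all eight solutions and $g_1$ as fixed points of Volterra integral equations for the normalized factors $\mathfrak m$. The conjugation $\widetilde V=e^{\mu_j\xi}\mathfrak m$ turns \eqref{A_Decomposehom} into
\begin{align*}
\mathfrak m'=(A_\infty(\Lambda)-\mu_j)\mathfrak m+q\,\mathfrak m,\qquad A_\infty(\Lambda):=A_\infty^{(0)}+\Lambda A_\infty^{(1)}(\gamma).
\end{align*}
Using the spectral decomposition $A_\infty=\sum_k\mu_k\mathbf a_k\mathbf b_k^\intercal$, with the rank-one projectors analytic at $0$ as noted before the statement, I would write, for instance for $m_1$ at $+\infty$,
\begin{align*}
m_1(\xi)=\mathbf a_1-\sum_{k=1}^4\int_\xi^{+\infty}e^{(\mu_k-\mu_1)(\xi-y)}\mathbf a_k\mathbf b_k^\intercal q(y)m_1(y)\,dy .
\end{align*}
This choice is legitimate because $\Re(\mu_k-\mu_1)\ge0$ for all $k$, and $\Re(\mu_k-\mu_1)\ge\nu>0$ for $k\ne1$ on a neighborhood of $\im[-T,T]$. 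That neighborhood comes from Lemma~\ref{lem:roots1} (c), (d) and \eqref{eq:low-root-signs}, together with compactness of $\im[-T,T]$ and continuity of the roots. The factor $e^{(\mu_k-\mu_1)(\xi-y)}$ therefore has modulus $\le1$ for $y\ge\xi$, and the kernel is dominated by $Ce^{-(p-1)|y|}$. The Volterra iteration then converges on $[\xi_0,\infty)$ for any $\xi_0$ (the usual $\frac{1}{n!}(\int|q|)^n$ bound). Uniform convergence gives holomorphy in $\Lambda$. The bound $|m_1-\mathbf a_1|\le C\int_\xi^\infty e^{-(p-1)y}dy\lesssim e^{-\rho\xi}$ follows for $\xi\ge0$.

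The same construction applies to $f_4$ at $-\infty$ by symmetry, with $\Re(\mu_4-\mu_k)\ge0$. It also applies to $M_4$ at $+\infty$ and $M_1$ at $-\infty$: there one integrates the $k=4$ (resp.\ $k=1$) projection from the same end, and the terms with smaller (resp.\ larger) real part from $0$ or from the other end, in the usual way. For the central roots the difficulty is that $\Re\mu_2,\Re\mu_3$ are only $\le0$ or $\ge0$, and $\Re(\mu_3-\mu_2)$ can have either sign near the imaginary axis. For $\widehat m_2$ at $+\infty$, the $k=1$ term is integrated from $0$ to $\xi$, since $\Re(\mu_1-\mu_2)\le-\nu$ decays forward; all other $k$ are integrated from $\xi$ to $+\infty$. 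Growth of $e^{(\mu_k-\mu_2)(\xi-y)}$ for $k=3$ over $y>\xi$ is at most $e^{C|\Lambda_R||y-\xi|}$. By shrinking $\mathcal O$ so that $|\Re(\mu_3-\mu_2)|<p-1-\rho$, this growth is absorbed by $e^{-(p-1)y}$. This leaves the decay rate $e^{-\rho\xi}$, which is why $\rho<\min\{1,p-1\}$ is imposed. The term from $\mu_1$ integrated from $0$ yields $\int_0^\xi e^{-\nu(\xi-y)}e^{-(p-1)y}dy\lesssim e^{-\min(\nu,p-1)\xi}$; the constraint $\rho<1$ ensures $\nu$ (close to $1$ near $\Lambda=0$) exceeds $\rho$ after shrinking. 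No uniqueness follows for these solutions, because the lower limit $0$ is a choice, consistent with the statement.

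Derivative bounds \eqref{eq:m1_estimate_partial} are handled in two parts. For $\partial_\xi$, I would use the differential equation itself to express $\mathfrak m'$ in terms of $\mathfrak m$ and $q$, and iterate, using $|\partial^n q|\le C_ne^{-(p-1)|\xi|}$. For $\partial_\Lambda$, I would apply Cauchy estimates on a slightly smaller neighborhood, after shrinking $\mathcal O$, since everything is holomorphic and uniformly bounded. For $g_1$, I would run the same Volterra scheme for the adjoint \eqref{A_djoint} with the normalization $e^{\mu_1\xi}g_1\to\mathbf b_1$ at $-\infty$, using the dual decomposition $-A_\infty^\intercal=-\sum\mu_k\mathbf b_k\mathbf a_k^\intercal$. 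The relevant exponentials are $e^{(\mu_1-\mu_k)(\xi-y)}$ with $y<\xi$, which are bounded since $\Re\mu_1$ is the smallest root.

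Uniqueness of $f_1,f_4,g_1$ follows because any other solution with the same normalized limit differs by a combination of solutions growing or oscillating relative to $e^{\mu_1\xi}$. Such a difference cannot have normalized limit zero, by the strict gap $\nu$. The symmetries of $\mathcal O$ come from Lemma~\ref{lem:roots1} (e) and conjugation invariance. Analyticity in $c$ follows since all ingredients ($q$ via $Q$ scaling, roots, bases) are real analytic in $c$ with uniform bounds.

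I expect the central-root solutions near $\Lambda=0$ to be the main obstacle, where $\mu_2,\mu_3$ collide. The fix I would try first is to never separate $\mathbf a_2\mathbf b_2^\intercal$ and $\mathbf a_3\mathbf b_3^\intercal$ in a way that needs $1/(\mu_3-\mu_2)$, only using analyticity of each projector as recorded before the lemma, and to bound $|e^{(\mu_3-\mu_2)t}|$ by $e^{C|\Lambda||t|}$ on the small disk.
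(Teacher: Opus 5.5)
Your construction is the one the paper has in mind. Its proof only records the Volterra equations \eqref{eq:integral_eq_m1}--\eqref{eq:integral_eq_m4} and defers to standard references. Your treatment of $m_1$, $m_4$, $g_1$ is sound, as are the $\partial_\xi$ bounds via the equation, the $\partial_\Lambda$ bounds via Cauchy estimates, and the absorption of $e^{|\Re(\mu_3-\mu_2)|\,|y-\xi|}$ into $e^{-(p-1)|y|}$ for the central pair.

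One step, however, is not justified as written. For the six non-unique solutions you integrate the terms with forward-decaying exponentials from a finite point. This gives decay $e^{-\min\{\nu,p-1\}|\xi|}$, where $\nu$ is $\Re(\mu_{2}-\mu_1)$ or $\Re(\mu_3-\mu_1)$ (for $\widehat m_2$, $M_3$, $M_1$), or $\Re(\mu_4-\mu_2)$, $\Re(\mu_4-\mu_3)$ (for $M_4$, $M_2$, $\widehat m_3$). To get \eqref{eq:m1_estimate_partial} for every $\rho<\min\{1,p-1\}$ you need $\nu>\rho$ on all of $\mathcal O$. On the axis this means $\Re\mu_1(\im\tau)\le-1$ and $\Re\mu_4(\im\tau)\ge1$ for all $\tau\in[-T,T]$. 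You only check this near $\Lambda=0$, and shrinking $\mathcal O$ around $\im[-T,T]$ cannot supply it. Lemma~\ref{lem:roots1} gives only the signs in (c) and the large-$|\tau|$ asymptotics in (f). Nor can you integrate these terms from $\pm\infty$ instead. That requires $\nu<p-1$, which fails for large $|\tau|$ by \eqref{eq:assroots}, and near $\Lambda=0$ when $p\le2$. A $\Lambda$-dependent choice of integration limits would also destroy holomorphy on $\mathcal O$.

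The missing fact is true and short to prove. Put $\Lambda=\im\tau$ and $\mu=1+\im k$ in \eqref{eq:mus1}. The imaginary part gives $c\tau=k-2k^3$, and then the real part becomes
\begin{align*}
k^4-5k^2+2c\tau k-\gamma^2\tau^2=-k^2\Bigl(3+3k^2+\frac{\gamma^2}{c^2}(1-2k^2)^2\Bigr),
\end{align*}
which vanishes only for $k=\tau=0$. The line $\Re\mu=-1$ follows from $\mathcal F(-\Lambda,-\mu,\gamma)=\mathcal F(\Lambda,\mu,\gamma)$. Since $\mu_4(\Lambda)=1+c\Lambda-\frac{1+2c^2}{2}\Lambda^2+O(\Lambda^3)$, one has $\Re\mu_4(\im\tau)>1$ for small $\tau\ne0$, hence for all $\tau\neq0$ by continuity. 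Likewise $\Re\mu_1(\im\tau)<-1$. Compactness of $\im[-T,T]$ then gives $\nu>\rho$ on a small enough $\mathcal O$, and your argument goes through.

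A smaller point: the equations with mixed integration directions are not Volterra, so there is no $1/n!$ gain. Pose them on $[R,\infty)$ (resp.\ $(-\infty,-R]$) with $R$ large enough that the kernel has $L^1$-mass below $1/2$, and solve by contraction. Then extend by the linear ODE, and bound the range between $0$ and $\pm R$ by Gronwall, uniformly in $\Lambda\in\mathcal O$.
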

\begin{proof}
We skip the proof which is standard, referring in general to references \cite{Bealsbook1988, DT1979} and, for a setup almost congruent to our problem here, to 
 \cite{CM25D1}. We only recall that 
\begin{align}
m_1(\xi)&=\mathbf a_1-\sum_{k=1}^4\mathbf a_k\int_\xi^\infty
 e^{(\mu_k-\mu_1)(\xi-y)}
 \langle q(y)m_1(y),\mathbf b_k\rangle_{\C^4}\,dy,
\label{eq:integral_eq_m1}\\
m_4(\xi)&=\mathbf a_4+\sum_{k=1}^4\mathbf a_k\int_{-\infty}^\xi
 e^{(\mu_k-\mu_4)(\xi-y)}
 \langle q(y)m_4(y),\mathbf b_k\rangle_{\C^4}\,dy.
\label{eq:integral_eq_m4}
\end{align}

\end{proof}

We have the following elementary result
\begin{lemma}\label{lem:symjost1} Consider the diagonal matrix $D_{3,1}=\diag (1,-1,1, 1)$.  For $\Lambda \in \im \R$  we have
    \begin{equation}\label{eq:symjost11}
        f_1(\xi , -\Lambda   )=D_{3,1} f_4(-\xi , \Lambda  ).
    \end{equation}

    \end{lemma}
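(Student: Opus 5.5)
The plan is to show that $G(\xi):=D_{3,1}f_4(-\xi,\Lambda)$ solves the homogeneous system \eqref{A_Decomposehom} with parameter $-\Lambda$, and has the same normalized asymptotics at $+\infty$ as $f_1(\cdot,-\Lambda)$. The identity \eqref{eq:symjost11} then follows from the uniqueness of $f_1$ asserted in Lemma \ref{lem:estimate_f}.

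\textbf{Step 1: the system is invariant.} Write $M(\xi,\Lambda):=A_\infty^{(0)}+\Lambda A_\infty^{(1)}(\gamma)+q(\xi)$. Since $\frac{d}{d\xi}G(\xi)=-D_{3,1}(\partial_\xi f_4)(-\xi,\Lambda)$, what is needed is the matrix identity
\begin{align*}
-D_{3,1}M(-\xi,\Lambda)D_{3,1}=M(\xi,-\Lambda).
\end{align*}
We check it entrywise. Conjugation by $D_{3,1}$ flips the sign of every entry in row $2$ or column $2$, except the $(2,2)$ entry.
\begin{itemize}
\item In $A_\infty^{(0)}$ the nonzero entries are $(1,2)$, $(2,3)$, $(3,2)$ and $(4,2)$. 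Conjugation flips $(1,2)$, $(2,3)$, $(3,2)$, $(4,2)$, that is, all of them. Hence $-D_{3,1}A_\infty^{(0)}D_{3,1}=A_\infty^{(0)}$.
\item $A_\infty^{(1)}$ has entries only at $(3,1)$, $(3,4)$ and $(4,1)$, none of which conjugation flips. Hence $-D_{3,1}\Lambda A_\infty^{(1)}D_{3,1}=(-\Lambda)A_\infty^{(1)}$.
\item For $q$, $Q$ is even, so $Q'$ is odd. The entry $q_{31}\propto Q^{p-2}Q'$ is therefore odd in $\xi$ and is not flipped. The entry $q_{32}\propto Q^{p-1}$ is even and is flipped. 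Thus $-D_{3,1}q(-\xi)D_{3,1}$ has $(3,1)$ entry $-q_{31}(-\xi)=q_{31}(\xi)$ and $(3,2)$ entry $q_{32}(-\xi)=q_{32}(\xi)$, so it equals $q(\xi)$.
\end{itemize}
These three items give the identity, so $G$ solves \eqref{A_Decomposehom} with $-\Lambda$.

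\textbf{Step 2: asymptotics.} For $\Lambda\in\im\R$ we have $\mu_4(\Lambda)=-\mu_1(-\Lambda)$ by \eqref{mu1_mu4}. Hence
\begin{align*}
G(\xi)=e^{-\mu_4(\Lambda)\xi}D_{3,1}m_4(-\xi,\Lambda)=e^{\mu_1(-\Lambda)\xi}D_{3,1}m_4(-\xi,\Lambda),
\end{align*}
and $D_{3,1}m_4(-\xi,\Lambda)\to D_{3,1}\mathbf a_4(\Lambda)$ as $\xi\to+\infty$. By \eqref{eq:vj0},
\begin{align*}
D_{3,1}\mathbf a_4(\Lambda)=\Bigl(1,-\mu_4,\mu_4^2,\tfrac{\gamma^2\Lambda}{\mu_4}-c\Bigr)^\intercal .
\end{align*}
Substituting $\mu_4(\Lambda)=-\mu_1(-\Lambda)$, this becomes $\bigl(1,\mu_1,\mu_1^2,\tfrac{\gamma^2(-\Lambda)}{\mu_1}-c\bigr)^\intercal$ evaluated at $-\Lambda$, which is $\mathbf a_1(-\Lambda)$.

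\textbf{Step 3: uniqueness.} Steps 1 and 2 show that $G$ solves the system with parameter $-\Lambda$ and satisfies $e^{-\mu_1(-\Lambda)\xi}G(\xi)\to\mathbf a_1(-\Lambda)$ as $\xi\to+\infty$. By the uniqueness of $f_1$ stated in Lemma \ref{lem:estimate_f}, $G=f_1(\cdot,-\Lambda)$, which is \eqref{eq:symjost11}.

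The only step needing care is this uniqueness. Any solution with that normalized limit differs from $f_1$ by a solution whose rescaling by $e^{-\mu_1\xi}$ tends to zero. Every other solution grows relative to $e^{\mu_1\xi}$, because $\Re\mu_1<\Re\mu_j$ for $j\neq1$ on $\im\R$ by \eqref{eq:mus21}. So the difference vanishes, as already built into the construction via \eqref{eq:integral_eq_m1}.

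For $\Lambda=0$ the double central root does not affect this argument, since it concerns only $\mu_1$ and $\mu_4$, which remain simple. Alternatively, the case $\Lambda=0$ follows by continuity in $\Lambda$.
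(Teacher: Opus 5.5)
Your proof is correct and takes the same route as the paper. You show that $D_{3,1}f_4(-\xi,\Lambda)$ solves \eqref{A_Decomposehom} with parameter $-\Lambda$ via the conjugation identities for $A_\infty^{(0)}$, $A_\infty^{(1)}(\gamma)$ and $q$, match the normalized limit at $+\infty$ using \eqref{mu1_mu4} and $D_{3,1}\mathbf a_4(\Lambda)=\mathbf a_1(-\Lambda)$, and conclude by the uniqueness of $f_1$ in Lemma \ref{lem:estimate_f}; your entrywise checks only spell out what the paper states in one line.
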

 \begin{proof}
 Using  $ D_{3,1}^{-1}  =D_{3,1}$ and, by \eqref {eq:QQ},    $Q(-x) =Q(x)$ and $Q'(-x) =- Q'( x)$   we compute
 \begin{align*}
    D_{3,1} A _\infty^{(0)} D_{3,1} = - A _\infty^{(0)}, \quad  D_{3,1}A _\infty^{(1)} D_{3,1} =   A _\infty^{(1)} \text{ and }D_{3,1} q(-x) D_{3,1}=-q( x) .
 \end{align*}
 Then
 \begin{align*}
     \frac{d}{d\xi}\(  D_{3,1}f_4(-\xi , \Lambda    )  \) &= -D_{3,1}\(  A _\infty^{(0)}  + \Lambda A _\infty^{(1)} (\gamma)  +q(-\xi)   \)  f_4(-\xi , \Lambda   ) \\& =\(  A _\infty^{(0)}  - \Lambda A _\infty^{(1)} (\gamma)  +q(\xi)   \)  D_{3,1}f_4(-\xi , \Lambda   ).
 \end{align*}
  By \eqref{mu1_mu4} and   $D_{3,1} \mathbf a_4(\Lambda    )=  \mathbf a _1(-\Lambda    )  $  we obtain \eqref{eq:symjost11}
 from the asymptotic behavior  for $\xi \to +\infty $
 \begin{align*}
   D_{3,1}f_4(-\xi , \Lambda     ) \sim   e ^{-\mu _4(\Lambda   )\xi }D_{3,1} \mathbf a _4(\Lambda     )= e ^{\mu _1(-\Lambda   )\xi }  \mathbf a _1(-\Lambda     )\sim   f_1( \xi , -\Lambda     ) .
 \end{align*}
 \end{proof}

The matrices
\begin{align}\label{eq:complete-bases}
 Y_-=(F_1,F_2,\widehat f_3,f_4),\qquad
 Y_+=(f_1,\widehat f_2,F_3,F_4)
\end{align}
are fundamental matrices: Abel's identity and the own-end limits give
$\det Y_\pm=\det\mathbf A\ne0$. Consequently the  following $c_{jk}$'s   are analytic,
\begin{align}\label{eq:connections}
 \begin{aligned}
 f_1&=c_{11}F_1+c_{12}F_2+c_{13}\widehat f_3+c_{14}f_4,\\
 \widehat f_2&=c_{21}F_1+c_{22}F_2+c_{23}\widehat f_3+c_{24}f_4,\\
 \widehat f_3&=c_{31}f_1+c_{32}\widehat f_2+c_{33}F_3+c_{34}F_4,\\
 f_4&=c_{41}f_1+c_{42}\widehat f_2+c_{43}F_3+c_{44}F_4.
 \end{aligned}
\end{align}
With different values of $T$, the uniqueness assertion in Lemma \ref{lem:estimate_f}
identifies $f_1,f_4,g_1$ on overlapping imaginary intervals.
The  $ \widehat f_j$ are not uniquely defined, but expanding them in the bases \eqref{eq:complete-bases}
and comparing their normalized limits, shows that any two distinct  $ \widehat f_j $ and $ \widehat f_j^{\,\mathrm{new}}$ are related by
\begin{align}\label{eq:jost-choice-independence}
 \widehat f_2^{\,\mathrm{new}}=\widehat f_2+a(\Lambda)f_1,\qquad
 \widehat f_3^{\,\mathrm{new}}=\widehat f_3+b(\Lambda)f_4.
\end{align}
Consequently $\det(f_1,\widehat f_2,\widehat f_3,f_4)$ is uniquely defined  and, in particular, independent of the choice of $T$  
(cf.~\cite[Lemma~7.6]{GLM07ARMA} for an analogous invariance result).
Give any solution $\widetilde V$, resp. $W$, of   \eqref {A_Decomposehom}, resp.  \eqref{A_djoint}, it is elementary to show that
$\langle \widetilde V (\xi ) ,W(\xi )\rangle _{\C^4} $ is constant in the variable $\xi$. By the inner products in  \eqref{eq:zj0} and the asymptotical values in Lemma \ref{lem:estimate_f},  by Pego and Weinstein  \cite {PegoWei1} we can  define the Evans function  as 
\begin{align}
    \label{eq:evansfunct} D(\Lambda):=\langle f_1(\xi,\Lambda) , g_1(\xi,\Lambda)\rangle .
\end{align}
Notice that by  Pego and Weinstein  \cite {PegoWei1} we have 
\begin{align}\label{eq:limm1-infty}
\lim_{\xi\to-\infty}m_1(\xi,\Lambda)=D(\Lambda)\mathbf a_1(\Lambda).
\end{align}

\begin{lemma}\label{lem:strong-global}
For every $T>0$, there exists an open neighborhood
$\mathcal U_T\Subset\mathcal O_T$ of $\im[-T,T]$,
invariant under negation and complex conjugation, such that,
for all nonnegative integers $n,m$   
\begin{align}\label{eq:m4_estimate_partial}
 |\partial_\xi^n\partial_\Lambda^m m_1|
 +|\partial_\xi^n\partial_\Lambda^m m_4|\le C_{nm}   \text{ uniformly for
$\Lambda\in\mathcal U_T$ and $\xi\in\R$}.
\end{align}
The constants $C_{nm}$ may depend on $p,c,\rho,T$,
but are independent of $\xi$ and $\Lambda$ in the stated ranges.
Moreover
\begin{align}\label{eq:evans}
D(\Lambda)  
 =c_{11}(\Lambda) \text{ and } c_{44}(\Lambda)=D(-\Lambda).
\end{align}
\end{lemma}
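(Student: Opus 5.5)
The proof has two parts: the uniform bounds \eqref{eq:m4_estimate_partial} on all of $\R$, and the identification of the Evans function with connection coefficients in \eqref{eq:evans}.

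\textbf{Uniform bounds.} By Lemma \ref{lem:estimate_f}, $m_1$ is already controlled for $\xi\ge0$ by \eqref{eq:m1_estimate_partial}, uniformly in $\Lambda\in\mathcal O_T$. So the only task is $\xi\le0$. There I would expand $f_1$ in the basis $Y_-$ via \eqref{eq:connections}, which gives
\begin{align*}
m_1(\xi)=c_{11}M_1(\xi)+c_{12}e^{(\mu_2-\mu_1)\xi}M_2(\xi)+c_{13}e^{(\mu_3-\mu_1)\xi}\widehat m_3(\xi)+c_{14}e^{(\mu_4-\mu_1)\xi}m_4(\xi).
\end{align*}
The $c_{1k}$ are holomorphic on $\mathcal O_T$, so they are bounded with all $\Lambda$-derivatives on any $\mathcal U_T\Subset\mathcal O_T$. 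The factors $M_1,M_2,\widehat m_3,m_4$ are bounded for $\xi\le0$ by \eqref{eq:m1_estimate_partial}.

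For $k\ge2$ the exponentials $e^{(\mu_k-\mu_1)\xi}$ with $\xi\le0$ are bounded provided $\Re(\mu_k-\mu_1)\ge\nu/2>0$ on $\mathcal U_T$. I would guarantee this by shrinking $\mathcal U_T$ to a thin neighborhood of $\im[-T,T]$. On the imaginary axis, Lemma \ref{lem:roots1}\textbf{c} gives $\Re\mu_1<0\le\Re\mu_{2},\Re\mu_3<\Re\mu_4$, and by continuity and compactness there is a uniform gap. Near $\Lambda=0$ the same conclusion follows from \eqref{eq:low-root-signs}.

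The $\Lambda$-derivatives of these exponentials produce polynomial factors $|\xi|^m$. These are absorbed by the exponential gap, since $|\xi|^me^{\nu\xi/2}$ is bounded for $\xi\le0$. The $\xi$-derivatives are handled the same way, and the argument for $m_4$ is symmetric, or follows directly from Lemma \ref{lem:symjost1}.

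\textbf{Identification of the Evans function.} I would use the constancy of $\langle f_1,g_1\rangle$ and evaluate it as $\xi\to-\infty$. By \eqref{eq:g1}, $g_1\sim e^{-\mu_1\xi}\mathbf b_1$. Write $f_1=\sum_k c_{1k}e^{\mu_k\xi}(\cdots)$ using the expansion above. The term $c_{11}e^{\mu_1\xi}M_1\to c_{11}e^{\mu_1\xi}\mathbf a_1$ pairs with $g_1$ to give $c_{11}\langle\mathbf a_1,\mathbf b_1\rangle=c_{11}$. Each remaining term $k\ge2$ pairs with $g_1$ to give something of the form $\langle \text{bounded}, e^{\mu_1\xi}g_1\rangle$, which converges to $\langle\mathbf a_k,\mathbf b_1\rangle=0$ by the duality in \eqref{eq:zj0}. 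The errors vanish because of the $e^{\rho\xi}$ rates in \eqref{eq:m1_estimate_partial} and \eqref{eq:g1-normalized-estimate}. Hence $D=c_{11}$.

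For $c_{44}(\Lambda)=D(-\Lambda)$ on $\im\R$, I would apply $D_{3,1}$ and $\xi\mapsto-\xi$ to the expansion of $f_4$ in $Y_+$. By Lemma \ref{lem:symjost1} this maps $f_4(\cdot,\Lambda)$ to $f_1(\cdot,-\Lambda)$ and maps $F_4$ to a solution with normalized limit $\mathbf a_1(-\Lambda)$ at $-\infty$, that is, an admissible $F_1(-\Lambda)$. The coefficient of that component is unaffected by the non-uniqueness in \eqref{eq:jost-choice-independence}, because it is read off from the $e^{\mu_1\xi}$ asymptotics. Comparing with the first part gives $c_{44}(\Lambda)=c_{11}(-\Lambda)=D(-\Lambda)$. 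The identity then extends to $\mathcal U_T$ by analytic continuation, using that $\mathcal U_T$ is invariant under negation.

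\textbf{Main obstacle.} I expect the hardest step to be keeping the gap $\Re(\mu_k-\mu_1)>0$ uniform near $\Lambda=0$, where $\mu_2$ and $\mu_3$ coalesce and the individual bases degenerate. The relevant combinations are the rank-one matrices $\mathbf a_j\mathbf b_j^\intercal$, which stay analytic at zero, but the gap must be controlled separately. Only $\mu_1$ versus the other roots matters, and $\Re\mu_1\le-\nu$ on a small half-disk, so I would choose $\mathcal U_T$ close enough to $\im\R$ that $\Re\mu_{2},\Re\mu_3\ge-\nu/2$ there.
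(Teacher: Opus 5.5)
Your proof is correct, but for the uniform bounds it takes a different route from the paper. The paper does not use the connection formulas there. It cites \eqref{eq:m4_estimate_partial} as a standard consequence of the integral equations \eqref{eq:integral_eq_m1}--\eqref{eq:integral_eq_m4} on the whole line. For $y>\xi$ the kernels $e^{(\mu_k-\mu_1)(\xi-y)}$ are bounded, and absorb the polynomial factors produced by $\partial_\Lambda$, by the same root gap $\Re(\mu_k-\mu_1)>0$ that you use. Iterating against the integrable $q$ then controls $m_1$ by itself, without the other Jost solutions.

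You instead combine the half-line bounds on all eight factors from Lemma \ref{lem:estimate_f} with the analyticity of the $c_{1k}$. What your route buys is that the expansion of $m_1$ on $\xi\le0$ gives $\lim_{\xi\to-\infty}m_1=c_{11}\mathbf a_1$ at once. Pairing with $g_1$ in the definition \eqref{eq:evansfunct} then yields $D=c_{11}$ without quoting the Pego--Weinstein limit \eqref{eq:limm1-infty}, on which the paper relies. In that pairing the terms with $k\ge2$ vanish simply because $e^{(\mu_k-\mu_1)\xi}\to0$ as $\xi\to-\infty$, so duality is not needed.

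For $c_{44}(\Lambda)=D(-\Lambda)$ the paper is more economical. It reads $c_{44}\mathbf a_4=\lim_{\xi\to+\infty}m_4$ off the analogous expansion at $+\infty$ and uses $m_4(\xi,\Lambda)=D_{3,1}m_1(-\xi,-\Lambda)$. It therefore never reflects the whole basis $Y_+$ or checks which reflected solutions are admissible at $-\Lambda$. Your remark that the $e^{\mu_1\xi}$-coefficient is fixed by pairing with $g_1(\cdot,-\Lambda)$ does settle that point correctly.

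Finally, the obstacle you anticipate at $\Lambda=0$ does not arise. Only the gap between $\mu_1\approx-1$ and the other roots enters, and the $c_{1k}$ are analytic at zero because $\det Y_-=\det\mathbf A$ with $\det\mathbf A(0)=-4$.
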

\begin{proof}
The bounds \eqref{eq:m4_estimate_partial} are standard and well known consequences of bounds on the Riccati equations \eqref {eq:integral_eq_m1} and 
\eqref {eq:integral_eq_m4}, like in  \cite{DT1979}. 
By Lemma \ref{lem:estimate_f}, \eqref{eq:connections} and \eqref{eq:limm1-infty},  the following gives $D(\Lambda)=c_{11}(\Lambda)$, 
\begin{align*}
D(\Lambda)\mathbf a_1(\Lambda)  = \lim_{\xi\to-\infty}m_1(\xi,\Lambda)
 =c_{11}(\Lambda)\mathbf a_1(\Lambda).
\end{align*}
By the symmetry \eqref{eq:symjost11}, \eqref{mu1_mu4}  and by \eqref{eq:limm1-infty}  for $\Lambda \in \im \R$  we similarly have
\begin{align*}
  c_{44}(\Lambda   )\mathbf{a}_4( \Lambda    )&=\lim  _{\xi\to +\infty}  m_4(\xi,\Lambda    )  =    \lim  _{\xi \to -\infty}  D_{3,1} m_1(\xi ,-\Lambda   )  =D(-\Lambda    )D_{3,1} \mathbf{a}_1( - \Lambda  ) =  D(-\Lambda    ) \mathbf{a}_4(  \Lambda    ) 
\end{align*}
with the identity extending by analytic continuation outside the imaginary axis. 

\end{proof}

\subsection{Zero-energy identities}
Set
\begin{align*}
 \alpha=[2(p+1)]^{-1/(p-1)},\qquad
 \beta=-\frac{2c\alpha}{p-1}.
\end{align*}
Thus $Q(\xi)=\alpha^{-1}e^{-|\xi|}
(1+O(e^{-(p-1)|\xi|}))$ for $\xi \to \infty$, with the corresponding
differentiated asymptotics. Differentiating the rescaled profile equation
gives $L_+\Lambda_pQ=-2Q$, where $\Lambda_p$ is defined in \eqref{eq:partcphic}. Set
\begin{align}\label{eq:bigxi}
 \Xi_1[c]=\begin{pmatrix}Q'\\Q''\\Q'''\\-cQ'\end{pmatrix},\qquad
 \Xi_2[c]=\begin{pmatrix}
 -c\Lambda_pQ\\-c(\Lambda_pQ)'\\-c(\Lambda_pQ)''
 \\c^2\Lambda_pQ-\gamma^2Q
 \end{pmatrix}.
\end{align}
They satisfy $\Xi_1[c]'=(A_\infty^{(0)}+q)\Xi_1[c]$ and
$\Xi_2[c]'=(A_\infty^{(0)}+q)\Xi_2[c]-A_\infty^{(1)}(\gamma)\Xi_1[c]$.

\begin{lemma}\label{lem:jostlambda0}
With a dot denoting $\partial_\Lambda$ at zero,
\begin{align}
 f_1(\xi,0)&=-\alpha\Xi_1[c],&
 f_4(\xi,0)&=\alpha\Xi_1[c],\label{eq:jostlambda01}\\
 \dot f_1(\xi)&=\alpha\Xi_2[c]+\beta\Xi_1[c],&
 \dot f_4(\xi)&=-\alpha\Xi_2[c]+\beta\Xi_1[c].\label{eq:jostlambda02}
\end{align}
In particular all four vectors decay exponentially at both ends.
\end{lemma}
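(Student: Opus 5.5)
We verify the zero-energy identities directly from the uniqueness of $f_1$ and $f_4$ in Lemma \ref{lem:estimate_f}. At $\Lambda=0$ we have $\mu_1(0)=-1$ and $\mu_4(0)=1$, with $\mathbf a_1(0)=(1,-1,1,-c)^\intercal$ and $\mathbf a_4(0)=(1,1,1,-c)^\intercal$; here the last entries come from $\gamma^2\Lambda/\mu_j-c$ at $\Lambda=0$. The stated relation $\Xi_1[c]'=(A_\infty^{(0)}+q)\Xi_1[c]$ shows that $\Xi_1[c]$ solves \eqref{A_Decomposehom} at $\Lambda=0$. This also follows from differentiating $Q''-Q+Q^p=0$, which gives $Q'''=Q'-pQ^{p-1}Q'$, and from the fourth row $w_4'=-cw_2$.

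From the asymptotics $Q=\alpha^{-1}e^{-|\xi|}(1+O(e^{-(p-1)|\xi|}))$ we get, as $\xi\to+\infty$,
\[
e^{\xi}\Xi_1[c]\to-\alpha^{-1}(1,-1,1,-c)^\intercal=-\alpha^{-1}\mathbf a_1(0),
\]
with an exponentially small error. Since $f_1(\cdot,0)$ is the unique solution with $e^{\xi}f_1\to\mathbf a_1(0)$, this yields $f_1(\cdot,0)=-\alpha\Xi_1[c]$. Symmetrically, as $\xi\to-\infty$ we have $e^{-\xi}\Xi_1[c]\to\alpha^{-1}\mathbf a_4(0)$, which gives $f_4(\cdot,0)=\alpha\Xi_1[c]$. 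Exponential decay at both ends is then immediate from the decay of $Q$ and its derivatives.

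For the $\Lambda$-derivatives, we differentiate \eqref{A_Decomposehom} in $\Lambda$ at $0$:
\[
\dot f_1'=(A_\infty^{(0)}+q)\dot f_1+A_\infty^{(1)}f_1(\cdot,0)=(A_\infty^{(0)}+q)\dot f_1-\alpha A_\infty^{(1)}\Xi_1[c].
\]
By the stated identity for $\Xi_2[c]$, the function $\alpha\Xi_2[c]$ is a particular solution of this equation. Hence $\dot f_1-\alpha\Xi_2[c]$ solves the homogeneous system at $\Lambda=0$. To pin down this homogeneous part, we use the normalization $m_1=e^{-\mu_1\xi}f_1$ together with \eqref{eq:m1_estimate_partial} for $m=1$. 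This gives
\[
\dot m_1\to\dot{\mathbf a}_1(0)\quad\text{as }\xi\to+\infty,
\]
so that $\dot f_1=e^{-\xi}\big(\dot\mu_1\xi\,m_1+\dot m_1\big)$ is $O(\xi e^{-\xi})$ at $+\infty$. The homogeneous solutions at $\Lambda=0$ that are bounded by $\xi e^{-\xi}$ at $+\infty$ form the span of $\Xi_1[c]$. The other free rates are $e^{\xi}$ and the double root $0$, and $q$ decays faster than $e^{-\xi}$ since $p-1$ is compared with the rate gap; a resonant polynomial growth is excluded by the decaying Jost structure. Therefore $\dot f_1=\alpha\Xi_2[c]+\beta'\Xi_1[c]$ for some constant $\beta'$.

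The main obstacle is computing $\beta'$. We will match the $\xi e^{-\xi}$ and $e^{-\xi}$ coefficients at $+\infty$. From \eqref{lem:roots13}, $\mu_1(\Lambda)=-1+\dot\mu_1\Lambda+O(\Lambda^2)$, and expanding $\mathcal F$ gives $\dot\mu_1=c$ from $-\mu(2c\Lambda+\mu-\mu^3)=0$ near $\mu=-1$ (to be checked by the implicit function theorem). Meanwhile $\Lambda_pQ=\frac{2}{p-1}Q+\xi Q'$ contributes a $\xi e^{-\xi}$ term: its first component gives $-c\Lambda_pQ\sim-c\alpha^{-1}\xi e^{-\xi}\cdot(-1)$ plus $-\frac{2c}{p-1}\alpha^{-1}e^{-\xi}$. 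The $\xi e^{-\xi}$ parts agree automatically, so the $e^{-\xi}$ coefficient in the first component must equal $\dot a_{11}=0$. Solving $-\frac{2c\alpha}{p-1}\alpha^{-1}-\beta'\alpha^{-1}\cdot(-1)\cdots$ gives $\beta'=\beta=-2c\alpha/(p-1)$, after tracking signs carefully; the remaining components are consistent by the ODE.

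The formula for $\dot f_4$ follows identically at $-\infty$, or alternatively from Lemma \ref{lem:symjost1}. Differentiating $f_1(\xi,-\Lambda)=D_{3,1}f_4(-\xi,\Lambda)$ at $0$ gives $\dot f_4(\xi)=-D_{3,1}\dot f_1(-\xi)$. Using the parities $D_{3,1}\Xi_1[c](-\xi)=-\Xi_1[c](\xi)$ and $D_{3,1}\Xi_2[c](-\xi)=\Xi_2[c](\xi)$, which hold because $Q$ and $\Lambda_pQ$ are even, we obtain $\dot f_4=-\alpha\Xi_2[c]+\beta\Xi_1[c]$.
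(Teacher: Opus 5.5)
Your proposal is correct and is essentially the paper's proof. Uniqueness of $f_1,f_4$ gives the zero-order identities, $\alpha\Xi_2[c]$ is a particular solution of the differentiated system, the Jost expansion $\dot f_1=e^{-\xi}\bigl(c\xi\mathbf a_1(0)+\dot{\mathbf a}_1(0)+o(1)\bigr)$ at $+\infty$ fixes the $\Xi_1[c]$ component, and Lemma \ref{lem:symjost1} gives $\dot f_4$. The only difference is bookkeeping: you first reduce $\dot f_1-\alpha\Xi_2[c]$ to a multiple of $\Xi_1[c]$ and read the coefficient off the first entry, while the paper checks the full expansion of the candidate and discards an $o(e^{-\xi})$ homogeneous difference.

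Two minor fixes. First, for $1<p<2$ the potential $q$ decays only like $e^{-(p-1)\xi}$, so your remark that it decays faster than $e^{-\xi}$ is false. It is also unnecessary, since the fundamental matrix $Y_+=(f_1,\widehat f_2,F_3,F_4)$ at $\Lambda=0$ already shows that solutions decaying at $+\infty$ are multiples of $f_1(\cdot,0)$. Second, the coefficient balance in the first component is $0=-\frac{2c}{p-1}-\alpha^{-1}\beta'$, which gives $\beta'=-\frac{2c\alpha}{p-1}=\beta$.
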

\begin{proof}
The identities without dots follow from the equation, the asymptotic
normalizations in \eqref{eq:limm1}, and the uniqueness assertion for
$f_1$ and $f_4$ in Lemma \ref{lem:estimate_f}. Differentiation gives
$(\partial_\xi-A_\infty^{(0)}-q)\dot f_1=A_\infty^{(1)}(\gamma)f_1(\xi,0)$.
The proposed vector solves this equation. Since
$\dot\mu_1=c$ and
$\dot{\mathbf a}_1=(0,c,-2c,-\gamma^2)^\intercal$,
the Jost normalization gives
\begin{align*}
 \dot f_1(\xi)=e^{-\xi}
 \left(c\xi\mathbf a_1(0)+\dot{\mathbf a}_1(0)
       +O((1+\xi)e^{-\rho\xi})\right),\qquad \xi\to+\infty.
\end{align*}
The proposed vector has exactly this expansion; in its first component
the constant $-2c/(p-1)$ from $\alpha\Xi_2[c]$ is canceled by
$\beta\Xi_1[c]$. Subtracting gives a homogeneous solution which is
$o(e^{-\xi})$ at $+\infty$, hence zero by the fundamental basis $Y_+$.
Differentiating \eqref{eq:symjost11} at $\Lambda=0$ and
replacing $\xi$ by $-\xi$ gives
\begin{align*}
 \dot f_4(\xi)=-D_{3,1}\dot f_1(-\xi).
\end{align*}
Since $Q$ is even, the definitions in \eqref{eq:bigxi} imply
\begin{align*}
 D_{3,1}\Xi_1[c](-\xi)&=-\Xi_1[c](\xi),&
 D_{3,1}\Xi_2[c](-\xi)&=\Xi_2[c](\xi).
\end{align*}
Consequently, as asserted in \eqref{eq:jostlambda02},
\begin{align*}
 \dot f_4(\xi)
 &=-D_{3,1}\bigl(\alpha\Xi_2[c](-\xi)
                       +\beta\Xi_1[c](-\xi)\bigr) =-\alpha\Xi_2[c](\xi)+\beta\Xi_1[c](\xi).
\end{align*}

\end{proof}

Equations \eqref{eq:jostlambda01}--\eqref{eq:jostlambda02} give
\begin{align}\label{eq:strong-wedge-jets}
 (f_1\wedge f_4)(\xi,0)&=0,\nonumber\\
 \left.\partial_\Lambda(f_1\wedge f_4)\right|_{\Lambda=0}
 &=(\alpha\Xi_2[c]+\beta\Xi_1[c])\wedge\alpha\Xi_1[c]\nonumber\\
 &\quad-\alpha\Xi_1[c]\wedge(-\alpha\Xi_2[c]+\beta\Xi_1[c])
 =0.
\end{align}
We use the following generalized adjoint vector, with the outgoing
constants fixed explicitly. Set
\begin{align}\label{eq:defE1c}
h(c)&:=\left(\frac{2c}{p-1}-1\right)\int_\R Q(y)\,dy,\nonumber\\
E_{13}[c](\xi)&:=-c\int_{-\infty}^{\xi}\Lambda_pQ(y)\,dy
                  +\frac{1+c}{2}h(c),\nonumber\\
E_{14}[c](\xi)&:=\int_{-\infty}^{\xi}Q(y)\,dy+\frac12h(c),\nonumber\\
E_1[c]&:=\bigl(-L_+E_{13}[c]+cE_{14}[c],
             -E_{13}'[c],E_{13}[c],E_{14}[c]\bigr)^\intercal,\\
E_2&:=\bigl((p-1)Q^p,-Q',Q,0\bigr)^\intercal.
\label{eq:bigeta12-def}
\end{align}
Here $L_+=-\partial_\xi^2+1-pQ^{p-1}$, as in \eqref{eq:systres1}.
Direct differentiation, using $L_+\Lambda_pQ=-2Q$, gives
\begin{align}\label{eq:defE1c0}
E_1'[c]&=-(A_\infty^{(0)}+q)^\intercal E_1[c]
 -(A_\infty^{(1)}(\gamma))^\intercal E_2,\\
E_2'&=-(A_\infty^{(0)}+q)^\intercal E_2.
\label{eq:bigeta12}
\end{align}
We consider the following solution to \eqref{A_djoint} for $\Lambda =0$
\begin{align}\label{eq:outgoing-Psi2}
\Psi_2(\xi):=
\left(\frac12-\frac{p(1+c)}2Q^{p-1},0,
             -\frac{1+c}2,-\frac12\right)^\intercal ,
\end{align}
  with
$\Psi_2(\xi )  \xrightarrow{\xi \to  \infty}   \mathbf b_2(0)$.
We have $E_1[c]+h(c)\Psi_2\xrightarrow{\xi \to -\infty}0$  and
\begin{align}\label{eq:E1-limit}
\lim_{\xi\to+\infty}(E_1[c](\xi)+h(c)\Psi_2(\xi))
=\left(\int_\R Q\right)
\left(\frac{2c}{p-1},0,-c\left(\frac2{p-1}-1\right),1\right)^\intercal.
\end{align}

\begin{lemma}\label{lem:adjoint-classification}
Every solution of $w'=-(A_\infty^{(0)}+q)^\intercal w$ which is polynomially bounded
at either end has the form
\begin{align}\label{eq:adjoint-classification}
 w=bE_2+a_3
 \begin{pmatrix}pQ^{p-1}-1\\0\\1\\0\end{pmatrix}
 +a_4\begin{pmatrix}c\\0\\0\\1\end{pmatrix}.
\end{align}
In particular it is bounded, and its spatial derivatives of positive
order decay exponentially. If it tends to zero at $-\infty$, it is a
multiple of $E_2$.
\end{lemma}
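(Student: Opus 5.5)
The plan is to reduce the adjoint system at $\Lambda=0$ to the scalar equation $L_+u=0$ and then integrate it explicitly. Setting $\Lambda=0$ in \eqref{A_djointcoo1}, the system $w'=-(A_\infty^{(0)}+q)^\intercal w$ reads
\begin{align*}
w_1'=p(p-1)Q^{p-2}Q'w_3,\quad
w_2'=-w_1-w_3+cw_4+pQ^{p-1}w_3,\quad
w_3'=-w_2,\quad w_4'=0 .
\end{align*}
Hence $w_4\equiv a_4$ is constant.

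Next I eliminate $w_1,w_2$. From $w_3''=-w_2'$ we get
\begin{align*}
w_1=w_3''-w_3+ca_4+pQ^{p-1}w_3 .
\end{align*}
Differentiating this and inserting the first equation $w_1'=(pQ^{p-1})'w_3$, the terms $(pQ^{p-1})'w_3$ cancel. What remains is $w_3'''=w_3'-pQ^{p-1}w_3'$. Thus $u:=w_3'=-w_2$ solves $L_+u=0$, with $L_+=-\partial_\xi^2+1-pQ^{p-1}$.

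The key step is the use of the growth hypothesis. Since $Q^{p-1}$ decays exponentially, $L_+u=0$ has, at each end, a two-dimensional solution space spanned by solutions asymptotic to $e^{\xi}$ and $e^{-\xi}$. The decaying solution at either end is a multiple of $Q'$, because $L_+Q'=0$ and $Q'$ decays at both ends; this is the standard fact that $\ker L_+$ is spanned by $Q'$. If $w$ is polynomially bounded at one end, then $u=-w_2$ is polynomially bounded there. This excludes the exponentially growing branch, so $u=bQ'$ for some $b\in\C$. Integrating gives $w_3=bQ+a_3$ and $w_2=-bQ'$. Substituting into the formula for $w_1$ and using the profile equation $Q''-Q+Q^p=0$ (recall $Q>0$, so $f(Q)=Q^p$ for both nonlinearities) yields
\begin{align*}
w_1=b(p-1)Q^p+a_3(pQ^{p-1}-1)+ca_4 .
\end{align*}
This is exactly \eqref{eq:adjoint-classification}.

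The remaining assertions are then read off. Each of the three vectors is bounded. Their derivatives of positive order involve only derivatives of $Q$ and of $Q^{p-1}$, $Q^p$, which decay exponentially together with all their derivatives. As $\xi\to-\infty$ the limit of $w$ is $a_3(-1,0,1,0)^\intercal+a_4(c,0,0,1)^\intercal$. This vanishes iff $a_3=a_4=0$, i.e. iff $w=bE_2$. The only point I expect to need care is the ODE asymptotics: I must justify that the non-decaying solution of $L_+u=0$ at an end genuinely grows like $e^{|\xi|}$, so that polynomial boundedness forces $u\in\Span\{Q'\}$. I would handle this by a Levinson-type argument, or equivalently by variation of parameters against $Q'$, using the exponential decay of $pQ^{p-1}$.
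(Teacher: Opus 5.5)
Your proposal is correct and follows the paper's proof: the same reduction to $w_4'=0$, $w_2=-w_3'$, $L_+w_3'=0$, the exclusion of the growing branch at the polynomially bounded end, then $w_3=bQ+a_3$ and $w_1=-L_+w_3+cw_4$ evaluated with the profile equation. For the growth step the paper writes the second solution by reduction of order, $Q'(\xi)\int_{\xi_0}^{\xi}Q'(y)^{-2}\,dy\sim e^{\xi}$ on a right tail with reflection for the left, which is exactly your ``variation of parameters against $Q'$''; uniqueness for the ODE then carries $u=bQ'$ to the whole line.
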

\begin{proof}
The component equations imply
$w_4'=0$, $w_2=-w_3'$, and $L_+w_3'=0$.
The solutions of $L_+v=0$ which are polynomially bounded at either
specified end are multiples of $Q'$: on a right tail, a second solution is
$Q'(\xi)\int_{\xi_0}^\xi Q'(y)^{-2}\,d y$ and grows like $e^\xi$.
The left tail follows by reflection.
Uniqueness of the scalar initial-value problem extends the conclusion
to the whole line. Hence $w_3=bQ+a_3$ and
$w_1=-L_+w_3+cw_4$, proving the formula.
\end{proof}

\begin{lemma}\label{lem:evelem}\label{lem:tildew}
The following hold:
\begin{align}
 &D(0)=D'(0)=0,\qquad
 D''(0)=\alpha^2\gamma^{\,3-\frac4{p-1}}q'(c)>0,
 \label{eq:ddDexplicit}\\
 &\widehat W(\Lambda):=\det(f_1,\widehat f_2,\widehat f_3,f_4)
 =-2\alpha^2\gamma^{\,3-\frac4{p-1}}q'(c)\Lambda^2
   +O(\Lambda^3) \text{ for } \Lambda \text{ near }0\label{eq:deftildew}
\end{align}
In particular   $D(\Lambda )\neq 0$ and $\widehat W(\Lambda)\neq 0$        for $\Lambda \in D_\C (0, r)\setminus \{ 0 \}$ for a sufficiently small
$r>0.$
\end{lemma}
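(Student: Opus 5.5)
The plan is to compute the Taylor jets at $\Lambda=0$ by expressing both $D$ and $\widehat W$ as pairings of Jost solutions with adjoint solutions, feeding in the zero-energy identities of Lemma \ref{lem:jostlambda0} and the generalized adjoint vectors $E_1[c],E_2$. For $D$, recall $D(\Lambda)=\langle f_1,g_1\rangle$ is $\xi$-independent and $D=c_{11}$ by Lemma \ref{lem:strong-global}. Since $f_1(\cdot,0)=-\alpha\Xi_1[c]$ decays at $-\infty$, \eqref{eq:limm1-infty} forces $D(0)=0$.

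To get the derivatives, I differentiate the identity $\langle f_1(\xi,\Lambda),g_1(\xi,\Lambda)\rangle=D(\Lambda)$. The cleaner route is to replace $g_1$ by a solution built from $E_2,E_1[c]$. By Lemma \ref{lem:adjoint-classification}, $g_1(\cdot,0)$ is bounded, and its normalization \eqref{eq:g1} at $-\infty$ gives $g_1(\cdot,0)\to\mathbf b_1(0)$ after multiplication by $e^{-\xi}$. I will instead use the Wronskian-type formula
\[
\partial_\xi\langle \widetilde V,W\rangle=\langle \widetilde V,\,W'+(A^{(0)}_\infty+\Lambda A_\infty^{(1)}+q)^\intercal W\rangle ,
\]
applied with $\widetilde V=f_1(\cdot,\Lambda)$ and $W=E_2+\Lambda E_1[c]$. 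By \eqref{eq:defE1c0}–\eqref{eq:bigeta12}, this $W$ satisfies the adjoint equation up to an $O(\Lambda^2)$ error $\Lambda^2 (A_\infty^{(1)})^\intercal E_1[c]$. Integrating over $\R$ and using the boundary behaviour (decay of $f_1$ at $+\infty$, and at $-\infty$ the growth $e^{\mu_1\xi}$ against the constant limit of $E_1+h\Psi_2$) gives
\[
\Lambda^2\int\langle f_1,(A_\infty^{(1)})^\intercal E_1\rangle\,d\xi=\bigl[\langle f_1,E_2+\Lambda E_1\rangle\bigr]_{-\infty}^{+\infty},
\]
and the boundary term at $-\infty$ is proportional to $D(\Lambda)$ times $\langle\mathbf a_1,\lim_{-\infty}(E_2+\Lambda E_1)\rangle$. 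Expanding to second order, and using $f_1(0)=-\alpha\Xi_1$ and $\dot f_1=\alpha\Xi_2+\beta\Xi_1$, shows $D'(0)=0$. It also gives $D''(0)$ as an explicit integral: $-\alpha\langle\Xi_1,A^{(1)\intercal}E_1\rangle$ plus terms from $\dot f_1$ against $E_2$. These reduce, via $L_+\Lambda_pQ=-2Q$ and integrations by parts, to combinations of $\|Q\|_{L^2}^2$ and $\langle Q,\Lambda_pQ\rangle=(\frac2{p-1}-\frac12)\|Q\|^2$. Matching with \eqref{eq:qprc} (after the rescaling $\xi=\gamma x$, which produces the power $\gamma^{3-4/(p-1)}$) should yield $\alpha^2\gamma^{3-\frac4{p-1}}q'(c)$. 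Positivity then follows from \eqref{eq:q_prime}.

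For $\widehat W$, I write $\widehat W=\det(f_1,\widehat f_2,\widehat f_3,f_4)$ and use \eqref{eq:connections}. Evaluating at $\xi\to-\infty$ in the basis $Y_-$ gives $\widehat W=c_{11}\det(F_1,\widehat f_2,\widehat f_3,f_4)+(\text{terms with } F_2,\widehat f_3,f_4)$. The $\widehat f_3,f_4$ terms vanish, and the $F_2$ term is controlled through $f_1\wedge f_4$, whose value and first derivative vanish at $0$ by \eqref{eq:strong-wedge-jets}. Hence $\widehat W=D(\Lambda)\det\mathbf A(\Lambda)+O(\Lambda^3)$, i.e. $\widehat W=-4\cdot\frac12D''(0)\Lambda^2+O(\Lambda^3)$ by \eqref{eq:matrixbfA}, which is \eqref{eq:deftildew}. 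The main obstacle is this last step: controlling the $c_{12}F_2$ contribution (and the $\xi\to-\infty$ limit in the presence of the slowly varying central modes $\mu_2,\mu_3$) to order $\Lambda^3$ needs the wedge-jet identity \eqref{eq:strong-wedge-jets} together with the Hodge-star formula \eqref{eq:hodgestar} to express $\widehat W$ as $\langle f_1,*(\widehat f_2\wedge\widehat f_3\wedge f_4)\rangle$. Nonvanishing on a punctured disk is then immediate from the nonzero $\Lambda^2$ coefficients.
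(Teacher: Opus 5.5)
The genuine gap is in the step $\widehat W=D\det\mathbf A+O(\Lambda^3)$. Expanding $f_1$, and then $\widehat f_2$, in the basis $Y_-$ gives exactly
\[
\widehat W=\det\mathbf A\,\bigl(c_{11}c_{22}-c_{12}c_{21}\bigr),
\]
because $\det(F_1,\widehat f_2,\widehat f_3,f_4)=c_{22}\det\mathbf A$ and $\det(F_2,\widehat f_2,\widehat f_3,f_4)=-c_{21}\det\mathbf A$.

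This exposes two missing facts:
\begin{itemize}
\item You are silently assuming $c_{22}(0)=1$.
\item \eqref{eq:strong-wedge-jets} only gives $c_{12}=O(\Lambda^2)$, so the cross term is $O(\Lambda^3)$ only if also $c_{21}(0)=0$.
\end{itemize}
Neither is automatic. $\widehat f_2$ is normalized only at $+\infty$, and its zero-energy behaviour at $-\infty$ is a genuine transmission computation: does it pick up the growing mode $F_1$, and with what weight does it return to $\mathbf a_2(0)$? Without this you only get $\widehat W=-2\bigl(c_{22}(0)D''(0)-\ddot c_{12}(0)c_{21}(0)\bigr)\Lambda^2+O(\Lambda^3)$. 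That neither identifies the coefficient nor shows it is nonzero.

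This is the substantive half of the paper's proof. For the first component $\widetilde v_1$ of $\widehat f_2(\cdot,0)$:
\begin{itemize}
\item one has $L_+\widetilde v_1=1$;
\item the Wronskian $Q'\widetilde v_1'-Q''\widetilde v_1=-Q$ rules out an $e^{-\xi}$ component at $-\infty$;
\item the conserved quantity $c\widetilde v_1+\widetilde v_4\equiv c-1$ forces $\widehat f_2(-\infty,0)=\mathbf a_2(0)$.
\end{itemize}
Together these give \eqref{eq:central-connections-zero}. Alternatively, use two $\xi$-independent pairings: $c_{21}(0)=\frac\alpha2\langle\widehat f_2(\cdot,0),E_2\rangle$ vanishes at $+\infty$, and $\langle\widehat f_2(\cdot,0),\Psi_2\rangle$ equals $1$ at $+\infty$ and $c_{22}(0)$ at $-\infty$.

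Your derivation of $D''(0)$ is also incorrect as written. The pointwise identity $\partial_\xi\langle f_1,E_2+\Lambda E_1[c]\rangle=\Lambda^2\langle f_1,(A^{(1)}_\infty)^\intercal E_1[c]\rangle$ is right, but it cannot be integrated over $\R$ at fixed $\Lambda\neq0$. As $\xi\to-\infty$, $f_1=D(\Lambda)F_1+\cdots$ grows like $e^{|\Re\mu_1||\xi|}$ unless $D(\Lambda)=0$, while $E_1[c]\to-h(c)\mathbf b_2(0)$. So neither the integral nor the left boundary term exists. What you call the boundary term is the limit of $\langle m_1,\cdot\rangle$, not of $\langle f_1,\cdot\rangle$. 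Even that is $O(\Lambda^4)$, since $\langle\mathbf a_1(0),\mathbf b_2(0)\rangle=0$, so it cannot produce $D''(0)$.

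The repair is to expand in $\Lambda$ at fixed $\xi$ first and integrate the $\Lambda^2$ coefficient:
\[
\partial_\xi\bigl(\tfrac12\ddot f_1\cdot E_2+\dot f_1\cdot E_1[c]\bigr)=(A^{(1)}_\infty f_1(\cdot,0))\cdot E_1[c].
\]
The only surviving boundary value is $\tfrac12\lim_{\xi\to-\infty}\ddot f_1\cdot E_2=\alpha^{-1}D''(0)$. It comes from the growing mode $F_1(\cdot,0)\sim e^{-\xi}\mathbf a_1(0)$ paired with the decaying $E_2$, and it requires $g_1(\cdot,0)=\frac\alpha2E_2$, exactly as in \eqref{eq:g1E20}--\eqref{eq:ddDlimit}.

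This yields $D''(0)=\alpha^2\int_\R(A^{(1)}_\infty\Xi_1[c])\cdot E_1[c]\,d\xi=\alpha^2\bigl(2c^2\langle Q,\Lambda_pQ\rangle-\gamma^2\|Q\|_{L^2}^2\bigr)$, after integrating by parts against $E_{13}'=-c\Lambda_pQ$ and $E_{14}'=Q$. This is a valid adjoint form of the paper's $-\alpha\int(A^{(1)}_\infty\dot f_1)\cdot E_2$, with no extra ``$\dot f_1$ against $E_2$'' terms.
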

\begin{proof}
We claim that for a scalar $b$ the following hold, 
\begin{align}\label{eq:g1E20}
   g_1(\xi,0)=\frac\alpha2E_2  \text{ and }
 \dot g_1(\xi)-\frac\alpha2(E_1[c]+h(c)\Psi_2)=bE_2  .
\end{align}
The first equality holds by comparing  asymptotic behaviors of two solutions that both decay to 0 as $\xi \to -\infty$
and thus are  proportional. Likewise, the function on the left of the 2nd equation in \eqref {eq:g1E20}, is a solution to \eqref{A_djoint} for $\Lambda =0$
with limit 0 for $\xi \to -\infty$. Then it is a multiple of $E_2$ and \eqref{eq:g1E20} is proved.
The decay of $f_1(\cdot,0)$, $\dot f_1$ and of all the derivatives of $g_1$  to 0 for $\xi \to -\infty$ gives
$D(0)=D'(0)=0$.  
Differentiating twice with respect to $\Lambda$    the equation of  $f_1$  and evaluating at $\Lambda=0$
  we obtain
\begin{align}
\partial_\xi\ddot f_1  
={}&
\left(A_\infty^{(0)}+q\right)
\ddot f_1 
 +
2A_\infty^{(1)}(\gamma)
\dot f_1 .
\label{eq:ddf1zero}
\end{align}

We will be using here $u\cdot v$  for  $\<u,v\>_{\C^4}$.
Using  \eqref{eq:bigeta12}  and the ensuing cancellation of the $\left(A_\infty^{(0)}+q(\xi)\right)$ terms, 
we have
\begin{align} 
\frac{d}{d\xi}
\ddot  f_1(\xi,0 ) \cdot E_2(\xi) 
={}&
\partial_\xi\ddot f_1(\xi,0 ) \cdot E_2(\xi)
 +
\ddot f_1(\xi,0)\cdot E_2'(\xi)
  =
2
(A_\infty^{(1)}(\gamma)
\dot f_1(\xi,0)) \cdot 
E_2(\xi) .
\label{eq:pairddf1}
\end{align}
We next relate  line \eqref{eq:pairddf1} and
$ D''(0 )$. Differentiating twice
\eqref{eq:evansfunct}   gives at $\Lambda =0$ \small
\begin{align}
D''(0 )
={}&
\ddot f_1 \cdot 
g_1 
 +
2
\dot  f_1\cdot 
\dot  g_1 
 + 
f_1 \cdot \ddot
 g_1 .
\nonumber
\end{align}
\normalsize
Taking the limit  $\xi\to-\infty$, by   Lemma \ref{lem:jostlambda0} and by
$\partial _\Lambda ^n g_1(\xi,\Lambda ) \xrightarrow{\xi \to -\infty}0$ for all $n$, it follows
\begin{align}
D''(0 )
=
\frac{\alpha}{2}
\lim_{\xi\to-\infty}
\ddot f_1(\xi,0 )\cdot 
E_2(\xi) ,
\label{eq:ddDlimit}
\end{align}
where we used also   \eqref{eq:g1E20}.
When  we integrate \eqref{eq:pairddf1}, for $R>0$, \small
\begin{align*}
&
\ddot f_1(R,0)\cdot E_2(R)
-
\ddot f_1(-R,0)\cdot E_2(-R)
 =
2\int_{-R}^{R}
(A_\infty^{(1)}(\gamma)
\dot f_1(\xi,0) )\cdot
E_2(\xi) \,d\xi .
\end{align*} \normalsize
Since $ 
\ddot f_1(R,0)\cdot E_2(R)
\xrightarrow{R \to +\infty}0$ by the exponential decay of both functions,
 we obtain
\begin{align}
\lim_{\xi\to-\infty}
\ddot f_1(\xi ,0)\cdot E_2(\xi )
=
-2\int_{\R}
(A_\infty^{(1)}(\gamma)
\dot f_1(\xi,0) )\cdot
E_2(\xi) \,d\xi .
\label{eq:ddf1integral}
\end{align}
Combining \eqref{eq:ddDlimit} and \eqref{eq:ddf1integral} yields
\begin{align}
D''(0 )
=
-\alpha 
\int_{\R}
(A_\infty^{(1)}(\gamma)
\dot f_1(\xi,0) )\cdot
E_2(\xi) \,d\xi .
\label{eq:ddDevans}
\end{align}
Notice that by
\[
A_\infty^{(1)}(\gamma)\Xi_1[c]
=
(0,0,2cQ',\gamma^2Q')^\intercal,
\]   we have
\begin{align}
\int_{\R}
 (
A_\infty^{(1)}(\gamma)\Xi_1[c]) \cdot E_2 \,d\xi
=
2c\int_{\R}QQ'\,d\xi
=0
\label{eq:Xi1termzero}
\end{align}
and by
\[
A_\infty^{(1)}(\gamma)\Xi_2[c]
=
\left(
0,0,
\gamma^2Q-2c^2\Lambda_pQ,
-c\gamma^2\Lambda_pQ
\right)^\intercal,
\]
we have
\[ (A_\infty^{(1)}(\gamma)
\Xi_2[c] )\cdot
E_2  
=
\gamma^2Q^2-2c^2Q\Lambda_pQ.
\]
Then by  Lemma \ref{lem:jostlambda0} and using  \eqref{eq:qprc},   we obtain the following, which gives \eqref{eq:ddDexplicit},
\begin{align}  \nonumber
& \int_{\R}
(A_\infty^{(1)}(\gamma)
\dot f_1(\xi,0) )\cdot
E_2(\xi) \,d\xi
  =  \alpha 
\int_{\R}
(A_\infty^{(1)}(\gamma)
\Xi_2[c] )\cdot
E_2 
\\&   =   \alpha   \(  \gamma^2  \|Q\|_{L^2}^2 -2c^2 \langle Q,\Lambda_pQ\rangle \)
= \alpha  \left[
\gamma^2
-
c^2\left(\frac4{p-1}-1\right)
\right]\|Q\|_{L^2}^2
\nonumber\\
&=
-\alpha (\gamma) \gamma^{3-\frac4{p-1}}q'(c)  .
\nonumber
\end{align}
where for the last line we used \eqref{eq:qprc}. This completes the proof of \eqref{eq:ddDexplicit}.

We next prove \eqref{eq:deftildew}.  
For $\widetilde v_1$ the first component of $\widehat f_2(\xi,0)$, then
$(L_+\widetilde v_1)'=0$ by \eqref{eq:systres1}.
By $\widetilde v_1(\xi)-1\xrightarrow{\xi\to+\infty}0$
and $\widetilde v_1''(\xi)\xrightarrow{\xi\to+\infty}0$,
we have $L_+\widetilde v_1=1$.
Consider $w:=Q'\widetilde v_1'-Q''\widetilde v_1$.
Then an elementary computation gives $w'=-Q'$, so that
$w=-Q+C$. By $w(\xi)\xrightarrow{\xi\to+\infty}0$,
it follows $w=-Q$.
For $\xi\to-\infty$, $\widetilde v_1$ could have a growing
term of the form $\sim\kappa e^{-\xi}$.
For $\kappa\neq0$, this would give
$w+2\alpha^{-1}\kappa\xrightarrow{\xi\to-\infty}0$.
But from $w=-Q$ we conclude $\kappa=0$.
In particular this implies $c_{21}(0)=0$.
Then
\begin{align*}
\widehat f_2(-\infty,0)
:=\lim_{\xi\to-\infty}\widehat f_2(\xi,0)
=a\mathbf a_2(0)+b\mathbf a_3(0) \text{ where } \mathbf a_2(0)=(1,0,0,-1)^\intercal,
\quad
\mathbf a_3(0)=(1,0,0,1)^\intercal.
\end{align*}
In particular $\widetilde v_1''(\xi)\xrightarrow{\xi\to -\infty}0$,  
so that $L_+\widetilde v_1=1$ gives
$\widetilde v_1(-\infty)=1=a+b$, whereas
$\widetilde v_4(-\infty)=-a+b$.
Since $c\widetilde v_1'+\widetilde v_4'=0$
and
$c\widetilde v_1+\widetilde v_4\xrightarrow{\xi\to+\infty}c-1$,
we have
\begin{align*}
c\widetilde v_1+\widetilde v_4=c-1.
\end{align*}
Taking the limit as $\xi\to-\infty$ and using
$\widetilde v_1(-\infty)=1$, we obtain
\begin{align*}
\widetilde v_4(-\infty)=-1,
\qquad -a+b=-1.
\end{align*}
Together with $a+b=1$, this yields $a=1$ and $b=0$.
Hence 
$ \widehat f_2(\pm \infty,0)=\mathbf a_2(0).$  
The same argument, with the ends interchanged, gives
$\widehat f_3(\pm\infty,0)=\mathbf a_3(0)$. Consequently
\begin{align}\label{eq:central-connections-zero}
 c_{21}(0)=c_{23}(0)=c_{34}(0)=0,\qquad c_{22}(0)=1.
\end{align}
Also $c_{21}=\widehat f_2\cdot g_1$, so evaluation of its derivative at
$+\infty$, using \eqref{eq:E1-limit}, gives
\begin{align}\label{eq:c21-prime}
 c_{21}'(0)=\frac\alpha2 h(c).
\end{align}
The term $\dot{\widehat f}_2\cdot E_2$ vanishes in that limit.
Expanding the second column by \eqref{eq:connections} gives
\begin{align*}
 \widehat W=c_{21}H+c_{22}\det(f_1,F_2,\widehat f_3,f_4),\qquad
 H=\det(f_1,F_1,\widehat f_3,f_4).
\end{align*}
Equation \eqref{eq:strong-wedge-jets} implies $H(0)=H'(0)=0$.
The last determinant is $\det\mathbf A\,D$, either by the first
connection column or by its adjoint cofactor. Since
$\det\mathbf A(0)=-4$, \eqref{eq:central-connections-zero} and
$c_{21}=O(\Lambda)$ give  \eqref{eq:deftildew}.

\end{proof}

\subsection{Adjoint cofactors and the outgoing orthogonality conditions}

For $i=1,4$ we consider
$\widehat Z_i:=*(\widehat f_2\wedge\widehat f_3\wedge f_i)$. Since the trace of the matrices in \eqref{A_Decompose} is zero, they solve  \eqref{A_djoint}.

\begin{lemma}\label{lem:cofactor-jets}
There are constants $b_1,b_4$ such that
\begin{align}\label{eq:partlambdaz}
 \begin{aligned}
 \widehat Z_4(0)&=2\alpha E_2,&
 \widehat Z_1(0)&=-2\alpha E_2,\\
 \dot{\widehat Z}_4&=2\alpha E_1[c]+b_4E_2,&
 \dot{\widehat Z}_1&=-2\alpha E_1[c]+b_1E_2,\qquad
 b_1+b_4=4\beta.
 \end{aligned}
\end{align}
\end{lemma}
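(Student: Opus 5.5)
The plan is to identify the cofactor vectors $\widehat Z_i$ at $\Lambda=0$, and their $\Lambda$-derivatives, through the classification of bounded adjoint solutions in Lemma \ref{lem:adjoint-classification}. The normalizing constants will come from the invariant pairings $\langle \widetilde V,W\rangle_{\C^4}$ together with the identity \eqref{eq:hodgestar}. Since $\operatorname{tr}(A_\infty^{(0)}+\Lambda A_\infty^{(1)}(\gamma)+q)=0$, each $\widehat Z_i(\cdot,\Lambda)$ solves \eqref{A_djoint}. Differentiating in $\Lambda$ at $0$ then gives
\begin{align*}
\dot{\widehat Z}_i{}'=-(A_\infty^{(0)}+q)^\intercal\dot{\widehat Z}_i-(A_\infty^{(1)}(\gamma))^\intercal\widehat Z_i(0).
\end{align*}

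\emph{Step 1: values at $\Lambda=0$.} By \eqref{eq:jostlambda01} we have $f_4(\cdot,0)=\alpha\Xi_1[c]=-f_1(\cdot,0)$, so $\widehat Z_1(0)=-\widehat Z_4(0)$. The vectors $\widehat f_2(\cdot,0)$ and $\widehat f_3(\cdot,0)$ have the finite limits $\mathbf a_2(0),\mathbf a_3(0)$ at both ends (shown in the proof of Lemma \ref{lem:evelem}), while $\Xi_1[c]$ decays exponentially. Hence $\widehat Z_4(0)$ is bounded and tends to $0$ at $-\infty$, and Lemma \ref{lem:adjoint-classification} gives $\widehat Z_4(0)=\kappa E_2$. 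To compute $\kappa$, I pair with the solution $F_1(\cdot,0)$ and use \eqref{eq:hodgestar}:
\begin{align*}
\langle F_1,\widehat Z_4(0)\rangle=-\det(F_1,\widehat f_2,\widehat f_3,f_4)(\xi,0).
\end{align*}
This pairing is constant in $\xi$, so I evaluate it as $\xi\to-\infty$. There $F_1\sim e^{-\xi}\mathbf a_1(0)$ and $\alpha\Xi_1[c]\sim e^{\xi}(1,1,1,-c)^\intercal=e^{\xi}\mathbf a_4(0)$, so the right side tends to $-\det\mathbf A(0)=4$ by \eqref{eq:matrixbfA}. On the other side, $E_2\sim\alpha^{-1}e^{\xi}(0,-1,1,0)^\intercal$ and $\mathbf a_1(0)=(1,-1,1,-c)^\intercal$, so the left side tends to $2\kappa/\alpha$. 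This gives $\kappa=2\alpha$, which is the first line of \eqref{eq:partlambdaz}.

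\emph{Step 2: first derivatives.} Inserting $\widehat Z_4(0)=2\alpha E_2$ and comparing with \eqref{eq:defE1c0} shows that $\dot{\widehat Z}_4-2\alpha E_1[c]$ solves the homogeneous adjoint equation at $\Lambda=0$. The derivatives of $f_4,\widehat f_2,\widehat f_3$ grow at most polynomially: apply \eqref{eq:m1_estimate_partial} with $m=1$ and the explicit factor $\partial_\Lambda e^{\mu_j\xi}=\dot\mu_j\xi e^{\mu_j\xi}$. Combined with the boundedness of $E_1[c]$, this shows the difference is polynomially bounded, so Lemma \ref{lem:adjoint-classification} puts it in the form $bE_2+a_3(\dots)+a_4(\dots)$. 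The real work is to show $a_3=a_4=0$. For this I would compute the limits of $\dot{\widehat Z}_4$ at both ends via \eqref{eq:hodgestar}. Each of the three terms of $\partial_\Lambda(\widehat f_2\wedge\widehat f_3\wedge f_4)$ is paired with the free solutions $e^{\mu_j\xi}\mathbf a_j$, using the explicit jets \eqref{eq:jostlambda02}, $\dot\mu_j$, $\dot{\mathbf a}_j$ and the central limits \eqref{eq:central-connections-zero}. These limits are then matched against the limits of $E_1[c]$, namely \eqref{eq:E1-limit} together with the behaviour of $E_1[c]+h(c)\Psi_2$ at $-\infty$. The outgoing constants $\tfrac{1+c}2h(c)$ and $\tfrac12h(c)$ in $E_{13},E_{14}$ appear to be chosen precisely to make this matching work. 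The same argument with the ends interchanged gives $\dot{\widehat Z}_1=-2\alpha E_1[c]+b_1E_2$.

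\emph{Step 3: the relation $b_1+b_4=4\beta$.} By multilinearity and $(f_1+f_4)(\cdot,0)=0$,
\begin{align*}
\dot{\widehat Z}_1+\dot{\widehat Z}_4={*}\bigl(\widehat f_2\wedge\widehat f_3\wedge(\dot f_1+\dot f_4)\bigr)(\cdot,0).
\end{align*}
By \eqref{eq:jostlambda02}, $\dot f_1+\dot f_4=2\beta\Xi_1[c]=(2\beta/\alpha)f_4(\cdot,0)$. Hence the sum equals $(2\beta/\alpha)\widehat Z_4(0)=4\beta E_2$; the $E_1[c]$ terms cancel and $b_1+b_4=4\beta$.

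I expect the main obstacle to be the asymptotic bookkeeping in Step 2. One has to control the non-uniquely defined $\widehat f_2,\widehat f_3$ and their $\Lambda$-derivatives at the end where they are not normalized. Here \eqref{eq:jost-choice-independence} only changes the wedge products by multiples of $f_1\wedge f_4$-type terms, and those vanish to the needed order by \eqref{eq:strong-wedge-jets}. The final step is to check that the resulting constant limits coincide exactly with those of $2\alpha E_1[c]$, so that no $a_3,a_4$ components survive.
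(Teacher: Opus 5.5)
\textbf{Verdict: there is a genuine gap, in Step 2.} Steps 1 and 3 are correct.

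Step 3 is exactly the paper's argument. Step 1 is a valid alternative to the paper's route. The paper expands $\widehat f_2$ by \eqref{eq:connections} and uses \eqref{eq:central-connections-zero} to get $\widehat Z_4(0)={*}(F_2\wedge\widehat f_3\wedge f_4)=-\det\mathbf A(0)\,g_1(\cdot,0)$, and then uses $g_1(\cdot,0)=\frac{\alpha}{2}E_2$. Your route, classification plus pairing with $F_1$, gives the same constant $2\alpha$; your limits $4$ and $2\kappa/\alpha$ check out.

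Step 2 is the substance of the lemma. You correctly reduce to showing that $\dot{\widehat Z}_4-2\alpha E_1[c]$ tends to $0$ at $-\infty$. But you never compute $\lim_{\xi\to-\infty}\dot{\widehat Z}_4$, and the ingredients you list do not determine it. Since $\widehat f_2$ is normalized only at $+\infty$, the jets \eqref{eq:jostlambda02}, $\dot\mu_j$, $\dot{\mathbf a}_j$ and the zeroth-order data \eqref{eq:central-connections-zero} say nothing about $\dot{\widehat f}_2$ at $-\infty$. What is needed is a first-order jet of a connection coefficient:
\begin{itemize}
\item By \eqref{eq:connections}, $\dot{\widehat f}_2(\cdot,0)$ contains $c_{21}'(0)F_1(\cdot,0)$, with $F_1(\xi,0)\sim e^{-\xi}\mathbf a_1(0)$.
\item Wedged against $f_4(\xi,0)\sim e^{\xi}\mathbf a_4(0)$, this is the only term of $\partial_\Lambda(\widehat f_2\wedge\widehat f_3\wedge f_4)|_{\Lambda=0}$ with a nonzero left limit. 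Every other term contains $f_4(\cdot,0)$ or $\dot f_4$ against factors growing at most linearly. That limit is $c_{21}'(0)\,{*}(\mathbf a_1\wedge\mathbf a_3\wedge\mathbf a_4)=c_{21}'(0)\det\mathbf A(0)\mathbf b_2(0)$.
\item To evaluate $c_{21}'(0)$, write $c_{21}=\langle\widehat f_2,g_1\rangle$ and evaluate at $+\infty$, where $\widehat f_2(\cdot,0)\to\mathbf a_2(0)$. Then use the adjoint zero-energy identity $\dot g_1=\frac{\alpha}{2}(E_1[c]+h(c)\Psi_2)+bE_2$ from \eqref{eq:g1E20}, together with \eqref{eq:E1-limit}. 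This gives $c_{21}'(0)=\frac{\alpha}{2}h(c)$.
\item Hence $\dot{\widehat Z}_4(-\infty)=-2\alpha h(c)\mathbf b_2(0)$, which equals $2\alpha E_1[c](-\infty)=-2\alpha h(c)\Psi_2(-\infty)$.
\end{itemize}
This is precisely where $h(c)$ and the outgoing constants in $E_{13},E_{14}$ enter. Saying they ``appear to be chosen'' to make the matching work does not replace this computation.

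There is also an error in your justification of polynomial boundedness. It is not true that the $\Lambda$-derivatives of $\widehat f_2,\widehat f_3$ grow at most polynomially. The bound \eqref{eq:m1_estimate_partial} for $\widehat m_2$ holds only for $\xi\ge0$. On the left, $\dot{\widehat f}_2(\cdot,0)$ contains $c_{21}'(0)F_1(\cdot,0)$, which grows like $e^{|\xi|}$ whenever $h(c)\neq0$. Symmetrically, $\dot{\widehat f}_3$ contains $c_{34}'(0)F_4$ at $+\infty$. The wedge $\dot{\widehat Z}_4$ is still bounded, but only because this exponential growth is compensated by the decay of $f_4(\cdot,0)$. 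That is the same mechanism that produces the nonzero left limit above, and your argument misses it.

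Two smaller points:
\begin{itemize}
\item \eqref{eq:jost-choice-independence} only shows that the result does not depend on the choice of $\widehat f_2,\widehat f_3$. It says nothing about their behavior at the end where they are not normalized.
\item You do not need limits at both ends. By the last clause of Lemma \ref{lem:adjoint-classification}, the left limit alone forces $a_3=a_4=0$.
\end{itemize}
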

\begin{proof}
At zero, the second identity in \eqref{eq:connections} and
\eqref{eq:central-connections-zero} give
$\widehat Z_4=*(F_2\wedge\widehat f_3\wedge f_4)
 =-\det\mathbf A(0)g_1=2\alpha E_2$.
The assertion for $\widehat Z_1$ follows from $f_1=-f_4$ at zero.
The differentiated adjoint equations have inhomogeneous terms
$\mp2\alpha A_\infty^{(1)}(\gamma)^\intercal E_2$.

We compute the left limit of the derivatives before identifying them.
The only nondecaying contribution from differentiating the second
identity in \eqref{eq:connections} is
$c_{21}'(0)F_1\wedge\widehat f_3\wedge f_i$.
Indeed $f_i(0)$ and $\dot f_i(0)$ decay at the left end, whereas
the derivatives of the left central columns grow at most linearly.
By \eqref{eq:hodgestar}, \eqref{eq:jostlambda01}, and the left normalization,
with all vectors on the right evaluated at $\Lambda=0$,
\begin{align*}
 \lim_{\xi\to-\infty}*(F_1\wedge\widehat f_3\wedge f_i)(\xi,0)
 &=(-1)^i*(\mathbf a_1\wedge\mathbf a_3\wedge\mathbf a_4)\\
 &=(-1)^i\det\mathbf A(0)\mathbf b_2(0),\qquad i=1,4.
\end{align*}
Together with \eqref{eq:c21-prime}, this gives
\begin{align*}
 \dot{\widehat Z}_i(-\infty)
 =(-1)^i\det\mathbf A(0)c_{21}'(0)\mathbf b_2(0)
 =(-1)^{i+1}2\alpha h(c)\,\mathbf b_2(0).
\end{align*}
These are exactly the left limits of $2\alpha E_1[c]$ for $i=4$ and
$-2\alpha E_1[c]$ for $i=1$. Their differences solve the homogeneous
adjoint equation and decay at the left end. By
Lemma \ref{lem:adjoint-classification} they are multiples of $E_2$.
Finally differentiating
$\widehat Z_1+\widehat Z_4=*(\widehat f_2\wedge\widehat f_3\wedge(f_1+f_4))$
and using $\dot f_1+\dot f_4=2\beta\Xi_1[c]$ proves
$b_1+b_4=4\beta$.
\end{proof}

\begin{lemma}\label{lem:relkern}
For $F\in L^{1,1}(\R,\C^2)$ and $\widetilde{\mathbf F}$ as in
\eqref{eq:relkern2}, we have
\begin{align}
\<F,\eta_2[c]\>
 &=-\frac{\gamma^{\frac2{p-1}+2}}{q'(c)}
       \<\widetilde{\mathbf F},E_2\>,\label{eq:relkern3}\\
\<F,\eta_1[c]\>
 &=\frac{\gamma^{\frac2{p-1}-1}}{q'(c)}
       \<\widetilde{\mathbf F},E_1[c]\>
   +\frac{d_1(c)^2+d_2(c)^2}{2q'(c)}\<F,\eta_2[c]\>.
\label{eq:relkern4}
\end{align}
Thus $\mathcal P_{[c]}F=0$ is equivalent to
$\<\widetilde{\mathbf F},E_2\>=\<\widetilde{\mathbf F},E_1[c]\>=0$.
\end{lemma}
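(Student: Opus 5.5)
The plan is a direct computation. Write both pairings on the $x$ side as explicit integrals, substitute $\xi=\gamma x$, $Q_c(x)=\gamma^{\frac2{p-1}}Q(\xi)$, $dx=\gamma^{-1}d\xi$, and match them against $\<\widetilde{\mathbf F},E_2\>$ and $\<\widetilde{\mathbf F},E_1[c]\>$. All pairings converge absolutely for $F\in L^{1,1}(\R,\C^2)$. Indeed $\eta_1[c],\eta_2[c]\in L^\infty$, and $E_1[c],E_2$ are bounded: $E_{13},E_{14}$ are primitives of exponentially decaying functions plus constants. By \eqref{eq:relkern2}, only the third and fourth components of $\widetilde{\mathbf F}$ are nonzero, namely $\gamma^{-3}(c\widetilde F_1-\widetilde F_2)$ and $\gamma^{-1}\widetilde F_1$. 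Hence
\begin{align*}
\<\widetilde{\mathbf F},E_1[c]\>=\int_\R\left(\gamma^{-3}(c\widetilde F_1-\widetilde F_2)E_{13}[c]+\gamma^{-1}\widetilde F_1E_{14}[c]\right)d\xi ,
\end{align*}
and $\<\widetilde{\mathbf F},E_2\>=\gamma^{-3}\int_\R(c\widetilde F_1-\widetilde F_2)Q\,d\xi$.

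For \eqref{eq:relkern3}, use $\eta_2[c]=\theta_3\sigma_1\mathbf Q_c=q'(c)^{-1}(-cQ_c,Q_c)^\intercal$. This gives
\begin{align*}
\<F,\eta_2[c]\>=q'(c)^{-1}\gamma^{\frac2{p-1}-1}\int_\R(\widetilde F_2-c\widetilde F_1)Q\,d\xi .
\end{align*}
Comparing with the expression for $\<\widetilde{\mathbf F},E_2\>$ yields the factor $-\gamma^{\frac2{p-1}+2}/q'(c)$.

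For \eqref{eq:relkern4}, split $\eta_1[c]$ into its three pieces from \eqref{eq:ker2}. The $\theta_2$ piece equals $\theta_2q'(c)\eta_2[c]$, so it produces exactly the term $\frac{d_1^2+d_2^2}{2q'(c)}\<F,\eta_2[c]\>$. For the $\theta_1$ piece, use \eqref{eq:partcphic} together with $\partial_c\gamma=-c/\gamma$. This gives $\partial_cQ_c(x)=-c\gamma^{\frac2{p-1}-2}(\Lambda_pQ)(\gamma x)$, and therefore
\begin{align*}
\int_{-\infty}^x\partial_cQ_c=-c\gamma^{\frac2{p-1}-3}\int_{-\infty}^{\xi}\Lambda_pQ .
\end{align*}
Similarly, $\int_{-\infty}^x\partial_c(-cQ_c)=-\int_{-\infty}^xQ_c-c\int_{-\infty}^x\partial_cQ_c$ is expressed through $\int_{-\infty}^\xi Q$ and $\int_{-\infty}^\xi\Lambda_pQ$. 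Applying $\sigma_1$ and pairing with $(F_1,F_2)$, I will collect the coefficients of $\widetilde F_1$ and $\widetilde F_2$. The nonconstant parts should reproduce the variable parts of $E_{13}[c]$ and $E_{14}[c]$, multiplied by $\gamma^{\frac2{p-1}-1}/q'(c)$ after factoring the $\gamma^{-3},\gamma^{-1}$ weights.

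The main obstacle is the constant bookkeeping. The $s_c$ piece $\frac{s_c}{2q'(c)}\int(F_1-F_2)$ must combine with the constants so as to give exactly the constants $\frac{1+c}2h(c)$ and $\frac12h(c)$ in $E_{13}[c],E_{14}[c]$. To check this I will compute $d(c)$ explicitly. From $\int\Lambda_pQ=(\frac2{p-1}-1)\int Q$, one gets $d_1=-c\gamma^{\frac2{p-1}-3}(\frac2{p-1}-1)\int Q$. Also $d_2=-d_1c-\int Q_c$, so $s_c=d_2-d_1$ is an explicit multiple of $\int Q$; the ratio should be $h(c)$ up to powers of $\gamma$. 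I then verify that the coefficients of $\int\widetilde F_1$ and $\int\widetilde F_2$ match. The choice of the $s_c$ term in \eqref{eq:ker2} (normalizing $\<\xi_2,\eta_1\>=0$) and the choice of $h(c)$ in \eqref{eq:defE1c} (the outgoing normalization) are both designed for this, so I expect agreement. If a mismatch arises, it can only be a multiple of $\<F,\eta_2\>$ or of $\<F,(1,-1)^\intercal\>$, and I will recheck these against the $\theta_2$ term.

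Finally, since $\xi_1[c],\xi_2[c]$ are linearly independent, $\mathcal P_{[c]}F=0$ holds iff $\<\eta_1[c],F\>=\<\eta_2[c],F\>=0$. Since $q'(c)>0$ and $\gamma>0$, \eqref{eq:relkern3}--\eqref{eq:relkern4} show this is equivalent to $\<\widetilde{\mathbf F},E_2\>=\<\widetilde{\mathbf F},E_1[c]\>=0$.
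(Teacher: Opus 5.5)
Your proposal is correct and takes the same route as the paper: match the variable parts of $E_{13}[c],E_{14}[c]$ using $\partial_cQ_c=-c\gamma^{\frac2{p-1}-2}(\Lambda_pQ)(\gamma x)$. Then handle the constants by observing that the constant part of $E_1[c]$ pairs to $-h(c)\<\widetilde{\mathbf F},\Psi_2\>$, with $\<\widetilde{\mathbf F},\Psi_2\>=\frac{1+c}{2\gamma^2}\int_\R(F_2-F_1)\,dx$ and $s_c=\gamma^{\frac2{p-1}-3}(1+c)h(c)$, so it cancels the $s_c$ term exactly. Your explicit computation of $d_1,d_2$ yields precisely this identity (the ratio $s_c/h(c)$ also carries the factor $1+c$, not only a power of $\gamma$), so the fallback you mention for a possible mismatch is never needed.
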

\begin{proof}
The first identity follows from the third component of
\eqref{eq:relkern2} and $\xi=\gamma x$. For the second, direct
substitution of the primitive terms in \eqref{eq:ker2}, using
$\partial_cQ_c(x)=-c\gamma^{\frac2{p-1}-2}(\Lambda_pQ)(\gamma x)$
from \eqref{eq:partcphic},
gives their pairing as
$\gamma^{\frac2{p-1}-1}\<\widetilde{\mathbf F},
E_1[c]+h(c)\Psi_2\>/q'(c)$.
The constants are accounted for by
\begin{align*}
\<\widetilde{\mathbf F},\Psi_2\>
 &=\frac{1+c}{2\gamma^2}\int_\R(F_2-F_1)\,dx,\\
s_c&=\gamma^{\frac2{p-1}-3}(1+c)h(c).
\end{align*}
Thus the constant term $s_c(1,-1)^\intercal/(2q'(c))$ in
\eqref{eq:ker2} cancels the $h(c)\Psi_2$ term. Finally,
$\theta_2\sigma_1\mathbf Q_c=\theta_2q'(c)\eta_2[c]$,
which gives the last term in \eqref{eq:relkern4}.
\end{proof}

We also define for $i=2,3$
\begin{align}\label{eq:defhatZ12}
  \widehat{Z}_i(\xi  ,  \Lambda     )&:= *(  \widehat{f}_i(\xi  ,  \Lambda    )\wedge  {f} _1(\xi  ,  \Lambda    )\wedge f_4(\xi  ,  \Lambda   ))   .
\end{align}
Then  we have \begin{align}
    \label{eqd01hat23} \text{ $\partial ^{a}_\Lambda  \widehat{Z}_i(\xi, 0    )=0$ for $a=0,1 $   and for both $i=2,3$.}
\end{align}
For $a=0$ this follows from   $ f_1(\xi,0)=- f_4(\xi,0)$, while for $a=1$
\begin{align*}
  & \partial _\lambda \(   \widehat{f}_i \wedge  {f} _1 \wedge f_4    \)   =    \widehat{f}_i \wedge  \partial _\lambda ({f} _1 \wedge f_4)
\end{align*}
where   at $\lambda =0$,  by Lemma \ref{lem:jostlambda0} and by the antisymmetry of the wedge product,
\begin{align*}
  \partial _\lambda ({f} _1 \wedge f_4) & =   \partial _\lambda {f} _1 \wedge f_4  +   {f} _1 \wedge      \partial _\lambda f_4 =  \(\alpha\Xi_2[c]+\beta\Xi_1[c] \)   \wedge \alpha  \Xi _1  - \alpha  \Xi _1    \wedge    \( -\alpha\Xi_2[c]+\beta\Xi_1[c]   \) =0.
\end{align*}

\subsection{Modified Jost solutions}
The analytic functions
\begin{align}\label{eq:defc0}
 c_2(\Lambda)=\frac{\Lambda c_{21}(\Lambda)}{D(\Lambda)},
 \qquad
 c_3(\Lambda)=\frac{\Lambda c_{34}(\Lambda)}{D(-\Lambda)}
\end{align}
have removable singularities at zero: by
\eqref{eq:ddDexplicit} and \eqref{eq:central-connections-zero},
\begin{align*}
 D(\Lambda)&=\tfrac12D''(0)\Lambda^2+O(\Lambda^3),\qquad D''(0)>0,\\
 c_{21}(\Lambda)&=O(\Lambda),\qquad c_{34}(\Lambda)=O(\Lambda).
\end{align*}
Define
\begin{align}
 f_2&=\Lambda\widehat f_2-c_2 f_1,\label{eq:deff2}\\
 f_3&=\Lambda\widehat f_3-c_3 f_4,\label{eq:deff23}\\
 m_j&=e^{-\mu_j\xi}f_j\quad(1\le j\le4).\nonumber
\end{align}
For $r>0$ small write
$\Omega_r=\{\Lambda:0<|\Lambda|<r,\ \Re\Lambda\ge0\}$ and
$\xi_+=\max(\xi,0)$, $\xi_-=\max(-\xi,0)$.

\begin{lemma}\label{lem:bdm2}
For all nonnegative integers $n,m$, uniformly on $\Omega_r$,
\begin{align}\label{eq:estm21}
 \begin{aligned}
 |\partial_\xi^n\partial_\Lambda^m m_1|
 +|\partial_\xi^n\partial_\Lambda^m m_4|&\le C_{nm},\\
 |\partial_\xi^n\partial_\Lambda^m m_2|&\le C_{nm}\langle\xi_-\rangle^m,\\
 |\partial_\xi^n\partial_\Lambda^m m_3|&\le C_{nm}\langle\xi_+\rangle^m.
 \end{aligned}
\end{align}
The vectors $m_2(\cdot,0)$ and $m_3(\cdot,0)$ are exponentially
localized. Set
\begin{align*}
 Z_i=*(f_i\wedge f_1\wedge f_4)\quad(i=2,3),\qquad
 H_3=e^{\mu_2\xi}Z_3,\qquad H_2=e^{\mu_3\xi}Z_2.
\end{align*}
Then
\begin{align}\label{eq:b13_estimate_partial}
 |\partial_\xi^n\partial_\Lambda^m H_3|
 \le C_{nm}\langle\xi_+\rangle^m,\qquad
 |\partial_\xi^n\partial_\Lambda^m H_2|
 \le C_{nm}\langle\xi_-\rangle^m.
\end{align}
Both $Z_i$ vanish to order at least three in $\Lambda$ at zero.
The vectors
\begin{align}\label{eq::b13at03}
 T_i(\xi)=\partial_\Lambda^3Z_i(\xi,0)\quad(i=2,3)
\end{align}
are bounded solutions of $T_i'=-(A_\infty^{(0)}+q)^\intercal T_i$ and, for some $\delta>0$,
$|T_i'(\xi)|\le C e^{-\delta|\xi|}$.
\end{lemma}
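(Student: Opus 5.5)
The bounds for $m_1,m_4$ in \eqref{eq:estm21} are those of Lemma \ref{lem:strong-global}, restricted to $\Omega_r\subset\mathcal U_T$. For $m_2$ I will use the connection formula \eqref{eq:connections} on the negative half-line. Substituting \eqref{eq:defc0} into \eqref{eq:deff2} gives
\begin{align*}
f_2=\Lambda\widehat f_2-c_2f_1
=\Lambda\bigl(c_{21}F_1+c_{22}F_2+c_{23}\widehat f_3+c_{24}f_4\bigr)-\frac{\Lambda c_{21}}{D}\bigl(DF_1+c_{12}F_2+c_{13}\widehat f_3+c_{14}f_4\bigr),
\end{align*}
where I used $c_{11}=D$ from \eqref{eq:evans}. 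The growing term $\Lambda c_{21}F_1$ cancels exactly. This cancellation is the purpose of the choice of $c_2$. What remains is a combination of $F_2,\widehat f_3,f_4$ with coefficients analytic on a neighborhood of $0$, since $c_2$ is analytic there.

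For $\xi\le0$ I then write $m_2=e^{-\mu_2\xi}f_2$ as a combination of $M_2$, $e^{(\mu_3-\mu_2)\xi}\widehat m_3$ and $e^{(\mu_4-\mu_2)\xi}m_4$. By \eqref{eq:low-root-signs}, $|\mu_3-\mu_2|\le C|\Lambda|$ and $\Re(\mu_3-\mu_2)\ge0$, while $\Re(\mu_4-\mu_2)\ge\nu$. Each $\partial_\Lambda$ falling on $e^{(\mu_3-\mu_2)\xi}$ costs one factor $\langle\xi_-\rangle$, which gives the weight $\langle\xi_-\rangle^m$. On $\xi\ge0$ I use instead $m_2=\Lambda e^{(\mu_2-\mu_2)\xi}\widehat m_2-c_2e^{(\mu_1-\mu_2)\xi}m_1$, which is bounded with all derivatives by \eqref{eq:m1_estimate_partial}. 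The estimate for $m_3$ follows symmetrically, using $c_{44}(\Lambda)=D(-\Lambda)$ and the roles of the two ends interchanged. At $\Lambda=0$, \eqref{eq:central-connections-zero} gives $c_2(0)=2c_{21}'(0)/D''(0)$ and $f_2(\cdot,0)=-c_2(0)f_1(\cdot,0)$, a multiple of $\Xi_1[c]$. Thus $m_2(\cdot,0)=f_2(\cdot,0)$ is exponentially localized, and likewise $m_3(\cdot,0)$.

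For $Z_i$ I use multilinearity. The $f_1$ component of $f_2$ drops out under $\wedge f_1$, so $Z_2=\Lambda\,\widehat Z_2$, and similarly $Z_3=\Lambda\,\widehat Z_3$. Combined with \eqref{eqd01hat23}, this shows that $Z_i$ vanishes to order at least three at $\Lambda=0$. Next, $H_2=e^{\mu_3\xi}Z_2=*(m_2\wedge m_1\wedge m_4)\,e^{(\mu_3+\mu_2+\mu_1+\mu_4)\xi}$, and the exponential factor equals $1$ by \eqref{eq:mus22}. Using the bounds just proved for $m_1,m_2,m_4$, this gives \eqref{eq:b13_estimate_partial} for $H_2$, and the same argument gives it for $H_3$. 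The $T_i$ solve the adjoint equation at $\Lambda=0$ by the following argument. $Z_i$ solves \eqref{A_djoint}, because the matrix is traceless. Differentiating three times in $\Lambda$ produces terms $\partial^k_\Lambda Z_i(\cdot,0)\,A_\infty^{(1)}$ with $k\le2$, and these vanish. The $T_i$ are bounded because $\mu_2(0)=\mu_3(0)=0$, so that $Z_i=H_i e^{-\mu\xi}$ and at $\Lambda=0$ only three derivatives act. Since $\partial^k_\Lambda Z_i(\cdot,0)=0$ for $k<3$, no derivative can land on the exponential, and therefore $T_i=\partial_\Lambda^3H_i(\cdot,0)$. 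A priori this is only polynomially bounded at one end. I will upgrade it by applying Lemma \ref{lem:adjoint-classification}, which gives boundedness and exponential decay of $T_i'$.

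\textbf{Main obstacle.} The delicate point is the uniformity of the $m_2$ bounds as $\Lambda\to0$ within the right half-disk. The coefficients $c_{1k}/D$ are singular there, since $D$ vanishes to second order. They are tamed only because $\Lambda c_{21}=O(\Lambda^2)$, so I must check carefully that $\Lambda c_{21}c_{1k}/D$ is analytic for every $k$, not only for the cancelled $k=1$ term. I must also check that the non-unique choice of $\widehat f_j$ in \eqref{eq:jost-choice-independence} does not affect the estimates.
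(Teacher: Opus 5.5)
Your proposal is correct and follows the paper's proof essentially step by step: the cancellation $\Lambda c_{21}-c_2c_{11}=0$ on $\xi\le0$, with the weight $\langle\xi_-\rangle^m$ coming from $e^{(\mu_3-\mu_2)\xi}$; the strict decay of $e^{(\mu_1-\mu_2)\xi}$ on $\xi\ge0$; the identity $H_2=*(m_2\wedge m_1\wedge m_4)$ from \eqref{eq:mus22}; $Z_i=\Lambda\widehat Z_i$ together with \eqref{eqd01hat23}; and Lemma \ref{lem:adjoint-classification} for $T_i$. The two points you flag as the main obstacle are not real difficulties: $c_2c_{1k}$ is analytic simply because $c_2$ has a removable singularity at $0$, and any fixed choice of $\widehat f_2,\widehat f_3$ works (in fact $f_2,f_3$ are unchanged under \eqref{eq:jost-choice-independence}).
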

\begin{proof}
The first line is Lemma \ref{lem:strong-global}. On the right halfline,
the definition of $f_2$ proves the bound with no polynomial loss:
the ratio $e^{(\mu_1-\mu_2)\xi}$ has a fixed strict decay rate.
On the left, \eqref{eq:evans} and \eqref{eq:defc0} give
\begin{align*}
 \Lambda c_{21}-c_2c_{11}
 =\Lambda c_{21}-\frac{\Lambda c_{21}}D D=0.
\end{align*}
Substituting \eqref{eq:connections} into \eqref{eq:deff2} therefore yields
\begin{align*}
 f_2={}&(\Lambda c_{22}-c_2c_{12})F_2 +(\Lambda c_{23}-c_2c_{13})\widehat f_3
 +(\Lambda c_{24}-c_2c_{14})f_4.
\end{align*}
After normalization by $e^{-\mu_2\xi}$, the latter two ratios are
$e^{(\mu_3-\mu_2)\xi}$ and $e^{(\mu_4-\mu_2)\xi}$.
The second has strict decay; the first has modulus at most one on
$\xi\le0$ and its $m$-th parameter derivative costs at most
$\langle\xi_-\rangle^m$. This proves the assertion for $m_2$.
For $f_3$, the corresponding right-end calculation gives
\begin{align*}
 \Lambda c_{34}-c_3c_{44}
 &=\Lambda c_{34}-\frac{\Lambda c_{34}}{D(-\Lambda)}D(-\Lambda)=0,\\
 f_3&=(\Lambda c_{31}-c_3c_{41})f_1 +(\Lambda c_{32}-c_3c_{42})\widehat f_2
 +(\Lambda c_{33}-c_3c_{43})F_3.
\end{align*}
Reflection of the same estimates proves its bound.
At zero, \eqref{eq:deff2} gives
$f_2(0)=-c_2(0)f_1(0)$ and
$f_3(0)=-c_3(0)f_4(0)$, proving localization.

The zero sum of the four roots gives
\begin{align*}
 H_3=*(m_3\wedge m_1\wedge m_4),\qquad
 H_2=*(m_2\wedge m_1\wedge m_4),
\end{align*}
so \eqref{eq:b13_estimate_partial} follows. Equation \eqref{eq:strong-wedge-jets} gives
$f_1\wedge f_4=O(\Lambda^2)$.
Moreover $Z_i=\Lambda*(\widehat f_i\wedge f_1\wedge f_4)$, proving
the order-three vanishing. Differentiating its adjoint equation three
times thus leaves the homogeneous equation for $T_i$.
The directional bounds imply polynomial growth of $T_i$.
Lemma \ref{lem:adjoint-classification} proves its boundedness and the
exponential decay of its derivative.
\end{proof}

\section{Proof of the small energy part of Proposition \ref{prop:smooth2}}\label{sec:proofsmooth2}

Using   the notation in Lemma \ref{lem:transys} it is enough to prove
that for   $\mathcal P_{[c]}  {F}=0$
\begin{align}\label{eq:pfsmooth21}&
       \left \|        \widetilde{V}  \right \| _{       \widehat{\Sigma}    }\le C(  p, \gamma , \varepsilon _1)  \|  {F} \| _{   L ^{1,1}\( \R\)    }  \text{ for all $\Lambda \in \overline{\C} _+ \backslash \{  0 \}$
 with $|\Lambda |\le \varepsilon _1$} 
   \end{align}
 for \begin{align}\label{eqvarpartilde}
    \widetilde{V}=\widehat{R}(\Lambda) \widetilde{\mathbf{F}} = \sum _{j=1}^{4}\widehat{R}_j(\Lambda) \widetilde{\mathbf{F}}   \end{align}
  where   $ \widetilde{\mathbf{F}} $ is defined in \eqref{eq:relkern2} and
    \begin{align}& \widehat{R}_1(\Lambda) \widetilde{\mathbf{F}}=
    \frac{f_1(\xi,\Lambda  )}{\widehat{W}(\Lambda   ) } \int_  {-\infty} ^\xi    \det \( \widetilde{\mathbf{F}}(y) ,   \widehat{f}_2(y,\Lambda   ) ,  \widehat{f}_3(y,\Lambda   ) ,   {f}_4(y,\Lambda  )   \)  dy  ,    \label{eq:r1}  \\&   \widehat{R}_2(\Lambda) \widetilde{\mathbf{F}}=
    \frac{\widehat{f}_2(\xi,\Lambda   )}{\widehat{W}(\Lambda   ) }   \int_  {-\infty} ^\xi    \det \( {f}_1(y,\Lambda   )  ,  \widetilde{\mathbf{F}}(y) ,  \widehat{f}_3(y,\Lambda  ) ,   f _4(y,\Lambda  )   \) dy  , \label{eq:r2}
   \\&  \widehat{R}_3(\Lambda) \widetilde{\mathbf{F}}= -
    \frac{\widehat{f}_3(\xi,\Lambda  )}{\widehat{W}(\Lambda   ) }   \int_  {\xi}  ^{+\infty}    \det \( {f}_1(y,\Lambda    )  ,  \widehat{f}_2(y,\Lambda   ) ,  \widetilde{\mathbf{F}}(y)   ,   {f}_4(y,\Lambda   )   \)   dy, \label{eq:r3} \\&  \widehat{R}_4(\Lambda) \widetilde{\mathbf{F}}= -
     \frac{f_4(\xi,\Lambda  )}{\widehat{W}(\Lambda   ) } \int_  {\xi}  ^{+\infty}   \det \( {f}_1(y,\Lambda      )  ,  \widehat{f}_2(y,\Lambda     ) ,  \widehat{f}_3(y,\Lambda     ) , \widetilde{\mathbf{F}} (y)      \)  dy . \label{eq:r4}
 \end{align}
Notice that \eqref{eqvarpartilde} gives a $ \widetilde{V}$ which is a solution of \eqref{A_Decompose} as can be seen 
applying the operator in the left in \eqref{A_Decomposehom} to it, which gives the following, computed  in $\xi$,  where the equality holds for any $ \widetilde{\mathbf{F}}$,  \small
\begin{align}\label{eq:identdet}
   \frac{f_1  \det  ( \widetilde{\mathbf{F}} ,   \widehat{f}_2  ,  \widehat{f}_3  ,   {f}_4    )+\widehat f_2  \det  ( f_1 , \widetilde{\mathbf{F}} ,    \widehat{f}_3  ,   {f}_4    )+ \widehat f_3  \det  ( f_1 ,    \widehat{f}_2  ,  \widetilde{\mathbf{F}} ,   {f}_4    )  + f_4 \det ( {f}_1   ,  \widehat{f}_2  ,  \widehat{f}_3  , \widetilde{\mathbf{F}}        )}{\det ( {f}_1   ,  \widehat{f}_2  ,  \widehat{f}_3  , {f}_4        )} = \widetilde{\mathbf{F}} .
\end{align}
\normalsize
For $\Re \Lambda >0$, by Lemmas \ref{lem:roots1}   and \ref{lem:estimate_f} there exist constants $C_\Lambda , \delta _\Lambda \in \R _+$ such that  pointwise 
\begin{align*}
    |\widehat{R}(\Lambda) \widetilde{\mathbf{F}} |\le C_\Lambda  \int _\R e^{-\delta _\Lambda | \xi -y |}|\widetilde{\mathbf{F}} (y)| dy.
\end{align*}
 By Young's convolution estimate we conclude that $\widehat{R}(\Lambda)$ for $\Re \Lambda >0$ is a a bounded in operator in $L^2(\R , \C ^4)$. 
Notice that, for the same reasons and using   \eqref{eq:deff2}--\eqref{eq:deff23}, we also have
\begin{align}\label{eqvarpar}
    \widetilde{V}= {R}(\Lambda) \widetilde{\mathbf{F}} = \sum _{j=1}^{4} {R}_j(\Lambda) \widetilde{\mathbf{F}}   \end{align}
  where
    \begin{align}&  {R}_1(\Lambda) \widetilde{\mathbf{F}}=
    \frac{f_1(\xi,\Lambda   )}{\Lambda ^2 \widehat{W}(\Lambda  ) } \int_  {-\infty} ^\xi    \det \( \widetilde{\mathbf{F}}(y) ,   {f}_2(y,\Lambda   ) ,   {f}_3(y,\Lambda   ) ,   {f}_4(y,\Lambda  )   \)  dy  ,    \label{eq:1r1}  \\&    {R}_2(\Lambda) \widetilde{\mathbf{F}}=
    \frac{ {f}_2(\xi,\Lambda  )}{\Lambda ^2  \widehat{W}(\Lambda   ) }   \int_  {-\infty} ^\xi    \det \( {f}_1(y,\Lambda  )  ,  \widetilde{\mathbf{F}}(y) ,   {f}_3(y,\Lambda   ) ,   f _4(y,\Lambda   )   \) dy ,  \label{eq:1r2}
   \\&   {R}_3(\Lambda) \widetilde{\mathbf{F}}= -
    \frac{ {f}_3(\xi,\Lambda   )}{\Lambda ^2  \widehat{W}(\Lambda   ) }   \int_  {\xi}  ^{+\infty}    \det \( {f}_1(y,\Lambda    )  ,   {f}_2(y,\Lambda    ) ,  \widetilde{\mathbf{F}}(y)   ,   {f}_4(y,\Lambda   )   \)   dy, \label{eq:1r3} \\&   {R}_4(\Lambda) \widetilde{\mathbf{F}}= -
     \frac{f_4(\xi,\Lambda   )}{\Lambda ^2  \widehat{W}(\Lambda   ) } \int_  {\xi}  ^{+\infty}   \det \( {f}_1(y,\Lambda      )  ,   {f}_2(y,\Lambda    ) ,   {f}_3(y,\Lambda    ) , \widetilde{\mathbf{F}} (y)      \)  dy . \label{eq:1r4}
 \end{align}
From the definition of $\widehat{Z}_i$  for $i\in \{ 1,4\}$ and \eqref{eq:hodgestar}
 \begin{align*}&
    \widehat{R}_1(\Lambda) \widetilde{\mathbf{F}}=-  \frac{  f_1(\xi,\Lambda   ) }{ \widehat{W}(\Lambda  ) }   \int_  {-\infty} ^\xi   \<  \widetilde{\mathbf{F}}(y),  \widehat{Z}_4(y, \Lambda    )\> _{\C^4} dy  \text{ and}\\&   \widehat{R}_4(\lambda) \widetilde{\mathbf{F}}=-  \frac{  f_4(\xi,\Lambda     ) }{\widehat{W}(\Lambda  )  }   \int_  {\xi }  ^{+\infty}  \<  \widetilde{\mathbf{F}}(y),  \widehat{Z}_1(y,\Lambda     )\> _{\C^4} dy .
 \end{align*}
 Similarly, 
  \begin{align*}&
    \widehat{R}_2(\Lambda) \widetilde{\mathbf{F}}= - \frac{\widehat{f}_2(\xi,\Lambda   )}{\widehat{W}(\Lambda   ) } \int_  {-\infty} ^\xi   \< \widetilde{ \mathbf{F}}(y), \widehat{Z}_3(y, \Lambda    )\> _{\C^4} dy  \text{ and } \\&   \widehat{R}_3(\Lambda) \widetilde{\mathbf{F}}=   -  \frac{\widehat{f}_3(\xi,\Lambda   )}{\widehat{W}(\Lambda   ) }    \int_  {\xi}  ^{+\infty}  \<  \widetilde{\mathbf{F}}(y), \widehat{Z}_2(y, \Lambda    )\> _{\C^4} dy .
 \end{align*}
Let us now write
 \begin{align}\label{eq:resolvf}
    \widehat{R} (\Lambda) \widetilde{\mathbf{F}}  = \frac{\widehat{N}(\Lambda)\widetilde{\mathbf{F}}}{\widehat{W}(\Lambda  ) } = \frac{ {N}(\Lambda)\widetilde{\mathbf{F}}}{\Lambda ^2\widehat{W}(\Lambda   ) }= {R} (\Lambda) \widetilde{\mathbf{F}}.
 \end{align}

\begin{lemma}\label{lem:numerator-jets}
We have for $b_c=\alpha b_4-2\alpha\beta$
\begin{align}\label{eq:numerator-jets}
 \widehat N(0) \widetilde{\mathbf{F}}
 &=2\alpha^2\Xi_1[c]\<   \widetilde{\mathbf{F}}  ,E_2\>,\\
 \widehat N'(0)\widetilde{\mathbf{F}}
 &=-2\alpha^2\Xi_2[c]\<   \widetilde{\mathbf{F}}   ,E_2\>
   +2\alpha^2\Xi_1[c]\<  \widetilde{\mathbf{F}}    ,E_1[c]\>
   +b_c\Xi_1[c]\<   \widetilde{\mathbf{F}}   ,E_2\> . \nonumber
\end{align}
\end{lemma}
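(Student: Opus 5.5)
The plan is to expand each of the four pieces of $\widehat N(\Lambda)\widetilde{\mathbf F}=\widehat W(\Lambda)\widehat R(\Lambda)\widetilde{\mathbf F}$ to first order in $\Lambda$ and then recombine. I will use the cofactor form obtained just before \eqref{eq:resolvf}:
\begin{align*}
\widehat N(\Lambda)\widetilde{\mathbf F}
={}&-f_1\int_{-\infty}^{\xi}\<\widetilde{\mathbf F},\widehat Z_4\>\,dy
-\widehat f_2\int_{-\infty}^{\xi}\<\widetilde{\mathbf F},\widehat Z_3\>\,dy\\
&-\widehat f_3\int_{\xi}^{+\infty}\<\widetilde{\mathbf F},\widehat Z_2\>\,dy
-f_4\int_{\xi}^{+\infty}\<\widetilde{\mathbf F},\widehat Z_1\>\,dy .
\end{align*}

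First I justify evaluating this expression and its $\Lambda$-derivative at $\Lambda=0$ termwise. For this I need uniform bounds on the Jost solutions and cofactors: Lemma \ref{lem:estimate_f}, Lemma \ref{lem:strong-global} and \eqref{eq:b13_estimate_partial}. Their $\Lambda$-derivatives grow at most polynomially in $\xi$, with at most linear growth at first order. Since $\widetilde{\mathbf F}\in L^{1,1}$, dominated convergence then lets me differentiate under the integrals.

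By \eqref{eqd01hat23}, both $\widehat Z_2$ and $\widehat Z_3$ vanish to first order at $\Lambda=0$. Hence the middle two terms contribute neither to $\widehat N(0)$ nor to $\widehat N'(0)$: in their derivative the prefactor $\widehat f_i$ is differentiated only against a vanishing $\widehat Z$, or the $\widehat Z$ is differentiated and vanishes at zero.

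For $\widehat N(0)$, I insert $f_1(0)=-\alpha\Xi_1[c]$ and $f_4(0)=\alpha\Xi_1[c]$ from \eqref{eq:jostlambda01}, together with $\widehat Z_4(0)=2\alpha E_2$ and $\widehat Z_1(0)=-2\alpha E_2$ from \eqref{eq:partlambdaz}. The two half-line integrals then recombine into $2\alpha^2\Xi_1[c]\<\widetilde{\mathbf F},E_2\>$.

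For $\widehat N'(0)$, I apply the product rule to the first and last terms. This produces four contributions.
\begin{itemize}
\item The $\dot f_1$ and $\dot f_4$ contributions use \eqref{eq:jostlambda02}. Their $\Xi_2[c]$ parts again combine over $(-\infty,\xi)\cup(\xi,\infty)$ into $-2\alpha^2\Xi_2[c]\<\widetilde{\mathbf F},E_2\>$.
\item The $\beta\Xi_1[c]$ parts of the same contributions leave $\xi$-dependent pieces $-2\alpha\beta\int_{-\infty}^\xi\<\widetilde{\mathbf F},E_2\>+2\alpha\beta\int_\xi^\infty\<\widetilde{\mathbf F},E_2\>$.
\item The $\dot{\widehat Z}_4$ and $\dot{\widehat Z}_1$ contributions use \eqref{eq:partlambdaz}. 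They give $2\alpha^2\Xi_1[c]\<\widetilde{\mathbf F},E_1[c]\>$, plus $\alpha b_4\Xi_1[c]\int_{-\infty}^\xi\<\widetilde{\mathbf F},E_2\>$ and $-\alpha b_1\Xi_1[c]\int_\xi^\infty\<\widetilde{\mathbf F},E_2\>$.
\end{itemize}

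The key, and only delicate, step is checking that the $\xi$-dependent $E_2$ pieces recombine into a full-line integral. The coefficient on $(-\infty,\xi)$ is $\alpha b_4-2\alpha\beta$, and the coefficient on $(\xi,\infty)$ is $2\alpha\beta-\alpha b_1$. The relation $b_1+b_4=4\beta$ from Lemma \ref{lem:cofactor-jets} makes these two coefficients equal. Both pieces therefore combine into $b_c\Xi_1[c]\<\widetilde{\mathbf F},E_2\>$ with $b_c=\alpha b_4-2\alpha\beta$, which completes \eqref{eq:numerator-jets}.

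I expect the main obstacle to be the sign bookkeeping in this last recombination: keeping track of the minus signs in $\widehat R_1$ and $\widehat R_4$ and of the opposite signs of $f_1(0)$ and $f_4(0)$.
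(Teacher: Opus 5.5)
Your proposal is correct and follows the paper's proof essentially verbatim. You drop the $\widehat R_2,\widehat R_3$ terms via \eqref{eqd01hat23}, evaluate $-f_1\widehat Z_4^\intercal$ and $-f_4\widehat Z_1^\intercal$ and their first $\Lambda$-derivatives at zero using \eqref{eq:jostlambda01}, \eqref{eq:jostlambda02} and \eqref{eq:partlambdaz}, and match the half-line coefficients $\alpha b_4-2\alpha\beta$ and $2\alpha\beta-\alpha b_1$ through $b_1+b_4=4\beta$; the only addition is your explicit dominated-convergence justification for differentiating under the integrals, which the paper leaves implicit.
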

\begin{proof}
By \eqref{eqd01hat23} we have 
 \begin{align*}
 \widehat N(0) \widetilde{\mathbf{F}}= -f_1(\xi , 0)\int_{-\infty}^{\xi}  \<  \widetilde{\mathbf{F}},    \widehat Z_4\> dy
 -f_4(\xi, 0)\int_{\xi}^{\infty}\<  \widetilde{\mathbf{F}},    \widehat Z_1\> dy  
\end{align*}
and the 1st line in \eqref{eq:numerator-jets} follows by
\eqref{eq:jostlambda01} and \eqref{eq:partlambdaz}  from
\begin{align*}
 -f_1(\xi,0)\widehat Z_4(y,0)^\intercal
 &=-f_4(\xi,0)\widehat Z_1(y,0)^\intercal =2\alpha^2\Xi_1[c](\xi)E_2(y)^\intercal.
\end{align*}
  For the  first derivatives,
\eqref{eq:jostlambda02} and \eqref{eq:partlambdaz} yield
\begin{align*}
 &-\left.\partial_\Lambda
    \bigl(f_1(\xi,\Lambda)\widehat Z_4(y,\Lambda)^\intercal\bigr)
   \right|_{\Lambda=0} =-\dot f_1(\xi)\widehat Z_4(y,0)^\intercal
         -f_1(\xi,0)\dot{\widehat Z}_4(y)^\intercal\\
 & =-2\alpha^2\Xi_2[c](\xi)E_2(y)^\intercal
        +2\alpha^2\Xi_1[c](\xi)E_1[c](y)^\intercal 
        +(\alpha b_4-2\alpha\beta)
           \Xi_1[c](\xi)E_2(y)^\intercal,
\end{align*}
and
\begin{align*}
 &-\left.\partial_\Lambda
    \bigl(f_4(\xi,\Lambda)\widehat Z_1(y,\Lambda)^\intercal\bigr)
   \right|_{\Lambda=0} =-\dot f_4(\xi)\widehat Z_1(y,0)^\intercal
         -f_4(\xi,0)\dot{\widehat Z}_1(y)^\intercal\\
 &\quad=-2\alpha^2\Xi_2[c](\xi)E_2(y)^\intercal
        +2\alpha^2\Xi_1[c](\xi)E_1[c](y)^\intercal 
        +(2\alpha\beta-\alpha b_1)
           \Xi_1[c](\xi)E_2(y)^\intercal.
\end{align*}
The last coefficients agree because
\begin{align*}
 (\alpha b_4-2\alpha\beta)
 -(2\alpha\beta-\alpha b_1)
 =\alpha(b_1+b_4-4\beta)=0.
\end{align*}
 Then the 2nd equality in \eqref{eq:numerator-jets} ensues.
\end{proof}

\begin{lemma}\label{lem:smooth2small} There exists an $a _4(p, \gamma  )>0$  and a  $C( p, \gamma )>0$ such that for $\widetilde{\mathbf{F}}$ 
such that  $\mathcal P_{[c]}  {F}=0$ we have
  \begin{align}\label{eq:smooth21small}
     \sup  _{ \substack{  \Lambda  \in \overline{\C} _+ \backslash \{  0 \}  \\ |\Lambda |\le a _4(p, \gamma  ) }} \left \|    \widehat{R}  (\Lambda )\widetilde{\mathbf{F}}    \right \| _{      \widehat{\Sigma}    }\le C( p, \gamma )     \|  \widetilde{\mathbf{F}} \| _{ L ^{1,1}\( \R\)} .
   \end{align}

\end{lemma}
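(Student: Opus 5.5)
The plan is to exploit the factorization \eqref{eq:resolvf}: $\widehat R(\Lambda)=\widehat N(\Lambda)/\widehat W(\Lambda)$, where by Lemma \ref{lem:evelem} $\widehat W(\Lambda)=-2\alpha^2\gamma^{3-\frac4{p-1}}q'(c)\Lambda^2(1+O(\Lambda))$ has a double zero at the origin. The singularity $\Lambda^{-2}$ must be compensated by the numerator. By Lemma \ref{lem:relkern}, the hypothesis $\mathcal P_{[c]}F=0$ gives $\<\widetilde{\mathbf F},E_2\>=\<\widetilde{\mathbf F},E_1[c]\>=0$. Then Lemma \ref{lem:numerator-jets} yields $\widehat N(0)\widetilde{\mathbf F}=0$ and $\widehat N'(0)\widetilde{\mathbf F}=0$. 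The aim is to write
\begin{equation*}
\widehat N(\Lambda)\widetilde{\mathbf F}=\int_0^1(1-s)\,\Lambda^2\,\partial_\Lambda^2\widehat N(s\Lambda)\widetilde{\mathbf F}\,ds
\end{equation*}
and to bound $\|\sech(\kappa\xi)\partial_\Lambda^2\widehat N(\Lambda)\widetilde{\mathbf F}\|_{L^2}\lesssim\|\widetilde{\mathbf F}\|_{L^{1,1}}$ uniformly for $\Lambda\in\overline{\C}_+$ with $|\Lambda|$ small. Dividing by $\widehat W$ then gives \eqref{eq:smooth21small}, after undoing the scaling via \eqref{eq:resolvent-weight-scaling}.

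The difficulty is that the Taylor argument needs $\Lambda$-derivatives of the kernels, and $\widehat f_2,\widehat f_3$ and the cofactors $\widehat Z_2,\widehat Z_3$ are oscillatory or growing at the wrong end when $\Lambda\in\im\R$. I would split $\widehat N=\sum_j\widehat N_j$ according to \eqref{eq:r1}--\eqref{eq:r4} and treat the terms differently.

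\textbf{Terms $j=1,4$.} These involve $f_1=e^{\mu_1\xi}m_1$ and $\widehat Z_4$ (resp.\ $f_4$ and $\widehat Z_1$). The factor $e^{\mu_1(\xi-y)}$ with $y<\xi$ decays, and $\widehat Z_4=*(\widehat f_2\wedge\widehat f_3\wedge f_4)$ carries $e^{-\mu_1 y}$ times factors growing at most polynomially in $\Lambda$-derivatives. Using \eqref{eq:m1_estimate_partial}, \eqref{eq:m4_estimate_partial} and the zero sum \eqref{eq:mus22}, each $\partial_\Lambda^m$ of the kernel is bounded by $C\langle\xi\rangle^m\langle y\rangle^m$. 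This is absorbed by $\sech(\kappa\xi)$ on the left and by the weight of $L^{1,1}$ on the right, since only $m\le2$ is needed and $L^{1,1}$ supplies one power of $\langle y\rangle$.

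Here I expect a mismatch: two derivatives produce $\langle y\rangle^2$, while $L^{1,1}$ supplies only one power. To repair this I would use the orthogonality a second time, in the form $\int_{-\infty}^\xi\<\widetilde{\mathbf F},E_2\>dy=-\int_\xi^\infty\<\widetilde{\mathbf F},E_2\>dy$. Since $|\xi-y|$ is controlled by $\langle\xi\rangle\langle y\rangle$, one can subtract the zero-energy profile and place the extra growth on the $\xi$ side.

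\textbf{Terms $j=2,3$.} For these I would switch to the modified representation \eqref{eqvarpar}, $R=N/(\Lambda^2\widehat W)$, with the modified Jost functions $f_2,f_3$ of Lemma \ref{lem:bdm2}. The cofactors $Z_i=*(f_i\wedge f_1\wedge f_4)$ vanish to third order at $0$. So $R_2,R_3$ have the form $\Lambda^3(\cdots)/(\Lambda^4\widehat W/\Lambda^2)$, which is one power better than needed, leaving at most a $\Lambda^{-1}$.

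This residual $\Lambda^{-1}$ is the main obstacle. I would handle it by expanding $Z_i=\Lambda^3T_i/6+O(\Lambda^4)$, using the bounds \eqref{eq:b13_estimate_partial} for $H_2,H_3$. The leading term $f_2(\xi,0)\int_{-\infty}^\xi\<\widetilde{\mathbf F},T_3\>$ must cancel. For this I would use that $T_i$ are bounded adjoint zero-energy solutions, hence combinations as in Lemma \ref{lem:adjoint-classification}. The constant parts combine with $f_2(0)=-c_2(0)f_1(0)$, and $f_3(0)=-c_3(0)f_4(0)$ combines with the $j=1,4$ terms. Together these reduce to pairings with $E_2$ and $E_1[c]$, which vanish. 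The oscillatory factors $e^{\mu_{2,3}(\xi-y)}$ have modulus $\le1$ on the relevant half-lines by \eqref{eq:low-root-signs}, and $\Lambda$-derivatives cost $\langle\xi-y\rangle$, which is again acceptable with the weights.

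If the exact cancellation bookkeeping fails, the fallback is to bound $R_2+R_3$ directly with the $\Lambda^{-1}$ and show it is multiplied by a pairing $\<\widetilde{\mathbf F},T\>$ that is itself $O(\Lambda)$ after using $\mathcal P_{[c]}F=0$.
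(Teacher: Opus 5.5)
Your first step matches the paper: Lemmas \ref{lem:relkern} and \ref{lem:numerator-jets} give $\widehat N(0)\widetilde{\mathbf F}=\widehat N'(0)\widetilde{\mathbf F}=0$. After that there are genuine gaps.

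\textbf{The terms $j=1,4$ in the unmodified representation.} You treat these with $\widehat f_2,\widehat f_3$ and claim that $e^{\mu_1 y}\widehat Z_4(y)$ is bounded, with polynomially growing $\Lambda$-derivatives. But \eqref{eq:m1_estimate_partial} controls $\widehat m_2$ only for $\xi\ge0$. For $\xi\le0$, \eqref{eq:connections} shows that $\widehat f_2$ contains $c_{21}(\Lambda)F_1\sim c_{21}e^{\mu_1\xi}\mathbf a_1$, which grows at $-\infty$. In general $c_{21}(\Lambda)\neq0$ for $\Lambda\neq0$; for instance $c_{21}'(0)=\frac\alpha2h(c)$ by \eqref{eq:c21-prime}. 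By \eqref{eq:limm1-infty}, also $f_1(\xi,\Lambda)\sim D(\Lambda)e^{\mu_1\xi}\mathbf a_1$ at $-\infty$. Hence for $\Lambda\in\im\R\setminus\{0\}$ and $y<\xi\to-\infty$, the kernel of $\widehat R_1$ contains a piece of size about $|D(\Lambda)c_{21}(\Lambda)/\widehat W(\Lambda)|\,e^{|\Re\mu_1||\xi|}$. So $\widehat R_1$ alone is not bounded from $L^{1,1}$ into $\widehat\Sigma$, and no bound $C\langle\xi\rangle^m\langle y\rangle^m$ holds.

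This growing component is exactly what $f_2=\Lambda\widehat f_2-c_2f_1$ removes, since $\Lambda c_{21}-c_2c_{11}=0$. It cancels only in sums: one checks $\widehat R_1+\widehat R_2=R_1+R_2$, while $R_1-\widehat R_1=-\frac{c_2f_1(\xi)}{\Lambda\widehat W}\int_{-\infty}^{\xi}\det(\widetilde{\mathbf F},f_1,\widehat f_3,f_4)\,dy\neq0$. So your mixed sum $\widehat R_1+\widehat R_4+R_2+R_3$ is not $\widehat R$. The paper uses $R=N/(\Lambda^2\widehat W)$ for all four terms. There the $j=1,4$ kernels carry the factor $e^{-\kappa_0|\xi-y|}$ from \eqref{eq:estR1(1)}, so powers of $\langle y\rangle$ turn into powers of $\langle\xi\rangle$ and are absorbed by $\sech(\kappa\xi)$. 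Only $L^1$ is needed for these terms. The ``mismatch'' you anticipate, and the second use of orthogonality meant to repair it, never arise.

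\textbf{The terms $j=2,3$.} No individual leading term, such as $f_2(\xi,0)\int_{-\infty}^{\xi}\langle\widetilde{\mathbf F},T_3\rangle\,dy$, has to cancel. Your own first step gives $N(\Lambda)\widetilde{\mathbf F}=\Lambda^2\widehat N(\Lambda)\widetilde{\mathbf F}=O(\Lambda^4)$. Hence the third-order Taylor polynomials at $0$ of the four pieces $N_j\widetilde{\mathbf F}$ sum to zero. Subtracting from each $R_j$ its own polynomial leaves four fourth-order remainders divided by $\Lambda^2\widehat W\sim\Lambda^4$. The classification of $T_i$ and your fallback are therefore unnecessary.

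The real obstacle is the one you wave through: $e^{\mu_2(\xi-y)}$ has no decay on the unbounded half-line $y<\xi$ when $\Lambda$ is small. So $m$ derivatives in $\Lambda$ cost $\langle\xi-y\rangle^m$, which requires data in $L^{1,m}$ with $m=4$ (or $m=2$ in your scheme). The lemma allows only $L^{1,1}$. The paper never differentiates this exponential. It splits the remainder as $A+B$:
\begin{itemize}
\item $A=e^{\mu_2(\Lambda)(\xi-y)}\int_0^\Lambda\frac{(\Lambda-z)^3}{6}\partial_z^4\bigl(m_2(\xi,z)H_3(y,z)\bigr)\,dz$. By \eqref{eq:estm21}, \eqref{eq:b13_estimate_partial} and $y_+\le\xi_+$, this is bounded by $|\Lambda|^4(1+|\xi|+|y_+|)^4\lesssim|\Lambda|^4\langle\xi\rangle^4$.
\item $B=\frac{\Lambda^3}{6}\bigl(e^{\mu_2(\xi-y)}-1\bigr)m_2(\xi,0)T_3(y)$. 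This costs a single factor $|\Lambda||\xi-y|$, against the exponentially localized $m_2(\xi,0)$ and the bounded $T_3$.
\end{itemize}
The term $B$ is the only place where the weight of $L^{1,1}$ is used.
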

\begin{proof}
By \eqref{eq:resolvf}, Lemma \ref{lem:numerator-jets} and Lemma \ref{lem:relkern}    it follows that 
\small
 \begin{align*}
  \widehat{R} (\Lambda) \widetilde{\mathbf{F}} =   \widehat{R} ^{(1)}(\Lambda) \widetilde{\mathbf{F}} :&=   \widehat{R} (\Lambda) \widetilde{\mathbf{F}} - \frac{ \sum _{0\le a \le 1}   \Lambda ^a  \partial _\Lambda ^a \widehat{N}(0)\widetilde{\mathbf{F}}}{\widehat{W}(\Lambda , \gamma )  } \\&  =   {R} (\Lambda) \widetilde{\mathbf{F}} - \frac{ \sum _{0\le a \le 3} \frac{\Lambda ^a }{a!} \partial _\Lambda ^a N(0)\widetilde{\mathbf{F}}}{\Lambda ^2\widehat{W}(\Lambda , \gamma ) }   =  \sum  _{1\le j \le 4} {R}_j^{(1)}(\Lambda) \widetilde{\mathbf{F}}
 \end{align*} \normalsize
where  
 \small
  \begin{align*}& {R}_1^{(1)}(\Lambda) \widetilde{\mathbf{F}}:=\frac{1 }{\Lambda ^2\widehat{W}(\Lambda   ) }
  \int _0 ^\Lambda dz \frac{(\Lambda -z )^3}{6}  \int_  {-\infty} ^\xi    \partial _z ^4\(   f_1(\xi ,z  )  \det \( \widetilde{\mathbf{F}}(y)  ,    {f}_2(y,z , \gamma ) ,  {f}_3(y,z   ) ,   {f}_4(y,z   )   \) \) dy\\ &{R}_2^{(1)}(\Lambda) \widetilde{\mathbf{F}}:=\frac{1 }{\Lambda ^2\widehat{W}(\Lambda   ) }
  \int _0 ^\Lambda dz \frac{(\Lambda -z )^3}{6}  \int_  {-\infty} ^\xi  \partial _z ^4\(
     {f}_2(\xi,z  )   \det \( {f}_1(y,z   )  ,  \widetilde{\mathbf{F}} (y),   {f}_3(y,z   ) ,   f _4(y,z  )   \)   \) dy   \\&   {R}_3^{(1)}(\Lambda) \widetilde{\mathbf{F}}:=-\frac{1 }{\Lambda ^2\widehat{W}(\Lambda , \gamma ) }
  \int _0 ^\Lambda dz \frac{(\Lambda -z )^3}{6}  \int_  {\xi}  ^{+\infty}  \partial _z ^4\(
     {f}_3(\xi,z  )     \det \( {f}_1(y,z   )  ,   {f}_2(y,z  ) ,  \widetilde{\mathbf{F}} (y)  ,   {f}_4(y,z   )   \)  \) dy   \\&   {R}_4^{(1)}(\Lambda) \widetilde{\mathbf{F}}:=-\frac{1 }{\Lambda ^2\widehat{W}(\Lambda  ) }
  \int _0 ^\Lambda dz \frac{(\Lambda -z )^3}{6}  \int_  {\xi}  ^{+\infty} \partial _z ^4\(
     {f}_4(\xi,z  )    \det \( {f}_1(y,z  )  ,   {f}_2(y,z  ) ,   {f}_3(y,z   ) , \widetilde{\mathbf{F} }  (y)    \)  \) dy .
 \end{align*}\normalsize
We have  \small
\begin{align*}&
  {R}_1^{(1)}(\Lambda) \widetilde{\mathbf{F}} =\frac{ 1 }{\Lambda ^2\widehat{W}(\Lambda)  }
  \int _0 ^\Lambda dz \frac{(\Lambda -z )^3}{6}  \int_  {-\infty} ^\xi \partial _z ^4\(
      e  ^{\mu _1(z) (\xi-y) }  m_1(\xi,z)  \< \widetilde{ \mathbf{F}}(y),  \widetilde{{n}}_1(y,z)   \> _{\C^4}\)  dy \text{ where} \\&  \widetilde{g}_1(y,z):=
-*\(  {f}_2(y,z) \wedge   {f}_3(y,z)  \wedge   {f}_4(y,z) \) \text{ and } \widetilde{n}_1(y,z):=  e  ^{\mu _1(z) y }\widetilde{g}_1(y,z).
\end{align*}\normalsize
Then since  by \eqref{eq:mus22} we have
\begin{equation*}
   \widetilde{n}_1(y,z) =
-*\(  m_2(y,z) \wedge   m_3(y,z)  \wedge   m_4(y,z) \) ,
\end{equation*}
from  \eqref{eq:m1_estimate_partial}, \eqref{eq:m4_estimate_partial} and \eqref{eq:estm21} and from Lemma \ref{lem:roots1} we can conclude  there exists a $ \kappa _0 >0$ such that
\begin{align}\label{eq:estR1(1)}
  \left |{R}_1^{(1)}(\Lambda) \widetilde{\mathbf{F}} (\xi) \right | \lesssim   \int_  {-\infty} ^\xi    (1+|\xi |+|y|) ^4    \exp \(-  \kappa _0  |\xi-y|\)  |\widetilde{\mathbf{F}} (y) | dy
\end{align}
so that  for $j=1$ the following is true,
\begin{align}\label{eq:estRj(1)}
   \| {R}_j^{(1)}(\Lambda) \widetilde{\mathbf{F}}  \| _{ \widehat{\Sigma}}\lesssim  C   \| \widetilde{\mathbf{F}}\| _{L^1(\R )}  \text{ for $j=1$ and $j=4$}.
\end{align}
The proof for $j=4$  is similar. We focus now on the remaining terms   ${R}_2^{(1)}(\Lambda) \widetilde{\mathbf{F}}$  and ${R}_3^{(1)}(\Lambda) \widetilde{\mathbf{F}}$.  We can write  schematically
\begin{align*}&
   {R}_2^{(1)}(\Lambda) \widetilde{\mathbf{F}} =  \frac{ -1 }{\Lambda ^2 \widehat{W}(\Lambda  )   }  \int_  {-\infty }  ^{\xi }
   \mathbf{a}(\xi,y, \Lambda )  : \widetilde{\mathbf{F}}(y)   dy \text{ where}
\\& \mathbf{a}(\xi,y, \Lambda )  =
     {f}_2(\xi,\Lambda )     Z_3(y,\Lambda  )  + \frac{\Lambda ^3}{3!} c_2(0)
     {f}_1(\xi,0 )      \partial _z ^3 Z_3(y,z )| _{z=0}
\end{align*} where we used \eqref{eq:defc0} and \eqref{eq:deff2}.
In the notation in Lemma  \ref{lem:bdm2}, we have  
\begin{align*} &
  \mathbf{a}(\xi,y, \Lambda )=    A(\Lambda , \xi ,y) +B(\Lambda , \xi,y)   \text{ where}    \\&  A(\Lambda , \xi,y) =
  e^{\mu _2(\Lambda ) (\xi-y)}  \(     {m}_2(\xi,\Lambda )   H_3(y,\Lambda ) - \frac{\Lambda ^3}{3!}
     {m}_2(\xi,0 )        T_3(y )    \)
\\& B(\Lambda , \xi ,y) = \frac{\Lambda ^3}{3!}  \( e^{\mu _2(\Lambda ) (\xi-y)}-1    \)    {m}_2(\xi,0 )       T_3( y )  .
\end{align*}
Since
\begin{align*}
   A(\Lambda , \xi ,y) =  e^{ \mu _2(\Lambda   ) (\xi-y)}\int _0 ^\Lambda dz \frac{(\Lambda -z )^3}{6}  \partial _z ^4\(     {m } _2(\xi ,z    )     H_3(y,z   ) \)
\end{align*}
by Lemma \ref{lem:bdm2} we have
\begin{align*}
  |A(\Lambda , \xi  ,y)|\lesssim   |\Lambda |^4  (1+|\xi | +|y_+|)^4   
\end{align*}
so that
we have
\begin{align}\label{eq:R121}
  \left \|   {R} _{21}^{(1)}(\Lambda) \widetilde{\mathbf{F}}   \right \| _{ \widehat{\Sigma}}   \lesssim   \|   \widetilde{\mathbf{F}} \| _{L^1(\R )}  \text{ for }  {R} _{21}^{(1)}(\Lambda) \widetilde{\mathbf{F}}  :=  \frac{  -1 }{\Lambda ^2 \widehat{W}(\Lambda    )   }  \int_  {-\infty}  ^{\xi}
   A(\Lambda , \xi  ,y)  : \widetilde{\mathbf{F}}(y)    dy.
\end{align}
By \eqref{eq:jostlambda01}, Lemma \ref{lem:bdm2} and $|e^{\mu_2(\Lambda)(\xi-y)}-1|\lesssim |\Lambda|(\xi-y) $, we have 
\begin{align*}
  |B(\Lambda , \xi  ,y)|\lesssim        |  \Xi_1[c](\xi)   \Lambda ^4 (\xi-y)|    
\end{align*}
so that by \eqref{lem:roots12} we have 
\begin{align}\label{eq:R122}
  \left \|   {R} _{22}^{(1)}(\Lambda) \widetilde{\mathbf{F}}   \right \| _{ \widehat{\Sigma}}   \lesssim   \|   \widetilde{\mathbf{F}} \| _{L ^{1,1}(\R )}  \text{ for }  {R} _{22}^{(1)}(\Lambda) \widetilde{\mathbf{F}}  :=  \frac{  -1 }{\Lambda ^2 \widehat{W}(\Lambda    )   }  \int_  {-\infty}  ^{\xi}
   B(\Lambda , \xi  ,y)  : \widetilde{\mathbf{F}}(y)    dy.
\end{align}
Hence we have proved for $j=2$, with for   $j=3$  a similar proof, 
\begin{align}\label{eq:estRj(2)}
   \| {R}_j^{(1)}(\Lambda) \widetilde{\mathbf{F}}  \| _{ \widehat{\Sigma}}\lesssim  C   \|   \widetilde{\mathbf{F}} \| _{L ^{1,1}(\R )}     .
\end{align}
 The implicit constants in \eqref{eq:estRj(1)} and \eqref{eq:estRj(2)} are uniform in $\Lambda$, hence 
the lemma is proved.
   
\end{proof}

\begin{lemma}\label{lem:low-derivative}
 There exists $C(p,c)>0$ such that
\begin{align}\label{eq:low-derivative}
\sup_{\substack{0<|\lambda |<\gamma ^3\varepsilon _1\\  \lambda \in  \overline{\C}_+ }}
\left\|\pi_1R_{\mathcal L_{[c]}}(\lambda )\mathcal Q_{[c]}
            (0,\partial_xg)^\intercal\right\|_{\widehat\Sigma}
 \le C(p,c)\|g\|_{L^1(\R )}.
\end{align}
\end{lemma}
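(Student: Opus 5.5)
We work in the rescaled variables of Lemma \ref{lem:transys}, with $\lambda=\gamma^3\Lambda$ and $|\Lambda|<\varepsilon_1$. The source is $F=\mathcal Q_{[c]}(0,\partial_xg)^\intercal=(0,\partial_xg)^\intercal-\mathcal P_{[c]}(0,\partial_xg)^\intercal$.

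\textbf{Step 1: the projection term.} The inner products with $\eta_i[c]$ are defined by integration by parts. They are bounded by $\|\eta_i'[c]\|_{L^\infty}\|g\|_{L^1}$. Since $\xi_i[c]$ decays exponentially, $\mathcal P_{[c]}(0,\partial_xg)^\intercal$ lies in $L^{1,1}$ with norm $\lesssim\|g\|_{L^1}$. Moreover $\mathcal Q_{[c]}$ kills it after projection, so this part is handled by Lemma \ref{lem:smooth2small}. More precisely, we write $F=(0,\partial_xg)^\intercal-\mathcal P_{[c]}(0,\partial_xg)^\intercal$. It remains to bound $\pi_1\widehat R(\Lambda)$ applied to $F$ with its derivative structure. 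The inputs here are that $\mathcal P_{[c]}F=0$, and hence, by Lemma \ref{lem:relkern}, $\<\widetilde{\mathbf F},E_2\>=\<\widetilde{\mathbf F},E_1[c]\>=0$.

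\textbf{Step 2: moving the derivative onto the kernel.} In $\widetilde{\mathbf F}$ the entry $\partial_xg$ appears in the third component as $-\gamma^{-3}\widetilde F_2$; the fourth component involves only $\widetilde F_1$, which is a smooth localized function with $L^{1,1}$ bounds. We then repeat the argument of Lemma \ref{lem:smooth2small}, namely the Taylor subtraction $\widehat R=\widehat R^{(1)}$ combined with \eqref{eq:numerator-jets}. In each integral $\int\<\widetilde{\mathbf F}(y),Z(y,\cdot)\>dy$ we integrate by parts in $y$, so that the derivative falls on the adjoint vectors $\widehat Z_i$, $Z_i$, $H_i$, $T_i$. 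There are two kinds of contributions.

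\emph{Boundary terms at $y=\xi$.} These produce a local term $\widetilde g(\xi)$ times the kernel evaluated on the diagonal. After the Taylor remainders are taken, their size is bounded using the pointwise bounds \eqref{eq:estm21}, \eqref{eq:b13_estimate_partial}. The weight $\sech(\kappa\xi)$ absorbs the polynomial factors, and $\|\sech\cdot\|_{L^2}\|\widetilde g\|_{L^1}$ is not available pointwise. Instead, we combine the boundary contributions of $R_j$ with those of $R_{j\pm1}$: by \eqref{eq:identdet} the sum of the diagonal terms reconstructs the third component of $\widetilde{\mathbf F}$ times $\pi_1$-entries. Because $a_{j1}b_{j3}$ sums to $0$ by \eqref{eq:varpid}, the $\pi_1$ component of that sum vanishes identically.

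\emph{Integral terms.} After integration by parts, these involve $\partial_y$ of the adjoint factors. The derivatives satisfy the same bounds with no loss, by Lemma \ref{lem:bdm2} and the $\partial_\xi^n$ estimates. There is one exception: the central exponentials $e^{-\mu_{2,3}y}$ produce a factor $\mu_{2,3}=O(\Lambda)$, and this factor only helps. Thus these terms are bounded like the undifferentiated case, but now by $\|g\|_{L^{1,1}}$.

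\textbf{Step 3: removing the weight.} We need $\|g\|_{L^1}$, not $\|g\|_{L^{1,1}}$. The $\langle y\rangle$ weight in Lemma \ref{lem:smooth2small} arose only in $B$, from $e^{\mu_2(\xi-y)}-1$. After the derivative is moved, the factor $(\xi-y)$ is replaced by $\partial_y$ of it, which is bounded. Likewise $\partial_yT_i$ decays exponentially by Lemma \ref{lem:bdm2}, so the growing terms disappear.

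The \textbf{main obstacle} is to verify that the low-order terms subtracted via \eqref{eq:numerator-jets} vanish after the derivative is moved. For this we use that $\<(0,\partial_x g)^\intercal,\eta_i\>$ are exactly the integrated-by-parts pairings, so that the orthogonality to $E_1[c],E_2$ holds in the same integrated-by-parts form. The other delicate point is to check the cancellation of the diagonal boundary terms in $\pi_1$, for which we plan to use \eqref{eq:varpid} together with \eqref{eq:varpid1}.
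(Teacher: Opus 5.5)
Your proposal is correct and is essentially the paper's argument. The Taylor-subtracted operator $\widehat R^{(1)}(\Lambda)$ coincides with $\widehat R(\Lambda)$ on the range of $\mathcal Q_{[c]}$; the jets in \eqref{eq:numerator-jets} are linear and the pairings are defined by integration by parts, so the point you flag as the main obstacle is automatic. One then splits by linearity into $\widehat R^{(1)}\widetilde{\mathbf F_g}$ and $\widehat R^{(1)}\widetilde{\mathcal P_{[c]}F_g}$. The first is treated by integrating by parts in $y$, and the diagonal boundary terms sum through \eqref{eq:identdet} to a multiple of $e_3$, which $\pi_1$ annihilates. The second is treated by the $L^{1,1}$ kernel bounds. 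Your Step 3 ($\partial_y$ hitting $(e^{\mu_2(\xi-y)}-1)T_3$, with $T_3'$ exponentially decaying) makes explicit why $\|g\|_{L^1}$ suffices, which the paper only asserts.

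Two small corrections. First, the projection piece is controlled by the uniform $L^{1,1}\to\widehat\Sigma$ bounds for $\widehat R^{(1)}$ proved inside Lemma \ref{lem:smooth2small}, not by its statement. That statement requires $\mathcal P_{[c]}F=0$, which fails for $\mathcal P_{[c]}F_g$, and on that input $\widehat R(\Lambda)$ is singular as $\Lambda\to0$. Second, \eqref{eq:varpid} concerns the free dual basis and plays no role here; the boundary cancellation comes from \eqref{eq:identdet} alone.
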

\begin{proof}
Setting $\mathbf v =R_{\mathcal L_{[c]}}(\lambda )\mathcal Q_{[c]}
            (0,\partial_xg)^\intercal$, $F_g=(0,\partial_xg)^\intercal$  and for $\lambda =\gamma ^3\Lambda $, then 
\begin{align*} \widetilde{V} =   \widehat{R} ^{(1)}(\Lambda)\widetilde{\mathbf F_g}
  - \widehat{R} ^{(1)}(\Lambda)\widetilde{\mathcal P_{[c]}F_g}.
\end{align*}
Then 
\begin{align*}
    \|   \widehat{R} ^{(1)}(\Lambda)\widetilde{\mathcal P_{[c]}F_g} \| _{\widehat\Sigma} \lesssim  \sum _{j=1,2} \| \xi _j [c]\| _{L^{1,1}}     | \<  \partial _x\eta _j [c], (0,g) ^\intercal \>   |  \lesssim \|g\|_{L^1(\R )}.
\end{align*}
Next, we have  \begin{align*}       \widetilde{\mathbf F_g}=-\gamma^{-2}e_3\partial_\xi\widetilde g,\qquad\|\widetilde g\|_{L^1(d\xi)}=\gamma\|g\|_{L^1(dx)}.\end{align*}
Integrating by parts in $y$, we obtain \small
\begin{align} \label{eq:intpar}
&\pi_1 R^{(1)}(\Lambda)\widetilde{\mathbf F_g}
 =\sum_{j=1}^4\pi_1 J_j \text{ where, for $\sigma_j=-$ if $j\le 2$ and $\sigma_j=+$ if
$j\ge 3$, }\\
J_{j_1}
&=-\frac{\gamma^{-2}}{\Lambda^2\widehat W(\Lambda)}
\int_0^\Lambda \frac{(\Lambda-z)^3}{6}
\int_{\sigma_{j_1}\infty}^{\xi}
\partial_z^4\left(
(f_{j_1})_1(\xi,z)
\left\langle
e_3\widetilde g(y),
\partial_y *\bigl(
f_{j_2}\wedge f_{j_3}\wedge f_{j_4}
\bigr)(y,z)
\right\rangle
\right)\,dy\,dz \nonumber
\end{align} \normalsize and $(j_1,j_2,j_3,j_4)$   any even permutation
of $(1,2,3,4)$.  By
\eqref{eq:identdet}, the sum of the boundary contributions
at $y=\xi$, before taking the first component, is
proportional to
$e_3\partial_z^4\bigl(z^2\widehat W(z)\widetilde g(\xi)\bigr)$ which vanishes 
when we apply $\pi _1$, by $ \pi _1 e_3 =0$.
Then
$\|J_j\|_{\widehat{\Sigma}}\lesssim\|g\|_{L^1(\R)}$,
completing the proof.

\end{proof}

\section{Proof of the intermediate energy part of Proposition \ref{prop:smooth2}}
\label{sec:proofmedsmooth}

The key technical point in this section is the following.

\begin{lemma}\label{lem:nozerodet}
For any $c\in (\sqrt{p-1}/2,1)$ we have
\begin{align}
    \widehat W(\Lambda )\neq0
    \text{ for all $\Lambda\in\im\R\backslash\{0\}$.}
    \label{eq:nozeroW}
\end{align}
\end{lemma}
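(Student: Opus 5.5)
The plan is to argue by contradiction via a flux (conserved current) argument, reducing to Proposition \ref{prop:noeigh} \textbf{iii}. Suppose $\widehat W(\Lambda_0)=0$ for some $\Lambda_0=\im\tau_0$ with $\tau_0\neq0$. Then $f_1,\widehat f_2,\widehat f_3,f_4$ are linearly dependent at $\Lambda_0$. So there are constants, not all zero, with
\begin{align*}
V:=a f_1+b\widehat f_2=-(d\widehat f_3+e f_4).
\end{align*}
By Lemma \ref{lem:roots1} \textbf{c} and Lemma \ref{lem:estimate_f}, we have $\Re\mu_1<0=\Re\mu_2=\Re\mu_3<\Re\mu_4$ on $\im\R\setminus\{0\}$. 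Hence $V=b e^{\mu_2\xi}\mathbf a_2+o(1)$ as $\xi\to+\infty$ and $V=-d e^{\mu_3\xi}\mathbf a_3+o(1)$ as $\xi\to-\infty$. Thus $V$ is a bounded solution of \eqref{A_Decomposehom}, purely oscillatory at both ends up to exponentially small errors. The goal is to show $b=d=0$.

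\textbf{Step 1: the conserved flux.} For $\Lambda\in\im\R$ the problem is Hamiltonian, so a real quadratic current $\mathcal J(V)(\xi)$ is constant in $\xi$ for every solution $V$. To construct it, go back to the original variables: $v=(v_1,v_2)$ solves $\mathcal L_{[c]}v=\im\tau v$ with $\tau=\gamma^3\tau_0$. Set $\Psi=\mathbf L_c v$, so that $\partial_x\sigma_1\Psi=\im\tau v$. Then
\begin{align*}
\partial_x\<\sigma_1\Psi,\overline\Psi\>_{\C^2}=2\Re\bigl(\im\tau\, v\cdot\overline{\mathbf L_c v}\bigr).
\end{align*}
Since $\mathbf L_c$ is formally self-adjoint, the product $v\cdot\overline{\mathbf L_c v}$ equals a real function plus an exact derivative; the only non-real part comes from $-v_1\overline{v_1''}$, giving $\Im(v_1\overline{v_1''})=-\partial_x\Im(\overline{v_1'}v_1)$. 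This yields a current $\mathcal J=\<\sigma_1\Psi,\overline\Psi\>+2\tau\Im(v_1\overline{v_1'})$, possibly up to sign, with $\mathcal J'=0$. Alternatively one can write $\mathcal J=\<V,\mathcal K\overline V\>_{\C^4}$ for an explicit Hermitian matrix $\mathcal K$ with $\mathcal K(A_\infty^{(0)}+\Lambda A_\infty^{(1)}+q)+(\cdot)^*\mathcal K=0$ when $\Lambda\in\im\R$.

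\textbf{Step 2: evaluate the flux at both ends.} Evaluate $\mathcal J$ at $\pm\infty$. At $+\infty$ only the plane wave $be^{\mu_2\xi}\mathbf a_2$ survives; the cross terms with decaying modes vanish, and since there is a single oscillatory mode there is no oscillating cross term. This gives $\mathcal J=|b|^2 s_2$, and similarly $\mathcal J=|d|^2 s_3$ at $-\infty$. Here $s_j=\<\mathbf a_j,\mathcal K\overline{\mathbf a_j}\>$ at $\mu_j=\im k_j$. The main obstacle is computing $s_2$ and $s_3$: I expect $s_j$ to be proportional to the group velocity, i.e.\ to $1/\tau_\pm'(k_j)$, by the standard relation between energy flux and dispersion (differentiate $\mathcal F(\im\tau,\im k,\gamma)=0$ with respect to $k$). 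By \eqref{eq:mus3pm} and the signs of $\partial_\Lambda\mu_{2,3}$ recorded in the proof of Lemma \ref{lem:roots1} \textbf{d}, we have $\tau_-'<0<\tau_+'$. The constancy $|b|^2s_2=|d|^2s_3$ then equates two quantities of opposite strict sign, forcing $b=d=0$. I would check this sign bookkeeping by an explicit computation with $\mathbf a_j$ from \eqref{eq:vj0}.

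\textbf{Step 3: conclude.} With $b=d=0$ we get $V=af_1=-ef_4$, which decays exponentially at both ends. Translating back through Lemma \ref{lem:transys}, $V$ gives an $L^2$ eigenfunction of $\mathcal L_{[c]}$ with eigenvalue $\gamma^3\Lambda_0\in\im\R\setminus\{0\}$; this contradicts Proposition \ref{prop:noeigh} \textbf{iii} unless $V=0$. If $V=0$, then $af_1+b\widehat f_2=0$ and $d\widehat f_3+ef_4=0$. Since $Y_+$ and $Y_-$ in \eqref{eq:complete-bases} are fundamental matrices, their columns are independent, so $a=b=0$ and $d=e=0$. This contradicts the nontriviality of the linear relation and proves \eqref{eq:nozeroW}.
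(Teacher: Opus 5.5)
Your route is the same as the paper's. You take a nonzero solution in $\Span\{f_1,\widehat f_2\}\cap\Span\{\widehat f_3,f_4\}$ and a quadratic current that is constant in $\xi$ when $\Lambda\in\im\R$. You evaluate it at $\pm\infty$, where only the oscillatory modes survive, and opposite signs force $b=d=0$; Claim \textbf{iii} of Proposition \ref{prop:noeigh} then finishes. Two points need repair.

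\textbf{The sign of the correction term.} From $(v_1\overline{v_1'})'=|v_1'|^2+v_1\overline{v_1''}$ one gets $\Im(v_1\overline{v_1''})=+\partial_x\Im(v_1\overline{v_1'})$. So the conserved quantity is $\langle\sigma_1\Psi,\overline\Psi\rangle-2\tau\Im(v_1\overline{v_1'})$, not the one with $+2\tau$. In the variables $u=\widetilde v_1$, $r=\widetilde v_4+c\widetilde v_1$ (so $\Psi_2\leftrightarrow r$ and $\Psi_1\leftrightarrow\gamma^2L_+u+cr$), this is exactly $2\gamma^2$ times the paper's $\mathcal J$ in \eqref{eq:def_flux}.

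\textbf{The signs of $s_2$ and $s_3$.} This is the real gap: these signs are the whole content of the argument, and you only announce them. ``Flux proportional to group velocity'' is not automatic. The proportionality factor is the energy density of the mode, which in Hamiltonian problems can have either sign, and a negative factor on one branch would break the argument. Here it is positive because the far-field symbol of $\mathbf L_c$ at $\xi$-wavenumber $k$, namely $\bigl(\begin{smallmatrix}1+\gamma^2k^2&c\\c&1\end{smallmatrix}\bigr)$, is positive definite for $|c|<1$.

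Concretely, for $u=e^{\im k\xi}$ one has $r=\gamma^2\tau_0k^{-1}u$. With $\tau_0=\tau_\pm(k)$ from \eqref{eq:mus221},
\begin{align*}
\mathcal J
&=\frac{\gamma^2\tau_0}{k}\Bigl(1+2k^2+\frac{c\tau_0}{k}\Bigr)
=\gamma^{-2}\bigl(c\pm\sqrt{1+\gamma^2k^2}\bigr)\bigl(1+2\gamma^2k^2\pm c\sqrt{1+\gamma^2k^2}\bigr)\\
&=\sqrt{1+\gamma^2k^2}\,\bigl(\sqrt{1+\gamma^2k^2}\pm c\bigr)\,\tau_\pm'(k),
\end{align*}
which is the content of \eqref{eq:JV2}--\eqref{eq:JV3}. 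The prefactor is half the value of that positive definite form on the amplitude $(1,\pm\sqrt{1+\gamma^2k^2})^\intercal$ of $(\widetilde v_1,\widetilde v_4)$, so it is positive. Hence $s_2<0<s_3$ follows from $\tau_-'<0<\tau_+'$.

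The flux is proportional to $\tau_\pm'$, which is the group velocity, rather than to $1/\tau_\pm'$; this does not affect the sign. With this computation inserted, the rest of your argument, including the explicit treatment of the case $V=0$, is complete.
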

\begin{proof}
Let $(\widetilde v_1,\widetilde v_4)$ be a solution of
\eqref{eq:systres1} for $\widetilde F_1=\widetilde F_2=0$, and set
$u:=\widetilde v_1$ and $r:=\widetilde v_4+c\widetilde v_1$.
For $\Lambda=\im\tau$ we have
\begin{align}
    r'&=\im\gamma^2\tau u,
    \label{eq:fluxr1}\\
    (L_+^{(U)}u)'&=\im\tau(r-2cu),
    \label{eq:fluxr2}
\end{align}
where $U:=f'(Q)$ and $L_+^{(U)}:=-\partial_\xi^2+1-U$.
We claim that the following is independent of $\xi$:
\begin{align}
    \mathcal J[\widetilde V,U]
    :=\operatorname{Re}\left(
        \overline r\,L_+^{(U)}u
        +\overline{u'}\,r'
        +c\gamma^{-2}|r|^2
    \right).
    \label{eq:def_flux}
\end{align}
Indeed,
\small
\begin{align*}
    \frac{d}{d\xi}\mathcal J[\widetilde V,U]
    &=\operatorname{Re}\Big(
        \overline{r'}\,L_+^{(U)}u
        +\overline r\,(L_+^{(U)}u)'
        +\overline{u''}\,r'
        +\overline{u'}\,r''
        +c\gamma^{-2}
        (\overline{r'}\,r+\overline r\,r')
    \Big)\\
    &=\operatorname{Re}\Big(
        -\im\gamma^2\tau\overline u\,L_+^{(U)}u
        +\im\tau|r|^2
        -2\im c\tau\overline r\,u
        +\im\gamma^2\tau\overline{u''}u
        +\im\gamma^2\tau|u'|^2
        +\im c\tau(-\overline u\,r+\overline r\,u)
    \Big)\\
    &=\gamma^2\tau\operatorname{Re}\Big(
        \im(\overline u\,u''+\overline{u''}u)
    \Big)=0.
\end{align*}
\normalsize
Notice that the constancy of $\mathcal J[\widetilde V,U]$ remains
true for any real valued smooth potential $U$ in
\eqref{eq:fluxr1}--\eqref{eq:def_flux}.

We prove \eqref{eq:nozeroW} by contradiction. Fix
$\tau\in\R\backslash\{0\}$, choose $T>|\tau|$, and use the Jost
solutions supplied by Lemma \ref{lem:estimate_f} on a neighborhood
of $\im[-T,T]$. Suppose that $\widehat W(\im\tau,\gamma)=0$.
The pairs $\{f_1,\widehat f_2\}$ and $\{\widehat f_3,f_4\}$ are
linearly independent by their asymptotic behavior at $+\infty$
and $-\infty$, respectively. Hence their spans have a nontrivial
intersection, and there exist a nonzero solution $\widetilde V$
and constants $\alpha_j\in\C$ such that
\begin{align}
    \widetilde V
    =\alpha_1f_1+\alpha_2\widehat f_2
    =\alpha_3\widehat f_3+\alpha_4f_4.
    \label{eq:outgoing_intersection}
\end{align}
Here all the Jost functions are evaluated at $(\Lambda,\gamma)
=(\im\tau,\gamma)$.
By Lemma \ref{lem:estimate_f},
$f_1$ decays exponentially at $+\infty$ and $f_4$ decays
exponentially at $-\infty$. Thus, for the free solutions in
\eqref{eq:vj0},
\begin{align}
    \mathcal J[\widetilde V,U]
    &=|\alpha_2|^2\lim_{\xi\to+\infty}
        \mathcal J[\widehat f_2,U]
      =|\alpha_2|^2\mathcal J[\widetilde V_2^{(0)},0]
      \nonumber\\
    &=|\alpha_3|^2\lim_{\xi\to-\infty}
        \mathcal J[\widehat f_3,U]
      =|\alpha_3|^2\mathcal J[\widetilde V_3^{(0)},0].
    \label{eq:flux00}
\end{align}

Write $\mu_j=\im k_j$ for $j=2,3$. Then
$k_2=\tau_-^{-1}(\tau)$ and $k_3=\tau_+^{-1}(\tau)$ by
\eqref{eq:mus3pm}; in particular, $k_2,k_3\neq0$.
For $\widetilde V_j^{(0)}$ we have
$u=e^{\im k_j\xi}$,
$r=(\gamma^2\tau/k_j)e^{\im k_j\xi}$,
$L_+^{(0)}u=(1+k_j^2)u$ and $r'=\im\gamma^2\tau u$.
Consequently,
\begin{align*}
    \mathcal J[\widetilde V_j^{(0)},0]
    =\frac{\gamma^2\tau}{k_j}
        \left(1+2k_j^2+c\frac{\tau}{k_j}\right).
\end{align*}
Using \eqref{eq:mus221} and $\gamma^2=1-c^2$, we obtain
\begin{align}
    \mathcal J[\widetilde V_2^{(0)},0]
    &=\gamma^{-2}
        \left(c-\sqrt{1+\gamma^2k_2^2}\right)
        \left[1+2\gamma^2k_2^2
            -c\sqrt{1+\gamma^2k_2^2}\right]<0,
    \label{eq:JV2}\\
    \mathcal J[\widetilde V_3^{(0)},0]
    &=\gamma^{-2}
        \left(c+\sqrt{1+\gamma^2k_3^2}\right)
        \left[1+2\gamma^2k_3^2
            +c\sqrt{1+\gamma^2k_3^2}\right]>0.
    \label{eq:JV3}
\end{align}
Indeed, $0<c<1$ and
\begin{align*}
    1+2\gamma^2k_2^2-c\sqrt{1+\gamma^2k_2^2}
    \geq 1-c+(2-c)\gamma^2k_2^2>0.
\end{align*}
It follows from \eqref{eq:flux00} that $\alpha_2=\alpha_3=0$.
Hence \eqref{eq:outgoing_intersection} reduces to
$\widetilde V=\alpha_1f_1=\alpha_4f_4$, so that $\widetilde V$
decays exponentially at both ends of the line.
In particular, $\widetilde V\in L^2(\R,\C^4)$, and
\eqref{A_Decomposehom} gives
$\widetilde v_1\in H^3(\R)$ and
$\widetilde v_4\in H^1(\R)$.
By Lemma \ref{lem:transys},
\begin{align*}
    \mathbf v(x)
    :=\bigl(\widetilde v_1(\gamma x),
            \widetilde v_4(\gamma x)\bigr)^\intercal
    \text{ belongs to } H^3(\R)\times H^1(\R), 
\end{align*}
is nonzero and satisfies
$\mathcal L_{[c]}\mathbf v=\im\gamma^3\tau\mathbf v$.
This contradicts Claim \textbf{iii} in Proposition
\ref{prop:noeigh}, proving \eqref{eq:nozeroW}.
\end{proof}

We now prove the intermediate energy estimate using the  representation \eqref{eqvarpartilde}. This representation
requires only the nonvanishing of $\widehat W$.

\begin{lemma}\label{lem:interm}
For any $c\in(\sqrt{p-1}/2,1)$ and
$\mathbf c_1>\varepsilon_1>0$   there are  constants  
  $C(c,\mathbf c_1,\varepsilon_1)$  and $\varepsilon _*(c)$ such that,
for $z= \im\lambda  +\varepsilon $    with
  $0\le\varepsilon\le\varepsilon_*$ and
$\gamma^3\varepsilon_1\le|\lambda|\le\gamma^3\mathbf c_1$,
\begin{align}
 \|\sech(\gamma\kappa x)R_{\mathcal L_{[c]}}(z)F\|_{L^2(\R,\C^2)}
 &\le C(c,\mathbf c_1,\varepsilon_1)\|F\|_{L^1(\R,\C^2)},
 \label{eq:interm1}\\
 \|\pi_1R_{\mathcal L_{[c]}}(z)\mathcal Q_{[c]}
       (0,\partial_xg)^\intercal\|_{\widehat\Sigma}
 &\le C(c,\mathbf c_1,\varepsilon_1)\|g\|_{L^1(\R)}.
 \label{eq:interm-derivative}
\end{align}
 
\end{lemma}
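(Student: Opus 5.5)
The plan is to work in the rescaled variables of Lemma \ref{lem:transys}, with $z=\gamma^3\Lambda$. I will use the Jost representation \eqref{eqvarpartilde}--\eqref{eq:r4} of the outgoing resolvent, which applies to $\Lambda$ in a compact set $K:=\{\Lambda:\ \varepsilon_1\le|\Im\Lambda|\le\mathbf c_1,\ 0\le\Re\Lambda\le\gamma^{-3}\varepsilon_*\}$.

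First I fix $T>\mathbf c_1$ and take the neighborhood $\mathcal O_T$ of Lemma \ref{lem:estimate_f}, on which $f_1,\widehat f_2,\widehat f_3,f_4$ are holomorphic in $\Lambda$. The determinant $\widehat W$ is holomorphic on $\mathcal O_T$, and by Lemma \ref{lem:nozerodet} it does not vanish on the compact set $\im([-\mathbf c_1,-\varepsilon_1]\cup[\varepsilon_1,\mathbf c_1])$. By continuity there are $\varepsilon_*>0$ and $w_0>0$ with $|\widehat W|\ge w_0$ on $K$. Shrinking $\varepsilon_*$ also keeps $K\subset\mathcal O_T$ and keeps the root ordering of Lemma \ref{lem:roots1}\textbf{d} valid: $\Re\mu_1\le-\nu$, $\Re\mu_4\ge\nu$, and $\Re\mu_2\le0\le\Re\mu_3$.

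The pointwise kernel bound comes next. For $\Re\Lambda>0$, the formula \eqref{eqvarpartilde} gives the $L^2$ solution, so it coincides with the resolvent; at $\Re\Lambda=0$ it is the outgoing boundary value by continuity in $\Lambda$. In each piece $\widehat R_j$, the factor at $\xi$ and the cofactor at $y$ combine to give $e^{\mu_j(\xi-y)}$ times products of $m$'s. On the integration half-line, $|e^{\mu_j(\xi-y)}|\le1$. The difficulty is that $\widehat f_2,\widehat f_3$ are normalized only at one end, so on the other side they are written in the bases $Y_\pm$ of \eqref{eq:complete-bases}. Using the connection coefficients \eqref{eq:connections}, which are bounded on the compact set $K$, together with the grouping of exponents $\mu_1+\dots+\mu_4=0$ used in Lemma \ref{lem:bdm2}, I obtain $|\widehat R(\Lambda)\widetilde{\mathbf F}(\xi)|\le C\int_\R|\widetilde{\mathbf F}(y)|\,dy$ uniformly on $K$. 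No polynomial weights appear here, because no $\Lambda$-derivatives are taken. Since $\sech(\kappa\xi)\in L^2$, this yields \eqref{eq:interm1} after the scaling \eqref{eq:resolvent-weight-scaling}. It also covers both components, since $v_2=\widetilde v_4$.

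For \eqref{eq:interm-derivative} I follow the proof of Lemma \ref{lem:low-derivative}.
\begin{itemize}[label={}]
\item First I subtract $\mathcal P_{[c]}F_g$. This term is bounded by $\|g\|_{L^1}$ through \eqref{eq:interm1}, the pairing with $\partial_x\eta_j$ and $\xi_j\in L^1$.
\item Then I write $\widetilde{\mathbf F_g}=-\gamma^{-2}e_3\partial_\xi\widetilde g$ and integrate by parts in $y$ in each $\widehat R_j$. The $y$-derivative falls on the cofactors $\widehat Z_i$, which solve the adjoint system, so this costs only bounded factors.
\item The boundary terms at $y=\xi$ sum, by \eqref{eq:identdet}, to $e_3\widetilde g(\xi)$. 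This is killed by $\pi_1$.
\item The boundary terms at $\pm\infty$ vanish for $g\in C_c^\infty$; the general case follows by density.
\end{itemize}

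The main obstacle is the growth of $\widehat f_2$ at $-\infty$ and of $\widehat f_3$ at $+\infty$ in the kernel bound of the second step. I expect to handle it by expanding in $Y_\mp$ and checking that every surviving exponent is non-positive on its half-line, exactly as in the proof of Lemma \ref{lem:bdm2}. Alternatively, one could replace $\widehat f_2,\widehat f_3$ by the uniquely determined outgoing Jost solutions; this is legitimate because $\widehat W\ne0$, and \eqref{eq:jost-choice-independence} leaves $\widehat W$ unchanged under such choices.
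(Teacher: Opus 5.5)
Your treatment of the strip $0\le\Re\Lambda\le\gamma^{-3}\varepsilon_*$ by compactness, and your integration by parts for \eqref{eq:interm-derivative}, are fine. The kernel bound as you plan it fails, however, because the pieces $\widehat R_j$ are not individually bounded from $L^1$ to $\widehat\Sigma$. So the check ``every surviving exponent is non-positive on its half-line'' is false piece by piece.

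On $y<\xi<0$, expand $f_1(\xi)=c_{11}F_1(\xi)+\cdots$ and $\widehat f_2(y)=c_{21}F_1(y)+\cdots$ in $Y_-$. Then $\widehat R_1$ contains
\begin{align*}
\frac{c_{11}c_{21}}{\widehat W}\,F_1(\xi)\int_{-\infty}^{\xi}\det\bigl(\widetilde{\mathbf F}(y),F_1(y),\widehat f_3(y),f_4(y)\bigr)\,dy .
\end{align*}
Its kernel has size comparable to $|e^{\mu_1\xi-\mu_2y}|=e^{|\Re\mu_1||\xi|}$ for $\Lambda\in\im\R$, which $\sech(\kappa\xi)$ with $\kappa$ small cannot absorb. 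So this piece alone is unbounded whenever $c_{11}c_{21}\ne0$. The same term with the opposite sign, by antisymmetry of the determinant, sits in $\widehat R_2$; it comes from $\widehat f_2(\xi)=c_{21}F_1(\xi)+\cdots$ and $f_1(y)=c_{11}F_1(y)+\cdots$. Likewise, a term with coefficient $c_{44}c_{34}$ and growth $e^{\Re\mu_4\,\xi}$ on $0<\xi<y$ appears in both $\widehat R_3$ and $\widehat R_4$, and cancels only in their sum.

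Lemma \ref{lem:bdm2}, which you invoke, removes these terms with the modification $f_2=\Lambda\widehat f_2-c_2f_1$, $c_2=\Lambda c_{21}/D(\Lambda)$ (and $c_3=\Lambda c_{34}/D(-\Lambda)$). That is, it divides by the Evans function. Nonvanishing of $D(\pm\Lambda)$ is proved only near $\Lambda=0$, and it does not follow from $\widehat W\ne0$. Indeed, writing $(f_1,\widehat f_2,\widehat f_3,f_4)$ in the basis $Y_-$ gives $\widehat W=\det\mathbf A\,(c_{11}c_{22}-c_{12}c_{21})$, which can be nonzero when $c_{11}=D(\Lambda)=0$. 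Your alternative, choosing $\widehat f_2,\widehat f_3$ with no $F_1$ (resp.\ $F_4$) component at the far end, needs exactly the same condition $D(\pm\Lambda)\ne0$. It is justified neither by $\widehat W\ne0$ nor by \eqref{eq:jost-choice-independence}.

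The missing idea is to group the pieces. For $y<\xi$, $\widehat R_1+\widehat R_2$ has kernel $\mathbf Y(\xi)\diag(1,1,0,0)\mathbf Y(y)^{-1}$; for $\xi<y$, $\widehat R_3+\widehat R_4$ has kernel $-\mathbf Y(\xi)\diag(0,0,1,1)\mathbf Y(y)^{-1}$. Here $\mathbf Y=(f_1,\widehat f_2,\widehat f_3,f_4)$, and these kernels are built from the complementary projections onto $\Span\{f_1,\widehat f_2\}$ and $\Span\{\widehat f_3,f_4\}$. Write $\mathbf Y=Y_\pm\mathbf C_\pm$ with $\mathbf C_+=\begin{pmatrix}I_2&B_+\\0&A_+\end{pmatrix}$ and $\mathbf C_-=\begin{pmatrix}A_-&0\\B_-&I_2\end{pmatrix}$. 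The conjugated projections then involve only $A_\pm^{-1}$ and $B_\pm A_\pm^{-1}$, and $\det A_\pm=\widehat W/\det\mathbf A$. In each of the four regions of the $(\xi,y)$ plane, the exponentials appear only as $\diag(e^{\mu_1t},e^{\mu_2t})$ with $t\ge0$ and $\diag(e^{\mu_3t},e^{\mu_4t})$ with $t\le0$, all of norm at most one. This is the paper's proof, and it uses nothing beyond $\widehat W\ne0$. Once this bound on $\mathbf G$ is in hand, your derivative step goes through, since $\partial_y\mathbf G=-\mathbf G\bigl(A_\infty^{(0)}+\Lambda A_\infty^{(1)}(\gamma)+q\bigr)$.
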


\begin{proof}
Choose $T>\mathbf c_1$ and use the Jost solutions in
Lemma \ref{lem:estimate_f} on a neighborhood of $\im[-T,T]$.
Throughout the proof $ \lambda =\gamma ^3 \Lambda $ and
$\varepsilon_1\leq|\Lambda|\leq\mathbf c_1$.
Constants are uniform in this range.
By Lemma \ref{lem:nozerodet} and the continuity of the Jost
functions,
\begin{align}
    \inf_{\varepsilon_1\leq|\tau|\leq\mathbf c_1}
        |\widehat W(\im\tau )|>0.
    \label{eq:interm-W-bound}
\end{align}
In particular, the matrix
\begin{align*}
    \mathbf Y(\xi,\Lambda)
    :=\bigl(f_1,\widehat f_2,\widehat f_3,f_4\bigr)(\xi,\Lambda )
\end{align*}
is invertible. Grouping the first two and the last two terms in
\eqref{eqvarpartilde}, we write
\begin{align}
    \widetilde V(\xi)
    &=\int_\R\mathbf G(\xi,y,\Lambda)
        \widetilde{\mathbf F}(y)\,dy,
    \label{eq:interm-Green-rep}\\
    \mathbf G(\xi,y,\Lambda)
    &:=\begin{cases}
        \mathbf Y(\xi,\Lambda)\diag(1,1,0,0)
            \mathbf Y(y,\Lambda)^{-1},&y<\xi,\\
        -\mathbf Y(\xi,\Lambda)\diag(0,0,1,1)
            \mathbf Y(y,\Lambda)^{-1},&\xi<y ,
    \end{cases}
    \label{eq:interm-Green}
\end{align}
where we used   \ref{itm:diag} Notation \ref{not:notation}.
We claim the following,  
\begin{align}
    \sup_{\substack{\Lambda\in\im\R\\
        \varepsilon_1\leq|\Lambda|\leq\mathbf c_1}}
    \sup_{\xi\neq y}|\mathbf G(\xi,y,\Lambda)|
    \leq C(p, \varepsilon_1,\mathbf c_1).
    \label{eq:interm-Green-bound}
\end{align}
To prove this, use the   matrices
$\mathbf Y_\pm:=Y_\pm$ in \eqref{eq:complete-bases}, whose determinants are constant in $\xi$ with $\det Y_\pm  =\det\mathbf A\ne0$, see \ref{eq:matrixbfA}.
By Lemma \ref{lem:estimate_f}, the matrices
\begin{align*}
    \mathbf M_\pm(\xi,\Lambda)
    :=\mathbf Y_\pm(\xi,\Lambda)
        \diag(e^{-\mu_1\xi},e^{-\mu_2\xi},
              e^{-\mu_3\xi},e^{-\mu_4\xi})
\end{align*}
and their inverses are uniformly bounded for $\pm\xi\geq0$.
Here and below  in this proof, $\mu_j=\mu_j(\Lambda )$.
The following   changes of basis   have the block forms,
\begin{align*}
    \mathbf Y=\mathbf Y_+\mathbf C_+
        =\mathbf Y_-\mathbf C_-,\qquad
    \mathbf C_+=\begin{pmatrix}I_2&B_+\\0&A_+\end{pmatrix},
    \qquad
    \mathbf C_-=\begin{pmatrix}A_-&0\\B_-&I_2\end{pmatrix}.
\end{align*}
All these blocks are independent from $\xi$ and  uniformly bounded by continuity on  parameters $(\Lambda , \gamma )$ which vary in a  compact
  set. Taking determinants gives
\begin{align}\label{eq:interm-block-determinants}
    \det A_\pm=\frac{\widehat W}{\det\mathbf A}.
\end{align}
Thus $A_\pm^{-1}$ are uniformly bounded by
\eqref{eq:interm-W-bound}. Set $P:=\diag(I_2,0)$. Direct multiplication gives
\begin{align}\label{eq:interm-block-products}
    \mathbf C_+P\mathbf C_+^{-1}
    &=\begin{pmatrix}I_2&-B_+A_+^{-1}\\0&0\end{pmatrix},&
    \mathbf C_-P\mathbf C_-^{-1}
    &=\begin{pmatrix}I_2&0\\B_-A_-^{-1}&0\end{pmatrix},\nonumber\\
    \mathbf C_+P\mathbf C_-^{-1}
    &=\begin{pmatrix}A_-^{-1}&0\\0&0\end{pmatrix},&
    \mathbf C_-(I_4-P)\mathbf C_+^{-1}
    &=\begin{pmatrix}0&0\\0&A_+^{-1}\end{pmatrix}.
\end{align}
Since
$$
\mathbf C_\pm(I_4-P)\mathbf C_\pm^{-1}
=I_4-\mathbf C_\pm P\mathbf C_\pm^{-1},
$$
we also obtain
$$
\mathbf C_+(I_4-P)\mathbf C_+^{-1}
=\begin{pmatrix}
0&B_+A_+^{-1}\\
0&I_2
\end{pmatrix},
\qquad
\mathbf C_-(I_4-P)\mathbf C_-^{-1}
=\begin{pmatrix}
0&0\\
-B_-A_-^{-1}&I_2
\end{pmatrix}.
$$
To display the exponential factors in the two blocks, set
\begin{align*}
 E_{12}(t,\Lambda)&:=\diag(e^{\mu_1t},e^{\mu_2t}),&
 E_{34}(t,\Lambda)&:=\diag(e^{\mu_3t},e^{\mu_4t}).
\end{align*}
Substitution of
$\mathbf Y_\pm(\xi)=\mathbf M_\pm(\xi)
\diag(E_{12}(\xi),E_{34}(\xi))$
into \eqref{eq:interm-Green}, using
\eqref{eq:interm-block-products}, gives, for $\xi,y\geq0$,
\begin{align*}
 &\mathbf M_+(\xi)^{-1}\mathbf G(\xi,y,\Lambda)\mathbf M_+(y)\\
 &\quad=
 \begin{cases}
 \begin{pmatrix}
 E_{12}(\xi-y)&-E_{12}(\xi)B_+A_+^{-1}E_{34}(-y)\\
 0&0
 \end{pmatrix},&y<\xi,\\[3mm]
 \begin{pmatrix}
 0&-E_{12}(\xi)B_+A_+^{-1}E_{34}(-y)\\
 0&-E_{34}(\xi-y)
 \end{pmatrix},&\xi<y.
 \end{cases}
\end{align*}
For $\xi,y\leq0$, the corresponding identity is
\begin{align*}
 &\mathbf M_-(\xi)^{-1}\mathbf G(\xi,y,\Lambda)\mathbf M_-(y)\\
 &\quad=
 \begin{cases}
 \begin{pmatrix}
 E_{12}(\xi-y)&0\\
 E_{34}(\xi)B_-A_-^{-1}E_{12}(-y)&0
 \end{pmatrix},&y<\xi,\\[3mm]
 \begin{pmatrix}
 0&0\\
 E_{34}(\xi)B_-A_-^{-1}E_{12}(-y)&-E_{34}(\xi-y)
 \end{pmatrix},&\xi<y.
 \end{cases}
\end{align*}
On the two remaining regions, the second line of
\eqref{eq:interm-block-products} gives
\begin{align*}
 &\mathbf M_+(\xi)^{-1}\mathbf G(\xi,y,\Lambda)\mathbf M_-(y)=
 \begin{pmatrix}
 E_{12}(\xi)A_-^{-1}E_{12}(-y)&0\\
 0&0
 \end{pmatrix},
 \qquad y\leq0\leq\xi, \\[2mm]
 &\mathbf M_-(\xi)^{-1}\mathbf G(\xi,y,\Lambda)\mathbf M_+(y)=
 \begin{pmatrix}
 0&0\\
 0&-E_{34}(\xi)A_+^{-1}E_{34}(-y)
 \end{pmatrix},
 \qquad \xi\leq0\leq y  .
\end{align*}
By \eqref{eq:mus21}, the Euclidean operator norms of the
diagonal matrices satisfy
\begin{align*}
 \|E_{12}(t,\Lambda)\|
 &=\max_{j=1,2}e^{t\Re\mu_j}\leq1  \text{ for }  t\geq0    \text{ and } \\
 \|E_{34}(t,\Lambda)\|
 &=\max_{j=3,4}e^{t\Re\mu_j}\leq1  \text{ for } t\leq0.
\end{align*}
In particular, 
\begin{align*}
 \|E_{12}(\xi-y,\Lambda)\|&\leq1   \text{ for }    y<\xi   \text{ and  }
 \|E_{34}(\xi-y,\Lambda)\| \leq1    \text{ for }   \xi<y.
\end{align*}
The other blocks obey the following estimates:
\begin{align*}
 \|E_{12}(\xi)B_+A_+^{-1}E_{34}(-y)\|
 &\leq  \|E_{12}(\xi) \|   \| E_{34}(-y)\|   \|B_+\|\,\|A_+^{-1}\|   \le \|B_+\|\,\|A_+^{-1}\|,
 &&\xi,y\geq0,\\
 \|E_{34}(\xi)B_-A_-^{-1}E_{12}(-y)\|
 & \le   \|E_{34}(\xi) \|    \| E_{12}(-y)\| \|B_-\|\,\|A_-^{-1}\|\leq\|B_-\|\,\|A_-^{-1}\|,
 &&\xi,y\leq0,\\
 \|E_{12}(\xi)A_-^{-1}E_{12}(-y)\|
 &\le\|E_{12}(\xi) \| \| E_{12}(-y)\| \| A_-^{-1} \| \leq\|A_-^{-1}\|,
 &&y\leq0\leq\xi,\\
 \|E_{34}(\xi)A_+^{-1}E_{34}(-y)\|
 &\le  \|E_{34}(\xi) \|   \| E_{34}(-y)\|  \| A_+^{-1} \| \leq\|A_+^{-1}\|,
 &&\xi\leq0\leq y.
\end{align*}
Multiplication by the uniformly bounded   factors
$\mathbf M_\pm(\xi,\Lambda)$ and
$\mathbf M_\pm(y,\Lambda)^{-1}$ therefore yields
\begin{align*}
 \|\mathbf G(\xi,y,\Lambda)\|
 \leq C\bigl(1+\|A_+^{-1}\|+\|A_-^{-1}\|
       +\|B_+\|\,\|A_+^{-1}\|
       +\|B_-\|\,\|A_-^{-1}\|\bigr)
 \leq C.
\end{align*}
This proves \eqref{eq:interm-Green-bound} directly from the
 kernel \eqref{eq:interm-Green}.
By \eqref{eq:interm-Green-bound} and \eqref{eq:relkern2},
\begin{align*}
    \|\widetilde V\|_{L^\infty(\R)}
    \leq C(p,\gamma,\varepsilon_1,\mathbf c_1)
        \|\widetilde{\mathbf F}\|_{L^1(\R)}
    \leq C(p,\gamma,\varepsilon_1,\mathbf c_1)
        \|F\|_{L^1(\R)}.
\end{align*}
Since $\sech(\kappa\xi)\in L^2(\R)$, this yields
\begin{align}
    \|\sech(\kappa\xi)\widetilde V\|_{L^2(\R)}
    \leq C(p,\gamma,\varepsilon_1,\mathbf c_1)
        \|F\|_{L^1(\R)}.
    \label{eq:pfsmooth21int}
\end{align}
The weight becomes exactly that of \eqref{eq:sigmatilde} after
returning to $x=\gamma^{-1}\xi$.
This proves \eqref{eq:interm1}. 
Since $\mathcal Q_{[c]}$ is a bounded in $L^1$,
it also proves the corresponding projected estimate.

For $F_g=(0,\partial_xg)^\intercal$,  
$\widetilde{\mathbf F_g}=-\gamma^{-2}e_3\widetilde g'$,
where $\widetilde g(\xi)=g(\xi/\gamma)$,
as in the proof of Lemma \ref{lem:low-derivative}, using formulas similar to \eqref{eq:intpar} but with $\widehat{f}_j$ replacing 
$ {f}_j$ for $j=2,3$ and $ \widehat W(\Lambda)$ replacing $\Lambda^2\widehat W(\Lambda)$,
integration by parts in $y$ again  gives no boundary contribution
to the first component. 
Since 
\[
\partial_y\mathbf G(\xi,y,\Lambda)
=-\mathbf G(\xi,y,\Lambda)
(A_\infty^{(0)}+\Lambda A_\infty^{(1)}+q(y)),
\]
\eqref{eq:interm-Green-bound}   bounds also
$\partial_y\mathbf G$.
Thus, multiplying by $\sech(\kappa\xi)$ and returning to
$x=\xi/\gamma$, we obtain   \eqref{eq:interm-derivative} without $\mathcal Q_{[c]}$.
Finally, by 
$\|\mathcal P_{[c]}F_g\|_{L^1}\lesssim\|g\|_{L^1}$, the proof of \eqref{eq:interm-derivative} is completed using
\eqref{eq:interm1}.

\end{proof}

\section{Proof of Propositions \ref{prop:smooth2} and \ref{prop:smoothreg}}\label{sec:proofsmoothreg}

\begin{proof}[Proof of Proposition \ref{prop:smooth2}]
Lemma \ref{lem:smooth2small} and  Lemma \ref{lem:interm}  give the bound in \eqref{eq:smooth21strip} for $|\lambda |\le \mathbf c_1$. Since we have   proved    in Lemma \ref{lem:largterm}  the bound in \eqref{eq:smooth21strip} for $|\lambda |\ge \mathbf c_1$, the proof   is completed.

\end{proof}

\begin{proof}[Proof of Proposition \ref{prop:smoothreg}]
Proceeding like in the proof of Proposition \ref{prop:smooth}, the proof reduces to proving 
\begin{align}
 \sup_ {\substack{\lambda\in\R \\ 0\le \varepsilon \le \varepsilon _*} }
 \|\pi_1R_{\mathcal L_{[c]}}( \im\lambda +\varepsilon)
       \mathcal Q_{[c]}(0,\partial_xg)^\intercal\|_{\widehat\Sigma}
 \le C(c)\|g\|_{L^1(\R)}.
 \label{eq:derivative-resolvent-strip}
\end{align}
This estimate is already proved for $|\lambda |\le \mathbf c_1$ 
by  Lemmas \ref {lem:low-derivative} and   \ref{lem:interm}. At higher frequencies, for $\lambda =\gamma ^3 \Lambda $,   
$\widetilde g(\xi)=g(\xi/\gamma)$  and  $\mathbf v=R_{\mathcal L_{[c]}}(z)(0,\partial_xg)^\intercal$, \eqref{eq:high-scalar-resolvent} and
$L_{13}\partial_\xi=L_{23}$ give
\begin{align}
 \widetilde v_1
 =L_{23}(q_{32}\widetilde v_1)
       -\gamma^{-2}L_{23}\widetilde g.
 \label{eq:derivative-high-scalar}
\end{align}
By \eqref{eq:high-kernel-bounds} and
\eqref{eq:high-scalar-contraction}, absorption at a sufficiently
large fixed threshold gives the following, finishing the proof of \eqref{eq:derivative-resolvent-strip}
\begin{align}
 \|\pi_1R_{\mathcal L_{[c]}}(z)(0,\partial_xg)^\intercal
       \|_{\widehat\Sigma}
 \le C(c)\langle z\rangle^{-1/2}\|g\|_{L^1}.
 \label{eq:derivative-high-bound}
\end{align}

\end{proof}

\appendix
\section{Proof of Theorem \ref{them:locwp}}\label{app:them:locwp}

All functions in this appendix are real valued. The argument
applies to both nonlinearities in \eqref{eq:gB1} and all $p>1$.
Put $\alpha=\min\{p-1,1\}$. On every bounded real interval,
$f'$ is H\"older continuous with exponent $\alpha$.
The Sobolev embedding $H^1(\R)\subset L^\infty(\R)$ and the
chain rule therefore imply, for $\|v\|_{H^1},\|w\|_{H^1}\le A$,
\begin{align}
 \|f(v)\|_{H^1}&\le C(A),
 &\|f(v)-f(w)\|_{L^2}&\le C(A)\|v-w\|_{L^2},
 \label{eq:lwp-nonlinearity-L2}\\
 \|f(v)-f(w)\|_{H^1}
 &\le C(A)\bigl(\|v-w\|_{H^1}
                  +\|v-w\|_{L^2}^{\alpha/2}\bigr).
 \label{eq:lwp-nonlinearity-H1}
\end{align}
For the last inequality, write
$\partial_x(f(v)-f(w))=f'(v)(v'-w')+(f'(v)-f'(w))w'$
and use
$\|v-w\|_\infty\le C\|v-w\|_2^{1/2}
\|v'-w'\|_2^{1/2}$.
In particular, $f:H^1\to H^1$ is continuous.

Write $U_j(t)\mathbf u_0=\pi_jU(t)\mathbf u_0$, where $U(t)$
is the group in \eqref{eq:group} and $\pi_j$ selects the $j$-th
component. This group is an isometry on
$\mathcal H^1$. The first component of \eqref{eq:gB3} is
\begin{align}
 u_1(t)&=U_1(t)\mathbf u_0-\Phi(f(u_1))(t),
 \label{eq:welp1}\\
 \Phi(g)(t)&:=\pi_1\int_0^tU(t-s)(0,\partial_xg(s))^\intercal\,ds
 \nonumber\\
 &=-\int_0^t\im\partial_x(1-\partial_x^2)^{-1/2}
       \sin\bigl(\im(t-s)\partial_x\sqrt{1-\partial_x^2}\bigr)
       g(s)\,ds.
 \label{eq:lwp-Phi}
\end{align}
The Fourier multiplier in the last integral has absolute value
at most one. Thus for $s=0,1$,
\begin{align}
 \|\Phi(g)\|_{C([0,T],H^s)}
 \le\int_0^T\|g(t)\|_{H^s}\,dt.
 \label{eq:lwp-Phi-bound}
\end{align}
Fix $A>2\|\mathbf u_0\|_{\mathcal H^1}$ and consider the metric
space \small
\begin{align}
 \mathcal X_{T,A}
 :=\{v\in C([0,T],L^2): 
                     \ \sup_{0\le t\le T}\|v(t)\|_{H^1}\le A\},
 \quad d(v,w):=\|v-w\|_{C([0,T],L^2)}.
 \label{eq:lwp-complete-space}
\end{align}\normalsize
This space is complete: an $L^2$-uniform limit retains the
$H^1$ bound at every time by weak compactness and lower
semicontinuity. Every member is strongly measurable into
$H^1$, since its Fourier truncations are continuous into
$H^1$ and converge pointwise there. Consequently
$f(v)\in L^\infty([0,T],H^1)$ is strongly measurable.
By \eqref{eq:lwp-nonlinearity-L2}--\eqref{eq:lwp-Phi-bound},
the map $v\mapsto U_1(t)\mathbf u_0-\Phi(f(v))(t)$ maps
$\mathcal X_{T,A}$ into itself and is a strict contraction
when $T=T(A)>0$ is sufficiently small. Indeed,
\begin{align}
 \|\Phi(f(v))-\Phi(f(w))\|_{C([0,T],L^2)}
 \le TC(A)d(v,w).
 \label{eq:lips1}
\end{align}
Its fixed point belongs to $C([0,T],H^1)$, because the image
of the map does, by \eqref{eq:lwp-Phi-bound}. Define
\begin{align}
 u_2(t):=U_2(t)\mathbf u_0
       -\int_0^t\cos\bigl(\im(t-s)\partial_x
                           \sqrt{1-\partial_x^2}\bigr)
                         \partial_xf(u_1(s))\,ds.
 \label{eq:lwp-second-component}
\end{align}
Then $u_2\in C([0,T],L^2)$ and the two components satisfy
\eqref{eq:gB3}. This proves existence.
For two solutions with the same initial datum, the integral
version of \eqref{eq:lips1} and Gronwall's inequality imply
equality of their first components; \eqref{eq:lwp-second-component}
then gives equality of their second components.

For continuous dependence, let two initial data differ by
$\delta$ in $\mathcal H^1$, and suppose the corresponding
solutions have a common bound depending only on $A$ on the
local interval. The first-component equation and
\eqref{eq:lwp-nonlinearity-L2} give
\begin{align}
 \|v_1-w_1\|_{C([0,T],L^2)}\le C(A)\delta.
 \label{eq:lwp-dependence-L2}
\end{align}
Using \eqref{eq:lwp-nonlinearity-H1} in the same equation and
then applying Gronwall's inequality yields
\begin{align}
 \|v_1-w_1\|_{C([0,T],H^1)}
 \le C(A)(\delta+\delta^{\alpha/2}).
 \label{eq:lwp-dependence-H1}
\end{align}
The second-component formula gives the same modulus for
$\|v_2-w_2\|_{C([0,T],L^2)}$. Restarting this construction on
successive intervals proves the continuity asserted in Claim 3,
uniqueness of the maximal solution, and continuation for as long
as its $\mathcal H^1$ norm stays bounded.

We finish by proving conservation of $E$ and $M$ without
assuming additional differentiability of $f$. Let $P_n$ be the
orthogonal Fourier projection onto $\{|k|\le n\}$, acting on
both components, and consider
\begin{align}
 \partial_tu_1^{(n)}&=\partial_xu_2^{(n)},\nonumber\\
 \partial_tu_2^{(n)}
 &=\partial_x\bigl((1-\partial_x^2)u_1^{(n)}
                      -P_nf(u_1^{(n)})\bigr),
 \qquad \mathbf u^{(n)}(0)=P_n\mathbf u_0,
 \quad P_n\mathbf u^{(n)}=\mathbf u^{(n)}.
 \label{eq:gB2n}
\end{align}
On the closed space of band-limited vectors the right-hand
side is locally Lipschitz in $\mathcal H^1$: the derivatives
are bounded Fourier multipliers there, and $f$ is locally
Lipschitz from bounded $H^1$ sets into $L^2$.
The preceding fixed-point construction applies with $P_nf$
in place of $f$ and $P_n\mathbf u_0$ in place of $\mathbf u_0$.
Since $P_n$ is a contraction on $L^2$ and $H^1$, it supplies a
common interval $[0,T(A)]$ and uniform energy-space bounds.

These approximating solutions conserve the original functionals
$E$ and $M$. Indeed, their Hamiltonian equation is
\begin{align}
 \partial_t\mathbf u^{(n)}
 =P_n\partial_x\sigma_1
   \binom{(1-\partial_x^2)u_1^{(n)}-f(u_1^{(n)})}{u_2^{(n)}}.
 \label{eq:lwp-truncated-Hamiltonian}
\end{align}
The operator $P_n\partial_x\sigma_1$ is bounded and skew-adjoint
on $L^2(\R,\R^2)$, so differentiating $E$ along
\eqref{eq:lwp-truncated-Hamiltonian} gives zero.
For $M$, self-adjointness of $P_n$ and $P_nu_1^{(n)}=u_1^{(n)}$
allow the projection to be removed in its pairing. Integration
by parts then gives
\begin{align*}
 \frac{d}{dt}M(\mathbf u^{(n)})
 =\int_\R \partial_xu_1^{(n)}f(u_1^{(n)})\,dx
 =\int_\R\partial_xF(u_1^{(n)})\,dx=0.
\end{align*}
These operations are justified since the approximations are
band limited and $F(u_1^{(n)})\in W^{1,1}(\R)$.

For completeness we verify strong convergence of the
approximations. Their first-component equations and
\eqref{eq:welp1} give
\begin{align}
 u_1^{(n)}-u_1
 &=r_n-\Phi\bigl(P_n(f(u_1^{(n)})-f(u_1))\bigr),\nonumber\\
 r_n&:=U_1(t)(P_n-I)\mathbf u_0
                 -\Phi((P_n-I)f(u_1)).
 \label{eq:lwp-approximation-difference}
\end{align}
Since $f(u_1)\in C([0,T],H^1)$, strong convergence of the
uniformly bounded operators $P_n$ is uniform on its compact
range. Hence $r_n\to0$ in $C([0,T],H^1)$.
Applying \eqref{eq:lwp-nonlinearity-L2} and Gronwall to
\eqref{eq:lwp-approximation-difference} first gives
$u_1^{(n)}\to u_1$ in $C([0,T],L^2)$.
Next \eqref{eq:lwp-nonlinearity-H1} and Gronwall give convergence
in $C([0,T],H^1)$. Formula \eqref{eq:lwp-second-component},
with the projected nonlinearity for the approximations, then
gives convergence of the second components in
$C([0,T],L^2)$. We have proved
\begin{align}
 \mathbf u^{(n)}\xrightarrow{n\to +\infty}\mathbf u
       \quad\text{in }C([0,T],\mathcal H^1).
 \label{eq:lwp-strong-approximation}
\end{align}
Both $E$ and $M$ are continuous on $\mathcal H^1$; continuity
of the potential term follows from boundedness in $H^1$,
the embedding into $L^\infty$, and convergence in $L^2$.
Thus \eqref{eq:lwp-strong-approximation} passes both conservation
laws to $\mathbf u$. Restarting the argument proves them on
the entire maximal interval, completing Theorem \ref{them:locwp}.

\section*{Acknowledgments}

J.B. was  supported by the National Research Foundation of Korea(NRF) grant funded by the Korea government(MSIT) (RS-2026-25599593) and  by the Basic Science Research Program through the National Research Foundation of Korea(NRF) funded by the Ministry of Education(RS-2026-25580254). 

\noindent M.M.  was supported by the JSPS KAKENHI Grant Number 23H01079 and 24K06792.

\section*{Data availability}
Data sharing is not applicable to this article as no datasets
were generated or analysed during the current study.

\section*{Statement of the use of AI}
The overall approach was developed by the authors. ChatGPT 5.6 and Astra were used to generate proofs of some lemmas and portions of the text. Specifically, ChatGPT suggested the functional $\mathcal J$ in \eqref{eq:def_flux}, which contributed to the proof that $\widehat W(\Lambda )\neq 0$ in Lemma \ref{lem:nozerodet}. This contribution enabled us to capture all  velocities $\frac{\sqrt{p-1}}{2}<c<1$ for all $1<p<5$. Without this contribution the result 
would have been valid for all $c\in \left (\frac{\sqrt{p-1}}{2}, 1\right ) \setminus C_p$ for all $1<p<5$, with $C_p =\{  c_n \} _{n\in A}$ 
with $A$ either finite or $A=\N$ and, if $A=\N$,  with $c_n \xrightarrow{n\to +\infty}\frac{\sqrt{p-1}}{2}$.   All AI-generated material was carefully reviewed and verified by the authors, who take full responsibility for the content of the manuscript.

Department of Mathematics, Kyungpook National University, 80 Daehakro, Bukgu, Daegu 41566 Korea.
{\it E-mail Address}: {\tt junsikbae@knu.ac.kr}

Department of Mathematics, Informatics and Geosciences,  University
of Trieste, via Valerio  12/1  Trieste, 34127  Italy.
{\it E-mail Address}: {\tt scuccagna@units.it, delsanto@units.it }

Department of Mathematics and Informatics,
Graduate School of Science,
Chiba University,
Chiba 263-8522, Japan.
{\it E-mail Address}: {\tt maeda@math.s.chiba-u.ac.jp}

\end{document}